\newcommand{\cc}{\mathbb C}
\newcommand{\zz}{\mathbb Z}
\newcommand{\qq}{\mathbb Q}
\newcommand{\A}{\mathbb A}
\newcommand{\la}{\langle}
\newcommand{\ra}{\rangle}
\newcommand{\lra}{\longrightarrow}
\newcommand{\hra}{\hookrightarrow}
\newcommand{\al}{\alpha}
\newcommand{\be}{\beta}
\newcommand{\ga}{\gamma}
\newcommand{\de}{\delta}
\newcommand{\De}{\Delta}
\newcommand{\Ga}{\Gamma}
\newcommand{\ep}{\epsilon}
\newcommand{\lam}{\lambda}
\newcommand{\Lam}{\Lambda}
\newcommand{\vp}{\varpi}
\newcommand{\sig}{\sigma}
\newcommand{\ka}{\kappa}
\DeclareMathOperator{\Gal}{Gal}
\DeclareMathOperator{\End}{End}
\DeclareMathOperator{\G}{G}
\DeclareMathOperator{\GL}{GL}
\DeclareMathOperator{\Hom}{Hom}
\DeclareMathOperator{\U}{U}
\DeclareMathOperator{\rH}{H}
\DeclareMathOperator{\SO}{SO}
\DeclareMathOperator{\RO}{RO}
\DeclareMathOperator{\Sp}{Sp}
\DeclareMathOperator{\diag}{diag}
\DeclareMathOperator{\Spec}{Spec}
\DeclareMathOperator{\Ad}{Ad}
\DeclareMathOperator{\Res}{Res}
\DeclareMathOperator{\Orb}{O}
\DeclareMathOperator{\val}{val}
\DeclareMathOperator{\inv}{inv}
\newcommand{\fu}{\mathfrak u}
\newcommand{\fc}{\mathfrak c}
\newcommand{\fp}{\mathfrak p}
\newcommand{\fgl}{\mathfrak gl}
\newcommand{\cala}{\mathcal{A}}
\newcommand{\cals}{\mathcal{S}}
\newcommand{\calq}{\mathcal{Q}}
\newcommand{\calo}{\mathcal{O}}
\newcommand{\calv}{\mathcal{V}}
\newcommand{\cald}{\mathcal{D}}
\newcommand{\Herm}{\mathcal{H}erm}
\newcommand{\Nm}{\mathrm{Nm}}
\newcommand{\iso}{\xrightarrow{\sim}}
\newcommand{\car}{\textbf{car}}
\newcommand{\kbar}{{k^{alg}}}
\newcommand{\Fbar}{{F^{alg}}}
\newcommand{\bfun}{\mathbf{1}}
\def\Ddots{\mathinner{\mkern1mu\raise\p@
\vbox{\kern7\p@\hbox{.}}\mkern2mu
\raise4\p@\hbox{.}\mkern2mu\raise7\p@\hbox{.}\mkern1mu}}
\newtheorem{Thm}{Theorem}[section]
\newtheorem{Prop}[Thm]{Proposition}
\newtheorem{Lem}[Thm]{Lemma}
\newtheorem{Cor}[Thm]{Corollary}
\theoremstyle{definition}
\newtheorem{Def}[Thm]{Definition}
\theoremstyle{remark}
\newtheorem{Rem}[Thm]{Remark}
\theoremstyle{definition}
\title[Descent and the fundamental lemma]{On the stabilization of relative trace formulae: descent and the fundamental lemma}
\author{Spencer Leslie}
\date\today
\address{Department of Mathematics, Duke University, 120 Science Drive, Durham, NC, USA}
\email{lesliew@math.duke.edu}
\subjclass[2010]{Primary 11F70; Secondary 11F55, 11F85}
\keywords{Fundamental lemma, relative trace formula, periods of automorphic forms, endoscopy, symmetric spaces, topological Jordan decompositions, descent}
\begin{document}

\begin{abstract}
Motivated by the study of periods of automorphic forms and relative trace formulae, we develop the theory of descent necessary to study orbital integrals arising in the fundamental lemma for a general class of symmetric spaces over a $p$-adic field $F$. More precisely, we prove that a connected symmetric space over $F$ enjoys a notion of topological Jordan decomposition, which may be of independent interest, and establish a relative version of a lemma of Kazhdan that played a crucial role in the proof of the Langlands--Shelstad fundamental lemma.

As our main application, we use these results to prove the endoscopic fundamental lemma for the unit element of the Hecke algebra for the symmetric space associated to unitary Friedberg--Jacquet periods.

\end{abstract}

\maketitle
\setcounter{tocdepth}{1}
\tableofcontents


\section{Introduction}

The study of relative trace formulae (RTFs) is a major component of the (global) relative Langlands program of Jacquet-Sakellaridis-Venkatesh. The recent spectacular success of the proof of the (tempered) Gan-Gross-Prasad (GGP) conjecture for unitary groups \cite{ZhangFourier,beuzart2019isolation,beuzart2020global} via the Jacquet-Rallis trace formula comparison illustrates the power of this method. However, many of the good properties present in the GGP setting (trivial generic stabilizers on the geometric side, local multiplicity one on the spectral side) fail for more general RTFs, and several new problems arise. More generally, many RTFs require a form of \emph{endoscopy and stabilization} analogous to the setting of the Arthur-Selberg trace formula in order to be effectively compared. Though there are antecedents (\cite{GetzWambach}, for example), these issues have previously not been dealt with in any single example. As a first case, we recently proposed a theory of relative endoscopy for the case of unitary Friedberg--Jacquet periods \cite{Leslieendoscopy}, described in more detail below, and proved the infinitesimal version of the endoscopic fundamental lemma in \cite{LeslieUFJFL}. 

The goal of the present paper is to study relative $\ka$-orbital integrals that arise in the geometric expansion of RTFs associated to a general class of symmetric subgroups of connected reductive groups. We develop the tools necessary to reduce the associated fundamental lemma to the tangent space at the identity (the ``Lie algebra case''). Our approach is inspired by that of Waldspurger in twisted endoscopy, but new problems arise since we work on a variety and not a group. We establish several new geometric results for symmetric spaces over a $p$-adic field which, to the best of the author's knowledge, have no precedent in the literature. More specifically, we first show that there is a good theory of \emph{topological Jordan decomposition} for symmetric spaces over a non-archimedean local field of odd residue characteristic. We then prove a relative analogue of a lemma of Kazhdan, a generalization of which played a crucial role in the proof of the Langlands--Shelstad fundamental lemma, describing integral models of stabilizer group schemes of certain elements in the symmetric space.

Our main application is the case of unitary Friedberg--Jacquet periods. Extending our work on the Lie algebra, we develop a relative theory of endoscopy for symmetric spaces of the form $\U(2n)/\U(n)\times \U(n)$ and apply our general results to prove the endoscopic fundamental lemma for the unit element of the spherical Hecke algebra. This identity is the first such result in the literature and plays a central role in the stabilization of the elliptic part of the relative trace formula associated to the relevant automorphic period integrals \cite{LeslieElliptic}. 

For the rest of the introduction, we describe the motivation and content of our main result for unitary Friedberg--Jacquet periods (Theorem \ref{Thm: fundamental lemma intro} below), then turn to our general descent results as we outline the proof.
\subsection{Unitary Friedberg--Jacquet periods} 
Let $E/F$ be a quadratic extension of number fields, with $\A_E$ and $\A_F$ the associated rings of adeles. Let $W_1$ and $W_2$ be two $n$ dimensional Hermitian spaces over $E$. The direct sum $W_1\oplus W_2$ is also a Hermitian space and we have the embedding of unitary groups $$\U(W_1)\times \U(W_2)\hra \U(W_1\oplus W_2),$$ realizing $\U(W_1)\times \U(W_2)$ as the fixed-points of an involution.
Let $\pi$ be an irreducible cuspidal automorphic representation of $\U(W_1\oplus W_2)(\A_F)$. Then $\pi$ is said to  be \emph{distinguished} by the subgroup $\U(W_1)\times \U(W_2)$ if the \emph{period integral}
\begin{equation*}
\displaystyle\int_{[\U(W_1)\times \U(W_2)]}\varphi(h)dh
\end{equation*}
is not equal to zero for some vector $\varphi$ in the $\pi$-isotypic subspace of automorphic forms on $\U(W_1\oplus W_2)(\A_F)$. Here, $[\rH]=\rH(F)\backslash \rH(\A_F)$ for any $F$-group $\rH$. We call these \emph{unitary Friedberg-Jacquet periods} in homage to \cite{FriedbergJacquet}.

These periods have recently appeared in the literature in several ways (for example, \cite{IchinoPrasanna}, \cite{PollackWanZydor}, \cite{GrahamShah}, and indirectly in \cite{LiZhang}). We would therefore like to study automorphic forms distinguished by these subgroups. In 2018, Wei Zhang conjectured a comparison of RTFs relating these periods to special $L$-values of certain $L$-functions (see \cite{LeslieUFJFL} for a discussion about some related conjectures). Such a comparison is related to the general framework of Getz-Wambach \cite{GetzWambach}; indeed, a similar conjecture for these period integrals first appears in the thesis of J. Pol\'{a}k \cite{polakthesis}. We therefore study the RTF associated to unitary Friedberg--Jacquet periods on $\U(W_1\oplus W_2)(\A_F)$.

However, as indicated previously, this relative trace formula is not \emph{stable}. Spectrally, this should be related to the non-factorizability of the period integrals for certain automorphic representations. Geometrically, this instability manifests in that when we consider the action of $\U(W_1)\times \U(W_2)$ on the {symmetric space} 
\[
\calq:=\U(W_1\oplus W_2)/\U(W_1)\times \U(W_2),
\]
invariant polynomials distinguish only \emph{geometric (or stable) orbits}. Our goal is to stabilize the geometric side of this relative trace formula by considering relative analogues of the theory of endoscopy in order to establish these conjectures (as well as their generalizations).

In \cite{Leslieendoscopy}, we introduced a putative theory of relative endoscopy for the infinitesimal symmetric space (that is, the tangent space at the distinguished $U(W_1)\times U(W_2)$-fixed point of $\calq(F)$, which we refer to as the Lie algebra of $\calq$), and proved the existence of smooth transfer for many test functions. We then established the {fundamental lemma for the unit function} in this infinitesimal setting in \cite{LeslieUFJFL} via a combination of local harmonic analysis and a global argument relying on a new comparison of relative trace formulae. Considering the linearized case first was largely motivated by the case of (twisted) endoscopy, where both the existence of smooth transfer and the fundamental lemma for the unit function were ultimately reduced to the Lie algebra (\cite{Waldspurgerlocal},\cite{Waldstransfert},\cite{Waldspurgertordue} \cite{halesfundamental}, \cite{WaldsCharacteristic}, \cite{NgoFL}). We expect a similar reduction for many symmetric spaces.


This paper accomplishes the first step in this reduction by extending the relative theory of endoscopy to the symmetric space $\calq(F)$ and proving the fundamental lemma for the unit element by descending to the main result of \cite{LeslieUFJFL}. Section \ref{Section: descent intro} below describes our descent results needed to carry out this argument. We also establish the existence of smooth transfers for many test functions (see Proposition \ref{Prop: regular transfer}).

In forthcoming work, we use these results to stabilize the elliptic part of the relative trace formula associated to unitary Friedberg--Jacquet periods. We wish to emphasize however that the main result here is the core identity showing that our proposed theory of relative endoscopy gives rise to a global stabilization of the relative trace formula. 

\subsection{Main Result}Let us now state the main result. For brevity, we refer the reader to Section \ref{Section: rel end symmetric space} for the relevant notations for orbital integrals and transfer factors. 

Now assume $E/F$ is an unramified quadratic extension of $p$-adic fields with odd residue characteristic large enough depending only on $F/\qq_p$ (for example, if $F=\qq_p$ the only constraint is $p\neq2$).\footnote{Lemma \ref{Lem: eignvalue restrictions} is the only source of restriction, and is standard. We remark that when ${F}$ is a number field, this condition is satisfied at all but a finite number of places which may be explicitly calculated.} Let $\calo_F$ (resp. $\calo_E$) denote the ring of integers in $F$ (resp. $E$). We consider the symmetric pair
\[
(\G_n,\rH_n)=(\U(V_n\oplus V_n),\U(V_n)\times \U(V_n)),
\]
where $V_n$ is a \emph{split} Hermitian space of dimension $n$. Concretely, we set $V_n=E^n$ and equip it with the Hermitian form represented by the identity matrix $I_n$. This affords the self-dual lattice $\Lam_n=\calo_E^n\subset V_n$, with respect to which we obtain hyperspecial subgroups 
\[
\rH_n(\calo_F):=\U(\Lam_n)\times U(\Lam_n)\subset \rH_n(F)
\]and 
\[
\G_n(\calo_F):=\U(\Lam_n\oplus \Lam_n)\subset \G_n(F).
\]
Set $\calq_n:=\G_n/\rH_n$ and let $\bfun_{\calq_n(\calo_F)}$ to be the associated characteristic function. 

In Section \ref{Section: rel end symmetric space}, we introduce the notion of an unramified elliptic relative endoscopic datum $\Xi_{a,b}=(\xi_{a,b},\al,\be)$ where $a+b=n$, following the approach of \cite{Leslieendoscopy}; here $\xi_{a,b}$ is a certain unramified elliptic endoscopic datum of $\G_n$, $\al$ (resp. $\be$) is a Hermitian form on $E^a$ (resp. $E^b$). This gives rise to (a pure inner form of) an elliptic endoscopic groups $\G_{a,\al}\times \G_{b,\be}$ of $\G_n$ and a symmetric subgroup $\rH_{a,\al}\times \rH_{b,\be}$ of $\G_{a,\al}\times \G_{b,\be}$. Let
\[
\calq_{a,\al}\times \calq_{b,\be}:=\G_{a,\al}/\rH_{a,\al}\times \G_{b,\be}/\rH_{b,\be}
\]
be the associated endoscopic symmetric space.

For a regular semi-simple element $x\in \calq^{rss}_n(F)$, the endoscopic datum determines a character $\ka: \mathfrak{C}(\rH_{n,x},\rH_n;F)\to \cc^\times$ on a certain subgroup of the Galois cohomology of the stabilizer $\rH_{n,x}$ of $x$; see Section \ref{Section: Prelim inv} for a review of the necessary invariant theory. The set $\calo_{st}(x)$ of rational orbits inside the stable orbit of $x$ is naturally a torsor for this group, so we define the relative $\ka$-orbital integral
\[
\Orb^\ka(x,f)=\sum_{[x']\in \calo_{st}(x)}\ka(\inv(x,x'))\Orb(x',f),
\]
where $\inv(x,x')$ is the cohomological invariant associated to the rational orbit of $x'$ (see Section \ref{Section: Prelim inv}). When $\ka=1$ is the trivial character, set $\SO=\Orb^1$. 

We show in Section \ref{Section: rel end symmetric space} that there is a good notion of the matching of regular semi-simple elements
\[
x\in\calq_n^{rss}(F)\:\:\text{and}\:\:(x_a,x_b)\in\left(\calq_{a,\al}(F)\times \calq_{b,\be}(F)\right)^{rss},
\]
and transfer factors 
\[
\De_{rel}:\left(\calq_{a,\al}(F)\times \calq_{b,\be}(F)\right)^{rss}\times \calq_n^{rss}(F)\lra \cc
\]
in the sense that we can define the notion of smooth transfer of $\ka$-orbital integrals on $\calq_n(F)$ and stable orbital integrals on $\calq_{a,\al}(F)\times \calq_{b,\be}(F)$ (Definition \ref{Def: transfer variety}) and prove the existence of smooth transfers for many test functions (Proposition \ref{Prop: regular transfer}). 

 We now state our main result.

\begin{Thm}\label{Thm: fundamental lemma intro}
 If $(\al,\be) = (I_a,I_b)$, the functions $\bfun_{\calq_n(\calo_F)}$ and $\bfun_{\calq_a(\calo_F)}\otimes\bfun_{\calq_b(\calo_F)}$ match. Otherwise, $\bfun_{\calq_n(\calo_F)}$ matches $0$. 
 
 More precisely, for any regular semi-simple $x\in \calq_n(F)$ and matching elements $(x_a,x_b)
 \in \calq_{a,\al}(F)\times \calq_{b,\be}(F)$, if $\ka$ is the character associated to the endoscopic datum, then
\begin{equation}\label{first identity}
    \De_{rel}((x_a,x_b),x)\Orb^\ka(x,\bfun_{\calq_n(\calo_F)})=\begin{cases}\SO((x_a,x_b),\bfun_{\calq_a(\calo_F)}\otimes \bfun_{\calq_b(\calo_F)})&:(\al,\be)= (I_a,I_b),\\\qquad\qquad\qquad 0&:(\al,\be)\neq (I_a,I_b).\end{cases}
\end{equation}
\end{Thm}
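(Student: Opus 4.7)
The strategy is to reduce the statement to the infinitesimal fundamental lemma proved in \cite{LeslieUFJFL}, combining the two descent tools developed in this paper: topological Jordan decomposition for symmetric spaces and the relative version of Kazhdan's lemma. This parallels Waldspurger's reduction of the twisted fundamental lemma to the Lie algebra, with the subtlety that, since $\calq_n$ is a variety rather than a group, the exponential map used in the classical argument is replaced by the integrally-compatible homeomorphism supplied by the relative Kazhdan lemma.

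For a regular semi-simple $x \in \calq_n^{rss}(F)$, the topological Jordan decomposition produces an absolutely semisimple part $x_s$ and a topologically unipotent part $x_u$. The stabilizer $\rH_{n,x_s}$ and the centralizer $\G_{n,x_s}$ cut out a descended symmetric space
\[
\calq_{n,x_s} \subset \G_{n,x_s}/\rH_{n,x_s},
\]
and by the structure theory of unitary symmetric pairs this descended space is itself a product of smaller unitary Friedberg--Jacquet symmetric spaces, built on Hermitian forms determined by the combinatorics of $x_s$. The element $x_u$ becomes a topologically unipotent regular semi-simple element of $\calq_{n,x_s}$, and $\Orb(x,\bfun_{\calq_n(\calo_F)})$ reduces to an orbital integral at $x_u$ on $\calq_{n,x_s}(F)$ against the characteristic function of an integral model extracted via the relative Kazhdan lemma.

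Invoking that lemma a second time yields an $\rH_{n,x_s}(\calo_F)$-equivariant homeomorphism between a neighborhood of $x_u$ in $\calq_{n,x_s}(F)$ and a neighborhood of $0$ in its tangent space that matches integral points. Under this identification the orbital integral becomes a Lie-algebra orbital integral on (a product of) tangent spaces of unitary Friedberg--Jacquet symmetric spaces; the endoscopic datum $\Xi_{a,b}$ restricts to a compatible product of infinitesimal relative endoscopic data; the matching pair $(x_a,x_b)$ descends accordingly; and the transfer factor $\De_{rel}$ factors, up to an explicit constant, as a product of local infinitesimal transfer factors. The identity \eqref{first identity} then follows, factor by factor, from the main result of \cite{LeslieUFJFL}.

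The principal technical obstacle is twofold: verifying that the descent of the endoscopic datum and of the transfer factor is compatible with the decomposition of $\calq_{n,x_s}$ into Friedberg--Jacquet factors (so that the product structure of the endoscopic side on the right-hand side of \eqref{first identity} is reproduced by the descent), and establishing the vanishing statement when $(\al,\be) \neq (I_a,I_b)$. For the vanishing, the key observation is that orbits $[x']\in\calo_{st}(x)$ meeting $\calq_n(\calo_F)$ have invariants $\inv(x,x')$ landing in the image of an unramified subgroup of $\mathfrak{C}(\rH_{n,x},\rH_n;F)$, and the hypothesis $(\al,\be) \neq (I_a,I_b)$ is equivalent to $\ka$ restricting nontrivially to that subgroup; orthogonality of characters then forces $\Orb^\ka(x,\bfun_{\calq_n(\calo_F)})=0$. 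The explicit descriptions of integral models of relative stabilizers coming from the relative Kazhdan lemma are what make this cohomological argument rigorous, which is precisely why those descent results were developed in the earlier sections.
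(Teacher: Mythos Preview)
Your overall strategy—descend via the topological Jordan decomposition, then linearize and invoke \cite{LeslieUFJFL}—matches the paper's, but two of your concrete claims fail, and the paper's proof is organized precisely to avoid them.

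First, the assertion that the descendant $\calq_{n,x_s}$ at the absolutely semi-simple part is ``a product of smaller unitary Friedberg--Jacquet symmetric spaces'' is false. Lemma~\ref{Lem: descendants} computes the descendants: in addition to the Friedberg--Jacquet factors (types (1)--(2), coming from the $\pm1$-eigenspaces of $x_{as}$), there are factors of the form $(\GL(V'),\U(V'))$ and diagonal Galois pairs (types (3)--(5), coming from eigenvalues $\neq\pm1$). The main result of \cite{LeslieUFJFL} says nothing about these. The paper handles this not by full descent to $x_{as}$ but by a \emph{partial} descent: one writes $x_{as}=\ga\cdot y_{as}$ where $\ga$ records only the $\pm1$-eigenspace decomposition (Section~\ref{Section: descent1}), so that the descendant $\calq_\ga=\calq_1\times\calq_{-1}$ is again of Friedberg--Jacquet type. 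The point of Lemma~\ref{Lem: into the reg locus} is that the components $(y_1,-y_{-1})$ land in the $\nu$-very regular loci $\calq_{\pm1}^{\heartsuit,\mp1}(\calo_F)$, where the Cayley transform applies directly. The paper says this explicitly: the partial descent is chosen ``to avoid a tedious comparison of our transfer factors to those associated to the descendants of the forms (\ref{descendants1}) and (\ref{descendants2}).''

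Second, the relative Kazhdan lemma (Proposition~\ref{Prop: relative Kazdhan lemma}) does not produce ``an $\rH_{n,x_s}(\calo_F)$-equivariant homeomorphism between a neighborhood of $x_u$ \ldots\ and a neighborhood of $0$ in its tangent space.'' It only gives a smooth integral model for the stabilizer and rigidity of integral stable orbits. The passage to the Lie algebra is carried out by the Cayley transform $\fc_\nu$, and Lemmas~\ref{Lem: Cayley contraction}--\ref{Lem: Cayley disc} do real work to check that $\fc_\nu$ is compatible with the contraction maps, matching of orbits, and transfer factors; Lemma~\ref{Lem: very reg count} is what guarantees that $\fc_\nu$ matches integral points on the very regular locus.

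Finally, your vanishing argument for $(\al,\be)\neq(I_a,I_b)$ cannot work as stated: the character $\ka$ is determined by the endoscopic triple $\xi_{a,b}$ and is independent of the auxiliary forms $(\al,\be)$, so there is no equivalence between ``$(\al,\be)\neq(I_a,I_b)$'' and ``$\ka$ restricts nontrivially.'' In the paper the vanishing is obtained after descent: if $(\al,\be)\neq(I_a,I_b)$ then one of the descended Hermitian spaces on the endoscopic side is non-split, and the vanishing statement from Proposition~\ref{Prop: very reg FL} (ultimately the Lie-algebra result) applies; see Lemma~\ref{eqn: stable almost there general}.
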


\subsection{Descent of orbital integrals}\label{Section: descent intro}
As previously stated, the proof of Theorem \ref{Thm: fundamental lemma intro} uses methods of descent inspired by those in the case of twisted endoscopy \cite{Waldspurgertordue}. However, relative orbital integrals on a symmetric space are in general not as well understood as those on a group, and we must establish several new results in order to make this precise. 

In addition to the case of unitary Friedberg--Jacquet periods, there are several classes of symmetric space for which stabilization of the RTF is desirable for global spectral consequences. For example, the framework of \emph{twisted base change} as outlined in \cite{GetzWambach} anticipates global spectral results from such stabilizations. Examples include Galois symmetric pairs $(\Res_{E/F}G,G)$ with $G$ a reductive group over $F$, which are the subject of much current research (see \cite{prasad2015arelative,BPgalois,Flickerstab}); for a case of a different flavor, one may consider the non-tempered symmetric pair $(\U(2n),\Sp(2n))$ \cite{MitraOffenSp}. For these reasons, we consider a general connected symmetric space $(\G,\rH)$ over a non-archimedean field of odd residue characteristic, where $\G$ is assumed to be unramified.

Setting $\calq=\G/\rH$, we consider the integral points $\calq(\calo_F):=\G(\calo_F)/\rH(\calo_F)$. The main object of interest for the study of endoscopic fundamental lemma for $\calq$ is the (relative) $\ka$-orbital integral (see Section \ref{Section: Prelim inv} for notation)
\begin{equation}\label{eqn: orbital goal}
  \Orb^\ka(x,\bfun_{\calq_n(\calo_F)})=\sum_{[x']\in \calo_{st}(x)}e(\rH_{x'})\ka(\inv(x,x'))\Orb(x',f), 
\end{equation}
where $x\in \calq(\calo_F)$ is regular semi-simple, $e(\rH_{x'})=\pm1$ is the Kottwitz sign \cite{Kottwitzsign}, and  $\ka:\mathfrak{C}(\rH_x,\rH;F)\lra \cc^\times$. If we assume that $F=k((t))$ is a non-archimedean local field of positive characteristic for the moment, general arguments relate these values to $k$-point counts on certain quotient stacks (see \cite[Section 3]{YunSpringer} for example). The combinatorics of such a point count are prohibitive, and so one seeks to linearize the problem by showing that that (\ref{eqn: orbital goal}) may be expressed in terms of a linearization of the relevant stacks.

In this direction, for a non-archimedean local field $F$ of odd residue characteristic we develop the topological Jordan decomposition for certain elements of $\calq(F)$. More precisely, for elements $x\in \calq_n(F)$ which are \emph{strongly compact}, we prove in Proposition \ref{Prop: relative top decomp} that is a unique decomposition 
\[
x=x_{as}x_{tu}\:\text{ with }\: x_{as}, x_{tu}\in \calq(F)
\]
where $x_{as}$ is the \emph{absolutely semi-simple part} of $x$ and $x_{tu}$ is the \emph{topologically unipotent part} of $x$; see Section \ref{Section: top jordan decomp} for the relevant definitions. The point is that since $ x_{as}, x_{tu}\in \calq(F)$, one can hope to study relative orbital integrals inductively by passing to more regular elements. Given the applications of the classical topological Jordan decomposition to character formulae (\cite{adler2008good}, for example), we hope this result will have application to the study of relative characters for symmetric spaces.

Our proof of this decomposition relies on the existence of the symmetrization map 
\[s: \calq\lra \G,
\]
 which realizes the symmetric space as a closed subvariety of $\G$. This enables us to make sense of taking products of elements of $\calq(F),$ as well as the definitions of absolutely semi-simplicity and topological unipotency, which involve eigenvalue constraints. We then combine the algebraic properties of symmetric spaces with the structure of $p$-adic groups to obtain the decomposition. It is an interesting question whether an analogous theory exists for general spherical varieties of interest in the relative Langlands program.

We now assume that $F$ has characteristic $0$. With this decomposition, it now makes sense to discuss absolutely semi-simple elements of $\calq(F)$ and study their integrality properties. More precisely, for any semi-simple element $x\in \calq^{ss}(F)$, one considers the \emph{descendant} $\calq_x$ of $\calq$ at $x$. Defined in Section \ref{Section: Prelim inv}, this gives a lower-dimensional degeneration modelling $\calq$ near $x$. Suppose now that $(\G,\rH)$ admits a smooth $\calo_F$-model, and let $x_{as}\in \calq(\calo_F)$ be absolutely semi-simple. In Proposition \ref{Prop: relative Kazdhan lemma}, we show that the stabilizer group $\rH_{x_{as}}$ of $x_{as}$ admits smooth group $\calo_F$-scheme model, which imposes strong constraints on the stable orbits of such elements. This is a relative analogue of a result of Kottwitz \cite[Proposition 7.1]{Kottwitzstableelliptic} (itself a generalization of a lemma of Kazhdan \cite{Kazhdanlifting}). 

In particular, this implies that the descendant $\calq_{x_{as}}$ at an absolutely semi-simple point has a natural smooth integral model. Propositions \ref{Prop: relative top decomp} and \ref{Prop: relative Kazdhan lemma} combine to give a systematic reduction of the $\kappa$-orbital integrals arising in the fundamental lemma on $\calq(F)$ to orbital integrals on {descendants} at absolutely semi-simple elements. This is stated in Proposition \ref{Prop: absolute descent}, where orbital integrals at a regular semi-simple element $x=x_{as}x_{tu}$ are shown to equal orbital integrals on $\calq_{x_{as}}(F)$ at $x_{tu}$. Our proof relies on a fixed-point result of Edixhoven on finite-group actions on smooth schemes over $\calo_F$ \cite{edixhoven1992neron}; in particular, this step also relies on $\calq$ being a symmetric space (as opposed to just a smooth $\G$-scheme). Additional care is needed in our relative context as the generic stabilizer of regular semi-simple elements need not be connected nor abelian; for our result to hold, we impose a technical condition ensuring generic stabilizers are connected (see Definition \ref{Def: simplyconnected}).


The upshot is that the topological unipotent locus of $\calq(F)$ lies in the image of the exponential map. In particular, one may now pass to the tangent space at the $\rH(F)$-fixed point of $\calq(F)$ for a general class of symmetric spaces. We expect this result to play a key role in the stabilization of the (relatively) elliptic part of several relative trace formulae. The main obstacle is the correct notion of \emph{relative endoscopic data} and \emph{transfer factors}. The results of Section \ref{Section: relative endo integrals} give such a definition in the case $(\G_n,\rH_n)=(\U(V_n\oplus V_n),\U(V_n)\times \U(V_n))$; we prove the necessary descent of the transfer factor in Lemma \ref{eqn: almost there transfer factor}.

\subsection{The proof of Theorem \ref{Thm: fundamental lemma intro}}
Returning to the case of unitary Friedberg--Jacquet periods, we end the introduction with a short sketch of the proof of the main result. The idea is to use the results of Sections \ref{Section: TJD and descent} and \ref{Section: relative kazhdan} to descend to the fundamental lemma for the Lie algebra of $\calq_n$, which is the main result of \cite{LeslieUFJFL}. We recall this infinitesimal theory in Section \ref{Section: proof of FL}. 

Setting $W=V_n\oplus V_n$, we consider the Cayley transforms $\fc_\nu: \End(W) \dashrightarrow \GL(W)$, where $\nu=\pm1$. These exponential-like maps are well suited for the study of the symmetric space $\calq_n$. In particular, we introduce certain open subset $\calq_n^{\heartsuit,\nu}(F)\subset \calq_n(F)$, which we refer to as the $\nu$-very regular locus. We show in Section \ref{Section: very regular} that the Theorem \ref{Thm: fundamental lemma intro} may be readily reduced to the Lie algebra result via the Cayley transform whenever $x\in \calq_n^{\heartsuit,1}(F)\cup\calq_n^{\heartsuit,-1}(F)$. Combined with certain elementary vanishing properties of orbital integrals (Lemma \ref{Lem: reduce to hyperspecial}), it follows that (\ref{first identity}) is known unless $x\in \calq_n^{rss}(\calo_F)$ lies in the $\calo_F$-points of a certain singular sub-$\calo_F$-scheme; see Remark \ref{Rem: last cases}.

In Section \ref{Section: descent final}, we apply Proposition \ref{Prop: absolute descent} in these remaining degenerate cases and establish the necessary descent of the transfer factors. In fact, we use the structure of descendants in this case to establish a slight generalization (Lemma \ref{Lem: into the reg locus}) of this proposition to simplify the descent of transfer factors. The essential point is that the topologically unipotent part $x_{tu}$ lies in the very regular locus of the descendant at $x_{as}$, allowing us to pass to the Lie algebra. This concludes the final cases of the fundamental lemma.

\subsection{Outline}
In Section \ref{Section: Prelim}, we fix notation and review some background on invariant theory, symmetric pairs, and unitary groups. We recall the topological Jordan decomposition for unramified groups in Section \ref{Section: TJD and descent}, and develop the relative theory in Section \ref{Section: top jordan decomp rel}. In Section \ref{Section: relative kazhdan}, we introduce the notion of a nice, simply-connected symmetric pair, and  prove a relative version of Kazhdan's lemma in Proposition \ref{Prop: relative Kazdhan lemma}. Turning to orbital integrals in Section \ref{Section: orbital ints and reduction}, we prove the main descent identity in Section \ref{Section: absolute descent}. 

In Section \ref{Section: symmetric}, we specialize to case of unitary Friedberg--Jacquet periods and study the basic geometry of the symmetric space $\calq$. We also introduce the contraction map $R$ used in defining our transfer factors, and compute all semi-simple descendants of the symmetric space. In Section \ref{Section: relative endo integrals}, we define elliptic relative endoscopic data and the relevant symmetric spaces. We then define the matching of stable orbits and transfer factors, following the infinitesimal theory developed in \cite{Leslieendoscopy}, and state the main theorem as Theorem \ref{Thm: fundamental lemma}. These notions rely on the theory of endoscopy for unitary Lie algebras, which we review in Appendix \ref{Section: endoscopy roundup} for the convenience of the reader. Section \ref{Section: proof of FL} recalls the infinitesimal theory and fundamental lemma (stated as Theorem \ref{Thm: fundamental lemma Lie alg}). We then apply the Cayley transform in Section \ref{Section: cayley}, deducing the fundamental lemma over the very regular locus in Section \ref{Section: very regular}. We complete the proof of Theorem \ref{Thm: fundamental lemma intro} in Section \ref{Section: descent final}.

\subsection{Acknowledgements}
I want to thank Jayce Getz for suggesting studying relative notions of endoscopy and for many helpful suggestions while writing this paper. I am very grateful to Wei Zhang for many clarifying discussions and for his generosity of time and ideas. I also want to thank Yiannis Sakellaridis for several insightful conversations and for his interest in this work. Finally, I thank the anonymous referee for several helpful comments and corrections, which have clarified several parts of this paper. 

This work was partially supported by an AMS-Simons Travel Award and by NSF grant DMS-1902865.

\section{Preliminaries}\label{Section: Prelim}

\subsection{Invariant theory}\label{Section: Prelim inv}
For any field $F$ and any non-singular affine algebraic variety $\mathrm{Y}$ over $F$ with $\mathrm{G}$ a connected reductive algebraic group over $F$ acting algebraically on $\mathrm{Y}$, we set $\mathrm{Y}^{rss}$ to be the invariant-theoretic regular semi-simple locus. That is, $x\in \mathrm{Y}^{rss}$ if and only if its $\mathrm{G}$-orbit is of maximal possible dimension and Zariski-closed. We also recall the semi-simple locus $\mathrm{Y}^{ss}$ of points with Zariski-closed orbits. When $F$ is a local field of characteristic zero, and we endow $Y(F)$ with the Hausdorff topology, it is known \cite[Theorem 2.3.8]{AizGourdescent} that $x\in Y^{ss}(F)$ if and only if $\G(F)\cdot x\subset Y(F)$ is closed in the Hausdorff topology.

For $x,x'\in \mathrm{Y}^{rss}(F)$, we say that $x'$ is in the \emph{rational $\G(F)$-orbit} of $x$ if there exists $g\in \mathrm{G}(F)$ such that
\[
g\cdot x= x'.
\]
Fixing an algebraic closure $\Fbar$, two semi-simple points $x,x'\in \mathrm{Y}^{ss}(F)$ are said to lie in the same \emph{stable orbit} if $g\cdot x=x'$ for some $g\in \mathrm{G}(\Fbar)$ such that the cocycle
\[
\inv(x,x'):=[\tau\in \Gal(\Fbar/F)\mapsto \tau(g)^{-1}g]\in Z^1(F,\G_x)
\]
lies in $Z^1(F,\G^\circ_x)$, where $\G_x^\circ\subset \G_x$ is the connected component of the identity of the stabilizer of $x$ in $\G$. When the semi-simple stabilizers are all connected (see Lemma \ref{Lem: descendants}), this cocycle constraint is automatic.

A standard computation  shows that the set $\calo_{st}(x)$ of rational orbits in the stable orbit of $x$ are in natural bijection with
\[
\cald(\G^\circ_x,\G;F):=\ker\left[H^1(F,\G_x^\circ)\to H^1(F,\G)\right].
\]
Suppose that $F$ is non-archimedean and of characteristic zero. When $\G_x^\circ$ is a torus, $\cald(\G^\circ_x,\G;F)$ is a finite abelian group and $\calo_{st}(x)$ is naturally a $\cald(\G^\circ_x,\G;F)$-torsor. While this is true for our main application (Section \ref{Section: symmetric} and thereafter), many of our results apply to varieties with \emph{non-abelian} regular stabilizers. For these cases, the stabilization of the associated relative trace formula involves abelianized cohomology \cite{LabesseBook}. Following Labesse's formalism (see also \cite[Section III.2]{LabesseIntro}), we let 
\[
\mathfrak{C}(\G^\circ_x,\G;F):=\ker\left[H_{ab}^1(F,\G_x^\circ)\to H_{ab}^1(F,\G)\right].
\]
There is a natural injective map $\cald(\G^\circ_x,\G;F)\lra \mathfrak{C}(\G^\circ_x,\G;F)$ \cite[pg. 25]{LabesseIntro}, which is bijective if $\G_x^\circ$ is a torus. By composition, this gives a map
\begin{align}\label{eqn: abelianized}
   (\G_x^\circ\backslash\G)(F)&\lra \cald(\G^\circ_x,\G;F)\lra \mathfrak{C}(\G^\circ_x,\G;F),\\
x'&\longmapsto \inv(x,x').\nonumber
\end{align}

\subsection{Symmetric spaces}\label{Section: symmetric recap}
Let $F$ be a field. We recall standard notation and facts about symmetric spaces.
\begin{Def}
A \textbf{symmetric pair} is a triple $(\G,\rH,\theta)$ where $\rH\subset \G$ are reductive groups and $\theta$ is an involution of $\G$ such that $\rH$ is the fixed-point subgroup. The symmetric pair is \textbf{connected} if $\G/\rH$ is connected. The quotient variety $\calq:=\G/\rH$ is called the \textbf{symmetric space.}
\end{Def}
Fix a symmetric pair $(\G,\rH,\theta)$; we will frequently drop $\theta$ from the notation and write $(\G,\rH)$, which somewhat justifies the (standard) use of the word ``pair.'' Define the anti-automorphism of $\G$
\begin{equation*}
    \sig(g) := \theta\left(g^{-1}\right).
\end{equation*}
Denote 
\[
\G^\sig=\{g\in\G: \sig(g)=g\},
\]
and define the \emph{symmetrization map}
\begin{align*}
    s:\G&\lra \G^\sig\\
        g&\longmapsto g\sig(g).
\end{align*}
It is well known \cite[Lemma 2.4]{Richardson} that $s$ induces an embedding of affine $\G$-varieties
\[
s:\calq=\G/\rH\lra \G^\sig.
\]

Viewing $\calq$ as an $\rH$-variety, let $\calq^{ss}$ (resp. $\calq^{rss}$) denote the semi-simple locus (resp. regular semi-simple locus) of $\calq$. A torus $\mathrm{S}\subset\G$ is called \emph{$\theta$-split} if $\theta(s)=s^{-1}$ for all $s\in S$. It is well known that any two maximal $\theta$-tori are stably conjugate by an element of $\rH^\circ$ and that every semi-simple element of $\calq(\Fbar)$ is contained in a maximal $\theta$-split torus \cite[Theorem 7.5]{Richardson}. The rank of such a torus is called the \textbf{rank of the symmetric space} $\calq$ and is denoted $\mathrm{rank}(\calq)$.

The symmetrization map allows us to relate these notions to (regular) semi-simplicity of elements in $\G$.
\begin{Lem}\cite[Theorem 7.5]{Richardson}\label{Lem: semi-simple match}
 Using the symmetrization map, we identify $\calq\subset \G$ as a closed subvariety of $\G$. Then $x\in\calq$ is $\rH$-semi-simple if and only if $x\in \G^{ss}$ is semi-simple as an element of $\G$. In particular,
 \[
 \calq^{ss}=\calq\cap \G^{ss}.
 \]
\end{Lem}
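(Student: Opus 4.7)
The plan is to verify both directions separately using the Jordan decomposition in $\G$ together with the symmetrization embedding, following Richardson. Via $s:\calq\hookrightarrow \G^\sig$, the $\G$-action on $\calq$ becomes $g\cdot y=gy\theta(g)$, and since $\theta$ restricts to the identity on $\rH$ the $\rH$-action is ordinary conjugation on $\G^\sig$. So $\rH$-semi-simplicity of $x\in\calq$ is equivalent to closedness of the $\rH$-conjugation orbit inside $\G^\sig$. The key preliminary observation is that the Jordan decomposition $x=x_sx_u$ in $\G$ respects $\calq$: since $\theta\in\Aut(\G)$ preserves Jordan decomposition and $\theta(x)=x^{-1}$, applying $\theta$ to the equation $x^{-1}=x_s^{-1}x_u^{-1}$ and invoking uniqueness gives $\theta(x_s)=x_s^{-1}$ and $\theta(x_u)=x_u^{-1}$; hence both factors lie in $\G^\sig$, and connectedness of $\calq$ (realized as the identity component of $\G^\sig$ under $s$) places them in $\calq$.

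For the backward direction ($\G$-semi-simple implies $\rH$-semi-simple), if $x$ is semi-simple in $\G$ then $\G_x$ is reductive and $\theta$-stable (from $\theta(x)=x^{-1}$), so $\rH_x=(\G_x)^\theta$ is reductive by Steinberg's theorem on fixed points of involutions. Matsushima's criterion then gives that $\rH\cdot x$ is closed in $\G^\sig$, hence $x\in\calq^{ss}$.

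For the forward direction, suppose $x\in\calq^{ss}$ but $x_u\neq 1$. I would contradict closedness of the orbit by exhibiting $x_s\in\overline{\rH\cdot x}\setminus\rH\cdot x$. The nontrivial unipotent $x_u\in\G^\sig$ commutes with $x_s$ and lies in the $\theta$-stable reductive group $\G_{x_s}$, whose $\theta$-fixed subgroup is exactly $\rH_{x_s}$. By the Kostant--Rallis analogue of Jacobson--Morozov for symmetric pairs, $x_u$ fits into an $\mathfrak{sl}_2$-triple compatible with the $\theta$-eigenspace decomposition of $\Lie(\G_{x_s})$, yielding a one-parameter subgroup $\lam:\Gm\to \rH^\circ_{x_s}$ with $\lim_{t\to 0}\lam(t)x_u\lam(t)^{-1}=1$; hence $\lim_{t\to 0}\lam(t)\cdot x=x_s$ in $\calq$, contradicting closedness of $\rH\cdot x$ (since $x_s$ and $x$ are not $\rH$-conjugate when $x_u\neq 1$). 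The identity $\calq^{ss}=\calq\cap\G^{ss}$ is then a direct reformulation.

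The main obstacle is precisely this cocharacter construction: one must adapt Jacobson--Morozov to the symmetric pair setting so that the contracting one-parameter subgroup lies in $\rH^\circ_{x_s}$ rather than merely in $\G_{x_s}$. Kostant--Rallis theory, or alternatively a relative Hilbert--Mumford argument applied to the $\rH_{x_s}$-action on the unipotent variety of $\G_{x_s}^\sig$, supplies this, but it must be invoked carefully to remain inside the $\theta$-fixed subgroup throughout.
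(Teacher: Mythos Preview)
The paper itself gives no proof of this lemma, only the citation to Richardson, so you are reconstructing the argument. Your forward direction (contracting $x_u$ to $1$ by a one-parameter subgroup in $\rH_{x_s}^\circ$, hence placing $x_s$ in the closure of $\rH\cdot x$) is the right strategy and is essentially what Richardson does, modulo the Kostant--Rallis input you correctly flag.

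The backward direction, however, has a genuine gap. You assert that since $\rH_x=(\G_x)^\theta$ is reductive, ``Matsushima's criterion then gives that $\rH\cdot x$ is closed in $\G^\sig$.'' Matsushima's theorem only says that $\rH/\rH_x$ is \emph{affine} as a variety when $\rH_x$ is reductive; it does not say the orbit is closed in an ambient affine variety. The scaling action of $\Gm$ on $\mathbb{A}^1$ is already a counterexample: every nonzero point has trivial (reductive) stabilizer, yet its orbit is not closed. Richardson's route is different: he shows a $\G$-semi-simple $x\in\calq$ lies in a maximal $\theta$-split torus, and then that the closed set $(\G\cdot x)\cap\calq$ (closed because $x\in\G^{ss}$) decomposes into finitely many $\rH^\circ$-orbits of equal dimension, forcing each to be closed. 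A smaller issue: your claim that $x_s,x_u\in\calq$ ``by connectedness'' is too quick, since $\G^\sig$ is not a group and there is no a priori reason $x_s=x\cdot x_u^{-1}$ stays in the component containing $1$; one really needs Richardson's explicit square-root construction (his Lemma~6.1--6.2, cf.\ the paper's Lemma~3.5 and Proposition~3.7) to place both Jordan factors in $\calq$.
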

The relationship between $\calq^{rss}$ and $\G^{rss}$ is more subtle. While in general they are unrelated, the symmetric spaces we consider in Section \ref{Section: symmetric} and beyond all satisfy
\begin{equation}\label{eqn: quasi-split condition}
    \calq^{rss}=\calq\cap \G^{rss};
\end{equation}
this is because the symmetric spaces we consider are \emph{quasi-split}: the centralizer in $\G$ of a maximal $\theta$-split torus $A\subset \calq$ is a torus; see \cite[Section 1.2]{LeslieSpringer} for a discussion on quasi-split symmetric pairs. It is easy to see that over an algebraically closed field (\ref{eqn: quasi-split condition}) is equivalent to $(\G,\rH)$ begin quasi-split.

Finally, for $x\in \calq^{ss}$, let $\G_x$ denote its centralizer as an element of $\G$ and $\rH_x$ its stabilizer. Then $(\G_x,\rH_x,\theta|_{\G_x})$ is a symmetric pair \cite[Definition 7.2.2]{AizGourdescent}, referred to as the \textbf{descendant} of $(\G,\rH)$ at $x$. We will also refer to the symmetric space $\calq_x:=\G_x/\rH_x$ as the descendant of $\calq$ at $x$. 

There are two natural closed immersions of $\calq_x$ into $\calq$. The first is simply given by restriction of $s$ to $\G_x$. For the second, define the \emph{symmetrization map at $x$} by
\begin{align*}
    s_x: \G_x&\lra \calq\\
        g&\longmapsto gx\sig(g)=xs(g)\nonumber.
\end{align*}
The following is immediate.
\begin{Lem}\label{Lem: symmetr at x}
 There is a commutative diagram
 \[
 \begin{tikzcd}
 &\G_x\ar[dl,swap,"s"]\ar[dr,"s_x"]&\\
 \calq_x\ar[rr,"x\cdot"]&&\calq.
 \end{tikzcd}
 \]
In particular, $s_x$ induces a closed immersion of affine $\G_x$-varieties
\begin{align*}
    s_x: \calq_x&\lra \calq\\
        y&\longmapsto xy\nonumber.
\end{align*}
\end{Lem}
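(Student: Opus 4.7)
The plan is to verify the triangle commutes pointwise on $\G_x$, then check that $s_x:\G_x\to\calq$ factors through the symmetrization quotient $\calq_x=\G_x/\rH_x$, and finally deduce the closed-immersion statement by applying the symmetrization embedding to the descendant pair.

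Commutativity is a direct computation: for $g\in\G_x$, the left-then-bottom composition sends $g\mapsto s(g)=g\sigma(g)$ and then applies ``$x\cdot$'' inside $\G$, producing $xg\sigma(g)$; since $\G_x$ is the centralizer of $x$, this equals $gx\sigma(g)=s_x(g)$. For the descent of $s_x$ to $\calq_x$, I would use two observations: $\sigma(h)=\theta(h^{-1})=h^{-1}$ for every $h\in\rH$ (because $\theta$ fixes $\rH$), and $hxh^{-1}=x$ for every $h\in\rH_x$ (because under the symmetrization identification $\calq\hookrightarrow\G$, the $\rH$-action on $\calq$ becomes twisted conjugation $h\star y = hyh^{-1}$, so $h$ stabilizing $x$ is equivalent to $h$ commuting with $x$). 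A one-line calculation then gives $s_x(gh)=g(hxh^{-1})\sigma(g)=s_x(g)$, so $s_x$ descends.

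For the closed-immersion claim, the plan is to factor $s_x:\calq_x\to\calq$ through $\G$. The descendant $(\G_x,\rH_x,\theta|_{\G_x})$ is again a symmetric pair, and applying the symmetrization embedding from Richardson's lemma to it yields a closed immersion $s:\calq_x\hookrightarrow\G_x\subset\G$. Post-composing with left multiplication by $x$, which is an automorphism of $\G$, produces a locally closed immersion $\calq_x\to\G$. The content of the commuting triangle is precisely that this composition lands inside $\calq=s(\G)\subset\G$: writing $x=s(k)$ for some representative $k\in\G$, the image computes as $xg\sigma(g)=gk\sigma(k)\sigma(g)=s(gk)$. Since $\calq$ is already closed in $\G$, the factored map $\calq_x\hookrightarrow\calq$ inherits the closed-immersion property, and $\G_x$-equivariance follows from $s_x(kg)=k\,s_x(g)\,\sigma(k)$ for $k\in\G_x$, recognizing the right-hand side as the twisted $\G_x$-action on $\calq$ inherited from the $\G$-action.

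The argument is essentially formal; the only step demanding any attention is the verification that $L_x\circ s$ lands in $\calq\subset\G$, which is where the global condition $g\in\G_x$ is used rather than just the pointwise centralizer property. I do not anticipate any genuine obstacle beyond keeping these bookkeeping identities straight.
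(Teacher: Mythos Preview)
Your proof is correct; the paper simply declares the lemma ``immediate'' and gives no argument, so your verification is a reasonable fleshing-out of the routine checks (commutativity via $g\in\G_x$, descent to $\calq_x$ via $\rH_x$-invariance, and the closed-immersion claim via Richardson's embedding composed with left translation by $x$). One minor point: your composite $\calq_x\hookrightarrow\G_x\hookrightarrow\G\xrightarrow{x\cdot}\G$ is already a closed immersion, not merely locally closed, so the appeal to $\calq\subset\G$ being closed is unnecessary for that step---it is only needed to confirm the image lands in $\calq$.
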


\subsection{Local fields} For simplicity, we fix a non-archimedean local field $F$ of characteristic zero and assume that the residue characteristic $p$ is odd. A further assumption on $p$ will arise in Section \ref{Section: descent final}, but the results of the prior sections (aside from the main result, which relies on that section) are valid without this restriction. We note that the results of Sections \ref{Section: TJD and descent} and \ref{Section: relative kazhdan} apply more generally. Nevertheless, the characteristic zero assumption is needed for the main application.

We set $|\cdot|_F$ to be the normalized valuation so that if $\vp$ is a uniformizer, then
\[
|\vp|^{-1}_F= \#(\calo_F/\fp_F) = :q
\]
is the cardinality of the residue field $k:=\calo_F/\fp_F$. Here $\fp_F$ denotes the unique maximal ideal of $\calo_F$. 

Let $\Fbar$ denote a fixed algebraic closure of $F$ and $\calo_{\Fbar}\subset \Fbar$ its ring of integers. For $a\in \calo_{\Fbar}$, we let $\overline{a}\in \kbar$ denote its image in the residue field, and use similar notation for $k$.

For any quadratic \'{e}tale algebra $E/F$ of local fields, we set $\eta_{E/F}: F^\times \to \cc^\times$ for the character associated to the extension by local class field theory. We also let $\Nm_{E/F}:E^\times \lra F^\times$ denote the norm map.

Throughout the article, all tensor products are over $\cc$ unless otherwise indicated.

\subsection{Groups and Hermitian spaces}\label{Section: groups and stuff}
For a field $F$ and for $n\geq1$, we consider the algebraic group $\GL_n$ of invertible $n\times n$ matrices. Suppose that $E/F$ is a quadratic \'{e}tale algebra and consider the restriction of scalars $\Res_{E/F}(\GL_n)$. For any $F$-algebra $R$ and $g\in \Res_{E/F}(\GL_n)(R)$, we set \[
g\mapsto \overline{g}
\]
to be the Galois involution associated to the extension $E/F$. Set
\[
\mathrm{X}_n(F)=\{x\in \GL_n(E): x^\dagger:={}^t\overline{x}=x\},
\]
where $t$ denotes the transpose. Note that $\GL_n(E)$ acts on $\mathrm{X}_n(F)$ via
\[
g x=gxg^\dagger,\quad x\in \mathrm{X}_n(F),\: g\in \GL_n(E).
\]
 We let $\calv_n$ be a fixed set of orbit representatives. For any $x\in \mathrm{X}_n(F),$ set $\la\cdot,\cdot\ra_x$ to be the Hermitian form on $E^n$ associated to $x$. Denote by $V_x$ the associated Hermitian space and $\U(V_x)$ the corresponding unitary group. Note that if $g x=x'$ then 
\[
V_x\xrightarrow{{g}^\dagger}V_{x'}
\]
is an isomorphism of Hermitian spaces. Thus, $\calv_n$ gives a set of representatives $\{V_x: x\in \calv_n\}$ of the equivalence classes of Hermitian vector space of dimension $n$ over $E$. We will abuse notation and identify this set with $\calv_n$. If we are working with a fixed but arbitrary Hermitian space, we often drop the subscript. For any Hermitian space, we set
\[
U(V)=\U(V)(F).
\]
When $E/F$ is an unramified quadratic extension of $p$-adic fields, we fix $V_n=(E^n,I_n)$ as our representative of split Hermitian spaces.

\subsection{Measures and centralizers}\label{measures}
We now assume $F$ is a non-archimedean local field of characteristic zero. We will only consider integration with respect to unimodular groups $\G(F)$, so we fix a Haar measure $dg$ throughout. In general, when $\G$ is unramified and $\G(\calo_F)$ is a fixed hyperspecial maximal subgroup, we choose the canonical normalization of $dg$ giving $\G(\calo_F)$ volume $1$. Outside of this setting, we may fix an arbitrary Haar measure as the precise choices will not affect the results of this paper.

In Sections \ref{Section: relative endo integrals} and beyond, we work with unitary groups and tori therein, so pause we make a few conventions here. When $E/F$ is unramified, $V_n=(E^n,I_n)$ our split Hermitian space, and $\Lam_n=\calo_E^n\subset V_n$ is the standard self-dual lattice, we always fix the Haar measures giving the hyperspecial maximal subgroups $\GL(\Lam_n)\subset\GL(V_n)$ and $U(\Lam_n)\subset U(V_n)$ volume $1$.

We need also to consider the measures on regular semi-simple centralizers. Fix a Hermitian form $x$ and consider $U(V)=U(V_x)$. We will be interested in the \emph{twisted Lie algebra}
\[
\Herm(V)=\{\de\in \End(V): \la \de v, u\ra=\la v,\de u\ra\}.
\]
The group $U(V)$ acts on this space by the adjoint action, and an element $\de$ is regular semi-simple if its centralizer is a maximal torus $T_\de\subset U(V)$. There is a unique maximal subgroup of $T_\de$ and we choose the measure $dt$ on $T_\de$ giving this subgroup volume $1$. We will study orbital integrals over regular semi-simple orbits and always use the measures introduced here to define invariant measures on these orbits. 

\begin{Rem}
This convention fixes measures on various rational orbits in a given stable orbit compatibly in the sense of transfer of measures along an inner twisting (see \cite[Chapter 3]{RogawskiBook}).
\end{Rem}

\section{A topological Jordan decomposition for symmetric spaces}\label{Section: TJD and descent}
We begin by recalling the topological Jordan decomposition for elements of unramified reductive groups $\G$ over a $p$-adic field $F$. We impose the assumption that the residue characteristic is odd. We then develop a relative version of this for certain elements in $p$-adic symmetric spaces. 

\subsection{Topological Jordan decomposition}\label{Section: top jordan decomp}
 Let $\G$ be an unramified connected reductive algebraic group over $F$. We assume that $\G(\calo_F)$ is a hyperspecial maximal compact subgroup of $\G(F)$. We recall the notions of topologically unipotent and absolutely semi-simple elements as defined in \cite{Halesunramified}; see also \cite{SpiceTopological}.

For any profinite group $K$ with a normal pro-$p$-subgroup $L$ of finite index, the prime-to-$p$ part of the order of $K/L$ is independent of the choice of $L$; denote this integer by $c_K$. Now for our reductive $p$-adic group $\G(F)$, if we fix representatives of the finitely many conjugacy classes of maximal compact subgroups $K_1,\ldots, K_d$, we may set $c_{\G}$ to be the least common multiple of $c_{K_i}$.

\begin{Def}
We say that an element $\ga\in \G(F)$ is \textbf{topologically unipotent} if 
\[
\lim_{n\to \infty}\ga^{q^n}=1,
\]where $q=|k|$ is the size of the residue field of $F$.
\end{Def}
\begin{Def}
We call a semi-simple element $\ga\in \G(F)$ \textbf{absolutely semi-simple} if $\ga^{c_{\G}}=1$.
\end{Def}
\noindent
We say $\ga\in \G(F)$ is \emph{strongly compact} if satisfies the following equivalent criteria
\begin{enumerate}
    \item $\ga$ lies in a compact subgroup of $\G(F)$, and
    \item the eigenvalues of $\rho(\ga)$ are units in $\Fbar$ for some faithful finite-dimensional rational representation $\rho:\G(F)\to \GL(V)$ defined over $\Fbar$.
\end{enumerate}
Clearly, topologically unipotent and absolutely semi-simple elements are strongly compact.

For each strongly compact element $\ga\in\G(F)$, there exists a unique decomposition 
\[
\ga=\ga_{as}\ga_{tu}=\ga_{tu}\ga_{as},
\]where 
$\ga_{as}$ is absolutely semi-simple and $\ga_{tu}$ is topologically unipotent; this is known as the \emph{topological Jordan decomposition}. This may be constructed as follows \cite{Halesunramified}: let $l$ be a positive integer such that $q^l\equiv 1\pmod{c}$, and set $\ga_{as}=\lim_{m\to \infty}\ga^{q^{lm}}$ and $\ga_{tu}=\ga\ga_{as}^{-1}$.

\begin{Lem}
The product $\ga=\ga_{as}\ga_{tu}$ gives the topological Jordan decomposition of $\ga$.
\end{Lem}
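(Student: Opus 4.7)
The plan is to verify the four required properties (convergence of the defining sequence, absolute semi-simplicity of $\ga_{as}$, topological unipotency of $\ga_{tu}$, and uniqueness of the decomposition) by passing through finite quotients of a compact open subgroup containing $\ga$. Since $\ga$ is strongly compact, it lies in some maximal compact subgroup $K_i$, whose prime-to-$p$ quotient has order dividing $c=c_\G$. Fix a filtration $K_i \supset K^+ \supset K_1 \supset K_2 \supset \cdots$ where $K^+$ is a normal pro-$p$ subgroup of finite index (so that $c_{K_i} = [K_i:K^+]_{p'}$) and the $K_j$ are open normal subgroups intersecting to $\{1\}$, each quotient $K_i/K_j$ being finite.

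First I would establish convergence. Working modulo a fixed $K_j$, the image of $\ga$ has some finite order $N = N_p N_{p'}$ with $N_{p'} \mid c_{K_i} \mid c$. Since $q^l \equiv 1 \pmod c$ we get $q^l \equiv 1 \pmod {N_{p'}}$, while $q^{lm} \equiv 0 \pmod {N_p}$ once $lm$ is large, because $q$ is a power of $p$. Combining these shows $\ga^{q^{l(m+1)}} \equiv \ga^{q^{lm}} \pmod {K_j}$ for $m \gg 0$, so the sequence is Cauchy and converges to a well-defined limit $\ga_{as}$. Furthermore, $\ga^{q^{lm}} \equiv \ga \pmod{K^+}$ for every $m$, so $\ga_{as} \equiv \ga \pmod {K^+}$, which is the key integrality fact I will reuse.

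Next, for absolute semi-simplicity, I would compute $\ga_{as}^c = \lim \ga^{cq^{lm}}$; the exponent $c q^{lm}$ is divisible by $N_{p'}$ (because $c$ is) and by $N_p$ (because $q^{lm}$ is, for $m$ large), so reduces to $1$ modulo every $K_j$, giving $\ga_{as}^c = 1$. Since $\gcd(c,p)=1$ this polynomial identity forces $\ga_{as}$ to be semi-simple as well. For topological unipotency of $\ga_{tu} = \ga \ga_{as}^{-1}$, the relation $\ga_{as} \equiv \ga \pmod {K^+}$ established above shows $\ga_{tu} \in K^+$; since $K^+$ is pro-$p$ and $q$ is a power of $p$, the sequence $\ga_{tu}^{q^n}$ automatically converges to $1$. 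Commutativity of $\ga_{as}$ and $\ga_{tu}$ is immediate because $\ga_{as}$ is a limit of powers of $\ga$.

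Finally, for uniqueness, suppose $\ga = ab = ba$ with $a^c = 1$ and $b^{q^n} \to 1$. Then $a^{q^{lm}} = a$ for every $m$ (as $q^{lm} \equiv 1 \pmod c$), so $\ga^{q^{lm}} = a \cdot b^{q^{lm}} \to a$, forcing $a = \ga_{as}$ and hence $b = \ga_{tu}$. The main technical step is the convergence argument, where one must separately control the $p$-part and the prime-to-$p$ part of the order of $\ga$ in each finite quotient; once the filtration and the decomposition $N = N_p N_{p'}$ are in place, every other claim follows formally.
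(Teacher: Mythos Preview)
The paper does not actually prove this lemma: it states the construction, cites Hales \cite{Halesunramified}, and records the result. So there is no paper proof to compare against; your proposal is supplying details the author chose to omit.

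Your argument is essentially the standard one and is almost complete, but there is a genuine gap in the step establishing topological unipotency. You assert that $\ga^{q^{lm}} \equiv \ga \pmod{K^+}$ for every $m$, and then conclude $\ga_{tu}\in K^+$. This claim is not justified and is in general false. Write $M = M_p M_{p'}$ for the order of the image of $\ga$ in the finite group $K_i/K^+$. Your congruence would require $q^{lm}\equiv 1\pmod{M}$; you correctly get $q^{lm}\equiv 1\pmod{M_{p'}}$, but since $q$ is a $p$-power you only get $q^{lm}\equiv 0\pmod{M_p}$ for large $m$, not $\equiv 1$. For $p$-adic reductive groups one cannot in general choose a normal pro-$p$ subgroup $K^+$ with $K_i/K^+$ of order prime to $p$ (for instance, $K_i/K^+$ is typically the $k$-points of a reductive group over the residue field, which has unipotent elements), so $M_p>1$ is the generic situation.

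The fix is immediate and already implicit in your convergence argument: work directly modulo each $K_j$ rather than going through $K^+$. Your computation shows that for $m$ large the image of $\ga^{q^{lm}}$ in $K_i/K_j$ is exactly the $p'$-part of the image of $\ga$ in the cyclic group it generates; hence the image of $\ga_{tu}=\ga\ga_{as}^{-1}$ is the $p$-part, which has $p$-power order. Thus $\ga_{tu}^{q^n}\equiv 1\pmod{K_j}$ for all sufficiently large $n$, and since this holds for every $j$ you get $\ga_{tu}^{q^n}\to 1$. With this correction the rest of your proof (absolute semi-simplicity via $\ga_{as}^c=1$, commutativity as a limit of powers, and uniqueness) goes through as written.
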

We record the following useful fact about topological Jordan decompositions, referring the reader to \cite{SpiceTopological} for more information.
\begin{Lem}\cite[Lemma 2.25]{SpiceTopological}\label{Lem: decomp in center}
 Suppose that $\ga\in \G(F)$ is strongly compact element with topological Jordan decomposition $\ga=\ga_{as}\ga_{tu}$. Then $\ga_{as},\ga_{tu}\in Z(\G_\ga)(F)$.
\end{Lem}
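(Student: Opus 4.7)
The plan is to exploit the explicit construction of the topological Jordan decomposition to place both $\ga_{as}$ and $\ga_{tu}$ inside the $p$-adic closure of the cyclic subgroup $\la \ga \ra$. Recall that if $l$ is chosen so that $q^l \equiv 1 \pmod{c_\G}$, then $\ga_{as} = \lim_{m \to \infty} \ga^{q^{lm}}$ and $\ga_{tu} = \ga \ga_{as}^{-1}$. Both elements are therefore $p$-adic limits of powers of $\ga$, so both belong to $\overline{\la \ga\ra}\subset \G(F)$, which is a compact abelian subgroup since $\ga$ is strongly compact.

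The main step is to promote commutation with $\ga$ into commutation with both $\ga_{as}$ and $\ga_{tu}$. Fix $h \in \G_\ga(F)$. The conjugation map $x \mapsto hxh^{-1}$ is continuous on $\G(F)$ and fixes every $\ga^k$, so by continuity it fixes the limit $\ga_{as}$, and hence also $\ga_{tu} = \ga\ga_{as}^{-1}$. Consequently $\ga_{as}$ and $\ga_{tu}$ lie in $\G_\ga(F)$ and centralize the entire group $\G_\ga(F)$.

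The final step is to upgrade ``centralizes $\G_\ga(F)$'' to ``lies in the scheme-theoretic center $Z(\G_\ga)$.'' For each $z \in \{\ga_{as},\ga_{tu}\}$, the centralizer $Z_{\G_\ga}(z)$ is a Zariski-closed subgroup scheme of $\G_\ga$ that, by the previous paragraph, contains $\G_\ga(F)$. Because $F$ is a local field of characteristic zero, the $F$-points of the smooth neutral component $\G_\ga^\circ$ are Zariski-dense in it, and a standard argument handles the remaining (finitely many) components. It follows that $Z_{\G_\ga}(z) = \G_\ga$, i.e.\ $z \in Z(\G_\ga)(F)$. The main subtlety is this last density step, which is routine for connected reductive groups but warrants a little extra care here since $\G_\ga$ need not be connected (or even reductive) when $\ga$ itself is not semi-simple; this is the main place where the assumption that the residue characteristic is odd and the topological structure of $\overline{\la \ga\ra}$ enter, ensuring the absolutely semi-simple and topologically unipotent parts of $\ga$ are genuinely detected by the abstract group-theoretic information of $\ga$.
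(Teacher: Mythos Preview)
The paper does not prove this lemma; it simply cites \cite[Lemma~2.25]{SpiceTopological}. Your argument is essentially the standard one and the first three steps are fine: since $\ga_{as}$ is a $p$-adic limit of powers of $\ga$, any $h\in\G_\ga(F)$ centralizes $\ga_{as}$ and hence $\ga_{tu}$.

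Your final step, however, is more convoluted than necessary and the closing sentence is misleading. The Zariski-density argument for non-connected, possibly non-reductive $\G_\ga$ is not entirely ``routine,'' and in any case it can be bypassed. The clean route is to note that $\ga_{as},\ga_{tu}$ lie in the \emph{Zariski} closure $V(\ga)$ of $\la\ga\ra$ (since $V(\ga)(F)$ is $p$-adically closed in $\G(F)$), and that $V(\ga)\subset Z(\G_\ga)$ as schemes: for any $h\in\G_\ga$, the algebraic morphism $g\mapsto hgh^{-1}g^{-1}$ vanishes on the Zariski-dense subset $\la\ga\ra$ of $V(\ga)$, hence on all of $V(\ga)$. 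This immediately gives $\ga_{as},\ga_{tu}\in Z(\G_\ga)(F)$ without any appeal to density of rational points or to component groups. Also, the odd residue characteristic plays no role here; that hypothesis enters elsewhere in the paper (e.g.\ in the relative topological Jordan decomposition), not in this lemma.
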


\subsection{The case of symmetric spaces}\label{Section: top jordan decomp rel}
Suppose now that $(\G,\rH)$ is a connected symmetric pair over $F$. We have the embedding of algebraic varieties 
\begin{align*}
   \calq:=\G/\rH&\lra  \G\\
   g&\mapsto s(g) = g\sig(g),
\end{align*}
where $\sig(g)=\theta\left(g^{-1}\right)$. 
We show that the topological Jordan decompositions of strongly compact elements of $\calq(F)\subset\G(F)$ respects the inclusion into $\calq(F)$. Our arguments combine the algebraic properties of symmetric spaces with the structure of $p$-adic groups. We remark that it is an interesting question whether this structure may be defined intrinsically to $\calq(F);$ such a definition might illuminate possible generalizations to spherical varieties that are not symmetric.

\begin{Lem}\label{Lem: unipotent image}
Suppose that $x\in \G(F)$ is topologically unipotent such that $x\in \G^\sig(F)$. Set $V(x)\subset G$ for the Zariski closure of the cyclic subgroup of $\G$ generated by $x$. Then there exists $y\in V(x)(\Fbar)$ such that $\sig(y)=y$ and $x=y^2=s(y)$. In particular, $x\in \calq(F)$.
\end{Lem}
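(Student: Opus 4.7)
The strategy is to construct $y$ as a square root of $x$ inside the $p$-adic closure of the cyclic group $\langle x\rangle$, and then force the equality $\sig(y) = y$ by uniqueness of square roots. The key inputs are that Zariski closures of cyclic groups are commutative, and that topologically unipotent elements in odd residue characteristic generate pro-$p$-subgroups on which $2$ is invertible.

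First I would note that $V(x)$ is a commutative algebraic $F$-subgroup of $\G$, being the Zariski closure of a cyclic group. Since $\sig(x)=x$, the cyclic group $\langle x\rangle$ is $\sig$-stable, and hence so is its Zariski closure $V(x)$. Because $V(x)$ is commutative, the anti-automorphism $\sig$ restricts to an honest algebraic group automorphism of $V(x)$.

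Next I would work inside the closure $C\subset V(x)(F)$ of $\langle x\rangle$ with respect to the $p$-adic (Hausdorff) topology. This $C$ is a compact abelian procyclic group topologically generated by $x$. The hypothesis $x^{q^n}\to 1$ implies that in every finite quotient $C/U$, the image of $x$ has order a power of $p$; since $x$ topologically generates $C$, it follows that $C$ is a pro-$p$-group. Because the residue characteristic is odd, $2$ is a unit in $\zz_p$, so the squaring map $C\to C$ is a topological group automorphism. Define $y\in C$ to be the unique element with $y^2=x$; then $y\in V(x)(F)\subset V(x)(\Fbar)$.

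Finally, I would deduce $\sig(y)=y$ from uniqueness. Since $\sig$ is continuous and preserves $V(x)$ and $\langle x\rangle$, it preserves $C$, so $\sig(y)\in C$; moreover $\sig(y)^2=\sig(y^2)=\sig(x)=x$, so $\sig(y)$ is also a square root of $x$ in $C$. Uniqueness of square roots in the pro-$p$-group $C$ forces $\sig(y)=y$. Therefore $s(y)=y\sig(y)=y^2=x$, exhibiting $x$ as an element of the image of $s$, so $x\in\calq(F)\subset \G^\sig(F)$. The main (and only nontrivial) obstacle is the pro-$p$ structure of $C$, which is precisely where topological unipotence and the odd residue characteristic enter; after this point the argument is a soft uniqueness manipulation.
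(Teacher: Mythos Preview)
Your proof is correct and is in fact cleaner than the paper's. The paper proceeds by Jordan decomposition: it treats the unipotent case by appeal to Richardson's classical lemma, the semi-simple case by passing (after a finite extension) to a split maximal $\theta$-split torus and using $2$-divisibility of $1+\fp_F$, and then assembles the general case by proving $V(x)\cong V(x_s)\times V(x_u)$ and multiplying the two square roots. Your argument bypasses all of this case analysis by observing directly that the $p$-adic closure $C$ of $\langle x\rangle$ is an abelian pro-$p$ group (from $x^{q^n}\to 1$) and hence a $\zz_p$-module on which squaring is bijective since $p$ is odd.

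What each approach buys: your route is shorter, avoids the Jordan decomposition and the passage to a field extension, and actually produces $y\in V(x)(F)$ rather than merely $y\in V(x)(\Fbar)$. (Indeed, since $\sig$ fixes $\langle x\rangle$ pointwise, it fixes the closure $C$ pointwise by continuity, so $\sig(y)=y$ is automatic without even invoking uniqueness.) The paper's approach, on the other hand, makes the algebraic structure $V(x)\cong V(x_s)\times V(x_u)$ explicit, connects to Richardson's result over general fields, and is perhaps more transparent about \emph{where} inside $V(x)$ the square root lives; but for the present application (where one only needs $y\in V(x)(\Fbar)\subset Z(\G_x)(\Fbar)$ in the proof of Proposition~\ref{Prop: relative top decomp}) your argument suffices and is more efficient.
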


\begin{proof}
We first assume that $x=x_u$ is unipotent. In this case, the lemma reduces to the classical statement of Richardson \cite[Lemma 6.1]{Richardson}. 

We now assume that $x=x_s$ is semi-simple. If $x$ has finite order, then since it is topologically unipotent there exists a smallest $m\in \zz_{\geq1}$ such that $x^{q^m}=1$; in particular, the order of $x$ is a power of $p$. Thus there is a $k$ such that $V(x)\cong \mu_{p^k}$ is the $p^k$-th roots of unity. The squaring map is an automorphism of this group, proving the lemma in this case.

Suppose now that $x=x_s$ is semi-simple of infinite order so that $V(x)$ is a torus. Passing to a finite extension as necessary, we are free to assume that there exist a $F$-split maximal torus $T(F)\subset \G(F)$ containing $x$ which is maximally $\theta$-split \cite[Theorem 7.5]{Richardson}. Let $A\subset T$ denote the maximal $\theta$-split torus contained in $T$. Then $x\in A(F)$ and since $\theta$ acts via inversion on $A(F)$, it suffices to show that $x$ is a square in $A(F)$.

Considering the split subtorus $V(x)\subset A$, there is an isomorphism
\begin{align*}
  A(F)&\iso (F^\times)^n\\ 
    t&\mapsto (t_1,\ldots,t_n)
\end{align*} such that for $t\in V(x)(F)$ we have
\begin{align*}
    t&\mapsto (t_1,\ldots,t_k, 1,\ldots, 1).
\end{align*}
 Under this isomorphism, the involution $\theta=(\theta_1,\ldots,\theta_n)$ acts via inversion on each factor. If we set
 \[
 x\mapsto (x_1,\ldots,x_k,1,\ldots,1),
 \] the assumption that $x$ is topologically unipotent implies that $\lim_{m\to \infty}x_i^{q^m}=1$ for each $1\leq i\leq k$. 
 In particular, we have 
 \[
 x_i\in 1+\fp_F
 \]
 for each $i$. This subgroup of $\calo_F^\times$ is a finitely-generated $\zz_p$-module, hence is $2$-divisible. Selecting $y_i\in 1+\fp_F$ such that $y_i^2=x_i$ for each $1\leq i\leq k$, we obtain an element $y\in V(x)(F)$ such that $y^2=x$.  

Finally, let $x=x_sx_u$ denote the Jordan decomposition of $x$. As we have seen above, there are elements
\[
y_s\in V(x_s)(\Fbar)\text{  and  }y_u\in V(x_u)(\Fbar)
\]such that $y_s^2=x_s$ and $y_u^2=x_u$. Since
$V(x_s), V(x_u)\subset Z(\G_x)$ by Lemma \ref{Lem: decomp in center}, we see $y_sy_u=y_uy_s$, so that
\[
s(y_sy_u)=(y_sy_u)^2 =y_s^2y_u^2= x.
\]
To see that $y_sy_u\in V(x)(F^{alg})$, we claim that the product map
\begin{align*}
  V(x_s)\times V(x_u)&\lra V(x)\\
    (g,h)&\longmapsto g h
\end{align*} is an isomorphism. In particular, the result follows from knowing $y_s\in V(x_s)$ and $y_u\in V(x_u)$. To see the claim, first note that $x_s,x_u\in V(x)(F)$ \cite[Theorem 2.4.8]{Springerbook}. Therefore, 
\[
V(x_s),V(x_u)\subset V(x).
\]
Since $x_sx_u=x_ux_s$, the commutator map 
\begin{align*}
    [\cdot,\cdot]: \G\times \G&\lra \G\\
                    (g,h)&\longmapsto ghg^{-1}h^{-1},
\end{align*}
vanishes on a Zariski-dense subgroup of $V(x_s)\times V(x_u)$; it follows that $V(x_u)$ and $V(x_s)$ commute with one another in $V(x)$. Since $V(x_s)\cap V(x_u)=\{1\}$, the product map
\begin{align*}
  V(x_s)\times V(x_u)&\lra V(x)\\
    (g,h)&\longmapsto g h
\end{align*}
is an injective homomorphism, the image of which is a closed subgroup of $V(x)$. As $x=x_sx_u$ lies in this closed subgroup, the map is an isomorphism. 
\end{proof}
The next proposition is the main result of this section. It allows us to discuss absolutely semi-simple and topologically unipotent elements of a symmetric space.
\begin{Prop}\label{Prop: relative top decomp}
For any strongly compact element $x\in \G(F)$, let $x=x_{as}x_{tu}$ be the topological Jordan decomposition. Then $x\in \calq(F)$ if and only if $x_{as},x_{tu}\in \calq(F)$.
\end{Prop}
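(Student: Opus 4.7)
The plan is to combine the uniqueness of the topological Jordan decomposition with a short computation showing that $\calq(F)$ is stable under integer powers, together with Lemma \ref{Lem: unipotent image} (applied in an appropriate descendant for the converse). First I would verify that the topological Jordan decomposition is compatible with $\sig$: because $\sig$ is a continuous anti-automorphism of $\G(F)$ and $x_{as}$ commutes with $x_{tu}$, the product $\sig(x)=\sig(x_{as})\sig(x_{tu})$ is a commuting product of an absolutely semi-simple element ($\sig(x_{as})^{c_\G}=\sig(x_{as}^{c_\G})=1$) and a topologically unipotent element (as $\sig$ is continuous). Uniqueness of the topological Jordan decomposition of $\sig(x)$ then identifies $\sig(x_{as})=\sig(x)_{as}$ and $\sig(x_{tu})=\sig(x)_{tu}$, so if $x\in\calq(F)$ both parts lie in $\G^\sig(F)$.

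For the direction $x\in\calq(F)\Rightarrow x_{as},x_{tu}\in\calq(F)$, the topologically unipotent part is handled directly by Lemma \ref{Lem: unipotent image}. For the absolutely semi-simple part the key observation is that every integer power of $x$ already lies in $\calq(F)$. Writing $x=g\sig(g)$ for some $g\in\G(\Fbar)$ and using $\sig(x)=x$, a short computation gives
\[
s(x^k)=x^k\sig(x^k)=x^{2k},\qquad s(x^k g)=x^k\bigl(g\sig(g)\bigr)x^k=x^{2k+1},\qquad k\in\zz.
\]
Thus $x^n\in\calq(\Fbar)$ for every $n\in\zz$, and since $\calq\hra\G$ is a closed $F$-immersion we obtain $x^n\in\calq(\Fbar)\cap\G(F)=\calq(F)$. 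In particular $\calq(F)$ is Hausdorff-closed in $\G(F)$ and contains the sequence $\{x^{q^{lm}}\}_{m\geq 0}$ used to construct $x_{as}$ (with $l$ chosen so that $q^l\equiv 1\pmod{c_\G}$), so the limit $x_{as}=\lim_m x^{q^{lm}}$ lies in $\calq(F)$.

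For the converse, assume $x_{as},x_{tu}\in\calq(F)$. Since $x_{as}$ is semi-simple in $\G$, Lemma \ref{Lem: semi-simple match} places $x_{as}\in\calq^{ss}(F)$, so the descendant $(\G_{x_{as}},\rH_{x_{as}},\theta|_{\G_{x_{as}}})$ is a symmetric pair. By Lemma \ref{Lem: decomp in center} we have $x_{tu}\in Z(\G_x)\subset\G_{x_{as}}$, and $x_{tu}$ remains topologically unipotent in $\G_{x_{as}}(F)$ and is $\sig$-fixed. Applying Lemma \ref{Lem: unipotent image} to the descendant pair gives $x_{tu}\in\calq_{x_{as}}(F)$, and Lemma \ref{Lem: symmetr at x} then provides the closed embedding $\calq_{x_{as}}\hra\calq,\ y\mapsto x_{as}y$, so that $x=x_{as}x_{tu}\in\calq(F)$. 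The only substantive input is the power formula above; granted this, the remainder of the argument is a formal combination of uniqueness of the topological Jordan decomposition, Lemma \ref{Lem: unipotent image}, and the descendant embedding.
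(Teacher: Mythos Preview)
Your proof is correct but takes a genuinely different route from the paper's for the forward implication. The paper handles $x_{as}$ by first using Lemma \ref{Lem: unipotent image} to produce a square root $y_{tu}\in V(x_{tu})(\Fbar)$ with $s(y_{tu})=x_{tu}$, observes that $y_{tu}\in Z(\G_x)(\Fbar)$ so it commutes with $x$, and then checks directly that $s(y_{tu}^{-1}v)=y_{tu}^{-2}x=x_{as}$ for any $v$ with $s(v)=x$. Your argument instead notes that $\calq(F)$ is stable under integer powers via the identities $s(x^k)=x^{2k}$ and $s(x^kg)=x^{2k+1}$, and then passes to the limit defining $x_{as}$. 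This is arguably more conceptual: it exploits the very construction of $x_{as}$ as $\lim_m x^{q^{lm}}$ and the closedness of $\calq(F)\subset\G(F)$, and avoids any explicit manipulation with square roots. The paper's approach, by contrast, actually exhibits an element of $\G(\Fbar)$ mapping to $x_{as}$ under $s$.

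For the converse you pass to the descendant at $x_{as}$ and reapply Lemma \ref{Lem: unipotent image} there, then use the embedding $s_{x_{as}}$; the paper instead computes $s(y_{tu}y_{as})=x$ directly, using that $y_{tu}$ and $x_{as}$ both lie in $Z(\G_x)(\Fbar)$. Your version is slightly more structural (and the paper in fact uses exactly this descendant argument immediately after the proposition), while the paper's is a two-line computation. Both are fine; the only point worth making explicit is that Lemma \ref{Lem: unipotent image} applies to the descendant pair $(\G_{x_{as}},\rH_{x_{as}})$, which the paper also takes for granted elsewhere.
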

\begin{proof}
Note that if $\theta(x)= x^{-1}$, then
\[
\theta(x_{as})\theta(x_{tu}) = \theta(x) = x_{as}^{-1}x_{tu}^{-1}.
\]
Uniqueness of the topological Jordan decomposition then forces 
\[
\text{$\theta(x_{as}) = x_{as}^{-1}$ and $\theta(x_{tu})=x_{tu}^{-1}$.}
\]A similar argument works for the converse, showing that $x\in \G^\sig(F)$ if and only if $x_{as},x_{tu}\in \G^\sig(F)$.

Suppose first that there exists $v\in \G(\Fbar)$ such that $s(v) = x$. The previous lemma states that there is a $y_{tu}\in \G(\Fbar)$ such that $s(y_{tu}) = y_{tu}^2= x_{tu}$. We claim that $y_{tu}$ commutes with $x$. Indeed, since $\G_x$ is a Zariski-closed subgroup of $\G$ and $x_{tu}\in Z(\G_x)$ by Lemma \ref{Lem: decomp in center}, we see that $y_{tu}\in V(x_{tu})(\Fbar)\subset Z(\G_x)(\Fbar)$. Therefore,
\[
s(y_{tu}^{-1}v) = y_{tu}^{-1}s(v)y_{tu}^{-1}= y_{tu}^{-2}x=x_{as}.
\]
Conversely, if $s(y_{as}) = x_{as}$ and $y_{tu}$ is as in Lemma \ref{Lem: unipotent image}, then $y_{tu}x_{as}=x_{as}y_{tu}$ as they both lie in $Z(\G_x)(\Fbar)$ by Lemma \ref{Lem: decomp in center}. This implies
\[
s(y_{tu}y_{as}) = y_{tu}s(y_{as})y_{tu}= x_{as}y_{tu}^{2}=x.\qedhere
\]
\end{proof}

Suppose that $x\in \calq^{ss}(F)$ is strongly compact element with topological Jordan decomposition $x=x_{as}x_{tu}$. Note that since $x_{as}=\lim_{m}x^{q^{lm}}\in \G^{ss}(F)$, Lemma \ref{Lem: semi-simple match} and Proposition \ref{Prop: relative top decomp} imply that $x_{as}\in \calq^{ss}(F)$. Let $\calq_{x_{as}}$ denote the descendant at $x_{as}$. Recall the symmetrization map at $x_{as}$
\[
s_{x_{as}}: \calq_{x_{as}}\lra \calq.
\]
Since $x_{tu}\in \G_{x_{as}}(F)\cap \calq^{ss}(F)$ and remains topologically nilpotent viewed as an element of the centralizer, an application of Lemma \ref{Lem: unipotent image} to the symmetric pair $(\G_{x_{as}}, \rH_{x_{as}})$ implies that $x_{tu}\in \calq^{ss}_{x_{as}}(F)$. It is easy to see that $s_{x_{as}}(x_{tu})=x$.

\begin{Lem}\label{Lem: top nil reg rel}
Suppose that $(\G,\rH)$ is a connected symmetric pair. Suppose that $x=x_{as}x_{tu}\in \calq^{rss}(F)$ is strongly compact element. Then $x_{tu}\in \calq_{x_{as}}^{rss}(F)$, where $\calq_{x_{as}}$ is the descent of $\calq$ at $x_{as}$.
\end{Lem}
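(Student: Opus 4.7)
The plan is to show that the $\rH_{x_{as}}$-orbit of $x_{tu}$ in $\calq_{x_{as}}$ has maximal possible dimension; combined with $x_{tu}\in\calq_{x_{as}}^{ss}(F)$, established in the paragraph just before the statement, this gives $x_{tu}\in\calq_{x_{as}}^{rss}(F)$. Concretely, I need to verify
\[
\dim\Stab_{\rH_{x_{as}}}(x_{tu})=\dim\rH_{x_{as}}-\dim\calq_{x_{as}}+\mathrm{rank}(\calq_{x_{as}}).
\]
First I would identify this stabilizer. By Lemma \ref{Lem: symmetr at x} the closed immersion $s_{x_{as}}:\calq_{x_{as}}\hra\calq$ is $\rH_{x_{as}}$-equivariant---both actions being conjugation inside $\G$, and $x_{as}$ commuting with every $h\in\rH_{x_{as}}$---and it sends $x_{tu}$ to $x=x_{as}x_{tu}$. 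Hence $\Stab_{\rH_{x_{as}}}(x_{tu})=\rH_{x_{as}}\cap\rH_{x}$. Since $x_{as}=\lim_{m}x^{q^{lm}}$ lies in the Zariski closure of the cyclic group generated by $x$, any element centralizing $x$ must centralize $x_{as}$, so $\rH_{x}\subset\rH_{x_{as}}$ and therefore $\Stab_{\rH_{x_{as}}}(x_{tu})=\rH_{x}$.

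Next I would run a dimension count. The regularity hypothesis $x\in\calq^{rss}(F)$ gives $\dim\rH_{x}=\dim\rH-\dim\calq+\mathrm{rank}(\calq)$, because the generic $\rH$-orbit in $\calq$ has dimension $\dim\calq-\mathrm{rank}(\calq)$ by the Chevalley restriction theorem for symmetric pairs. On the other hand, the standard slice decomposition at the semi-simple point $x_{as}$,
\[
T_{x_{as}}\calq=T_{x_{as}}(\rH\cdot x_{as})\oplus T_{x_{as}}\calq_{x_{as}},
\]
combined with $\dim(\rH\cdot x_{as})=\dim\rH-\dim\rH_{x_{as}}$, yields
\[
\dim\calq=\dim\rH-\dim\rH_{x_{as}}+\dim\calq_{x_{as}}.
\]
Putting these identities together produces $\dim\rH_{x}=\dim\rH_{x_{as}}-\dim\calq_{x_{as}}+\mathrm{rank}(\calq)$.

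Finally, the main conceptual input is the rank identity $\mathrm{rank}(\calq)=\mathrm{rank}(\calq_{x_{as}})$. I would prove it by choosing, over $\Fbar$, a maximal $\theta$-split torus $A\subset\G$ containing $x_{as}$---available by \cite[Theorem 7.5]{Richardson}. Being abelian, $A\subset\G_{x_{as}}$, and $A$ remains $\theta$-split in $\G_{x_{as}}$. Conversely, any $\theta$-split torus of $\G_{x_{as}}$ is automatically a $\theta$-split torus of $\G$, hence has dimension at most $\dim A=\mathrm{rank}(\calq)$, so $A$ is also maximal $\theta$-split in $\G_{x_{as}}$. Substituting $\mathrm{rank}(\calq)=\mathrm{rank}(\calq_{x_{as}})$ into the previous dimension count yields exactly the minimum stabilizer dimension for the $\rH_{x_{as}}$-action on $\calq_{x_{as}}$, finishing the proof. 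The main subtlety, in my view, is this last step: the stabilizer computation is formal and the slice dimension identity is standard, but the preservation of rank under descent at a semi-simple element is what allows the argument to close up.
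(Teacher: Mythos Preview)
Your proof is correct, and it shares with the paper's proof the decisive ingredient: a maximal $\theta$-split torus $A\subset\G$ through $x_{as}$ remains maximal $\theta$-split in $\G_{x_{as}}$, so $\mathrm{rank}(\calq_{x_{as}})=\mathrm{rank}(\calq)$. The packaging differs. You first establish the \emph{equality} $\Stab_{\rH_{x_{as}}}(x_{tu})=\rH_x$, which uses the special feature $\rH_x\subset\rH_{x_{as}}$ coming from $x_{as}\in Z(\G_x)$ (your Zariski-closure justification works, since $V(x)(F)$ is closed in the $p$-adic topology, though citing Lemma~\ref{Lem: decomp in center} is more direct); you then invoke a slice identity $\dim\calq=\dim(\rH\cdot x_{as})+\dim\calq_{x_{as}}$ and finish by a direct dimension count. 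The paper instead proves only the \emph{inclusion} $(\rH_{x_{as}})_{x_{tu}}\subset\rH_x$ and the inequality $m_{x_{as}}\geq m$ (via $\mathrm{St}_\rH(A)\subset(\rH_{x_{as}})_z$), and then squeezes: $m\leq m_{x_{as}}\leq\dim(\rH_{x_{as}})_{x_{tu}}\leq\dim\rH_x=m$. The paper's formulation is stated for an arbitrary pair $y\in\calq^{ss}$, $z\in\calq_y^{ss}$ with $s_y(z)\in\calq^{rss}$, not using the topological Jordan structure; this extra generality is what gets re-used in Lemma~\ref{Lem: new decomp}. Your slice identity is true but not entirely trivial---it amounts to the equality $\dim\mathfrak{g}_1-\dim(\mathfrak{g}_{x_{as}})_1=\dim\mathfrak{g}_0-\dim(\mathfrak{g}_{x_{as}})_0$, which one checks by noting that $\theta$ swaps the $\mathrm{Ad}(x_{as})$-eigenspaces $\mathfrak{g}^\lambda\leftrightarrow\mathfrak{g}^{\lambda^{-1}}$ and that no root fixed by $\theta$ can take a nontrivial value on the $\theta$-split torus $A$---so a sentence of justification there would make the write-up self-contained.
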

\begin{proof}
Suppose that $y\in\calq^{ss}(F)$ and $z\in \calq_y^{ss}(F)$, and consider the element $s_y(z)=yz\in \calq(F)$. We first claim that $s_y(z)\in \calq_y(F)$; that is, we claim that there exists $g\in \G_y(\Fbar)$ such that $s(g)= s_y(z)$. 

Since we have assumed that $z\in \calq^{ss}_y(F)$, it follows that there exists $h\in \G_y(\Fbar)$ such that $s(h)=z$. Thus there exists $g\in \G_y(\Fbar)$ such that
\[
s(g)=s_y(z) = yz = hy\theta(h)^{-1}\quad\text{ if and only if }\quad s(h^{-1}g)=y,
\]
so it suffices to prove the claim when $z=1$ and $s_y(1)= y$. But the assumption that $y\in \calq^{ss}(F)$ implies that $y$ lies in a maximal $\theta$-split torus $A\subset \calq\subset \G$ \cite[Theorem 7.5]{Richardson}. Picking any maximal torus $S\supset A$, we have $S\subset \G_y$ and (over the algebraic closure) the symmetrization map surjects $S$ onto $A$, proving that there exists $g\in \G_y(\Fbar)$ such that $s(g) = y$.

Now viewing $s_y(z)\in \calq_y(F),$ we have the equality
\[
 (\rH_y)_{s_y(z)}=(\rH_y)_z
 \]
 as the two elements differ by an element of the center of $\G_y$. Noting that $\rH_z=\rH\cap\G_z$ for any $z\in \calq(F)$, we see that 
for any $z\in \calq_{y}^{ss}(F)$
\[
(\rH_{y})_{z}= \rH_{y}\cap (\G_{y})_{z}=\rH\cap(\G_y\cap \G_z)=\rH_y\cap \rH_{z}.
\]
In particular, $(\rH_y)_{s_y(z)} \subset \rH_{s_y(z)}$,
from which we conclude that
\[
\dim((\rH_y)_{z})=\dim((\rH_y)_{s_y(z)})\leq\dim(\rH_{s_y(z)})
\]
for any $z\in \calq^{ss}_y(F)$.

In general, it follows from \cite[Theorem 9.11]{Richardson} that there exists an integer $m$ such that for all $y\in \calq^{ss}(F)$ 
\[
\dim(\rH_y)\geq m,
\] and that $y$ is regular if and only if this is an equality. Indeed, it follows from \emph{loc. cit.} that
\[
m=\dim(\mathrm{St}_{\rH}(A))
\]
 is the dimension of the stabilizer in $\rH$ of any maximal $\theta$-split torus $A$ in $\calq$. Let $y\in \calq^{ss}(F)$, and let $m_{y}$ denote the corresponding dimension for the descendant $\calq_y$. We claim that $$m_y\geq m.$$
To see that this proves the lemma, note that if  $z\in \calq_y^{ss}(F)$ has the property that $s_y(z)\in \calq^{rss}(F)$ is regular semi-simple, then 
\[
m\leq m_y\leq \dim((\rH_y)_{z})\leq\dim(\rH_{s_y(z)})=m
\] forcing $m=m_y$ and $z\in \calq_y^{rss}(F)$. Applying this to $y=x_{as}$ and $z=x_{tu}$ proves the lemma.

We now prove that $m_y\geq m$. As the statement is geometric, we are free to pass the algebraic closure, and thus assume that $F=\Fbar$ for the remainder of this proof. For $y\in \calq^{ss}(F)$ and $z\in \calq_y^{ss}(F)$ as before, there exists a maximally $\theta$-split torus $A\subset \calq\subset \G$ such that $y,z\in A(F)$. Indeed, since $z\in \calq^{ss}_y(F)$ we may first choose a maximal $\theta$-split torus $A'\subset \calq_y\subset\G_y$ such that $z\in A'(F)$. Since $y$ is central in $\calq_y(F)\subset\G_y(F)$, it is contained in all such tori, so that $y\in A'(F)$. Now take any maximal $\theta$-split torus $A\subset \calq$ containing $A'$. 

It follows immediately from $y,z\in A(F)$ that $\mathrm{St}_{\rH}(A)\subset \rH_y\cap \rH_z=(\rH_{y})_{z}$, implying $$\dim((\rH_{y})_{z}) \geq \dim(\mathrm{St}_{\rH}(A))= m.$$ This proves the desired inequality $m_y\geq m$.
\end{proof}

We will use this in our descent of orbital integrals on $\calq(F)$ at $x$ to those on $\calq_{x_{as}}(F)$ at $x_{tn}$ in Section \ref{Section: absolute descent}; see also Lemma \ref{Lem: new decomp}.

\section{A relative Kazhdan's lemma and descent}\label{Section: relative kazhdan}
In this section, we develop a version of an important lemma of Kazhdan (see \cite[pg. 1364]{Halesunipotent} for the statement and references) for symmetric spaces. This allows us to descend the orbital integrals to certain descendants of $\calq(F)$ arising from the relative topological Jordan decomposition. We prove these results for a general class of symmetric pairs, which we specify in the next subsection.


\subsection{Nice and simply-connected symmetric spaces}


We assume that the symmetric pair $(\G,\rH)$ arises from a symmetric pair over the ring of integers $\calo_F$ in the sense that there is a smooth reductive group scheme $\mathcal{G}$ over $\calo_F$ and an involutive automorphism 
\[
\theta: \mathcal{G}\lra \mathcal{G}
\]
such that $\theta:\G\lra \G$ arises as the generic fiber. Set $\mathcal{H}=\mathcal{G}^\theta$. This gives a smooth group scheme over $\calo_F$ \cite[Proposition 3.4]{edixhoven1992neron} with reductive neutral component such that $\mathcal{H}_F\cong \rH$. The subscript $F$ here denotes the base change from $\calo_F$ to $F$. In particular, $\G$ and $\rH$ are both unramified as groups over $F$.

Consider now the symmetrization map 
\begin{align*}
    s:\mathcal{G}&\lra \mathcal{G}\\
        g&\longmapsto g\theta(g)^{-1},
\end{align*}
and define $\mathcal{Q}$ to be the scheme-theoretic image of $s$. By definition, the induced morphism $s: \mathcal{G}\to \mathcal{Q}$ is dominant and $\calq\subset \mathcal{G}$ is a closed embedding of affine $\calo_F$-schemes of finite type. Note also that $\calq$ is finitely presented \cite[\href{https://stacks.math.columbia.edu/tag/00FP}{Tag 00FP}]{stacks-project}.  

The following lemma is surely well known; we could not locate a reference so include a proof for the completeness.
\begin{Lem}\label{Lem: integral quotient}
 The scheme $\calq$ is smooth over $\calo_F$ and represents the fppf quotient $\mathcal{G}/\mathcal{H}$.
\end{Lem}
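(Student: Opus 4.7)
My strategy is to identify $\calq$ with the fppf quotient $\mathcal{G}/\mathcal{H}$ via the symmetrization map; smoothness of $\calq$ over $\calo_F$ will then follow automatically from smoothness of $\mathcal{H}$. Because the residue characteristic is odd, $2 \in \calo_F^\times$, so the involution $d\theta$ on $\fg := \Lie(\mathcal{G})$ induces a direct sum decomposition $\fg = \fh \oplus \fp$ into $\pm 1$-eigenspaces, with $\fh = \Lie(\mathcal{H})$. A direct calculation gives $ds_e = 1 - d\theta$, so $\ker(ds_e) = \fh$ and $\mathrm{im}(ds_e) = \fp$; this identifies the ``expected'' relative tangent space of $\calq$ at the identity.

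The first step is to invoke a standard representability result: since $\mathcal{H}$ is a smooth closed subgroup scheme of the smooth affine $\calo_F$-group scheme $\mathcal{G}$, the fppf sheaf $\mathcal{G}/\mathcal{H}$ is representable by a smooth affine $\calo_F$-scheme of relative dimension $\dim \fp$, and $\pi:\mathcal{G} \to \mathcal{G}/\mathcal{H}$ is an $\mathcal{H}$-torsor (cf.\ Anantharaman, or SGA 3, Exp.\ VI$_B$). One then checks that $s$ is invariant under right translation by $\mathcal{H}$: for $h \in \mathcal{H}$, $\sig(gh) = \theta(h)^{-1}\sig(g) = h^{-1}\sig(g)$, so $s(gh) = s(g)$. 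Therefore $s = \bar s \circ \pi$ for a unique morphism $\bar s : \mathcal{G}/\mathcal{H} \to \mathcal{G}$ whose image lies in $\calq$.

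The heart of the argument is to show that $\bar s$ is a closed immersion with image exactly $\calq$. From the differential computation, $d\bar s_{[e]}$ is the injection $\fg/\fh \iso \fp \hookrightarrow \fg$, and $\mathcal{G}$-equivariance of $s$ promotes this to show $\bar s$ is unramified everywhere. By Richardson's classical result on each residue field (already invoked as $s: \calq \hookrightarrow \G$ in Section \ref{Section: symmetric recap}), the induced maps $\bar s_F$ and $\bar s_k$ on the generic and special fibers are closed immersions, with images $\calq_F$ and $\calq_k$ respectively. Since $\mathcal{G}/\mathcal{H}$ is flat over the DVR $\calo_F$ and $\bar s$ is a finite-type morphism whose base changes to $F$ and $k$ are closed immersions, the fibral criterion for closed immersions (EGA IV$_4$, \S 17) forces $\bar s$ itself to be a closed immersion. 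Finally, because $\pi$ is fppf surjective, the scheme-theoretic image of $s = \bar s \circ \pi$ coincides with the image of the closed immersion $\bar s$, so $\calq = \bar s(\mathcal{G}/\mathcal{H}) \cong \mathcal{G}/\mathcal{H}$, which gives both smoothness and the representability claim simultaneously.

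The principal technical obstacle is the representability of the fppf quotient in the integral setting; this depends essentially on $\mathcal{H}$ being smooth and is where the residue characteristic hypothesis enters indirectly (through smoothness of $\mathcal{H}=\mathcal{G}^\theta$, which requires $2$ invertible). The rest is a routine fibral promotion of Richardson's field-case embedding.
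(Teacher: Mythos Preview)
Your proposal is correct and follows essentially the same strategy as the paper: invoke representability of the fppf quotient $\mathcal{G}/\mathcal{H}$ (citing Anantharaman), factor $s$ through it, apply Richardson's Lemma 2.4 on each fiber, and then promote to $\calo_F$ via a fibral criterion from EGA IV.

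The only substantive difference is the target of the fibral criterion. The paper works with the induced map $\tilde s:\mathcal{G}/\mathcal{H}\to\calq$ into the scheme-theoretic image and applies the fibral \emph{isomorphism} criterion (EGA IV, Cor.\ 17.9.5) directly; this is a named result requiring only finite presentation of both sides and flatness of the source. You instead work with $\bar s:\mathcal{G}/\mathcal{H}\to\mathcal{G}$ and invoke a ``fibral criterion for closed immersions,'' which is not in the standard EGA list and needs an extra word of justification (e.g.\ deduce that $\bar s$ is a monomorphism via the fibral criterion for monomorphisms, then argue properness, or simply redirect to $\calq$ and use the isomorphism criterion as the paper does). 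Your differential computation showing $\bar s$ is unramified is correct but ultimately unnecessary for either route.
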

\begin{proof}
 Since $\mathcal{G}$ and $\mathcal{H}$ are smooth affine group schemes of finite type over $\calo_F$, the fppf quotient $\mathcal{G}/\mathcal{H}$ is represented by an $\calo_F$-scheme, which we will also call $\mathcal{G}/\mathcal{H}$ \cite[Th\'{e}or\`{e}me 4.C]{AnaSiva}. Let $q:\mathcal{G}\to \mathcal{G}/\mathcal{H}$ denote the resulting faithfully flat morphism of $\calo_F$-schemes; it is an $\mathcal{H}$-torsor over $\calo_F$. In particular, $\mathcal{G}/\mathcal{H}$ is smooth and affine over $\calo_F$ since both $\mathcal{G}$ and $\mathcal{H}$ are \cite[Remark 6.5.2]{PoonenRational}. 
 
 The morphism $s$ clearly factors through $\mathcal{G}/\mathcal{H}$, so we obtain a morphism
 $\tilde{s}:\mathcal{G}/\mathcal{H}\lra \calq$ such that $s= \tilde{s}\circ q$. To prove the lemma, we must show $\tilde{s}$ is an isomorphism of $\calo_F$-schemes. 
 The key input is Lemma 2.4 of \cite{Richardson}, which implies that the base change \begin{equation}\label{eqn: check on fibers}
     \tilde{s}_{\Spec(K)}:(\mathcal{G}/\mathcal{H})_{\Spec(K)}\iso \calq_{\Spec(K)}
 \end{equation}
for any field $K$ over $\calo_F$. Note that both schemes are finitely presented over $\Spec(\calo_F)$ with $\mathcal{G}/\mathcal{H}$ smooth over $\Spec(\calo_F)$. The fibral isomorphism criterion \cite[Corollaire 17.9.5]{EGAIV} now implies $\tilde{s}$ is an isomorphism by applying \eqref{eqn: check on fibers} to the residue fields of points of $\Spec(\calo_F)$.
\end{proof}
In particular, the generic fiber of $\calq$ is the symmetric space $\G/\rH$, which by a slight abuse of notation we continue to call $\calq$. We have the canonical inclusion $\calq(\calo_F)\subset \calq(F)$.
\begin{Def}\label{Def: simplyconnected}
We define a symmetric pair $(\G,\rH)$ over a field $k$ to be \textbf{simply connected} if for every field extension $K/k$ and every semi-simple point $x\in \calq(K)$, the centralizer $(\rH_K)_x$ is connected. We say a symmetric pair $(\mathcal{G},\mathcal{H})$ over $\calo_F$ is simply connected if both the generic and special fibers are simply connected.
\end{Def}
Note that the above condition forces, in particular, $\rH$ to be connected. One of the reasons for restricting to simply-connected symmetric pairs is the following surjectivity statement, which fails for general symmetric pairs (e.g. for $(\GL_n,\mathrm{O}_n)$).
\begin{Cor}\label{Cor: surjects on integral points}
Assume that $(\mathcal{G},\mathcal{H})$ is a simply-connected symmetric pair over $\calo_F$, and let $\calq$ be the associated symmetric space over $\calo_F$. Then the map $s$ is surjective on $\calo_F$-points: we have a short exact sequence of pointed sets
\[
1\lra \mathcal{H}(\calo_F)\lra \mathcal{G}(\calo_F)\overset{s}{\lra} \calq(\calo_F)\lra 1.
\]
\end{Cor}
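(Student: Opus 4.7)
My plan is to reduce the nontrivial content of the corollary---surjectivity of $s$ on $\calo_F$-points---to the vanishing of a nonabelian first cohomology set and then kill that set with Lang's theorem. Injectivity of $\mathcal{H}(\calo_F)\hookrightarrow \mathcal{G}(\calo_F)$ and exactness in the middle are immediate from Lemma \ref{Lem: integral quotient}, which identifies $\calq$ with the fppf quotient $\mathcal{G}/\mathcal{H}$, so I can focus on a single $x\in \calq(\calo_F)$ and try to lift it through $s$.

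For such an $x$, I would consider the fiber product $\mathcal{P}_x:=\mathcal{G}\times_{\calq,x}\Spec(\calo_F)$. Because $s:\mathcal{G}\to \calq$ is an $\mathcal{H}$-torsor for the fppf topology, $\mathcal{P}_x$ is an fppf $\mathcal{H}$-torsor over $\Spec(\calo_F)$, trivial precisely when $x$ lies in the image of $s(\calo_F)$. Hence surjectivity is equivalent to the vanishing of $H^1_{\mathrm{fppf}}(\calo_F,\mathcal{H})$. Since $\mathcal{H}$ is smooth, Grothendieck's comparison theorem replaces this with $H^1_{\mathrm{\acute et}}(\calo_F,\mathcal{H})$, and since $\calo_F$ is henselian and $\mathcal{H}$ is smooth, reduction modulo $\fp_F$ yields a bijection
\[
H^1_{\mathrm{\acute et}}(\calo_F,\mathcal{H})\iso H^1(k,\mathcal{H}_k),
\]
as any such torsor is representable by a smooth $\calo_F$-scheme and therefore admits a lift of any $k$-rational point by Hensel's lemma.

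The final step is where the simply-connectedness hypothesis enters. Applied to the identity point $e_k\in \calq(k)$, which is semi-simple and has stabilizer equal to all of $\mathcal{H}_k$, the hypothesis forces $\mathcal{H}_k$ to be connected. As $\mathcal{H}_k$ is then a smooth connected (in fact reductive) affine algebraic group over the finite residue field $k$, Lang's theorem gives $H^1(k,\mathcal{H}_k)=1$, completing the proof. The main conceptual point---and the only place where the hypothesis is really needed---is exactly this appeal to connectedness of the special fiber: for a pair such as $(\GL_n,\mathrm{O}_n)$ the component group $\pi_0(\mathrm{O}_n)$ obstructs Lang, surjectivity genuinely fails, and the obstruction set $H^1(k,\mathrm{O}_n)$ parametrizes the rational classes of nondegenerate quadratic forms over $k$. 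The technical heart of the argument is thus routine henselian descent, but its correct formulation in the relative setting is precisely what Definition \ref{Def: simplyconnected} is designed to support.
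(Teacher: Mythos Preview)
Your proof is correct and follows essentially the same route as the paper: identify the fiber over $x\in\calq(\calo_F)$ as an $\mathcal{H}$-torsor, kill $H^1_{\text{\'et}}(\calo_F,\mathcal{H})$ by combining smoothness, Hensel's lemma, and Lang's theorem over the residue field. You are slightly more explicit than the paper in spelling out the fppf-to-\'etale comparison and in deriving connectedness of $\mathcal{H}_k$ from the simply-connected hypothesis via the identity coset; the paper simply asserts the latter (having noted it immediately after Definition~\ref{Def: simplyconnected}).
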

\begin{proof}
  Since the special fiber $\mathcal{H}_k$ is a smooth, connected reductive group scheme over $k$, a theorem of Lang \cite[Corollary to Theorem 1]{Lang} shows that $H^1(k,\mathcal{H}_k)=1$. Hensel's lemma and the smoothness of $\mathcal{H}$ now implies that $H^1_{\text{\'{e}t}}(\Spec(\calo_F), \mathcal{H})=1.$ That is, all $\mathcal{H}$-torsors over $\calo_F$ are trivial.
  
  Now for any $x\in \calq(\calo_F)$, the fiber $q^{-1}(x)\subset \mathcal{G}$ is an $\mathcal{H}$-torsor over $\calo_F$ since $\mathcal{G}\to \calq$ is. It must be trivial by the aforementioned vanishing of $H^1_{\text{\'{e}t}}(\Spec(\calo_F), \mathcal{H})$, implying $q^{-1}(x)(\calo_F)\neq \emptyset$.
\end{proof}
\begin{Def}
We say that a symmetric pair $(\mathcal{G},\mathcal{H})$ over $\calo_F$ is \textbf{nice} if the ring of invariants
\[
\calo_F(\calq)^\mathcal{H}
\]
is a finitely generated $\calo_F$-algebra such that for every $x\in \Spec(\calo_F)$
\[
\calo_F(\calq)^\mathcal{H}\otimes_{\calo_F}k_x\cong k_x(\calq_x)^{\mathcal{H}_x},
\]
where $x=\Spec(k_x)$, and the subscripts denote passing to the fiber at $x$.
\end{Def}
If this holds, the $\calo_F$-scheme $\cala:=\Spec(\calo_F(\calq)^\mathcal{H})$ has the property that for each $x\in \Spec\calo_F$, the fiber $\cala_x$ is the categorical quotient for the $\mathcal{H}_x$ action on $\calq_x$.

Our primary example is $G=\U(V_n\oplus V_n)$ and $H= \U(V_n)\times \U(V_n),$ where $V_n$ is a split Hermitian space of dimension $n$ for an unramified extension $E/F$. We consider this case in detail in the next section, proving it is nice and simply connected in Lemma \ref{Lem: unitary nice}. 
We suspect that smooth symmetric pairs over $\calo_F$ are always nice, but do not have a proof. It is relatively easy to check once a concrete model for the categorical quotient is constructed. Other examples of nice simply-connected pairs are Galois pairs associated to simply-connected groups such as the symmetric pair $(\Res_{E/F}\rH_E,\rH)$ for an unramified quadratic extension $E/F$, where $\rH_E$ denotes the base change.

\begin{Prop}\label{Prop: relative Kazdhan lemma}
Suppose that $(\mathcal{G},\mathcal{H})$ is a nice simply-connected symmetric pair over $\calo_F$ and suppose that $\ga\in \calq(\calo_F)$ is absolutely semi-simple. Let $(\G,\rH)$ denote the associated symmetric pair over $F$. The centralizer $\rH_\ga$ is unramified and arises as the generic fiber of a smooth connected reductive group scheme $\mathcal{H}_\ga\subset \mathcal{H}$ over $\calo_F$. 

Moreover, if $\ga$ and $\ga'\in \calq(\calo_F)$ lie in the same stable $\rH(F)$-orbit, they are conjugate by an element in $\mathcal{H}(\calo_F)$.
\end{Prop}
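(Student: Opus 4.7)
My plan is to first produce a smooth integral model of the stabilizer $\mathcal{H}_\ga$ using Edixhoven's theorem on tame fixed-point schemes, and then to deduce integral stable conjugacy by combining the niceness hypothesis with Lang's theorem on the special fiber of $\mathcal{H}_\ga$.

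For the first assertion, I would view $\ga$ as an element of $\mathcal{G}(\calo_F)$ via the symmetrization embedding $\calq \hookrightarrow \mathcal{G}$. Since $\ga$ is absolutely semi-simple, $\ga^m = 1$ for some integer $m$ coprime to the residue characteristic $p$, so $\ga$ generates a constant (hence \'{e}tale) subgroup scheme $\mu_\ga \subset \mathcal{G}$ of order prime to $p$. Letting $\mu_\ga$ act on $\mathcal{G}$ by conjugation, Edixhoven's theorem \cite{edixhoven1992neron} shows that the fixed subscheme $\mathcal{G}_\ga = \mathcal{G}^{\mu_\ga}$ is smooth over $\calo_F$. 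Since $\theta(\ga) = \ga^{-1}$, the involution $\theta$ restricts to $\mathcal{G}_\ga$, and a second application of Edixhoven's theorem (valid because $p \neq 2$) yields smoothness of $\mathcal{H}_\ga = (\mathcal{G}_\ga)^\theta \subset \mathcal{H}$ over $\calo_F$. The fibers are reductive by Richardson's structure theorem for descendants of symmetric pairs, and connected by the simply-connected hypothesis applied to both generic and special fibers (noting the reduction $\bar\ga$ remains semi-simple, as its order divides $m$). This produces the desired smooth connected reductive $\calo_F$-model of $\rH_\ga$, which is in particular unramified.

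For the conjugacy claim, let $\pi : \calq \to \cala$ denote the categorical quotient supplied by the niceness hypothesis. Since $\ga$ and $\ga'$ lie in the same stable orbit, their images in $\cala(\Fbar)$ coincide; as $\cala$ is affine and $\ga, \ga' \in \calq(\calo_F)$, this forces $\pi(\ga) = \pi(\ga')$ already in $\cala(\calo_F)$. Reducing modulo $\fp_F$, the reductions $\bar\ga, \bar\ga' \in \calq_k(k)$ are semi-simple with equal image in $\cala_k(k)$, hence lie in the same geometric $\mathcal{H}_k$-orbit. Lang's theorem applied to the smooth connected group $(\mathcal{H}_\ga)_k$ then produces a $k$-rational conjugator. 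Now consider the transporter
\[
T = \{h \in \mathcal{H} : h\ga h^{-1} = \ga'\},
\]
a closed $\calo_F$-subscheme of $\mathcal{H}$. The previous step yields a point of $T(k)$, and the simply-transitive right action of $\mathcal{H}_\ga$ on $T$ identifies $T$ \'{e}tale-locally near this point with $\mathcal{H}_\ga$; in particular $T$ is smooth over $\calo_F$ in a neighborhood of the $k$-point. Hensel's lemma then lifts the $k$-point to $T(\calo_F)$, providing the required element of $\mathcal{H}(\calo_F)$ conjugating $\ga$ to $\ga'$.

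The subtlest step will be the verification that the transporter $T$ is genuinely smooth at its $k$-point, since this is where all three hypotheses are really needed: smoothness of $\mathcal{H}_\ga$ from the first part, the simply-connected condition ensuring $(\mathcal{H}_\ga)_k$ is connected so that Lang's theorem applies to produce a $k$-point of $T$, and the niceness condition ensuring that stable-orbit equality descends to the categorical quotient already over $\calo_F$. Once the $\mathcal{H}_\ga$-torsor structure of $T$ is in place, the Lang--Hensel combination trivializes it automatically.
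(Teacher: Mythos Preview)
Your overall strategy matches the paper's: build a smooth integral model of $\mathcal{H}_\ga$, use niceness to push stable-conjugacy down to the special fiber, apply Lang's theorem, and lift via Hensel. The differences are in how you establish the two smoothness claims.

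For the centralizer, your double-Edixhoven argument (first for the prime-to-$p$ conjugation action of $\langle\ga\rangle$, then for $\theta$) is a genuine simplification over the paper. The paper instead cites Kottwitz's Proposition~7.1 to obtain smoothness of $\mathcal{G}_\ga$ --- verifying the root-unit criterion $1-\al(\ga)\in\calo_{\Fbar}^\times\cup\{0\}$ --- and only then applies Edixhoven for $\theta$. Your observation that absolute semi-simplicity means \emph{finite order prime to $p$}, so Edixhoven applies directly to the conjugation action, bypasses Kottwitz entirely at this step.

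For the transporter, however, there is a gap. You assert that the free $\mathcal{H}_\ga$-action identifies $T$ \'etale-locally with $\mathcal{H}_\ga$, hence $T$ is smooth near its $k$-point. But this identification is precisely the statement that $T$ is an $\mathcal{H}_\ga$-torsor, which requires $T$ to be \emph{flat} over $\calo_F$ --- a free fiberwise-transitive action alone does not give this. (Over a DVR one can recover flatness from the fact that both fibers of $T$ are smooth of the same dimension and $T_k$ is irreducible, but this argument is missing from your sketch and is not entirely trivial.) The paper avoids the issue by passing to the $\mathcal{G}$-level transporter $Y=\{g\in\mathcal{G}:g\ga g^{-1}=\ga'\}$: Kottwitz's result gives smoothness of $Y$ directly, and then $T=Y^\theta$ is smooth by another application of Edixhoven. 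Going through $\mathcal{G}$ here is not a detour but the mechanism that delivers smoothness of $T$ without a separate flatness check.
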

\begin{proof}
To begin, we explain that \cite[Proposition 7.1]{Kottwitzstableelliptic} applies to absolutely semi-simple elements of $\G(F)$. This is implicit in \cite{Halesunramified}, and we provide the argument for the convenience of the reader. Suppose that $\ga\in \mathcal{G}(\calo_F)$ is absolutely semi-simple and suppose $T\subset G$ is a maximal torus such that $\ga\in T(F)$. Our assumption on $\ga$ implies that $\ga^{c_G}=1$ for $c_G$ defined as in Section \ref{Section: TJD and descent}. In particular, for each root $\al$ of $(G,T)$, $\al(\ga)\in \mu_{c_G}(\Fbar)$. Since $(c_G,p)=1$ by definition, it follows that $1-\al(\ga)\in \calo_{\Fbar}$ is either $0$ or a unit for each root $\al$. This is precisely Kottwitz' criterion.

Now suppose that $\ga\in \mathcal{Q}(\calo_F)$ is absolutely semi-simple as an element of $\mathcal{G}(\calo_F)$. Then \cite[Proposition 7.1]{Kottwitzstableelliptic} implies that $\mathcal{G}_\ga$ is a smooth group scheme over $\calo_F$ with reductive fibers. It is evidently stable under $\theta$, so we consider the automorphism 
\[
\theta:\mathcal{G}_\ga\lra\mathcal{G}_\ga.
\]
The fixed point scheme is given by
\[
\mathcal{G}^\theta_\ga(R)=\{g\in \mathcal{G}_\ga(R):\theta(g) = g\} = \mathcal{H}_\ga(R),
\]
for any $\calo_F$-algebra $R$. It follows from \cite[Proposition 3.4]{edixhoven1992neron} that $\mathcal{H}_\ga$ is a smooth group scheme over $\calo_F$. By our assumption that $(\mathcal{G},\mathcal{H})$ is simply connected, the smooth group scheme $\mathcal{H}_\ga$ has connected reductive fibers. In particular, $\rH_\ga = \mathcal{H}_{\ga,F}$ is unramified and $\mathcal{H}_{\ga}(\calo_F)=H_\ga(F)\cap \mathcal{H}(\calo_F)$ is a hyperspecial maximal subgroup. 

Now suppose that $\ga$ and $\ga'\in \calq(\calo_F)$ are absolutely semi-simple and lie in the same stable orbit. Since the stabilizers are connected, this implies that $\ga'=h\cdot\ga$ for some $g\in \rH(\Fbar)$. Viewed as elements of $\calq(F)$, it follows that $\ga$ and $\ga'$ have the same invariant $a\in \cala(F)$, where $\cala:=\Spec(F(\calq)^H)$ denotes the categorical quotient. By the assumption of niceness, the quotient map
\[
\calq\lra\Spec(F(\calq)^H)
\]
has a natural $\calo_F$-model, which we also call $\cala$. This $\calo_F$-scheme satisfies the property that for each point $x\in \Spec(\calo_F)$, the fiber $\cala_x$ is the categorical quotient of $\calq_x$ with respect to $\rH_x$. We have the commutative diagram
\[
\begin{tikzcd}
\calq(\calo_F)\ar[r]\ar[d]&\calq(F)\ar[d]\\
\cala(\calo_F)\ar[r]& \cala(F),
\end{tikzcd}
\]where the horizontal arrows are the natural inclusions. In particular, $a\in \cala(\calo_F)$.

Define now the $\calo_F$-scheme given by
\[
Y(R) = \{g\in \mathcal{G}(R): g\ga g^{-1}=\ga'\}
\]
for any $\calo_F$-algebra $R$. By the proof of Proposition 7.1 of \cite{Kottwitzstableelliptic}, we know that $Y$ is smooth as an $\calo_F$-scheme and that $Y(\calo_F)\neq\emptyset$. It is simple to check that the involution $\theta$ preserves $Y$ and so another application of \cite[Proposition 3.4]{edixhoven1992neron} implies that $Y^\theta$ is a smooth scheme over $\calo_F$ such that for any $\calo_F$-algebra $R$
\[
Y^\theta(R) = \{g\in \mathcal{G}(R): g\ga g^{-1}=\ga',\: \theta(g) =g \}= \{g\in \mathcal{H}(R): g\ga g^{-1}=\ga'\}.
\]
To prove the final claim, it suffices to show that $Y^\theta(\calo_F)\neq \emptyset$.

Let $\overline{\ga}$ and $\overline{\ga}'$ denote the images of $\ga$ and $\ga'$ in $\calq(k)$. These elements are semi-simple as $\ga$ and $\ga'$ are absolutely semi-simple as elements of $\mathcal{G}(\calo_F)$. Since the quotient map%
\begin{align*}
    \cala(\calo_F)&\lra \cala(k)\\
    a&\longmapsto\overline{a}
\end{align*}
is functorial, $\overline{\ga}$ and $\overline{\ga}'$ have the same invariant $\overline{a}\in \cala(k)$. As there is a unique stable semi-simple orbit in the fiber over $\overline{a}\in \cala(k)$ (see \cite[Theorem 2.2.2]{AizGourdescent}), it follows that $\overline{\ga}$ and $\overline{\ga}'$ lie in the same stable orbit under the action of $\mathcal{H}(k)$. 

By the assumption that $(\mathcal{G},\mathcal{H})$ is a simply-connected symmetric pair, the stabilizer $\mathcal{H}_{\ga,k}$ is connected. Lang's theorem \cite[Corollary after Theorem 1]{Lang} now implies the vanishing of Galois cohomology $H^1(k,\mathcal{H}_{\ga,k})=0$. In particular, there is a single $\mathcal{H}(k)$-orbit in the stable orbit of $\overline{\ga}$ so that  $\overline{\ga}$ and $\overline{\ga}'$ lie in the same $\mathcal{H}(k)$-orbit. That is,
\[
Y^\theta(k)\neq\emptyset.
\]
The smoothness of $Y^\theta$ over $\calo_F$ and Hensel's lemma now gives that $Y^\theta(\calo_F)\neq \emptyset$.
\end{proof}

\subsection{Good orbits} Our main application of Proposition \ref{Prop: relative Kazdhan lemma} will come in Proposition \ref{Prop: absolute descent}. We derive a few more consequences here to be used in Sections \ref{Section: absolute descent} and \ref{Section: descent final}. Suppose $(\G,\rH)$ are as in the proposition; we continue to use calligraphic font of integral models and make use of the canonical inclusions $\mathcal{G}(\calo_F)\subset \G(F)$ and $\calq(\calo_F)\subset \calq(F)$.

\begin{Def}
Let $\mathfrak{O}\subset \G(F)$ be a closed $\rH(F)\times \rH(F)$-orbit. We say that $\mathfrak{O}$ is a \textbf{good} orbit if 
\[
\sig(\mathfrak{O})=\mathfrak{O},
\]
where we remind the reader that $\sig(g) = \theta(g)^{-1}$. We say that a closed $\rH(F)$-orbit in $\calq(F)$ is good if it is the image of a good $\rH(F)\times \rH(F)$-orbit under the symmetrization map.
\end{Def}
This terminology is inspired by the notion of a good symmetric pair from \cite[Section 7]{AizGourdescent}.

\begin{Cor}\label{Cor: absolute good orbit}
Suppose that $\ga\in \calq(\calo_F)$ is absolutely semi-simple. The orbit $\mathfrak{O}_\ga:=\rH(F)\cdot \ga$ is good.
\end{Cor}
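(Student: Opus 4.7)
The plan is to exhibit a good $\rH(F)\times\rH(F)$-orbit in $\G(F)$ whose image under the symmetrization map is $\mathfrak{O}_\ga$. Since $\ga\in\calq(\calo_F)$, Corollary~\ref{Cor: surjects on integral points} produces $g_0\in\mathcal{G}(\calo_F)$ with $s(g_0)=\ga$, and I propose the closed double coset $\mathfrak{O}':=\rH(F)g_0\rH(F)$. A standard computation identifies $\mathfrak{O}'$ with the full preimage $s^{-1}(\mathfrak{O}_\ga)$: indeed, if $s(g)=h\ga h^{-1}$ for $h\in\rH(F)$, then $s(h^{-1}g)=\ga$, so $h^{-1}g\in g_0\rH(F)$. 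Thus $\mathfrak{O}'$ is the unique $\rH(F)\times\rH(F)$-orbit mapping onto $\mathfrak{O}_\ga$, and the essential point is to verify $\sigma$-stability. Applying $s$ reduces $\sigma(g_0)=\theta(g_0)^{-1}\in\mathfrak{O}'$ to the claim that $s(\sigma(g_0))=\theta(g_0)^{-1}g_0=g_0^{-1}\ga g_0$ (the last equality from $\ga=g_0\theta(g_0)^{-1}$) lies in the $\rH(F)$-conjugation orbit $\mathfrak{O}_\ga$ of $\ga$.

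First I would check this over $\Fbar$. By \cite[Theorem~7.5]{Richardson}, $\ga$ is contained in a maximal $\theta$-split torus $A\subset\G_{\Fbar}$. As $A$ is a divisible abelian group on which $\theta$ acts by inversion, one picks $a\in A(\Fbar)\subset \G_\ga(\Fbar)$ with $s(a)=a\theta(a)^{-1}=a^2=\ga$. Comparing with $s(g_0)=\ga$ forces $g_0\in a\rH(\Fbar)$, say $g_0=ah$, whence $g_0^{-1}\ga g_0=h^{-1}a^{-1}\ga a h = h^{-1}\ga h$. Hence $\ga$ and $g_0^{-1}\ga g_0$ lie in the same stable $\rH$-orbit in $\calq(F)$; the simply-connected hypothesis makes $\rH_\ga$ connected, so the cocycle condition on the conjugator is automatic.

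To descend from $\Fbar$ to $F$, I invoke Proposition~\ref{Prop: relative Kazdhan lemma}. Both $\ga$ and $g_0^{-1}\ga g_0$ are absolutely semi-simple elements of $\calq(\calo_F)$ (the latter because $g_0,\theta(g_0)\in\mathcal{G}(\calo_F)$, and $g_0^{-1}\ga g_0=s(\theta(g_0)^{-1})$ exhibits it as an integral point of $\calq$, while absolute semi-simplicity is conjugation-invariant). The proposition then upgrades their stable conjugacy to $\mathcal{H}(\calo_F)$-conjugacy, producing $h_0\in\mathcal{H}(\calo_F)\subset\rH(F)$ with $h_0\ga h_0^{-1}=g_0^{-1}\ga g_0$, so $g_0^{-1}\ga g_0\in\mathfrak{O}_\ga$ as required.

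The main obstacle is the passage from stable to rational $\rH(F)$-conjugacy of $\ga$ and $g_0^{-1}\ga g_0$: the square-root argument inside the $\theta$-split torus only works over $\Fbar$, and promoting this to $F$ genuinely requires the integral rigidity provided by Proposition~\ref{Prop: relative Kazdhan lemma}. This is where both the nice and simply-connected hypotheses on $(\mathcal{G},\mathcal{H})$ are actually used.
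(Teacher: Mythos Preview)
Your proof is correct and follows essentially the same route as the paper: lift $\gamma$ to $g_0\in\mathcal{G}(\calo_F)$, reduce $\sigma$-stability of the double coset to showing $\gamma$ and $\sigma(g_0)g_0=g_0^{-1}\gamma g_0$ lie in the same $\rH(F)$-orbit, establish this first stably, and then upgrade via Proposition~\ref{Prop: relative Kazdhan lemma}. The only difference is that where the paper cites \cite[Lemma~7.1.4]{AizGourdescent} to obtain stable conjugacy of $g_0$ and $\sigma(g_0)$ (hence of $\gamma$ and $\tilde{\gamma}=\sigma(g_0)g_0$), you instead argue this directly by placing $\gamma$ in a maximal $\theta$-split torus over $\Fbar$ and taking a square root there; the two arguments are interchangeable.
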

\begin{proof}
First, let $x\in \calq(F)$ be any semi-simple element such that there exists $g\in \G(F)$ such that $s(g)=x$. To show that the orbit of $g$ is good, it suffices to show that $g$ lies in the same $\rH(F)\times\rH(F)$-orbit of $\sig(g)$. Lemma 7.1.4 of \cite{AizGourdescent} implies that $\sig(g)$ lies in the \emph{stable} $\rH\times \rH$-orbit of $g$.
In particular, $s(g)$ lies in the same stable $H$-orbit as 
\[
\tilde{s}(g) := s(\sig(g)) = \sig(g)g.
\]

Now let $\ga\in \calq(\calo_F)$ be absolutely semi-simple as in the statement. Then Corollary \ref{Cor: surjects on integral points} implies there exists $g\in \mathcal{G}(\calo_F)$ with $s(g) = \ga$ and we show that the $H(F)\times H(F)$-orbit of $g$ is good. Note that 
 \[
 \tilde{\ga} = \tilde{s}(g)= \sig(g)g
 \]
 also lies in $\calq(\calo_F)$. Proposition \ref{Prop: relative Kazdhan lemma} now implies that $\ga$ and $\tilde{\ga}$ lie in the same $\mathcal{H}(\calo_F)$-orbit. It is easy to see that this implies that
 \[
 \sig(g) = h_1gh_2
 \]
 for some $h_1,h_2\in H(F),$ proving the claim.
\end{proof}

\begin{Cor}\label{Cor: absolute nice lifts}
Suppose that $\ga\in \calq(\calo_F)$ is absolutely semi-simple. Then there exists $g\in \mathcal{G}_{\ga}(\calo_F)$ such that $s(g) = \ga$.
\end{Cor}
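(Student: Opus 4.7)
The plan is to realise $g$ as a modification, by an element of $\mathcal{H}(\calo_F)$, of an arbitrary lift of $\ga$ supplied by Corollary \ref{Cor: surjects on integral points}. The modification will be provided by the orbit-equivalence part of Proposition \ref{Prop: relative Kazdhan lemma}; in other words, the entire statement should drop out of combining the two previous results and requires no new input.

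Concretely, I would first invoke Corollary \ref{Cor: surjects on integral points} to produce some $g_0 \in \mathcal{G}(\calo_F)$ with $s(g_0) = g_0\sig(g_0) = \ga$. From the identity $\sig(g_0) = g_0^{-1}\ga$, one immediately computes
\[
\tilde\ga := \sig(g_0)\,g_0 = g_0^{-1}\ga\, g_0 \in \mathcal{G}(\calo_F),
\]
and a direct check shows $\sig(\tilde\ga) = \sig(g_0)\sig(\sig(g_0)) = \sig(g_0)g_0 = \tilde\ga$, so $\tilde\ga \in \calq(\calo_F)$. Exactly as in the proof of Corollary \ref{Cor: absolute good orbit}, the application of \cite[Lemma 7.1.4]{AizGourdescent} to $g_0$ places $\tilde\ga$ in the stable $\rH$-orbit of $\ga$; since $\tilde\ga$ is a $\mathcal{G}(\calo_F)$-conjugate of $\ga$ it remains absolutely semi-simple, so the second assertion of Proposition \ref{Prop: relative Kazdhan lemma} furnishes some $h \in \mathcal{H}(\calo_F)$ with $h\ga h^{-1} = \tilde\ga$.

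The desired element is then $g := g_0 h$. Indeed, the conjugation relation $h\ga h^{-1} = g_0^{-1}\ga g_0$ rearranges to $(g_0 h)\,\ga = \ga\,(g_0 h)$, placing $g \in \mathcal{G}_\ga(\calo_F)$; while $h \in \mathcal{H}$ gives $\sig(h) = h^{-1}$, so
\[
s(g) = g_0 h\,\sig(h)\,\sig(g_0) = g_0\,\sig(g_0) = \ga.
\]
I do not anticipate any substantive obstacle: the statement is essentially a reformulation of Corollary \ref{Cor: absolute good orbit}, repackaging the conjugacy relation between $\ga$ and $\tilde\ga$ so as to produce a preimage of $\ga$ that already lies in its own centralizer. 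The only subtlety worth flagging in the write-up is the verification that $\tilde\ga \in \calq(\calo_F)$ is absolutely semi-simple, so that Proposition \ref{Prop: relative Kazdhan lemma} legitimately applies; both properties are immediate from the fact that $\tilde\ga$ is $\mathcal{G}(\calo_F)$-conjugate to $\ga$.
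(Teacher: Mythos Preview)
Your argument is correct and uses the same ingredients as the paper---Corollary~\ref{Cor: surjects on integral points}, \cite[Lemma 7.1.4]{AizGourdescent}, and Proposition~\ref{Prop: relative Kazdhan lemma}---but is organised more efficiently. The paper first passes through Corollary~\ref{Cor: absolute good orbit} to obtain $\sig(g_0)=h_1 g_0 h_2$ with $h_1,h_2\in H(F)$, sets $g'=g_0h_1$, checks $g'\in G_\ga(F)$ by a commutativity computation, and only then applies Proposition~\ref{Prop: relative Kazdhan lemma} to the relation $\sig(g_0)g_0=h_1\ga h_1^{-1}$ to force $h_1\in\mathcal{H}(\calo_F)H_\ga(F)$ and extract an integral lift. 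You short-circuit this: once you observe that $\tilde\ga=\sig(g_0)g_0=g_0^{-1}\ga g_0$ is an absolutely semi-simple element of $\calq(\calo_F)$ stably $\rH$-conjugate to $\ga$, Proposition~\ref{Prop: relative Kazdhan lemma} hands you $h\in\mathcal{H}(\calo_F)$ directly, and $g=g_0h$ is the answer. The only cosmetic addition worth making explicit is that $\tilde\ga\in\calq(\calo_F)$ because $\tilde\ga=s(\sig(g_0))$ with $\sig(g_0)\in\mathcal{G}(\calo_F)$, rather than merely because $\sig(\tilde\ga)=\tilde\ga$ (which only places it in $\mathcal{G}^\sig$).
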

\begin{proof}
As before, Lang's theorem implies that there exists $g_0\in \mathcal{G}(\calo_F)$ such that $s(g_0)=\ga$. By Corollary \ref{Cor: absolute good orbit}, the $H(F)\times H(F)$-orbit of $g_0\in \mathcal{G}(\calo_F)$ is good. In particular, there exist $h_1,h_2\in H(F)$ such that $\sig(g_0) = h_1g_0h_2$. Set $g':=g_0h_1$ and note that $\ga=s(g')$. 

We claim that
\[
g'\sig(g') = \sig(g')g'.
\]
Indeed,
\begin{align*}
\sig(g')g'=h_1^{-1}\sig(g_0)g_0h_1&=h_1^{-1}\sig(g_0)\sig(\sig(g_0))h_1\\
                            &=h_1^{-1}(h_1g_0h_2)(h_2^{-1}\sig(g_0)h_1^{-1})h_1=g_0{\sig}(g_0)= g_0h_1(h_1^{-1}\sig(g_0))= g'\sig(g').    
\end{align*}
This now implies that 
\[
(g')^{-1}\ga g'= (g')^{-1}g'\sig(g') g'=\sig(g')g' = g\sig(g') = \ga,
\]
so $g'\in G_\ga(F)$. Inspecting the previous argument, we find the equation
\[
\sig(g_0)g_0=h_1g'\sig(g')h_1^{-1}=h_1\ga h_1^{-1}.
\]Noting that $\sig(g_0)g_0\in \calq(\calo_F)$, Proposition \ref{Prop: relative Kazdhan lemma} now implies that
\[
h_1\in \mathcal{H}(\calo_F)H_\ga(F).
\]
Write $h_1 = hh'$ for $h\in \mathcal{H}(\calo_F)$ and $h'\in H_\ga(F)$, and set $g:=g_0h\in \mathcal{G}(\calo_F)$. Then clearly $s(g)=\ga$ and
\[
g= g'(h')^{-1}\in \mathcal{G}_\ga(\calo_F).\qedhere
\]
\end{proof}
\subsection{Orbital integrals}\label{Section: orbital ints and reduction} Fix a connected symmetric pair $(\G,\rH)$ over $F$. We first define the local relative orbital integrals that come most directly from the relative trace formula associated to (global) $\rH$-periods, then reduce via the symmetrization map to orbital integrals on the symmetric space. First, we need to introduce the following terminology.
\begin{Def}
We say that $\ga\in \G(F)$ is \emph{relatively (resp. regular) semi-simple} if $s(\ga)\in \calq(F)$ is (resp. regular) semi-simple with respect to the $\rH(F)$-action.
\end{Def}

Set $\rH_1:=\rH$. Fix an element $x_2\in \calq(F)$ and let $\rH_2\subset \G$ denote its stabilizer in $\G$ under the twisted-conjugation action:
\begin{align*}
    \G\times \calq&\lra \calq\\
    (g,x)&\longmapsto gx\sig(g).
\end{align*}
Then $\rH_2$ is the fixed-point subgroup of $\G$ with respect to the involution 
\[
\theta_x(h) = x\theta(h)x^{-1}
\]and gives a pure inner form of $\rH_1$. Set $s_2:\G\lra \calq$ to be $s_2(g)=gx_2\sig(g)$.

For $f\in C_c^\infty(\G(F))$, and $\ga\in \G(F)$ a relatively semi-simple element, we define the {relative orbital integral} of $f$ by
\begin{equation*}
\RO(\ga,f) = \displaystyle\iint_{(\rH_1\times \rH_2)_\ga(F)\backslash \rH_1(F)\times \rH_2(F)}f(h_1^{-1}\ga h_2) {d\dot{h}_1d\dot{h}_2},
\end{equation*}
where $d\dot{h}_1d\dot{h}_2$ denotes the invariant measure determined by our choice of Haar measures on $\rH_i(F)$ and $(\rH_1\times \rH_2)_\ga(F)$. 

 We now explain the reduction to orbital integrals on the symmetric space for regular semi-simple orbits. 
 Assume that $\ga$ is regular relatively semi-simple, then \cite[Proposition 7.2.1]{AizGourdescent} implies that $(\rH_1\times \rH_2)_\ga \cong \rH_{s_2(\ga)}$. Set $x=s_2(\ga)\in\calq^{rss}(F)$. 
The pushforward map $(s_2)_!:C_c^\infty(\G(F))\lra C^\infty_c(\calq(F))$ given by
\[
(s_2)_!(f)(s_2(g)) = \displaystyle \int_{\rH_2(F)}f(gh)dh
\]
is surjective onto the sub-module $C^\infty_{c,2}(\calq(F))$ of functions whose support is contained in the image of $s_2$. Setting $\Phi:=(s_2)_!(f)$ and $x=s_2(\ga)$, the isomorphism $(\rH_1\times \rH_2)_\ga \cong \rH_{x}$ and absolute convergence of the relative orbital integral gives
\begin{align}\label{eqn: OI first reduction}
\Orb(x,\Phi):= \int_{\rH_x(F)\backslash \rH(F)}\Phi(h^{-1}xh){d\dot{h}}=\RO(\ga,f),
\end{align}
where $d\dot{h}$ denotes the invariant measure on $\rH_x(F)\backslash \rH(F)$ induced from our choice of Haar measures. More generally, for any $f\in C^\infty_{c}(\calq(F))$ and regular semi-simple $x\in \calq^{rss}(F),$ we set
\begin{align*}
\Orb(x,f)= \int_{\rH_x(F)\backslash \rH(F)}f(h^{-1}xh){d\dot{h}}
\end{align*}
to be the orbital integral of $f$ at $x$.

Finally, let $\ka: \mathfrak{C}(\rH_x,\rH;F)\to \cc^\times$ be a character. Recalling (\ref{eqn: abelianized}), we define the relative $\kappa$-orbital integral by
\begin{align*}
    \Orb^\ka(x,f)=\sum_{[x']\in \calo_{st}(x)}e(\rH_{x'})\ka(\inv(x,x'))\Orb(x',f),
\end{align*}
where $e(\rH_{x'})$ is the Kottwitz sign of $\rH_{x'}$ \cite{Kottwitzsign}. When $\ka=1$ is the trivial character, we use the standard notation $\SO(x,-)$ and refer to this as the \emph{stable orbital integral} at $x$.

\subsection{Descent of orbital integrals}\label{Section: absolute descent}
Now suppose that $(\mathcal{G},\mathcal{H})$ is a nice, simply-connected symmetric pair, and let $(\G,\rH)$ be the generic fiber. Let $x=x_{as}x_{tu}\in \calq^{rss}(\calo_F)$ be the topological Jordan decomposition. Proposition \ref{Prop: relative top decomp} implies that $x_{as},x_{tu}\in \calq(\calo_F)$.  Proposition \ref{Prop: relative Kazdhan lemma} and Lemma \ref{Lem: integral quotient} imply that $\calq_{x_{as}}$ is a connected smooth scheme over $\calo_F$.

Recalling the symmetrization map at $x_{as}$ $s_{x_{as}}: \G_{x_{as}}\lra \calq$ given by 
\[
s_{x_{as}}(g) = g{x_{as}}\sig(g)= {x_{as}} s(g).
\]
By Proposition \ref{Prop: relative Kazdhan lemma} and Corollary \ref{Cor: absolute nice lifts}, this gives a closed embedding of $\calo_F$-schemes
\begin{align*}
    s_{x_{as}}:\calq_{x_{as}}&\lra \calq\\
    x&\longmapsto   {x_{as}} x=x{x_{as}}.
\end{align*}
In particular, $s_{x_{as}}(x_{tu}) = x$. In this section, we will relate orbital integrals on $\calq(F)$ with orbital integrals on $\calq_{x_{as}}(F)$ via descent. Strictly speaking, the descent is not to $\calq_{x_{as}}(F)$, but its image under $s_{x_{as}}$. The next lemma says that the these give the same value.
\begin{Lem}\label{Lem: all good}
 Suppose that $x=x_{as}x_{tu}\in \calq(\calo_F)$ as above. Then
 \[
 \displaystyle\int_{\rH_{{x_{as}},x}(F)\backslash \rH_{x_{as}}(F)}\bfun_{s_{{x_{as}}}(\calq_{{x_{as}}}(\calo_F))}(h^{-1}xh)dh = \displaystyle\int_{\rH_{{x_{as}},x_{tu}}(F)\backslash \rH_{x_{as}}(F)}\bfun_{\calq_{x_{as}}(\calo_F)}(h^{-1}x_{tu}h)dh. 
 \]
  Furthermore,
 \[
 e(\rH_{x})=e(\rH_{{x_{as}},x_{tu}}).
 \]
\end{Lem}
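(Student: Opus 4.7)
The plan is to reduce both identities to the observation that conjugation by $\rH_{x_{as}}(F)$ acts trivially on $x_{as}$, so translation by $x_{as}$ intertwines the two integrands in a completely controlled way.

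First I would verify the identification $\rH_x = \rH_{x_{as},x_{tu}} = \rH_{x_{as},x}$. Because $x_{as} = \lim_{m\to\infty} x^{q^{lm}}$ in the topological Jordan decomposition (see Section \ref{Section: top jordan decomp}), any element of $\G$ that commutes with $x$ automatically commutes with $x_{as}$, hence with $x_{tu} = x x_{as}^{-1}$ as well. Consequently, $\G_x = \G_{x_{as}}\cap\G_{x_{tu}}$, and intersecting with $\rH$ gives $\rH_x = \rH_{x_{as}}\cap\rH_{x_{tu}}$. On the other hand, since $\rH_{x_{as}}$ acts on $\calq_{x_{as}}\subset \G_{x_{as}}$ by conjugation, the stabilizer of $x_{tu}$ in $\rH_{x_{as}}$ is exactly $\rH_{x_{as}}\cap\G_{x_{tu}} = \rH_x$, and the same computation applied to $x$ gives $\rH_{x_{as},x} = \rH_x$. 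This coincidence of groups, including their fixed Haar measures, settles the second assertion $e(\rH_x) = e(\rH_{x_{as},x_{tu}})$ immediately, and it shows the two quotient domains of integration are literally the same measure space.

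Next I would rewrite the integrands. For any $h \in \rH_{x_{as}}(F)$, the fact that $h$ commutes with $x_{as}\in Z(\G_{x_{as}})$ yields
\[
h^{-1}xh = h^{-1}x_{as}x_{tu}h = x_{as}(h^{-1}x_{tu}h) = s_{x_{as}}(h^{-1}x_{tu}h),
\]
and by Lemma \ref{Lem: top nil reg rel} the element $h^{-1}x_{tu}h$ lies in $\calq_{x_{as}}(F)$. By Proposition \ref{Prop: relative Kazdhan lemma} and Corollary \ref{Cor: absolute nice lifts}, the symmetrization map $s_{x_{as}}:\calq_{x_{as}}\to \calq$ is a closed immersion of $\calo_F$-schemes, so it is injective on $\calo_F$-points. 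Thus $s_{x_{as}}(h^{-1}x_{tu}h) \in s_{x_{as}}(\calq_{x_{as}}(\calo_F))$ if and only if $h^{-1}x_{tu}h \in \calq_{x_{as}}(\calo_F)$, giving the pointwise identity
\[
\bfun_{s_{x_{as}}(\calq_{x_{as}}(\calo_F))}(h^{-1}xh) = \bfun_{\calq_{x_{as}}(\calo_F)}(h^{-1}x_{tu}h).
\]

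Combining the previous two paragraphs, the integrands are equal as functions on the (identical) quotient spaces, and the asserted equality of integrals follows. The only subtlety is the compatibility of closed immersions with passage to integral points, which is immediate once one knows $s_{x_{as}}$ descends to a closed embedding over $\calo_F$; I do not anticipate any genuine obstacle, since all of this has been set up in the preceding sections. The whole argument is essentially a bookkeeping consequence of the integral structure of the descendant and the commutativity of $x_{as}$ with its own centralizer.
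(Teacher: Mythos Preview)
Your proof is correct and follows essentially the same approach as the paper's, which simply remarks that the identity is immediate because $x_{as}\in\calq(\calo_F)$ is central in $\G_{x_{as}}$ (hence for $\rH_{x_{as}}$) and integral, with Lemma~\ref{Lem: top nil reg rel} supplying $x_{tu}\in\calq_{x_{as}}^{rss}(F)$ and $\rH_{x_{as},x}=\rH_{x_{as},x_{tu}}$. You have unpacked exactly these two ingredients, and your closed-immersion argument for the equivalence of integrality is a clean way to justify what the paper means by ``integral''; one could also argue it directly from $s_{x_{as}}(y)=x_{as}y$ with $x_{as}\in\mathcal{G}(\calo_F)$ invertible.
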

Note that this later integral is nothing but the $\rH_{x_{as}}(F)$-orbital integral $\Orb(x_{tu},\bfun_{\calq_{x_{as}}(\calo_F)})$ on $\calq_{{x_{as}}}(F)$.
\begin{proof}
This is immediate at ${x_{as}}\in \calq(\calo_F)$ is central for $\rH_{x_{as}}$ and integral. Furthermore,  Lemma \ref{Lem: top nil reg rel} ensures that $x_{tu}\in\calq_{{x_{as}}}^{rss}(F)$ and  
\[
\rH_{{x_{as}},x}=\rH_{{x_{as}},x_{tu}}.
\]
The identity of Kottwitz signs also follows.
\end{proof} 

Recall that we have fixed measures $dh$ on $\rH(F)$, $dt$ on $\rH_x(F)$, and $dh_{x_{as}}$ on $\rH_{{x_{as}}}(F)$ so that our chosen hyperspecial maximal compact subgroups have volume $1$. In particular, both $\calq(\calo_F)$ and $\calq_{{x_{as}}}(\calo_F)$ have volume one with these choices. 

\begin{Prop}\label{Prop: absolute descent}
Let  $\ka\in \mathfrak{C}(\rH_x,\rH;F)^D$ and let $\ka_{{x_{as}}}$ denote the restriction of the character $\ka$ to 
\[
\mathfrak{C}(\rH_x,\rH_{x_{as}};F)\subset\mathfrak{C}(\rH_x,\rH;F).
\]

Under the above assumptions, we have the identity
\[
\Orb^\ka(x,\bfun_{\calq(\calo_F)}) = \Orb^{\ka_{{x_{as}}}}(x_{tu},\bfun_{\calq_{{x_{as}}}(\calo_F)}).
\]
\end{Prop}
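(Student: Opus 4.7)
The plan is to reduce the identity to three pieces: a bijection of stable orbits, compatibility of invariants and Kottwitz signs, and a descent of each individual orbital integral.

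First I would parametrize $\calo_{st}(x)$. Since conjugation preserves the topological Jordan decomposition, any $g \in \rH(\Fbar)$ taking $x$ to $x'$ satisfies $x'_{as} = gx_{as}g^{-1}$ and $x'_{tu} = gx_{tu}g^{-1}$. The orbital integral vanishes unless $x'$ has an $\rH(F)$-conjugate in $\calq(\calo_F)$, so I may assume $x' \in \calq(\calo_F)$. Proposition \ref{Prop: relative top decomp} then places $x'_{as} \in \calq(\calo_F)$, and Proposition \ref{Prop: relative Kazdhan lemma} provides $k \in \mathcal{H}(\calo_F)$ with $k^{-1}x'_{as}k = x_{as}$. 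Replacing $x'$ by its $\mathcal{H}(\calo_F)$-conjugate, I assume $x'_{as} = x_{as}$; then $g \in \rH_{x_{as}}(\Fbar)$, the cocycle $\tau \mapsto \tau(g)^{-1}g$ takes values in $\rH_{x_{as}, x_{tu}}(\Fbar) = \rH_x(\Fbar)$, and $x'_{tu}$ lies in the stable $\rH_{x_{as}}$-orbit of $x_{tu}$ inside $\calq_{x_{as}}(F)$. Reversing the argument via $s_{x_{as}}: \calq_{x_{as}} \hookrightarrow \calq$ yields a bijection $\calo_{st}(x_{tu}) \iso \calo_{st}(x)$ under which $\inv(x,x') = \inv(x_{tu}, x'_{tu})$ as classes in $H^1(F,\rH_x)$, so $\ka(\inv(x,x')) = \ka_{x_{as}}(\inv(x_{tu}, x'_{tu}))$, and by Lemma \ref{Lem: all good} the Kottwitz signs also match.

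Next I would establish $\Orb(x', \bfun_{\calq(\calo_F)}) = \Orb(x'_{tu}, \bfun_{\calq_{x_{as}}(\calo_F)})$ for each such $x'$. The support analysis is central: for $h \in \rH(F)$ with $h^{-1}x'h \in \calq(\calo_F)$, Proposition \ref{Prop: relative top decomp} gives $h^{-1}x_{as}h \in \calq(\calo_F)$, and Proposition \ref{Prop: relative Kazdhan lemma} then forces $h \in \rH_{x_{as}}(F)\mathcal{H}(\calo_F)$. Writing $h = h_1 k$ with $h_1 \in \rH_{x_{as}}(F)$ and $k \in \mathcal{H}(\calo_F)$, the identity $h_1^{-1}x'h_1 = x_{as}(h_1^{-1}x'_{tu}h_1) = s_{x_{as}}(h_1^{-1}x'_{tu}h_1)$, combined with $s_{x_{as}}$ being a closed $\calo_F$-embedding and $k \in \mathcal{H}(\calo_F)$ acting on the $\calo_F$-scheme $\calq$, shows that $h^{-1}x'h \in \calq(\calo_F)$ if and only if $h_1^{-1}x'_{tu}h_1 \in \calq_{x_{as}}(\calo_F)$, independently of $k$.

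To conclude, Proposition \ref{Prop: relative Kazdhan lemma} yields $\rH_{x_{as}}(F) \cap \mathcal{H}(\calo_F) = \mathcal{H}_{x_{as}}(\calo_F)$, so the product map $\rH_{x_{as}}(F) \times \mathcal{H}(\calo_F) \to \rH_{x_{as}}(F)\mathcal{H}(\calo_F)$ has $\mathcal{H}_{x_{as}}(\calo_F)$-torsor fibers. With our normalizations $\vol(\mathcal{H}(\calo_F)) = \vol(\mathcal{H}_{x_{as}}(\calo_F)) = 1$ and $\rH_x = \rH_{x_{as}, x'_{tu}}$ (Lemma \ref{Lem: all good}), a Fubini argument trivializes the $k$-integral and produces $\Orb(x', \bfun_{\calq(\calo_F)}) = \Orb(x'_{tu}, \bfun_{\calq_{x_{as}}(\calo_F)})$. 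Summing over the bijection $\calo_{st}(x_{tu}) \iso \calo_{st}(x)$ with the matched weights then gives the desired identity. I expect the main difficulty to be the clean support analysis and the measure bookkeeping; both rely essentially on the integrality statements in Proposition \ref{Prop: relative Kazdhan lemma}, which is why the hypothesis that $x_{as}$ is $\calo_F$-integral and absolutely semi-simple is so crucial.
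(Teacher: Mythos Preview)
Your proposal is correct and follows essentially the same route as the paper: both use the uniqueness of the topological Jordan decomposition together with Proposition~\ref{Prop: relative Kazdhan lemma} to normalize $x'_{as}=x_{as}$ for integral $x'$, land $\inv(x,x')$ in $\mathfrak{C}(\rH_x,\rH_{x_{as}};F)$, and then reduce each orbital integral via the support constraint $h\in\rH_{x_{as}}(F)\mathcal{H}(\calo_F)$. The only cosmetic difference is that the paper packages the last step as an equality of orbit counts $\#\mathcal{H}(\calo_F)[x']=\#\mathcal{H}_{x_{as}}(\calo_F)[x']$ rather than your Fubini argument, but these are equivalent under the volume normalizations.
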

\begin{proof}
Our choice of $x\in \calq(\calo_F)$ fixes a bijection
\begin{align*}
    \left\{\text{$\rH(F)$-orbits in  } (\rH_x\backslash \rH)(F)\right\}&\iso \cald(\rH_x,\rH;F)\\
    x'\qquad\qquad\quad&\mapsto \inv(x,x').
\end{align*}
For each rational orbit $\rH(F)\cdot x'$, we decompose this with respect to the action of $\mathcal{H}(\calo_F)$
\[
\rH(F)\cdot x' = \bigsqcup_{y}\mathcal{H}(\calo_F)\cdot y
\]
with $y\in \rH(F)\cdot x'$ running over a set of $\mathcal{H}(\calo_F)$-orbit representatives and set 
\[
\# \mathcal{H}(\calo_F)[x']:=\#\{y\in \mathcal{H}(\calo_F)\backslash\rH(F)\cdot x': y\in \calq(\calo_F)\}
\]
to be the number of $\mathcal{H}(\calo_F)$-orbits in $\rH(F)\cdot x'$ with an integral representative. With our choices of measure, we see that
\[
\Orb^\ka(x,\bfun_{\calq(\calo_F)}) = \sum_{x'}e(\rH_{x'})\ka(\inv(x,x'))\#\mathcal{H}(\calo_F)[x'].
\]

Now if $x'=x_{as}'x_{tu}'\in \calq(\calo_F)$ lies in the stable orbit of $x$, there is an $h\in \rH(\Fbar)$ such that
\[
x_{as}x_{tu}=hx_{as}'x_{tu}'h^{-1} = (hx_{as}'h^{-1})(hx_{tu}'h^{-1}).
\] It is not immediately clear that $hx_{as}'h^{-1}$ and $hx_{tu}'h^{-1}$ are $F$-rational; nevertheless, if we pass to a finite extension $L/F$ such that $hx_{as}'h^{-1},hx_{tu}'h^{-1}\in \G(L)$, uniqueness of the topological Jordan decomposition forces
\[
x_{as}=hx_{as}'h^{-1}\text{ and }x_{tu}=hx_{tu}'h^{-1},
\]which a postiori implies the elements are $F$-rational. In particular, if $x$ and $x'$ lie in the same stable orbit, the same holds for their absolutely semi-simple and topologically nilpotent parts.

Applying Proposition \ref{Prop: relative Kazdhan lemma} now to ${x_{as}}$ and $x_{as}'$, we may conjugate $x'$ by an element of $\mathcal{H}(\calo_F)$ to assume that ${x_{as}}=x_{as}'$. This implies that $x'\in \G_{{x_{as}}}(F)$ and if $x=hx'h^{-1},$ then $h\in \rH_{x_{as}}(\Fbar)$. In particular, $x$ and $x'$ lie in the same stable $\rH_{x_{as}}$-orbit in $s_{{x_{as}}}(\calq_{{x_{as}}}(F))$. This implies that
\[
\inv(x,x')\in \cald(\rH_x,\rH_{x_{as}};F)\subset \mathfrak{C}(\rH_x,\rH_{x_{as}};F).
\]

Furthermore, we claim that
\[
\# \mathcal{H}(\calo_F)[x']=\# \mathcal{H}_{x_{as}}(\calo_F)[x'],
\]
where the right-hand side counts the number of $\mathcal{H}_{x_{as}}(\calo_F)$-orbits in $\rH_{x_{as}}(F)\cdot x'$ with an integral representative. To see this, assume $y\in\mathcal{H}(\calo_F)[x']$ represents an $\mathcal{H}(\calo_F)$-orbit with $y\in \calq(\calo_F)$. Then $hyh^{-1}=x'$ for some $h\in \rH(F)$, and if $y=y_{as}y_{tn}$ is the associated decomposition, then $y_{as}$ lies in the same $\rH(F)$-orbit of $x_{as}$. Now Proposition \ref{Prop: relative Kazdhan lemma} forces 
\[
h\in \mathcal{H}(\calo_F)\rH_{x_{as}}(F).
\]
Thus, up to changing $y$ by an element of $\mathcal{H}(\calo_F)$ to ensure that $h\in \rH_{x_{as}}(F)$, we have
\[
\mathcal{H}_{x_{as}}(\calo_F)\cdot y\subset \rH_{x_{as}}(F)\cdot x'.
\]
On the other hand, suppose $y'\in \mathcal{H}(\calo_F)\cdot y$ is another element such that $h'y'(h')^{-1}=x'$ for some $h'\in \rH_{x_{as}}(F)$. Since $hyh^{-1}=x'$ for $h\in \rH_{x_{as}}(F)$, we see that
\[
y_{as}=x_{as}'=y_{as}',
\]
forcing $y'\in \mathcal{H}_{x_{as}}(\calo_F)\cdot y$. Thus, the intersection of $\mathcal{H}(\calo_F)\cdot y$ with $\rH_{x_{as}}(F)\cdot x'$ consists of a single $\mathcal{H}_{x_{as}}(\calo_F)$-orbit. This sets up a bijection between the two sets of orbits and the claim follows. 

Using our normalization of measures, Lemma \ref{Lem: all good} implies the result.
\end{proof}


\section{The symmetric space and invariant theory}\label{Section: symmetric}
We now specialize to the case of primary interest. Let $W_1$ and $W_2$ be two $n$-dimensional Hermitian spaces with respect to our fixed quadratic extension of $p$-adic $E/F$. Set $W=W_1\oplus W_2$ be the $2n$-dimensional Hermitian space. Let $\ep\in \U(W)$ be an element of order $2$ inducing the eigenvalue decomposition $W=W_1\oplus W_2$. We then have the involution $\theta(g)=\ep g\ep$ on both $\U(W)$ and $\GL(W)$, with corresponding fixed-point subgroups 
 \[
 \U(W_1)\times \U(W_2)\subset \U(W)\text{ and }\GL(W_1)\times \GL(W_2)\subset \GL(W).
 \]
Set $\G=\U(W)$ and $\rH= \U(W_1)\times \U(W_2)$. In this section, we study the associated symmetric space
\[
\calq(F):=U(W)/ U(W_1)\times U(W_2).
\]

\subsection{The linear symmetric space}
For the purposes of invariant theory, it is useful to first consider the base change of the variety $\calq$ to $E$, the $F$-points of which are isomorphic to 
\[
\cals(F)=\Res_{E/F}(\calq_E)(F) =\GL(W)/\GL(W_1)\times \GL(W_2).
\]

Consider the symmetrization map $s:\GL(W)\lra \cals$. Since $H^1(F,\GL(W_1)\times\GL(W_2))=0$, we have a surjection on $F$-points and an identification
\[
s:\GL(W)/\GL(W_1)\times \GL(W_2)\iso \cals(F).
\]Given an element $x=s(g)\in \cals(F)$, write
$
x= \left(\begin{array}{cc}A&B\\C&D\end{array}\right).
$ Then the block matrices satisfy the polynomial relations
\[
A^2=I_n+BC,\: D^2=I_n+CB,\: AB=BD,\: CA=DC.
\]
These relations are not sufficient to cut out $\cals$, cutting out instead the subvariety $\G^\sig\subset \GL(W)$ of elements satisfying $\sig(g) = g$. Unwinding the definition, this is the variety of $x\in \GL(W)$ such that $\ep x$ is an involution. We have a decomposition of $\G^\sig$ into irreducible components
\[
\G^\sig=\bigsqcup_{i=1}^{2n}\G^\sig_i
\]
where if for any $x\in \G^\sig(F)$, we have an eigenspace decomposition 
\[
W=W_{x,1}\oplus W_{x,-1},
\]
for the involution $\ep x$ and
\begin{equation}\label{eqn: components}
    \G^\sig_i(F) = \{x\in \G^\sig(F): \dim(W_{x,-1}) = i\}.
\end{equation}

In general, a computation of the characteristic polynomial of $\ep x$ distinguishes these components. It is clear that $\cals= \G^\sig_n,$ since $\ep s(g)= \ep g\ep g^{-1} \ep$ is conjugate to $\ep$. 
\begin{Lem}\label{Lem: minus sign}
An element $x\in \G^\sig(F)$ lies in $\G^\sig_i(F)$ if and only if $-x\in \G^\sig_{2n-i}(F)$. In particular, $x\in \cals(F)$ if and only if $-x\in \cals(F)$.
\end{Lem}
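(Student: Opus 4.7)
The plan is to unwind the definition of the components $\G^\sig_i$ and observe that negation swaps the eigenspaces of the involution $\ep x$.

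First I would check that negation preserves $\G^\sig(F)$: since $\sig(g) = \theta(g^{-1})$ is an anti-involution that is additive up to inversion, one computes $\sig(-x) = \theta((-x)^{-1}) = \theta(-x^{-1}) = -\sig(x) = -x$ whenever $\sig(x)=x$. Also $-x \in \GL(W)$ since $x$ is invertible. So $-x \in \G^\sig(F)$ whenever $x$ is.

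Next, recall that $\G^\sig$ is exactly the locus where $\ep x$ is an involution, and the components are distinguished by $\dim(W_{x,-1})$, the dimension of the $(-1)$-eigenspace of $\ep x$ on $W$. Replacing $x$ by $-x$ replaces $\ep x$ by $-\ep x$, which simply interchanges the $(+1)$- and $(-1)$-eigenspaces:
\begin{equation*}
W_{-x,1} = W_{x,-1}, \qquad W_{-x,-1} = W_{x,1}.
\end{equation*}
Since $\dim(W_{x,1}) + \dim(W_{x,-1}) = 2n$, if $x \in \G^\sig_i(F)$ (so $\dim W_{x,-1} = i$) then $\dim W_{-x,-1} = 2n-i$, i.e.\ $-x \in \G^\sig_{2n-i}(F)$. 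The converse is the same argument applied to $-x$. This gives the first assertion.

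For the ``in particular'' part, the identification $\cals = \G^\sig_n$ already noted (since $\ep\, s(g) = \ep g \ep g^{-1}\ep$ is conjugate to $\ep$, hence has $n$-dimensional $(-1)$-eigenspace) combined with the first part for $i=n$ gives $x \in \cals(F) \iff -x \in \G^\sig_{2n-n}(F) = \G^\sig_n(F) = \cals(F)$. I expect no real obstacle here; the only subtlety is verifying $-x \in \G^\sig$ at the start, which is immediate once $\sig$ is written out explicitly.
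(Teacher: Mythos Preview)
Your proof is correct and follows essentially the same approach as the paper: both observe that passing from $x$ to $-x$ replaces $\ep x$ by $-\ep x$, swapping the $(\pm1)$-eigenspaces, and then invoke the description of the components $\G^\sig_i$ together with $\cals=\G^\sig_n$. Your additional check that $-x\in\G^\sig(F)$ is a harmless elaboration the paper leaves implicit.
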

\begin{proof}
 Since multiplication by $-I_{2n}$ multiplies all the eigenvalues of the involution $\ep x$ by $-1$, the lemma immediately follows from \eqref{eqn: components} and the identification $\cals= \G^\sig_n$.
\end{proof}
Realizing the embedding $\GL(W_1)\times \GL(W_2)\subset \GL(W)$ in a block-diagonal fashion, the action of $(g,h)\in \GL(W_1)\times \GL(W_2)$ on $x\in \cals(F)$ is given by
\[
(g,h)\cdot x =\left(\begin{array}{cc}gAg^{-1}&gBh^{-1}\\hCg^{-1}&hDh^{-1}\end{array}\right).
\]
The following lemma gives a nice set of orbit representatives of the semi-simple elements of $\cals(F)$.
\begin{Lem}\cite[Lemma 4.3]{JRUniqueness}\label{Lem: orbit reps}
 Each semi-simple element $x\in \cals^{ss}(F)$ is $\GL(W_1)\times \GL(W_2)$-conjugate to an element of the form
\begin{equation}\label{Prop: GL orbit reps}
x(A,n_1,n_2):=\left(\begin{array}{cccccc}A&0&0&A-I_m&0&0\\0&I_{n_1}&0&0&0&0\\0&0&-I_{n_2}&0&0&0\\A+I_m&0&0&A&0&0\\0&0&0&0&I_{n_1}&0\\0&0&0&0&0&-I_{n_2}\end{array}\right),
\end{equation}
with $n=m+n_1+n_2$ and $A\in \fgl_m(E)$ semi-simple without eigenvalues $\pm1$ and unique up to conjugation. Moreover, $x(A,n_1,n_2)$ is regular if and only if $n_1=n_2=0$ and $A$ is regular in $\fgl_n(E)$.
\end{Lem}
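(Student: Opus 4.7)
The plan is to analyze the structure of a semi-simple $x\in\cals(F)$ via its eigenspace decomposition together with the relation $\epsilon x\epsilon=x^{-1}$ (which follows from $\sigma(x)=x$), and then construct the normal form block-by-block. Since $x\in\cals^{ss}(F)$, by Lemma \ref{Lem: semi-simple match} the element $x$ is semi-simple as an endomorphism of $W$, so we have the eigenspace decomposition $W=\bigoplus_{\lambda\in\overline{E}^\times}W_\lambda$. The relation $\epsilon x\epsilon=x^{-1}$ forces $\epsilon(W_\lambda)=W_{\lambda^{-1}}$; in particular $W_{\pm1}$ are $\epsilon$-stable while for $\lambda\neq\pm1$ the subspace $W_\lambda\oplus W_{\lambda^{-1}}$ is $\epsilon$-stable with $\epsilon$ interchanging the two summands.

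The next step is to exploit that $x\in\cals$ means $y:=\epsilon x$ is an involution conjugate to $\epsilon$, so its $\pm1$ eigenspaces $W_y^\pm$ each have dimension $n$. Setting $n_1^{(i)}=\dim(W_{+1}\cap W_i)$, $n_2^{(i)}=\dim(W_{-1}\cap W_i)$ for $i=1,2$, and $m=\sum_{\lambda\neq\pm1}\dim W_\lambda$ (summed over a chosen representative for each pair $\{\lambda,\lambda^{-1}\}$), I would compute:
\begin{align*}
\dim W_1 &= m+n_1^{(1)}+n_2^{(1)}=n,\qquad \dim W_2= m+n_1^{(2)}+n_2^{(2)}=n,\\
\dim W_y^+ &= m+n_1^{(1)}+n_2^{(2)}=n,\qquad \dim W_y^-= m+n_1^{(2)}+n_2^{(1)}=n,
\end{align*}
where the $m$ in the eigenspace counts comes from the fact that $\epsilon$ restricts to an isomorphism $W_\lambda\iso W_{\lambda^{-1}}$, forcing $\epsilon$ to have balanced $\pm1$ eigenspaces on each $W_\lambda\oplus W_{\lambda^{-1}}$. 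Comparing these four equations forces $n_1^{(1)}=n_1^{(2)}=:n_1$ and $n_2^{(1)}=n_2^{(2)}=:n_2$ and $m+n_1+n_2=n$; this is the crucial balance that makes the normal form exist.

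I would then select bases adapted to these pieces. On $W_{+1}$, diagonalize $\epsilon$ to produce the $I_{n_1}$ block on each side; on $W_{-1}$, similarly produce $-I_{n_2}$. For the mixed part $W^{(mix)}=\bigoplus_{\lambda\neq\pm 1}W_\lambda$, fix a basis $\{e_i\}$ of the chosen representatives with $x$ acting via a semisimple $A\in\fgl_m(E)$ (with no eigenvalue $\pm1$), and extend to $W_{\lambda^{-1}}$-summands via $\{\epsilon e_i\}$; then change basis to the $\epsilon$-eigenbasis $\{e_i+\epsilon e_i,\;e_i-\epsilon e_i\}$ in $W_1\oplus W_2$. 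A direct computation of $x$ in this basis yields the off-diagonal entries $A-I_m$ and $A+I_m$ via the identities $x(e_i+\epsilon e_i)=Ae_i+A^{-1}\epsilon e_i$ rewritten in the new basis (after normalization). Matching coefficients gives the explicit block form. Uniqueness of $A$ up to $\GL_m(E)$-conjugation follows because the eigenvalues of $A$ are precisely the eigenvalues of $x$ on $W^{(mix)}$ (with chosen representatives). The regularity statement follows by computing the stabilizer: the centralizer in $H=\GL(W_1)\times\GL(W_2)$ of $x(A,n_1,n_2)$ has dimension equal to $\dim Z_{\GL_m(E)}(A)+n_1^2+n_2^2$ (by a block computation), which is minimized exactly when $n_1=n_2=0$ and $A$ is regular in $\fgl_n(E)$.

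The main obstacle is the precise basis manipulation producing the specific off-diagonal blocks $A\pm I_m$; this is mostly bookkeeping, but the appearance of $A\pm I_m$ rather than just $A$ reflects the change of basis between the eigenbasis of $x$ and the $\epsilon$-eigenbasis of $W_1\oplus W_2$, and correctly tracking the $\pm1$ shifts requires care. Given that the conclusion is asserted in \cite[Lemma 4.3]{JRUniqueness}, I would rely on the cited computation for the detailed verification.
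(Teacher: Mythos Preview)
The paper does not give its own proof of this lemma; it simply cites \cite[Lemma 4.3]{JRUniqueness}. So there is nothing in the paper to compare against beyond the citation.

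Your eigenspace approach over $\overline{E}$ and the dimension count are correct, and the identities forcing $n_1^{(1)}=n_1^{(2)}$ and $n_2^{(1)}=n_2^{(2)}$ are exactly the structural input needed. There is, however, a rationality gap in your construction of $A$: you build $A$ from a basis of $\bigoplus_{\lambda\text{ chosen}}W_\lambda$, where you pick one representative from each pair $\{\lambda,\lambda^{-1}\}$. This subspace is in general not defined over $E$ (e.g.\ when $\lambda$ and $\lambda^{-1}$ are Galois-conjugate over $E$), so the matrix $A$ you obtain need not lie in $\fgl_m(E)$ as claimed.

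The repair is to argue directly over $E$ on the mixed part $W^{(mix)}$. Write $x|_{W^{(mix)}}=\left(\begin{smallmatrix}A'&B'\\C'&D'\end{smallmatrix}\right)$ in the $\epsilon$-eigenspace blocks. Since $x$ has no eigenvalue $\pm1$ on $W^{(mix)}$, the relation $A'^2-I=B'C'$ shows $B'$ is invertible. Conjugating by $\left(\begin{smallmatrix}I&0\\0&(A'-I)^{-1}B'\end{smallmatrix}\right)\in\GL(W^{(mix)}_1)\times\GL(W^{(mix)}_2)$ and using $A'B'=B'D'$ and $A'^2=I+B'C'$ yields exactly $\left(\begin{smallmatrix}A'&A'-I\\A'+I&A'\end{smallmatrix}\right)$, so $A=A'$ is $E$-rational from the outset. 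This block computation is presumably what the cited reference does, and it replaces your descent step entirely. Your dimension argument over $\overline{E}$ still serves to show $\dim W^{(mix)}_1=\dim W^{(mix)}_2=m$, which is needed for $B'$ to be square.
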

This has the following simple consequence. 
\begin{Lem}\label{Lem: characteristic comparison}
Suppose that
\[
x= \left(\begin{array}{cc}A&B\\C&D\end{array}\right)\in \cals(F).
\]
The characteristic polynomial of $x$ is 
\[
\car_x(t) = \det(t^2I_n-2tA+I_n).
\]
In particular, if we let $\chi_x(t):=\det(tI_n-A)$ denote the characteristic polynomial for the $n\times n$ matrix $A$, the eigenvalues of $x$ are given by
\[
\Omega(x):=\{\al\pm\sqrt{\al^2-1}:\al\text{ is a root of }\chi_x(t)\};
\]
here we abuse notation in the non-archimedean setting and use $\pm\sqrt{a}$ to denote the two square roots of $a\in F^{alg}$ as a set.
\end{Lem}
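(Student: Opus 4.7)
The plan is to exploit the defining relations of $\cals$ inside $\GL(W)$ (namely $A^2 = I + BC$, $D^2 = I + CB$, $AB = BD$, $CA = DC$) to collapse the $2n \times 2n$ determinant to an $n \times n$ one via the Schur complement. The crucial point is that $AB = BD$ gives the intertwining $(tI - A)^{-1} B = B(tI - D)^{-1}$ whenever these inverses exist, and similarly $CA = DC$ gives $C(tI-A)^{-1} = (tI-D)^{-1}C$. Generically in $t$, both $tI - A$ and $tI - D$ are invertible, so these identities let us substitute freely.

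First I would establish the preliminary fact that $A$ and $D$ have the same characteristic polynomial as elements of $\mathrm{Mat}_n$. One clean way is to use that $\epsilon x \in \GL(W)$ is conjugate to $\epsilon$ (since $s(g)\epsilon = g\epsilon g^{-1}$), so $\det(tI_{2n} - \epsilon x) = (t^2 - 1)^n$; expanding the left side by Schur complement and the intertwining relations (together with $CB = D^2 - I$) yields $(t^2 - 1)^n \cdot \det(tI - A)/\det(tI - D)$, forcing $\det(tI - A) = \det(tI - D)$. Alternatively, one can invoke Lemma \ref{Lem: orbit reps}: semi-simple elements of $\cals$ are conjugate under $\GL(W_1) \times \GL(W_2)$ to $x(A,n_1,n_2)$, whose $A$- and $D$-blocks are identically $\mathrm{diag}(A, I_{n_1}, -I_{n_2})$, and the general case follows by the density of $\cals^{ss}$ in $\cals$.

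Second, I compute directly:
\begin{equation*}
\det(tI_{2n} - x) = \det\begin{pmatrix} tI - A & -B \\ -C & tI - D \end{pmatrix} = \det(tI - A)\,\det\bigl(tI - D - C(tI-A)^{-1}B\bigr).
\end{equation*}
Applying the intertwining relation, $C(tI-A)^{-1}B = (tI - D)^{-1}CB = (tI - D)^{-1}(D^2 - I)$, so the Schur complement equals $(tI - D)^{-1}\bigl((tI - D)^2 - (D^2 - I)\bigr) = (tI - D)^{-1}(t^2 I - 2tD + I)$. Combining factors and using the equality of characteristic polynomials $\det(tI - A) = \det(tI - D)$ established above, one obtains
\begin{equation*}
\det(tI_{2n} - x) = \det(t^2 I_n - 2tD + I_n) = \det(t^2 I_n - 2tA + I_n),
\end{equation*}
where the final equality holds because $\det(t^2 I - 2tM + I) = \prod_i(t^2 - 2t\alpha_i + 1)$ depends only on the eigenvalues $\alpha_i$ of $M$. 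This identity was derived under the generic assumption that $tI - A$ is invertible, but both sides are polynomials in $t$ of degree $2n$, so they agree identically.

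Finally, the eigenvalue description is an immediate consequence: factoring $\det(t^2 I_n - 2tA + I_n) = \prod_i (t^2 - 2t\alpha_i + 1)$ over $F^{\mathrm{alg}}$ and solving each quadratic yields $t = \alpha_i \pm \sqrt{\alpha_i^2 - 1}$. The only technical wrinkle is the need to justify working generically in $t$, which is handled by the polynomial density argument. I do not foresee any serious obstacle beyond verifying the characteristic polynomial equality of $A$ and $D$, which is the one place where the full structure of $\cals$ (and not just the individual block relations) enters the argument.
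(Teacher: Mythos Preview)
Your argument is correct, but it takes a different route from the paper. The paper first reduces (by Zariski density) to the regular semi-simple locus, where Lemma~\ref{Lem: orbit reps} puts $x$ in the normal form $x(A,0,0)$; in that form the lower-right block equals $A$ and the off-diagonal blocks $I_n-A$, $-I_n-A$ visibly commute, so the identity $\det\left(\begin{smallmatrix}P&Q\\R&S\end{smallmatrix}\right)=\det(PS-QR)$ (valid when $RS=SR$) applies directly and gives $\det(t^2I_n-2tA+I_n)$ in one line. Your approach instead stays with the general block form and uses the Schur complement together with the intertwining relation $C(tI-A)^{-1}=(tI-D)^{-1}C$; this is perfectly valid but forces you to first establish $\det(tI-A)=\det(tI-D)$ as a separate step. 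The paper's route avoids that auxiliary lemma at the cost of invoking the orbit classification, while your route is more intrinsic to the block relations of $\cals$ but slightly longer. Both are fine; the paper's is quicker here because the normal form already has $A=D$.
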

\begin{proof}
It suffices to prove the claim on the Zariski-open and dense set $\cals^{rss}(F)$ of regular semi-simple elements. Using the previous lemma, this amounts to the following identity
\begin{align*}
   \det\left(\begin{array}{cc}
    tI_n-A &I_n-A  \\
    -I_n-A & tI_n-A 
\end{array}\right)&=\det\left((tI_n-A)^2-(I_n-A)(-I_n-A)\right)\\
                &=\det\left(t^2I_n-2tA+I_n\right).
\end{align*}
Here we apply the block-matrix determinant identity
\[
\det\left(\begin{array}{cc}
    A & B \\
    C & D
\end{array}\right)=\det(AD-BC),
\] which holds whenever $CD=DC$.
\end{proof}
 Consider now the $\GL(W_1)\times \GL(W_2)$-invariant map $\chi:\cals\to \A^n$ given by sending $x\in \cals(F)$ to the coefficients of the monic polynomial $\chi_x(t) =\det(tI_n-A)$.
\begin{Lem}\label{Lem: cat quotient}
The pair $(\A^n,\chi)$ is a categorical quotient for $(\GL(W_1)\times \GL(W_2),\cals)$.
\end{Lem}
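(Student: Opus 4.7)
The plan is to show that the inclusion
\[
F[c_1, \ldots, c_n] \subseteq F[\cals]^{\GL(W_1) \times \GL(W_2)}
\]
is an equality, where $c_1, \ldots, c_n$ are the coefficients of $\chi_x(t) = \det(tI_n - A)$. The inclusion follows from Lemma \ref{Lem: characteristic comparison}: the $c_i$ depend only on the upper-left block $A$, which transforms by $\GL(W_1)$-conjugation and is fixed by $\GL(W_2)$. For the reverse inclusion, I set $Q := \Spec F[\cals]^{\GL(W_1) \times \GL(W_2)}$ (the GIT quotient, well-defined as $\GL(W_1) \times \GL(W_2)$ is reductive) and consider the induced morphism $\tilde\chi : Q \to \A^n$; the strategy is to show $\tilde\chi$ is an isomorphism of varieties.

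First, I verify $\tilde\chi$ is bijective on $\overline{F}$-points. The closed $G(\overline{F})$-orbits on $\cals$, which parameterize $Q(\overline{F})$, are the semi-simple orbits; by Lemma \ref{Lem: orbit reps} these are represented uniquely by the normal forms $x(A, n_1, n_2)$, with $A \in \fgl_m(\overline{F})$ semi-simple having no $\pm 1$ eigenvalues and $m + n_1 + n_2 = n$. The characteristic polynomial of such a representative equals $\chi_A(t)(t-1)^{n_1}(t+1)^{n_2}$, and any monic polynomial of degree $n$ uniquely determines $(n_1, n_2)$ as the multiplicities of $\pm 1$ while the remaining factor determines $A$ up to conjugation. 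Hence $\tilde\chi$ gives a bijection on closed orbits.

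Second, I assemble the geometric inputs needed for Zariski's Main Theorem. A regular semi-simple orbit $x(A, 0, 0)$ with $A$ regular has stabilizer $\{(g, g) : gA = Ag\}$ in $\GL_n \times \GL_n$, of dimension $n$, so $\dim Q = \dim \cals - (2n^2 - n) = n = \dim \A^n$. Both $Q$ and $\A^n$ are irreducible ($\cals$ is a quotient of the connected group $\GL(W)$) and normal ($\A^n$ is smooth, while $F[\cals]^G = F[\cals] \cap \mathrm{Frac}(F[\cals])^G$ is an intersection of normal rings inside a common fraction field). Since $F$ has characteristic zero, the bijective dominant morphism $\tilde\chi$ between irreducible varieties of the same dimension is birational; being quasi-finite, birational, and surjective onto the normal target $\A^n$, Zariski's Main Theorem implies $\tilde\chi$ is an isomorphism, completing the proof.

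The main input is really Lemma \ref{Lem: orbit reps}, which supplies the explicit orbit representatives; once this is in hand, the argument reduces to routine dimension and normality bookkeeping plus Zariski's Main Theorem. An alternative approach would be a Chevalley-style restriction theorem identifying $F[\cals]^G$ with Weyl-group invariants on a maximal $\theta$-split torus of $\cals$, but that requires explicitly producing the little Weyl group for the symmetric pair $(\GL_{2n}, \GL_n \times \GL_n)$ and is strictly more structural work than the approach above.
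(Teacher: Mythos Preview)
Your proof is correct but takes a genuinely different route from the paper. The paper invokes \emph{Igusa's criterion} directly on the map $\chi:\cals\to\A^n$: it checks (after passing to the algebraic closure) that $\chi$ is surjective and that over the Zariski-open locus of polynomials with distinct roots avoiding $\pm1$, each fiber consists of a single orbit; both checks are read off from Lemma~\ref{Lem: orbit reps} and Lemma~\ref{Lem: characteristic comparison}. This bypasses constructing the GIT quotient entirely.

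Your argument instead forms the GIT quotient $Q=\Spec F[\cals]^{\GL(W_1)\times\GL(W_2)}$ and shows the induced $\tilde\chi:Q\to\A^n$ is an isomorphism via Zariski's Main Theorem, after verifying bijectivity on closed points (again from Lemma~\ref{Lem: orbit reps}), equality of dimensions, and normality of both sides. Both approaches hinge on the same explicit orbit classification; the paper's use of Igusa packages the verification more economically and avoids invoking normality of the invariant ring or ZMT, while your approach is slightly more structural and would generalize well to settings where one already has a GIT quotient in hand. Neither is clearly superior---yours is a perfectly standard alternative in the GIT literature.
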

\begin{proof}
As the statement is geometric, we may assume that $E=E^{alg}$. We make use of Igusa's criterion \cite[Section 3]{ZhangFourier}: let a reductive group $H$ act on an irreducible affine variety $X$. Let $Q$ be a normal irreducible variety, and let $\pi:X\to Q$ be a morphism that is constant on $H$ orbits such that
\begin{enumerate}
\item $Q-\pi(X)$ has codimension at least two,
\item there exists a nonempty open subset $Q'\subset Q$ such that the fiber $\pi^{-1}(q)$ of $q\in Q'$ contains exactly one orbit.
\end{enumerate}
Then $(Q,\pi)$ is a categorical quotient of $(H,X)$. 

To show this, we use Lemma \ref{Lem: orbit reps}. Given $n$-tuple $(a_1,\ldots, a_n)$, one may form the polynomial
\[
p_{(a_i)}(t)=t^n+a_1t^{n-1}+\cdots+a_n = \displaystyle\prod_{i=1}^m(t-\al_i)\times(t-1)^{n_1}(t+1)^{n_2},
\]
for certain $\al_i\in E$. Setting $A=\diag(\al_1,\ldots,\al_m)$, we see that $\chi(x(A,n_1,n_2)) = (a_1,\ldots, a_n)$ so that $\chi$ is surjective. This shows the first requirement of Igusa. 

For the second, we consider the subset $Q'\subset \A^n$ given by
 $$Q'=\{(a_1,\ldots, a_n):p_{(a_i)}(t)=\det(tI_n-A)\text{ for some }A\in \fgl^{rss}_n(E)-(D_1\cup D_{-1})(E)\},$$
 where for any $a\in E$, $D_a=\{X\in \fgl_n: \det(aI_n-X)=0\}$. Lemma \ref{Lem: orbit reps} shows that $Q'$ is the image of the \emph{regular semi-simple locus} of $\cals$. On the other hand, if $x\in \cals(F)$ such that $\chi(x)\in Q'$, Lemma \ref{Lem: characteristic comparison} implies that $x$ is regular semi-simple as an element of $\GL(W)$. In particular, $x\in \cals^{rss}(F)$. The uniqueness statement of Lemma \ref{Lem: orbit reps} now implies that there is a unique orbit in the fiber over the coefficients of $\det(tI_n-A)$. This implies the second criterion for the open set $Q'$.
\end{proof}
\begin{Rem}
In the process of our proof, we showed that $\cals^{rss}\subset \GL(W)^{rss}$. This shows that this symmetric space is quasi-split (see (\ref{eqn: quasi-split condition})). As this notion is geometric, this implies that $\calq=\U(W)/\U(W_1)\times\U(W_2)$ is also quasi-split.
\end{Rem}

A similar argument gives the following lemma for the quotient by the $\GL(W_2)$-factor.
\begin{Lem}\label{Lem: halfway}
Let $R: \cals\to \fgl(W_1)$ denote the map 
\[
\left(\begin{array}{cc}A&B\\C&D\end{array}\right)\longmapsto A.
\]
Then $(\fgl(W_1), R)$ is a categorical quotient for the $GL(W_2)$-action on $\cals$ given by
\[
h\cdot \left(\begin{array}{cc}A&B\\C&D\end{array}\right)= \left(\begin{array}{cc}A&Bh^{-1}\\hC&hDh^{-1}\end{array}\right).
\]
The map $R$ is $\GL(W_1)$-equivariant with respect to the adjoint action on $\fgl(W_1)$.
\end{Lem}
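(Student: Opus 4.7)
The plan is to invoke Igusa's criterion as in the proof of Lemma \ref{Lem: cat quotient}. The equivariance claims are formal from the block form of the action: the $\GL(W_2)$-action preserves the upper-left block, making $R$ invariant, while the embedded $\GL(W_1)$-factor conjugates the upper-left block by the given element, giving the asserted $\GL(W_1)$-equivariance with respect to the adjoint action on $\fgl(W_1)$.

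For surjectivity of $R$, I would exhibit the explicit preimage
\[
x_A := \begin{pmatrix} A & A - I_n \\ A + I_n & A \end{pmatrix}
\]
of any $A \in \fgl(W_1)$. A direct block computation verifies the $\G^\sig$-relations $A^2 = I_n + BC$, $D^2 = I_n + CB$, $AB = BD$, and $CA = DC$, and the block determinant identity yields $\det(x_A) = \det(A^2 - (A - I_n)(A + I_n)) = 1$, so $x_A \in \GL(W) \cap \G^\sig$. To place $x_A$ in the correct component $\cals = \G^\sig_n$, I would note that $\trace(\ep x_A) = 0$ combined with $\ep x_A$ being an involution forces both $\pm 1$-eigenspaces of $\ep x_A$ to have dimension $n$. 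Hence $R$ is surjective, and the codimension condition in Igusa's criterion is vacuous.

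The substantive step is verifying condition (2) of Igusa's criterion. Let $U \subset \fgl(W_1)$ be the Zariski-open locus of elements with no eigenvalue equal to $\pm 1$, so that $A \pm I_n$ are both invertible for $A \in U$. Any $x \in R^{-1}(A)$ satisfies $BC = A^2 - I_n = (A - I_n)(A + I_n)$, forcing both $B$ and $C$ to be invertible. The relations $AB = BD$ and $BC = A^2 - I_n$ then uniquely determine $D = B^{-1}AB$ and $C = B^{-1}(A^2 - I_n)$ from $A$ and $B$; one readily checks these choices also satisfy the remaining $\G^\sig$-relations. Thus the fiber $R^{-1}(A)$ is parametrized by $B \in \GL_n$. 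On the other hand, acting on $x_A$ by $h \in \GL(W_2)$ replaces $B$ by $(A - I_n)h^{-1}$, which ranges over all invertible matrices as $h$ varies over $\GL(W_2)$. Hence the $\GL(W_2)$-orbit of $x_A$ exhausts $R^{-1}(A)$, establishing the generic-fiber condition.

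The main obstacle I anticipate is establishing the automatic invertibility of $B$ on $U$, which is essential for the parametrization to close up into a single orbit. This contrasts with the full semi-simple locus of Lemma \ref{Lem: orbit reps}, where the discrete invariants $n_1, n_2$ cause the $B$ block to degenerate and produce multiple orbits in a generic fiber. Once these points are in hand, Igusa's criterion yields that $(\fgl(W_1), R)$ is a categorical quotient for the $\GL(W_2)$-action on $\cals$.
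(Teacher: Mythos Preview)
Your proof is correct and follows the same overall strategy as the paper: both apply Igusa's criterion, and both check the trivial equivariance claims and surjectivity by writing down an explicit preimage.

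The one genuine difference is in verifying Igusa's second criterion. The paper takes $Q'=\fgl(W_1)^{rss}$, uses Lemma~\ref{Lem: characteristic comparison} to force any $x$ with $R(x)\in Q'$ into $\cals^{rss}$, and then invokes the uniqueness statement of Lemma~\ref{Lem: orbit reps} to place $x$ in the $\GL(W_1)\times\GL(W_2)$-orbit of $x(A,0,0)$; a short manipulation then absorbs the $\GL(W_1)_A$-factor into the $\GL(W_2)$-action. By contrast, you work on the larger open set $U=\{A:\pm1\notin\mathrm{Spec}(A)\}$ and parametrize the fiber $R^{-1}(A)$ directly by $B\in\GL_n$ (using that $BC=A^2-I_n$ is invertible), then check transitivity by hand. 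Your route is more elementary and self-contained, avoiding any appeal to the orbit classification; the paper's route reuses structure already established and makes the link to $\cals^{rss}$ explicit. Both are perfectly valid. The only small point worth adding to your write-up is the one-line check (via $\trace(\ep x)=0$) that every element in your $B$-parametrized family actually lands in the component $\cals=\G^\sig_n$, so that the $\G^\sig$-fiber you compute really coincides with $R^{-1}(A)\subset\cals$; you state this for $x_A$ but should note it holds for all $B$.
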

\begin{proof}
The set of $\GL(W_1)\times \GL(W_2)$-orbit representatives given by (\ref{Prop: GL orbit reps}) shows that the map $R$ is surjects onto the semi-simple locus of $\fgl(W_1)$. Indeed, it is a simple exercise that
\[
\left(\begin{array}{cc}
    A &I_n  \\
    A^2-I_n & A 
\end{array}\right)\in \cals(F)
\] for any $A\in \fgl(W_1)$. This shows $R$ is surjective, proving the first criterion of Igusa.

Let $Q'=\fgl(W_1)^{rss}$. Then as before, if $x\in \cals(F)$ satisfies $R(x) \in Q'$, Lemma \ref{Lem: characteristic comparison} implies that $x\in \cals^{rss}(F)$. The uniqueness statement of Lemma \ref{Lem: orbit reps} thus implies that
\[
x=(g,h)\cdot x(A,0,0)
\]for some $h\in \GL(W_1)$ and for some $g\in \GL(W_1)_A$. However, for such $g$ we have
\[
\left(\begin{array}{cc}
    gAg^{-1} &g(A-I_n)h^{-1}  \\
    h(A+I_n)g^{-1} & hAh^{-1} 
\end{array}\right)=\left(\begin{array}{cc}
    A &(A-I_n)(hg^{-1})^{-1}  \\
    hg^{-1}(A+I_n) & (hg^{-1})A(hg^{-1})^{-1} 
\end{array}\right),
\]
so that $x$ is in the same $\GL(W_2)$-orbit as $x(A,0,0)$. The second criterion of Igusa thus follows.
\end{proof}

\subsection{The unitary symmetric space}\label{Section: unitary orbits}
We now account for various arithmetic aspects of $\calq$. We have the exact sequence of pointed sets
\begin{equation}\label{eqn: long exact sequence}
1\to U(W_1)\times U(W_2)\to U(W)\to \calq(F)\to \ker\left[H^1(F,\U(W_1)\times \U(W_2))\to H^1(F,\U(W))\right].
\end{equation}

\begin{Lem}\label{Lem: Forms in first quotient}
Let $E/F$ be a quadratic extension of $p$-adic fields. There exist two isomorphism classes of decomposition
\[
W_1\oplus W_2=W=W_1'\oplus W_2'.
\]We have a bijection of $F$-points
\begin{equation*}
\calq(F) = U(W)/U(W_1)\times U(W_2)\bigsqcup U(W)/U(W'_1)\times U(W'_2)
\end{equation*}
where the first quotient is identified with the image of $s:U(W)\to \calq(F)$. 
\end{Lem}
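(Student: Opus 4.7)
The plan is to exploit the long exact sequence of pointed sets \eqref{eqn: long exact sequence}, compute the kernel on the right using the classification of Hermitian forms, and then match the two resulting $U(W)$-orbits with the two isomorphism classes of decompositions.

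First, I would recall that for any $n$-dimensional Hermitian space $V$ over the quadratic extension $E/F$ of $p$-adic fields, the pointed set $H^1(F,\U(V))$ parametrizes isomorphism classes of $n$-dimensional Hermitian spaces via the discriminant in $F^\times/\Nm_{E/F}(E^\times)\cong \zz/2\zz$; in particular $H^1(F,\U(V))\cong \zz/2\zz$. Applied to $W_1,W_2,W$, we obtain
\[
H^1(F,\U(W_1)\times \U(W_2))\cong \zz/2\zz\oplus\zz/2\zz, \qquad H^1(F,\U(W))\cong\zz/2\zz,
\]
and I would check that under these identifications the map in \eqref{eqn: long exact sequence} sends $(\alpha_1,\alpha_2)\mapsto \alpha_1+\alpha_2$, since a pair of Hermitian spaces $(W_1',W_2')$ is sent to the orthogonal sum $W_1'\oplus W_2'$ whose discriminant is the product of the discriminants of the summands. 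Thus the kernel is the diagonal $\zz/2\zz\hookrightarrow (\zz/2\zz)^{\oplus 2}$, and contains exactly the two classes $(\text{disc}(W_1),\text{disc}(W_2))$ and $(\eta\cdot \text{disc}(W_1),\eta\cdot \text{disc}(W_2))$, where $\eta$ is the non-trivial class. Translated back to spaces, this gives precisely the two (ordered) isomorphism classes of decompositions $W=W_1'\oplus W_2'$ claimed in the statement.

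Next, I would use \eqref{eqn: long exact sequence} to conclude that the fibers of the connecting map $\calq(F)\to \ker[H^1(F,\rH)\to H^1(F,\G)]$ are exactly the $U(W)$-orbits on $\calq(F)$, and that the connecting map surjects onto this kernel. Combined with the previous paragraph, this partitions $\calq(F)$ into two $U(W)$-orbits. To identify these orbits concretely with the two quotients in the statement, I would use the description of $\calq$ as an irreducible component of $\G^\sigma$: an element $x\in \calq(F)\subset \U(W)(F)$ corresponds to an involution $\ep x\in\U(W)(F)$ with $n$-dimensional $(-1)$-eigenspace, which produces an orthogonal decomposition $W=W_{x,+1}\oplus W_{x,-1}$ into two $n$-dimensional Hermitian subspaces. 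The $U(W)$-orbit of $x$ then depends only on the isomorphism class of this ordered decomposition, with stabilizer $\U(W_{x,+1})\times \U(W_{x,-1})$.

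Finally, the base point of $\calq(F)$ (the image of $1\in U(W)$ under $s$) corresponds to the original decomposition $W=W_1\oplus W_2$, with stabilizer $U(W_1)\times U(W_2)$; this identifies the ``trivial'' orbit in the kernel with $U(W)/U(W_1)\times U(W_2)$ and, by the exactness of \eqref{eqn: long exact sequence}, with the image of $s$. The remaining orbit corresponds to the other decomposition $W=W_1'\oplus W_2'$, and is naturally identified with $U(W)/U(W_1')\times U(W_2')$. The only point requiring some care — and the main (minor) obstacle — is verifying that the connecting homomorphism genuinely identifies the orbits with the two classes of ordered decompositions in the way just described, which amounts to checking that the cocycle attached to an orbit representative $x$ is precisely the one classifying the pure inner twist $(W_{x,+1},W_{x,-1})$ of $(W_1,W_2)$; this is a routine Galois-cohomological computation.
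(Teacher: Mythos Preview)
Your proposal is correct and is exactly the ``basic Galois cohomology calculation'' that the paper alludes to but omits. The paper's own proof consists of the single sentence ``This is a basic Galois cohomology calculation. We omit the details,'' so your argument is precisely the intended one, carried out in full.
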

\begin{proof}
This is a basic Galois cohomology calculation. We omit the details.
\end{proof}


We pause to introduce some notation. The symmetrization map takes the form $s(g)=g\ep g^\dagger \ep$, where $\dagger$ denotes the adjoint map such that
\[
U(W)=\{g\in \GL(W): gg^\dagger = I_W\}.
\]
Writing this out, we have
\[
s(g)=\left(\begin{array}{cc}AA^\ast-BB^\ast&CA^\ast-DB^\ast\\BD^\ast-AC^\ast&DD^\ast-CC^\ast\end{array}\right) \qquad\text{ for $g= \left(\begin{array}{cc}A&B\\C&D\end{array}\right)$.}
\]
Here we need to be precise about the overloaded notation. For $A\in \End(W_1)$, $A^\ast$ is the adjoint operator with respect to the Hermitian form $\Phi_1$ on $W_1$:
\[
\la Av,w\ra_1= \la v,A^\ast w\ra_1\qquad\text{ for all } v,w\in W_1;
\]similarly with $D\in \End(V_2)$. For $B\in \Hom_{E}(W_2,W_1)$, the endomorphism $B^\ast\in\Hom_{E}(W_1,W_2)$ is defined by 
\[
\la Bv,w\ra_1=\la v,B^\ast w\ra_2,\qquad\text{ for all }w\in W_1,\:v\in W_2;
\]
the map $C\mapsto C^\ast$ is analogous. In particular, any element $x\in \calq(F)$ may be written
\[
x=\left(\begin{array}{cc}A&B\\-B^\ast&D\end{array}\right),
\]
where $A\in \Herm(W_1)$, $D\in \Herm(W_2)$, and $B\in \Hom_{E}(W_2,W_1)$. As before, the blocks satisfy the polynomial relations
\[
A^2=I_n-BB^\ast,\: D^2=I_n-B^\ast B,\: AB=BD, B^\ast A= DB^\ast.
\]
As in the linear case, we define the morphism $\chi:\calq\to \A^n$ by sending $x$ to the coefficients of the monic polynomial $\chi_x(t)= \det(tI-A)$. 
\begin{Lem}\label{Lem: cat quotient unitary}
The pair $(\A^n,\chi)$ is a categorical quotient for the $\U(W_1)\times \U(W_2)$-action on $\calq$.
\end{Lem}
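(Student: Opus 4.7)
The plan is to adapt the Igusa-criterion strategy of Lemma \ref{Lem: cat quotient}, with the key simplification that we may reduce directly to that lemma via base change. Recall that under the identification $\U(W)_{E} \cong \GL(W)$ coming from $\Res_{E/F}$, the base change $\calq_{E}$ is naturally identified with $\cals$ in a way that carries the $(\U(W_1)\times \U(W_2))_{E}$-action to the $\GL(W_1)\times\GL(W_2)$-action, and the morphism $\chi$ on $\calq$ to the morphism $\chi$ on $\cals$ considered in Lemma \ref{Lem: cat quotient}. Since $\rH = \U(W_1)\times\U(W_2)$ is reductive over $F$, the formation of the categorical quotient by $\rH$ commutes with flat base change, giving
\[
F[\calq]^{\rH}\otimes_{F} E \;\cong\; E[\cals]^{\GL(W_1)\times\GL(W_2)} \;=\; E[t_1,\dots,t_n].
\]
The morphism $\chi: \calq \to \A^{n}$ is defined over $F$ and $\rH$-invariant, so it induces an $F$-algebra map $F[t_1,\dots,t_n] \to F[\calq]^{\rH}$; by the display above, this map becomes an isomorphism after the faithfully flat extension $F \hookrightarrow E$, so it is already an isomorphism over $F$. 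This identifies $(\A^{n},\chi)$ with $\Spec(F[\calq]^{\rH})$.

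Alternatively, and more in the spirit of Lemma \ref{Lem: cat quotient}, one can apply Igusa's criterion directly over the algebraic closure. Invariance of $\chi$ is immediate since $(g,h) \in \U(W_1)\times\U(W_2)$ acts on the top-left block as $A \mapsto gAg^{-1}$. For surjectivity at $\bar{F}$-points, one uses that over $\bar F$ every matrix becomes a ``Hermitian'' matrix (the Galois-twisted structure trivializes), so the block construction $x=(\begin{smallmatrix} A & B \\ -B^{\ast} & A\end{smallmatrix})$ with $BB^{\ast}=I_n-A^{2}$ realizes any prescribed $A$; this mirrors the argument of Lemma \ref{Lem: cat quotient}. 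Taking $Q'\subset \A^{n}$ to be the open locus of polynomials with pairwise distinct roots, none equal to $\pm 1$, Lemma \ref{Lem: characteristic comparison} shows that $\chi^{-1}(Q')\subset \calq^{rss}$, so uniqueness of orbits in each fiber reduces, after passage to $\bar{F}$, to the uniqueness statement in Lemma \ref{Lem: orbit reps}.

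The main technical point in the direct Igusa approach is verifying that a single geometric $\GL(W_1)\times \GL(W_2)$-orbit on $\cals_{\bar F}$ corresponds to a single geometric $\U(W_1)\times\U(W_2)$-orbit on $\calq_{\bar F}$ over the regular semi-simple locus; this is automatic from the base-change identification of the two actions, which is precisely what makes the first (base-change) route cleaner. I would therefore present the proof via the base-change argument, recording the Igusa verification as a self-contained sanity check.
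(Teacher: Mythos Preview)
Your proposal is correct and aligns with the paper's proof, which consists of the single sentence: ``As the assertion is geometric, we may prove this over the algebraic closure, at which point the claim follows from Lemma~\ref{Lem: cat quotient}.'' Your second route is exactly this argument spelled out, and your first route (faithfully flat descent along $F\hookrightarrow E$, using that invariants of a reductive group commute with flat base change) is a more explicit justification of the phrase ``the assertion is geometric''---indeed, that phrase is hiding precisely the base-change compatibility you invoke.
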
 
\begin{proof}
As the assertion is geometric, we may prove this over the algebraic closure, at which point the claim follows from Lemma \ref{Lem: cat quotient}.
\end{proof}
\subsubsection{The contraction map}
We also have a unitary version of Lemma \ref{Lem: halfway}:
\begin{Lem}\label{Lem: unitary halfway}
Define the \emph{contraction map} $R:\calq\to \Herm(W_1)$ given by $$\left(\begin{array}{cc}A&B\\-B^\ast&D\end{array}\right)\longmapsto A.$$ The pair $(\Herm(W_1),R)$ is a categorical quotient for the $\U(W_2)$-action on $\calq$.
\end{Lem}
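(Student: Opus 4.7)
The plan is to mimic exactly the proof of Lemma \ref{Lem: cat quotient unitary}: since being a categorical quotient is a geometric property (one checks the universal mapping property after base change to the algebraic closure $\Fbar$), the statement will reduce to the linear analogue already established in Lemma \ref{Lem: halfway}.

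More concretely, first I would pass to $\Fbar$ and observe that $E\otimes_F \Fbar \cong \Fbar\times\Fbar$ via the two embeddings of $E$, which unwinds the restriction of scalars: the group $\U(W_2)_{\Fbar}$ becomes $\GL(W_2)_{\Fbar}$, the symmetric space $\calq_{\Fbar}$ becomes $\cals_{\Fbar}$, and the Hermitian condition on $\Herm(W_1)$ trivializes so that $\Herm(W_1)_{\Fbar}\cong \fgl(W_1)_{\Fbar}$. Under these identifications, an element $x=\left(\begin{smallmatrix}A&B\\-B^\ast&D\end{smallmatrix}\right)\in\calq$ is sent to a matrix of the shape used in Lemma \ref{Lem: halfway}, and the contraction map $R$ becomes the projection $x\mapsto A$ appearing there. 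The $\U(W_2)_{\Fbar}\cong \GL(W_2)_{\Fbar}$ action on $\calq_{\Fbar}$ coincides with the $\GL(W_2)$-action on $\cals$ used in Lemma \ref{Lem: halfway}. Applying that lemma then yields that $(\Herm(W_1),R)$ is a categorical quotient for $(\U(W_2),\calq)$.

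If one prefers a direct approach over $F$ (bypassing the base change), I would instead verify Igusa's criterion as in Lemma \ref{Lem: halfway}. For the codimension condition, given any $A\in \Herm(W_1)$ one can build an explicit preimage: seek $x\in \calq$ of the form $\left(\begin{smallmatrix}A&B\\-B^\ast&D\end{smallmatrix}\right)$ by solving the defining relations $A^2=I_n-BB^\ast$, $D^2=I_n-B^\ast B$, $AB=BD$, $B^\ast A=DB^\ast$; this can be done in a dense open locus and extended (or one simply argues surjectivity of $R$ on a subset whose complement has codimension $\geq 2$ in $\Herm(W_1)$). For the uniqueness condition, take the open set $Q'=\Herm(W_1)^{rss}$; if $R(x)$ is regular semi-simple then Lemma \ref{Lem: characteristic comparison} forces $x\in \calq^{rss}$, and the representatives from Lemma \ref{Lem: orbit reps} (combined with the normalization $B^\ast=-C$ available in the unitary case) show the fiber over such a value consists of a single $\U(W_2)$-orbit.

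The main obstacle, insofar as there is one, is ensuring that the base change identifications are made cleanly; once they are in place, everything reduces directly to Lemma \ref{Lem: halfway}. If the direct route via Igusa's criterion is chosen instead, the only nontrivial point is confirming $\U(W_2)$-orbit uniqueness in the regular fibers, which, by an argument along the lines of Lemma \ref{Lem: orbit reps}, should reduce to the statement that regular semi-simple conjugation orbits in $\calq$ over the relevant open locus are determined by $A$ alone.
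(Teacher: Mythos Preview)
Your proposal is correct and matches the paper's approach: the paper gives no explicit proof for this lemma, evidently because it follows from Lemma~\ref{Lem: halfway} by the same base-change argument used for Lemma~\ref{Lem: cat quotient unitary}. Your first paragraph is exactly that reduction, and the alternative direct Igusa-criterion argument you sketch is also sound but unnecessary here.
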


A useful consequence of the orbit computations (\ref{Prop: GL orbit reps}) is that if $x\in \calq(F)$ is regular semi-simple, then $\det(B)\neq0$ and $A$ is regular semi-simple in $\Herm(W_1)$. Let $\calq^{iso}\subset \calq$ denote the Zariski-open subvariety cut out by this determinant condition. The superscript $iso$ refers to the fact that 
\[
x\in \calq^{iso}(F)\:\text{ if and only if }\: I-R(x)^2 \in \mathrm{Iso}(W_1,W_1).
\]
Setting \[
\Herm(W_1)^{iso}=\{A\in \Herm(W_1): I-A^2\text{ is non-singular}\},
\]
we obtain a map $R:\calq^{iso}\lra \Herm(W_1)^{iso}$. 

\begin{Lem}\label{Lem: centralizer contraction}
The restriction $R:\calq^{iso}\to \Herm(W_1)^{iso}$ is a $\U(W_2)$-torsor. Moreover, for $x\in \calq^{iso}(F)$, we have an isomorphism
\[
H_x\iso \U(W_1)_{R(x)}
\]
given by $(h_1,h_2)\mapsto h_1$.
\end{Lem}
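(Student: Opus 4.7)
The plan is to proceed via direct linear algebra using the explicit matrix structure of elements of $\calq^{iso}$. First I would record the key consequence of $x \in \calq^{iso}$: writing $x = \begin{pmatrix} A & B \\ -B^* & D\end{pmatrix}$, the condition $A^2 = I - BB^*$ combined with $I - A^2$ being invertible forces $B\in \Hom_E(W_2,W_1)$ to be an isomorphism, so in particular $\dim W_1=\dim W_2$. The constraint $AB = BD$ then determines $D = B^{-1}AB$, so that the pair $(A,B)$ with $A$ Hermitian, $B$ invertible, and $BB^* = I - A^2$ completely determines $x$.

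For the torsor statement I would check the three pieces: (i) $\U(W_2)$-invariance of $R$ is immediate from the explicit action $h\cdot x = \left(\begin{smallmatrix} A & Bh^{-1} \\ -hB^* & hDh^{-1}\end{smallmatrix}\right)$, which leaves the upper-left block untouched; (ii) freeness follows since $Bh^{-1} = B$ forces $h = 1$ by invertibility of $B$; (iii) for transitivity on fibers, given $x, x'$ both mapping to $A$ with blocks $B, B'$, the computation
\[
(B^{-1}B')(B^{-1}B')^{*} = B^{-1}(B'(B')^{*})B^{-*} = B^{-1}(I-A^2)B^{-*} = B^{-1}(BB^*)B^{-*} = I
\]
shows $h := (B')^{-1}B \in \U(W_2)$, and a short calculation using $D = B^{-1}AB$ (and the analogous formula for $D'$) confirms $h\cdot x = x'$. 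Surjectivity holds after passage to the algebraic closure (where any two non-degenerate Hermitian forms of the same rank become isomorphic, so $I-A^2$ can be written as $BB^*$), which combined with freeness gives the fppf-torsor structure; this is the sense in which the word ``torsor'' should be interpreted here.

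For the stabilizer isomorphism, the key point is again that $B$ is invertible, so the equation $h_1 B h_2^{-1} = B$ from the stabilizer condition forces $h_2 = B^{-1}h_1 B$. I would therefore define the proposed inverse
\[
\U(W_1)_A \lra H_x, \qquad h_1 \longmapsto (h_1, B^{-1}h_1 B),
\]
and verify directly that $h_2 := B^{-1}h_1B$ is unitary (using $h_1 A = A h_1$ and the identity $h_1(I-A^2)h_1^{-1} = I - A^2$, which rewrites as $h_1 BB^* = BB^* h_1$), and that all four block identities defining the stabilizer of $x$ hold, in particular $h_2 D h_2^{-1} = B^{-1} h_1 A h_1^{-1} B = B^{-1} A B = D$. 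This is evidently a two-sided inverse to the projection $(h_1,h_2) \mapsto h_1$.

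The main obstacle is minor but worth naming: ensuring that the formulas $D = B^{-1}AB$ and $D = B^*A(B^*)^{-1}$ extracted from $AB = BD$ and $B^*A = DB^*$ are compatible, which reduces to $A$ commuting with $BB^* = I - A^2$; this is automatic. The rest is bookkeeping in block-matrix form, using only the defining relations for $\calq$ recalled just above the lemma.
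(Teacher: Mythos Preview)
Your argument is correct and complete. The paper itself does not spell out a proof here, merely pointing to the analogous infinitesimal statement (Lemma~3.6 of \cite{Leslieendoscopy}) and asserting that the same reasoning applies; what you have written is precisely the direct block-matrix verification that this reference would unpack, so there is nothing to compare beyond noting that you have filled in the details the paper leaves implicit.
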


\begin{proof}This is analogous to Lemma 3.6 of \cite{Leslieendoscopy}, and is proved in the same way.
\end{proof}



\begin{Lem}\label{Lem: image of contraction}
Identify $H^1(F,\U(W_2))=F^\times/\Nm_{E/F}(E^\times)$ via the discriminant map $$(W_2,\Phi)\mapsto d(\Phi)\in F^\times/\Nm_{E/F}(E^\times),$$ where 
\[
d(\Phi):=(-1)^{n(n-1)/2}\det(\Phi).
\]Then $X$ is in the image of $R:\calq^{iso}(F)\lra \Herm(W_1)^{iso}$ if and only if
\[
d(I-X^2)\equiv d(\la\cdot,\cdot\ra_1)\cdot d(\la\cdot,\cdot\ra_2)\pmod{\Nm_{E/F}(E^\times)}
\]
\end{Lem}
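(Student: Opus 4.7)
My plan is to first reduce the image characterization to a matrix factorization question, then translate this into an isometry of Hermitian spaces over $E$, and finally compare discriminants. For the reduction, any $x\in\calq^{iso}(F)$ with $R(x)=X$ has block form $x=\mat{X}{B}{-B^*}{D}$ satisfying $BB^*=I-X^2$, and since $I-X^2$ is invertible on $\calq^{iso}$, the map $B$ is automatically an isomorphism. Conversely, given any invertible $B\in\Hom_E(W_2,W_1)$ with $BB^*=I-X^2$, the operator $D:=B^{-1}XB$ is Hermitian (because $X$ commutes with $BB^*=I-X^2$) and the remaining block relations $D^2=I-B^*B$, $XB=BD$, $B^*X=DB^*$ follow directly. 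So the image question for $R$ reduces to: when does the equation $BB^*=I-X^2$ admit a solution $B\in\Hom_E(W_2,W_1)$?

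For the main translation, I write $Y:=I-X^2$ and introduce the Hermitian form $h_{Y^{-1}}(v,w):=\la v,Y^{-1}w\ra_1$ on $W_1$. Given invertible $B$ with $BB^*=Y$, the direct computation
\begin{equation*}
h_{Y^{-1}}(Bv,Bw)=\la Bv,(BB^*)^{-1}Bw\ra_1=\la Bv,(B^*)^{-1}w\ra_1=\la v,w\ra_2
\end{equation*}
realizes $B$ as an isometry $W_2\iso(W_1,h_{Y^{-1}})$. Conversely, any isometry $\phi:W_2\to(W_1,h_{Y^{-1}})$ satisfies $\phi^*Y^{-1}\phi=I_{W_2}$, equivalently $\phi\phi^*=Y$, so setting $B:=\phi$ solves the factorization. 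Hence $X$ lies in the image of $R$ if and only if $(W_1,h_{Y^{-1}})\cong W_2$ as Hermitian spaces over $E$.

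For the final step, I use that Hermitian spaces of a given dimension over $E$ are classified by their discriminant in $F^\times/\Nm_{E/F}(E^\times)$. In any basis where $\la\cdot,\cdot\ra_1$ has Gram matrix $\Phi_1$, the form $h_{Y^{-1}}$ has Gram matrix $\Phi_1Y^{-1}$, so $d(h_{Y^{-1}})=d(\la\cdot,\cdot\ra_1)\det(Y)^{-1}$. Using that $\det(Y)\in F^\times$ gives $\det(Y)^2=\Nm_{E/F}(\det Y)\in\Nm(E^\times)$, and similarly $d(\la\cdot,\cdot\ra_1)^2\in\Nm$, the isometry condition rearranges to $\det(I-X^2)\equiv d(\la\cdot,\cdot\ra_1)\,d(\la\cdot,\cdot\ra_2)\pmod{\Nm}$. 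Combining this with $d(I-X^2)=(-1)^{n(n-1)/2}\det(I-X^2)$ and the fact that the unramified hypothesis on $E/F$ forces $-1\in\Nm_{E/F}(\calo_E^\times)$ (so $(-1)^{n(n-1)/2}\equiv 1\pmod{\Nm}$) yields the claimed identity. The main obstacle will be the reformulation carried out in the second paragraph: translating the matrix equation $BB^*=Y$ into an explicit isometry of Hermitian spaces over $E$. Once that is in hand, the discriminant matching is routine bookkeeping.
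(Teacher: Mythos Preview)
Your argument is correct and is precisely the natural expansion of the paper's one-line proof, which just invokes the relation $I_n-A^2=BB^\ast$; the reduction to a factorization $BB^\ast=Y$, the reinterpretation as an isometry $(W_2,\langle\cdot,\cdot\rangle_2)\cong(W_1,h_{Y^{-1}})$, and the discriminant bookkeeping are exactly what the paper leaves implicit.

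One small point: the unramified hypothesis you invoke at the end is not yet in force in Section~5.2 (it enters only in Section~6.4), so strictly speaking your last step is outside the lemma's stated generality. Your computation actually yields the clean condition $\det(I-X^2)\equiv d(\langle\cdot,\cdot\rangle_1)\,d(\langle\cdot,\cdot\rangle_2)\pmod{\Nm}$, and the passage to $d(I-X^2)$ costs the factor $(-1)^{n(n-1)/2}$, which need not lie in $\Nm_{E/F}(E^\times)$ for ramified $E/F$. This looks like a harmless imprecision in the paper's formulation rather than a flaw in your argument: the only use of the lemma (in the proof of Lemma~\ref{Lem: kernel isom}) compares $d(I-y^2)$ and $d(I-(y')^2)$ for stably conjugate $y,y'$, where the sign cancels anyway.
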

\begin{proof}
 The claim follows from the definition of the map $R$ in Lemma \ref{Lem: unitary halfway} and the relation $I_n-A^2=BB^\ast$.
\end{proof}

The inclusion $\calq^{rss}\subset \calq^{iso}$ and Lemma \ref{Lem: centralizer contraction} imply that the restriction of the contraction map $R: \calq\lra \Herm(W_1)$ to the regular semi-simple locus is a $\U(W_2)$-torsor. 
The next lemma enables us to use $R$ to study $\ka$-orbital integrals at regular semi-simple elements of $\calq(F)$ in the next subsection.

\begin{Lem}\label{Lem: kernel isom}
Let $x\in \calq^{rss}(F)$ and set $R(x)=y\in \Herm(W_1)$. The isomorphism 
\[
\phi: \rH_x\iso \U(W_1)_y
\]induces an isomorphism 
\begin{equation*}
\mathfrak{C}(\rH_x,\rH;F)\iso \mathfrak{C}(\U(W_1)_y,\U(W_1);F).
\end{equation*}
\end{Lem}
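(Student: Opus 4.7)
The plan is to combine the commutative diagram supplied by $\phi$ with the discriminant obstruction in Lemma \ref{Lem: image of contraction}.

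Since $x\in\calq^{rss}(F)$ and $y=R(x)\in \Herm(W_1)^{rss}$, both $\rH_x$ and $\U(W_1)_y$ are tori, so abelianized cohomology agrees with ordinary Galois cohomology. The isomorphism $\phi$ sits in the commutative square
\[
\begin{tikzcd}
\rH_x \ar[r,"\phi","\sim"'] \ar[d,hook] & \U(W_1)_y \ar[d,hook] \\
\rH=\U(W_1)\times\U(W_2) \ar[r,"pr_1"] & \U(W_1).
\end{tikzcd}
\]
Applying $H^1(F,-)$ and identifying $H^1(F,\rH)=H^1(F,\U(W_1))\oplus H^1(F,\U(W_2))$, the containment $\phi_\ast(\mathfrak{C}(\rH_x,\rH;F))\subseteq \mathfrak{C}(\U(W_1)_y,\U(W_1);F)$ is immediate: a class that maps to $(0,0)$ in the product maps to $0$ in the first factor.

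The content of the lemma is the reverse containment. Given $\al\in H^1(F,\rH_x)$ whose image in $H^1(F,\U(W_1))$ vanishes, I would write its image in $H^1(F,\rH)$ as $(0,\be)$ with $\be\in H^1(F,\U(W_2))$ and aim to show $\be=0$. By the standard torsor-descent dictionary applied to $\rH\to\rH/\rH_x$, the class $\al$ corresponds to a rational orbit representative $x'\in \calq'^{rss}(F)$ lying in $\calq':=\U(W)/\U(W_1)\times\U(W_2')$, where $W_2'$ is the Hermitian space obtained from $W_2$ by twisting by $\be$ (so $d(\Phi_{W_2'})\equiv d(\Phi_{W_2})\cdot\be$ modulo norms). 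Setting $y':=R(x')\in \Herm(W_1)^{rss}$, the stable equivalence of $x$ and $x'$ forces $y$ and $y'$ to have the same characteristic polynomial, and in particular $d(I-y^2)\equiv d(I-(y')^2)$ in $F^\times/\Nm_{E/F}(E^\times)$.

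To finish, I would apply Lemma \ref{Lem: image of contraction} both at $y\in \calq$ and at $y'\in\calq'$, yielding
\[
d(I-y^2)\equiv d(\Phi_{W_1})d(\Phi_{W_2})\quad\text{and}\quad d(I-(y')^2)\equiv d(\Phi_{W_1})d(\Phi_{W_2'})\pmod{\Nm_{E/F}(E^\times)}.
\]
Equality of the left-hand sides forces $d(\Phi_{W_2})\equiv d(\Phi_{W_2'})$, whence $W_2'\cong W_2$ and $\be=0$, giving $\al\in\mathfrak{C}(\rH_x,\rH;F)$. The main, though modest, obstacle will be justifying that any class in the kernel of $H^1(F,\rH_x)\to H^1(F,\U(W_1))$ is represented by an actual rational orbit in a valid pure inner form $\calq'$ of the symmetric pair (as opposed to an abstract cohomology class in a non-realizable inner form of $\rH$); this follows from the long exact sequence attached to the torsor $\rH\to \rH/\rH_x$ together with Lemma \ref{Lem: Forms in first quotient}.
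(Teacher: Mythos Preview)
Your setup and the easy containment are fine and match the paper. The reverse containment, however, has a real gap. You write $\calq':=\U(W)/\U(W_1)\times\U(W_2')$, but if $\be\neq 0$ then $W_1\oplus W_2'\not\cong W$ and $\U(W_1)\times\U(W_2')$ is not a subgroup of $\U(W)$ at all, so this quotient does not exist. Lemma~\ref{Lem: Forms in first quotient} does not rescue you: it only exhibits the two components of $\calq(F)$ corresponding to classes in $\ker\big(H^1(F,\rH)\to H^1(F,\G)\big)$, namely $(0,0)$ and $(1,1)$, whereas the class $(0,\be)$ with $\be\neq 0$ lies outside that kernel. So your proposed geometric realization of $\alpha$ presupposes exactly what you are trying to prove. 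One could salvage the idea by twisting the whole symmetric pair to $(\U(W'),\U(W_1)\times\U(W_2'))$ with $W'=W_1\oplus W_2'$ and tracking the contraction map through the twist, but that is more machinery than needed and is not what you wrote.

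The paper avoids this entirely by running the argument in the opposite direction. Starting from a class in $\mathfrak{C}(\U(W_1)_y,\U(W_1);F)$, represented by some $y'\in\Herm(W_1)$ stably conjugate to $y$, one has $d(I-(y')^2)\equiv d(I-y^2)$, so Lemma~\ref{Lem: image of contraction} guarantees a lift $x'\in\calq^{iso}(F)$ in the \emph{original} symmetric space with $R(x')=y'$. Using the $\U(W_2)$-torsor structure of $R$ on $\calq^{iso}$ (Lemma~\ref{Lem: centralizer contraction}) one then exhibits an explicit $(h,h')\in\rH(\Fbar)$ carrying $x$ to $x'$, so $\inv(x,x')\in\cald(\rH_x,\rH;F)$ and it maps to $\inv(y,y')$. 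This gives surjectivity without ever leaving $\calq$.
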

\begin{proof}
This is analogous to the Lie algebra version \cite[Lemma 3.9]{Leslieendoscopy}; we include a slightly more hands-on argument afforded by our restriction to the $p$-adic setting. In particular, we may use the fact that 
\[
\mathfrak{C}(\rH_x,\rH;F)\cong \cald(\rH_x,\rH;F)
\] in this setting to prove the lemma using the invariant map. Consider the commutative diagram
\[
\begin{tikzcd}
H^1(F,\rH_x)\ar[r,"\iota_x"]\ar[d,"\phi^\ast"]& H^1(F,\U(W_1))\times H^1(F,\U(W_2))\ar[d,"p_1^\ast"]\\
H^1(F,\U(W_1)_y)\ar[r,"\iota_y"]&H^1(F,\U(W_1)).
\end{tikzcd}
\]
where $p_1:\U(W_1)\times \U(W_2)\lra \U(W_1)$ is the projection, $\phi: \rH_x\iso \U(W_1)_y$ is the induced isomorphism, and $\phi^\ast$ and $p_1^\ast$ are the maps induced on cohomology.

If $\al\in\cald(\rH_x,\rH;F)$, then $\iota_\de\phi(\al)=p_1(\iota_x(\al))=1$. This allows us to extend the diagram to
\[
\begin{tikzcd}
1\ar[r]&\cald(\rH_x,\rH;F)\ar[r]\ar[d]&H^1(F,\rH_x)\ar[r,"\iota_x"]\ar[d,"\phi^\ast"]& H^1(F,\U(W_1))\times H^1(F,\U(W_2))\ar[d,"p_1^\ast"]\\
1\ar[r]&\cald(\U(W_1)_y,\U(W_1);F)\ar[r]&H^1(F,\U(W_1)_y)\ar[r,"\iota_\de"]&H^1(F,\U(W_1)),
\end{tikzcd}
\]
where the arrow $\cald(\rH_x,\rH;F)\to \cald(\U(W_1)_y,\U(W_1);F)$ is an injection. To show it is surjective, we show that it induces a surjection on rational orbits in the given stable orbit. Suppose that $y'\in \Herm(W_1)$ is stably conjugate to $y$; this gives the element
\[
\inv(y,y')=[\tau\longmapsto \tau(h)^{-1}h]\in \cald(\U(W_1)_y,\U(W_1);F)
\]
where $h\in \U(W_1)(\Fbar)$ such that $y'=hyh^{-1}$. Since $R(x) =y$ and 
\[
d(I-(y')^2)\equiv d(I-y^2)\pmod{\Nm_{E/F}(E^\times)},
\]
Lemmas \ref{Lem: centralizer contraction} and \ref{Lem: image of contraction} combine to imply that there exists $x'\in \calq^{rss}(F)$ such that $R(x') = y'$.

Then
\[
R(x') = y'=hyh^{-1}=R\left(\left(\begin{array}{cc}
    h &  \\
     & I
\end{array}\right)\cdot x\right).
\]
The $\U(W_2)$-torsor statement of Lemma \ref{Lem: centralizer contraction} now implies that there exists $h'\in \U(W_2)(\Fbar)$ such that
\[
\left(\begin{array}{cc}
   I &  \\
     &  (h')^{-1}
\end{array}\right)\cdot x'=\left(\begin{array}{cc}
    h &  \\
     & I
\end{array}\right)\cdot x\Longrightarrow x'=\left(\begin{array}{cc}
    h &  \\
     & h'
\end{array}\right)\cdot x;
\]
that is, $x'$ is stably conjugate to $x$ and 
\[\inv(x,x') = [\tau\longmapsto (\tau(h)^{-1}h,\tau(h')^{-1}h')]\in \cald(\rH_x,\rH;F)
\]
maps to $\inv(y,y')\in \cald(\U(W_1)_y,\U(W_1);F)$.
\end{proof}


\subsection{Descendants}
 For this subsection only, we let $E/F$ denote a quadratic extension of fields of either odd or zero characteristic. 
 
We compute the possible descendants $(\G_x,\rH_x)$ of $(\G,\rH)$ at semi-simple points $x\in \calq^{ss}(F)$. An important corollary of this computation is that all the stabilizers $H_x\subset H$ are connected reductive groups (see Lemma \ref{Lem: unitary nice})\footnote{This fact already follows over the algebraic closure from the orbit computation (\ref{Prop: GL orbit reps}).}. We remark that our descent argument in Section \ref{Section: descent final} only encounters descendants of the form (\ref{1}) and (\ref{2}) below. Regardless, the general form will be useful for later applications toward smooth transfer.

\begin{Lem}\label{Lem: descendants}
For any $x\in \calq^{ss}(F)$, there is an orthogonal decomposition of $W$
\[
W=V_0\oplus V_1\oplus V_{-1},
\]
with $V_1$ (resp. $V_{-1}$) is the $1$-eigenspace (resp. $-1$-eigenspace) of $x$ and $V_0$ is the orthogonal compliment of $V_1\oplus V_{-1}$ in $W$. The involution $\theta(g)=\ep g\ep$ preserves this decomposition, and the symmetric pair $(U(W)_x,(U(W_1)\times U(W_2))_{x})$ is a product of the following symmetric pairs:\\

\begin{enumerate}
    \item\label{1} $(U(V_1),U(V_{1,1})\times U(V_{1,-1}))$, where $V_1$ is the $1$-eigenspace of $x$, and 
    \[
    V_{1,\pm1}=\{v\in V_1: \ep v=\pm v\};
    \]
     \item\label{2} $(U(V_{-1}),U(V_{-1,1})\times U(V_{-1,-1}))$, where $V_{-1}$ is the $-1$-eigenspace of $x$, and 
     \[
     V_{-1,\pm1}=\{v\in V_{-1}: \ep v=\pm v\};
     \]
    \item\label{case3}\label{descendants1} $(\GL(V'), U(V'))$, where $V'$ is a non-degenerate Hermitian space over $E'/F'$. Here, $F'$ is a finite extension of $F$ and $E'=EF'$ is the associated quadratic extension;\\
    \item\label{descendants2} $(U(V')\times U(V'), U(V'))$, with $U(V')$ embedded diagonally;\\
    \item\label{case5} $(\GL(V')\times \GL(V'), \GL(V'))$, with $\GL(V')$ embedded diagonally.
\end{enumerate}

\end{Lem}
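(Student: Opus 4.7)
The plan is to work entirely inside the ambient unitary group $\U(W)$, using that via the symmetrization map $x \in \calq^{ss}(F)$ sits in $\U(W)^{ss}(F)$ with the additional property $\theta(x) = x^{-1}$ (equivalently, $\epsilon x \epsilon = x^{-1}$). First I would produce the orthogonal decomposition $W = V_0 \oplus V_1 \oplus V_{-1}$: the fact that $x$ is a semisimple unitary operator forces its distinct eigenspaces to be mutually orthogonal, so $V_1$, $V_{-1}$, and their orthogonal complement $V_0$ (which is the sum of eigenspaces for eigenvalues $\lambda \neq \pm 1$) are pairwise orthogonal and non-degenerate. The identity $\epsilon x \epsilon = x^{-1}$ shows that $\epsilon$ carries the $\lambda$-eigenspace of $x$ to the $\lambda^{-1}$-eigenspace; since the sets $\{1\}$, $\{-1\}$, and $\{\lambda:\lambda\neq \pm 1\}$ are each stable under $\lambda \mapsto \lambda^{-1}$, each of $V_1, V_{-1}, V_0$ is $\epsilon$-stable, and hence $\theta$-stable.

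Next I would use this decomposition to split the centralizer: the centralizer of $x$ in $\U(W)$ factors as $\U(V_1) \times \U(V_{-1}) \times \U(V_0)_{x|_{V_0}}$, and because $\theta$ preserves the decomposition it acts factor-by-factor. On $V_1$ the element $x$ acts trivially, so the centralizer is all of $\U(V_1)$ and the involution reduces to conjugation by $\epsilon|_{V_1}$, whose fixed points are $\U(V_{1,1}) \times \U(V_{1,-1})$ where $V_{1,\pm 1}$ are the $\pm 1$ eigenspaces of this commuting involution; this is case \eqref{1}. The same argument with $-x$ in place of $x$ (noting $x$ acts as $-1$ on $V_{-1}$, a central scalar) yields case \eqref{2}.

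The substantive step is the analysis of the $V_0$ factor, where $x|_{V_0}$ is semisimple with no $\pm 1$ eigenvalues. I would decompose the commutative semisimple $F$-algebra $F[x|_{V_0}] = \prod_i F_i$ into fields, yielding an orthogonal decomposition $V_0 = \bigoplus_i V_i$, and then study how the involution $\lambda \mapsto \lambda^{-1}$ and the involution $\theta$ act on this labeling. For each index $i$ one of three mutually exclusive possibilities occurs: (a) $\epsilon$ stabilizes $V_i$, which forces $F_i$ to carry a non-trivial involution $\tau_i$ sending $x|_{V_i}$ to its inverse (with fixed field $F_i^+$) and the Hermitian form on $V_i$ together with the $E$-structure combines to realize the descendant on this summand as $(\GL(V'), \U(V'))$ for a Hermitian space $V'$ over $E' = E \cdot F_i^+$ as in case \eqref{case3}; or (b) $\epsilon$ swaps $V_i$ with a distinct $V_{i'}$, and depending on whether $E$ splits the common field (equivalently whether $x|_{V_i}$ has $E$-rational eigenvalues or not) one obtains either the Galois-diagonal pair \eqref{descendants2} or the linear-diagonal pair \eqref{case5} by identifying the two swapped summands via $\epsilon$.

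The main obstacle is the last step: the case analysis for $V_0$ requires being careful about the interplay between the $E$-linear structure, the Hermitian form, and the two commuting involutions (Galois conjugation $\alpha \mapsto \bar\alpha$ on $E$ and inversion $x \mapsto x^{-1}$), together with the unitary constraint on the centralizer that couples an element acting on $V_i$ with its adjoint acting on the paired summand. Once one correctly identifies the fixed field $F_i^+$, the quadratic extension $E'/F'$, and the transferred Hermitian form on the relevant representative $V'$, the descendant symmetric pair on each $V_0$-component falls into exactly one of \eqref{case3}, \eqref{descendants2}, \eqref{case5}, and assembling the factors finishes the lemma.
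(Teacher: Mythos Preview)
Your overall strategy is exactly the paper's: use that $x\in\U(W)^{ss}$ with $\ep x\ep=x^{-1}$ to get an orthogonal eigenspace decomposition, observe that $\ep$ permutes eigenspaces by $\lambda\mapsto\lambda^{-1}$, split off $V_{\pm1}$ to obtain cases \eqref{1} and \eqref{2}, and then do a case analysis on $V_0$. The paper carries this out by working with the $E$-isotypic components and organizing the case analysis via the packet
\[
C(\al)=\{\al,\al^{-1},\overline{\al},\overline{\al}^{-1}\},
\]
distinguishing four cases according to which of these coincide; this simultaneously tracks the Hermitian involution $\al\mapsto\overline{\al}^{-1}$ (which governs whether the centralizer factor is unitary or general linear, i.e.\ the $I/J$ dichotomy) and the $\theta$-action $\al\mapsto\al^{-1}$ (which governs how $\ep$ permutes the summands).

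Your dichotomy (a)/(b) over the $F$-isotypic components does not line up with the types \eqref{case3}--\eqref{case5} in the way you claim. The $F$-isotypic component of $\al$ is $\ep$-stable precisely when $\al^{-1}$ lies in the $\Gal(\overline{F}/F)$-orbit of $\al$. This certainly happens in your intended scenario for \eqref{case3}, but it also happens whenever $\al\overline{\al}=1$ with $\al\neq\overline{\al}$ (the paper's Case~2), since then $\al^{-1}=\overline{\al}$ is automatically $F$-Galois conjugate to $\al$; yet that situation yields the descendant of type \eqref{descendants2}, not \eqref{case3}. So your case (a) produces both \eqref{case3} and \eqref{descendants2}, and must be further subdivided by whether $E$ embeds in $F_i$ (equivalently, whether the $E$-isotypic components inside your $V_i$ are swapped by $\overline{(\cdot)}$). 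The ``$E$ splits'' distinction you invoke only in (b) is needed in (a) as well; once you track both involutions at once, as the paper's $C(\al)$ does, the five types separate cleanly and your argument coincides with the paper's.
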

\begin{proof}
We begin by decomposition 
\[
W=\bigoplus_i V_i
\]
where each $V_i$ is a subspace upon which the minimal polynomial of $x|_{V_i}$ is irreducible. For each $i$, let $E_i$ be the finite extension of $E$ cut out by $x$; we have then $V_i \cong E_i^{n_i}$ for some $n_i$. We set $\al_i\in E_i^\times$ for the eigenvalue of $x$ on $E_i$. 

Let $P(t)=\car_x(t)$ denote the characteristic polynomial of $x$ and let $P_i$ denote the minimal polynomial of $x|_{V_i}$. Noting that 
\[
x\in U(W)\implies x^\dagger=x^{-1},
\]
we have $P(t) = \frac{t^{\dim(W)}}{\overline{P}(0)}\overline{P}(t^{-1})$, where $\overline{P}(t)$ denotes the action of the non-trivial Galois element of $\Gal(E/F)$ on the coefficients. This implies a product decomposition
\[
P(t)=\prod_{i\in I} P_i(t)^{n_i}\prod_{(j,j')\in J}\left(P_j(t)P_{j'}(t)\right)^{n_j},
\]
where for each $i\in I$,
\[
P_i(t) = \frac{t^{\dim(V_i)}}{\overline{P}_i(0)}\overline{P}_i(t^{-1}),
\]
and for each pair $(j,j')\in J$
\[
P_{j'}(t) = \frac{t^{\dim(V_j)}}{\overline{P}_j(0)}\overline{P}_j(t^{-1}).
\]
Thus, for each $i\in I$, we obtain a Galois element $\overline{(\cdot)}:E_i\lra E_i$ induced by
\begin{align*}
E_i\cong E[t]/(P_i(t))&\lra E[t]/(\overline{P}_i(t))\cong E_i\\
\al_i\longmapsto t&\longmapsto t\longmapsto \al_i^{-1}.
\end{align*}
Setting $F_i$ to be the field fixed by this involution, we obtain a quadratic extension $E_i/F_i$ and note that $\overline{\al_i} = \al_i^{-1}$. It is now easy to see that the Hermitian form on $W$ restricts to a non-degenerate Hermitian form on $V_i$ with respect to the quadratic extension $E_i/ F_i$.

For each $(j,j')\in J,$ a similar argument shows an isomorphism $E_j\iso E_{j'}$. Under the identification, we find that $\al_{j'}=\overline{\al}_j^{-1}$, and the restriction of the Hermitian form on $W$ to $V_j\oplus V_{j'}$ is non-degenerate, the direct summands of the decomposition being maximal isotypic subspaces. Thus, we obtain the product 
\[
U(W)_x=\prod_{i\in I} U(V_i)\times \prod_{(j,j')\in J}\GL(V_j).
\]

We now compute the group $(U(W_1)\times U(W_2))_x$. For simplicity, fix $i$ and set $E'=E_i$, $V'=V_i$, and let $\al=\al_i$ denote the associated eigenvalue. Since $\ep x \ep =x^{-1}$, we see that $\ep( V')$ is the $\al^{-1}$-eigenspace. In particular, $\ep$ fixes $V'$ if and only if $\al=\al^{-1}$. Set
\[
C(\al)=\{\al,{\al}^{-1},\overline{\al},\overline{\al}^{-1}\}.\\
\]

\noindent
\textbf{\underline{Case 1:}} $C(\al)=\{\al\}$.\\

In this case, $\al=\al^{-1},$ so that $\al=\pm1$. Clearly, $E'=E$ and $\ep (V') =V'$. This induces an eigenvalue decomposition 
\[
V'= V'_1\oplus V'_{-1}.
\]
A simple exercise shows that the restriction of the Hermitian pairing is non-degenerate on each eigenspace, so we obtain the symmetric pair 
\[
(U(V'),U(V'_1)\times U(V'_{-1})).
\]
It follows from the orbit representatives in Lemma \ref{Lem: orbit reps} that $\dim(V')$ is even and that $\dim(V'_1)=\dim(V'_{-1})$.\\

\noindent
\textbf{\underline{Case 2:}} $C(\al)=\{\al,\overline{\al}\}$.\\

In this case, $\al =\overline{\al}^{-1}$ but $\al\neq \overline{\al}$. Then $\ep (V')$ is the $\overline{\al}$-eigenspace, and we find the symmetric pair
\[
(U(V')\times U(\ep V'), U(V')),
\]
with respect to the embedding $g\mapsto (g, \theta(g))$.\\

\noindent
\textbf{\underline{Case 3:}} $C(\al)=\{\al,\al^{-1}\}$.\\

In this case, $\al = \overline{\al}$, so that $\ep (V')$ is the $\overline{\al}^{-1}$-eigenspace. This produces the pair
\[
(\GL(V'),U(V'')),
\]
where 
\begin{align*}
    V''=\{(w,\ep w): w\in V'\}\hra V'\oplus \ep (V').
\end{align*}
The projection 
\[
\begin{tikzcd}
U(V'')\ar[r]\ar[dr]&\GL(V')\times \GL(\ep V')\ar[d,"p_1"]\\
&\GL(V')
\end{tikzcd}
\]produces an embedding $U(V'')\hra \GL(V')$ where the resulting form on $V'$ is given by $\la w,\ep v\ra$, for $w,v\in V'$.\\

\noindent
\textbf{\underline{Case 4:}} $C(\al)=\{\al,{\al}^{-1},\overline{\al},\overline{\al}^{-1}\}$.\\

Finally, we have the case that all eigenvalues are distinct. Then $\ep( V')$ is the $\al^{-1}$-eigenspace, and the spaces $V'$ and $\ep (V')$ belong to distinct pairs $(V_j,V_{j'})$ with $(j,j')\in J$. Thus, we have the pair
\[
(\GL(V')\times \GL(\ep V'), \GL(V'))
\]
with the embedding $g\mapsto (g,\theta(g))$. This exhausts the cases and establishes the lemma.
\end{proof}
 
As a corollary, we now show that our symmetric space admits a nice integral model in the unramified setting in the sense of Section \ref{Section: relative kazhdan}. We therefore assume that $G=\U(V_n\oplus V_n)$ and $H= \U(V_n)\times \U(V_n),$ where $V_n$ is a split Hermitian space of dimension $n$ for an unramified extension $E/F$.  Fix a self-dual lattice $\Lam_n\subset V_n$ and consider the associated group $\calo_F$-schemes, $\mathcal{G}=\mathbb{U}(\Lam_n\oplus \Lam_n)$ and $\mathcal{H}=\mathbb{U}(\Lam_n)\times\mathbb{U}(\Lam_n)$, the involution $\theta$ extends naturally to an automorphism of $\calo_F$-schemes
\[
\theta:\mathcal{G}\lra \mathcal{G}
\]
with $\mathcal{G}^\theta=\mathcal{H}$.
\begin{Lem}\label{Lem: unitary nice}
The symmetric pair $(\mathcal{G},\mathcal{H})=(\mathbb{U}(\Lam_n\oplus\Lam_n),\mathbb{U}(\Lam_n)\times\mathbb{U}(\Lam_n))$ is nice and simply connected.
\end{Lem}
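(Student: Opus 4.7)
The argument splits into two essentially independent parts. For the simply-connected condition, the main input is Lemma \ref{Lem: descendants}, which classifies the descendants at semi-simple points of the symmetric pair $(\G,\rH)$. Since we have assumed odd residue characteristic, that classification applies without change on both the generic fiber (over $F$) and the special fiber (over the residue field $k$), and in fact over every field extension of either, as its proof only uses the eigenspace decomposition of $x$ and the non-degeneracy of restricted Hermitian forms. Inspecting the five cases of that lemma, each stabilizer $\rH_x$ is a product of factors of the form $\U(V')$ or $\GL(V')$ over finite field extensions; as these are all connected, so is $\mathcal{H}_x$ on each fiber. This is precisely the condition of Definition \ref{Def: simplyconnected}.

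For niceness, the natural candidate for the invariant ring is built from the contraction map: the composition
\[
\chi:\calq\lra \Herm(W_1)\lra \A^n_{\calo_F},\qquad x\longmapsto R(x)\longmapsto (a_1,\ldots,a_n),
\]
with $(a_i)$ the coefficients of $\det(tI-R(x))$, is $\mathcal{H}$-invariant and defined over $\calo_F$, so it induces
\[
\chi^\ast:\calo_F[a_1,\ldots,a_n]\lra \calo_F(\calq)^{\mathcal{H}}.
\]
The plan is to show that $\chi^\ast$ is an isomorphism; granted this, the identification $\calo_F[a_1,\ldots,a_n]\otimes_{\calo_F} k_x\cong k_x[a_1,\ldots,a_n]$ combined with Lemma \ref{Lem: cat quotient unitary} applied on the fiber $\calq_x$ immediately yields both the finite-generation and base-change clauses of niceness.

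To verify that $\chi^\ast$ is an isomorphism, I would first observe that both sides are flat $\calo_F$-modules: the domain freely, and the codomain as a submodule of the torsion-free $\calo_F$-module $\calo_F(\calq)$ over the DVR $\calo_F$. The fiberwise statement $\chi^\ast \otimes_{\calo_F} k_x$ is an isomorphism for each $x\in\Spec(\calo_F)$ by Lemma \ref{Lem: cat quotient unitary}, provided one knows that the formation of $\mathcal{H}$-invariants commutes with the base changes $\calo_F\to F$ and $\calo_F\to k$; a Nakayama argument then promotes the fiberwise isomorphism to an isomorphism of $\calo_F$-algebras.

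The main obstacle is precisely this base-change compatibility of invariants for the smooth reductive group scheme $\mathcal{H}$ acting on the smooth affine $\calo_F$-scheme $\calq$. Two viable approaches are: (a) invoke Seshadri's theorem on the geometric reductivity of $\mathcal{H}$ over $\calo_F$ to construct a Reynolds operator compatible with base change, from which commutation of invariants follows formally; or (b) adapt the proof of Lemma \ref{Lem: cat quotient unitary} via Igusa's criterion to the integral setting, by lifting the orbit representatives of Lemma \ref{Lem: orbit reps} to $\calo_F$ and verifying the analogues of the two Igusa conditions on $\Spec(\calo_F)$. Either route completes the verification that $(\mathcal{G},\mathcal{H})$ is nice.
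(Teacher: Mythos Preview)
Your argument for simple connectedness is identical to the paper's: invoke Lemma \ref{Lem: descendants} and observe that every factor appearing in the five cases is connected. This is correct.

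For niceness, the paper's proof is considerably more terse than yours. It simply observes that the map $\chi$ sending $x$ to the coefficients of $\det(tI-R(x))$ is defined over $\calo_F$, so that $\calo_F[a_1,\ldots,a_n]$ furnishes an integral model $\cala$ whose fibers, by Lemma \ref{Lem: cat quotient unitary} applied fiberwise, are the categorical quotients. The paper does not explicitly verify that $\chi^\ast$ is an isomorphism onto $\calo_F(\calq)^{\mathcal{H}}$, nor does it address the base-change issue you isolate.

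You are being more scrupulous about the literal definition of ``nice,'' and you correctly identify that verifying $\calo_F(\calq)^{\mathcal{H}}\otimes_{\calo_F}k_x\cong k_x(\calq_x)^{\mathcal{H}_x}$ is not automatic; your proposed fix via Seshadri's geometric reductivity over a base is the standard route and would work. In practice, the only consequence of niceness used downstream (in the proof of Proposition \ref{Prop: relative Kazdhan lemma}) is the existence of an $\calo_F$-model $\cala$ of the categorical quotient whose formation commutes with passage to fibers, and this is exactly what the explicit $\chi$ provides without any appeal to invariant theory over $\calo_F$. So the paper's brevity is harmless for its purposes, while your version actually establishes the stated definition.
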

\begin{proof}
Lemma \ref{Lem: descendants} shows that the base change of this pair to any field has connected reductive stabilizers, showing that the pair is simply connected. To see that the variety is nice, we appeal to our explicit construction of the categorical quotient map
\begin{align*}
    \chi:\G/\rH&\lra \A^{n}_F\\
    x=\left(\begin{array}{cc}
        A & B \\
        -B^\ast & D
    \end{array}\right)&\longmapsto (a_1(x),\ldots,a_n(x)),
\end{align*}
where $\{a_i(x)\}$ are the coefficients of the characteristic polynomial of $A$. This map is clearly defined over $\calo_F,$ and $\calo_F[a_1,\ldots,a_n]$ provides the necessary integral model $\cala=\Spec(\calo_F[a_1,\ldots,a_n])$.
\end{proof}

\section{Relative endoscopy}\label{Section: relative endo integrals}
Let  $E/F$ be a quadratic extension of $p$-adic local fields. We continue to let $W_1$ and $W_2$ denote two $n$-dimensional Hermitian vector spaces over $E$, and set $W=W_1\oplus W_2$. Set $\G=\U(W)$ and denote by $\theta:\G\to \G$, the unitary involution with $\rH=\U(W_1)\times \U(W_2)=G^{\theta}$. Let $\calq=\G/\rH$ be the associated symmetric space, and let $s:\G\lra \calq$ denote the symmetrization map.

\subsection{Relative endoscopy}\label{Section: rel end symmetric space}
Recall that $\calv_n$ denotes a fixed set of representatives of the isometry classes of Hermitian form over $E$ on $E^n$. Since $E/F$ is an extension of $p$-adic fields, $|\calv_n|=2$. If $E/F$ is unramified, we assume that the split Hermitian form is represented by $I_n$ and set $V_n:=(E^n,I_n)$ for the split Hermitian space.

Following \cite{Leslieendoscopy}, we have the following definition.
\begin{Def}
We define an \emph{(elliptic) relative endoscopic datum} of the symmetric space $\calq$ to be a triple $\Xi_{a,b}=(\xi_{a,b}, \al, \be)$, where 
\[
\xi_{a,b}=(\U(V_a)\times \U(V_b),s,\eta)
\]is an elliptic endoscopic triple for $U(W_1)$ and  $\al\in\calv_a$ (resp. $\be\in\calv_b$). Setting
\[
\calq_{a,\al}:=\U(V_a\oplus V_\al)/\U(V_a)\times \U(V_\al)\:\text{ and }\:\calq_{b,\be}:=\U(V_b\oplus V_\be)/\U(V_b)\times \U(V_\be),
\] we define the associated endoscopic symmetric space to be $\calq_{a,\al}\times \calq_{b,\be}$.
\end{Def}
For a fixed endoscopic datum, the endoscopic symmetric space is equipped with a contraction map as in Lemma \ref{Lem: unitary halfway}
\begin{align*}
R_{\al,\be}:\calq_{a,\al}\times \calq_{b,\be}&\lra \Herm(V_a)\oplus \Herm(V_b),\\
(x_a,x_b)&\longmapsto (R(x_a), R(x_b)).
\end{align*}
As in Lemma \ref{Lem: cat quotient unitary}, the coefficients of the characteristic polynomials
\[
(\chi_{x_a}(t),\chi_{x_b}(t))=(\det(tI_a-R(x_a)),\det(tI_b-R(x_b)))
\]
gives the categorical quotient of $\calq_{a,\al}\times \calq_{b,\be}$.

\begin{Def}
 Let $x\in \calq^{rss}(F)$ and $(x_a,x_b)\in \left(\calq_{a,\al}\times \calq_{b,\be}\right)^{rss}(F)$. 
We say that $x$ \emph{matches} $(x_a,x_b)$ (or that $x$ is an image of $(x_a,x_b)$) if we have the identity
\[
\chi_x(t) = \chi_{x_a}(t)\chi_{x_b}(t),
\]
where $\chi$ is the invariant map from Lemma \ref{Lem: cat quotient unitary}.
\end{Def}
In particular, the elements $x$ and $(x_a,x_b)$ match if and only if
\[
R(x)=y\in \Herm(W_1)\text{ and }R_{\al,\be}(x_a,x_b)= (y_a,y_b)\in \Herm(V_a)\oplus \Herm(V_b)
\]
match in the sense of Definition \ref{Def: endoscopic matching}. When $W_1\cong V_a\oplus V_b$, we say that $(x,(x_a,x_b))$ are a \emph{good matching pair} if $(y,(y_a,y_b))$ are.

 Given matching elements $(x,(x_a,x_b))$, we define the transfer factor by 
 \begin{equation}\label{eqn: relative transfer factors}
\Delta_{rel}((x_a,x_b),x) := \Delta((y_a,y_b),y),
\end{equation}
where the right-hand side is the Langland-Shelstad-Kottwitz transfer factor from Section \ref{Section: transfer factor}.

\subsection{Smooth transfer}

 Fix $x\in \calq^{rss}(F)$ and let $\Xi_{a,b}$ be a relative endoscopic datum. The endoscopic triple $\Xi=(\U(V_a)\times \U(V_b),s,\eta)$ of $U(W_1)$ determines a character 
 \[
 \kappa:\mathfrak{C}(\U(W_1)_{R(x)},\U(W_1);F)\lra \cc^\times
 \]
 via the construction of Lemma \ref{Lem: endo character}. Using Lemma \ref{Lem: kernel isom}, we pull this character back along the isomorphism
 \[
 \mathfrak{C}(\rH_x,\rH;F)\iso \mathfrak{C}(\U(W_1)_{R(x)},\U(W_1);F),
 \]
to obtain a character which we also call $\ka: \mathfrak{C}(\rH_x,\rH;F) \lra\cc^\times$. Using $e(T)=1$ for any torus, we thus obtain the relative $\kappa$-orbital integral
\begin{equation*}
\Orb^\kappa(x,f) := \sum_{[x']\in\calo_{st}(x)}\kappa(\inv(x,x'))\Orb(x',f).
\end{equation*}

\begin{Def}\label{Def: transfer variety}
Let $f\in C_c^\infty(\calq(F))$ and let $f^{\al,\be}\in C^\infty_c(\calq_{a,\al}(F)\times \calq_{b,\be}(F))$. We say that $f$ and $f^{\al,\be}$ are smooth transfers of each other (or match) if the following conditions are satisfied:
\begin{enumerate}
\item For any matching orbits $x\in \calq^{rss}(F)$ and $(x_a,x_b)\in \calq_{a,\al}(F)\times \calq_{b,\be}(F)$, we have an identify
\begin{equation*}
\SO((x_a,x_b),f^{\al,\be})= \Delta_{rel}((x_a,x_b),x)\Orb^\kappa(x,f).
\end{equation*}
\item If there does not exist  $x\in \calq^{rss}(F)$ matching $(x_a,x_b)\in \calq_{a,\al}(F)\times \calq_{b,\be}(F)$, then
\begin{equation*}
\SO((x_a,x_b),f^{\al,\be})= 0.
\end{equation*}
\end{enumerate}
\end{Def}
We conjecture that transfers always exist. For test functions supported in $\calq^{iso}(F)$, the existence of transfers readily follows from smooth transfer on the unitary Lie algebra.
\begin{Prop}\label{Prop: regular transfer}
Let $f\in C^\infty_c(\calq(F))$ and assume $\mathrm{supp}(f)\subset \calq(F)^{iso}$. Then there exists $f^{\al,\be}\in  C^\infty_c(\calq_{a,\al}(F)\times \calq_{b,\be}(F))$ such that $f$ and $f^{\al,\be}$ match.
\end{Prop}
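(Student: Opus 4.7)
The plan is to descend the question to smooth transfer on the Hermitian matrix space $\Herm(W_1)(F)$ via the contraction map, invoke the known infinitesimal transfer of \cite{Leslieendoscopy}, and then lift the result back through the endoscopic contraction map $R_{\al,\be}$. By Lemma \ref{Lem: centralizer contraction}, $R:\calq^{iso}\to \Herm(W_1)^{iso}$ is a $\U(W_2)$-torsor onto its image (characterized by the discriminant condition of Lemma \ref{Lem: image of contraction}), so integrating $f$ along the $\U(W_2)(F)$-orbits produces a function $\phi\in C_c^\infty(\Herm(W_1)(F))$ supported in the image of $R$. A direct Fubini computation using the isomorphism $\rH_x\iso \U(W_1)_{R(x)}$ yields $\Orb(x,f)=\Orb(R(x),\phi)$ for every $x\in \calq^{rss}(F)$, and Lemma \ref{Lem: kernel isom} promotes this to the identity of $\ka$-orbital integrals $\Orb^\ka(x,f)=\Orb^\ka(R(x),\phi)$.

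I would then apply the infinitesimal smooth transfer of \cite{Leslieendoscopy} to obtain $\phi^{\al,\be}\in C_c^\infty((\Herm(V_a)\oplus \Herm(V_b))(F))$ matching $\phi$ in the sense of Lie-algebra endoscopy. Next, I lift $\phi^{\al,\be}$ through $R_{\al,\be}$: its restriction to the iso locus is a $\U(V_\al)\times \U(V_\be)$-torsor, so choosing a continuous section and smooth bump functions of integral one along the fibers produces a compactly supported smooth $f^{\al,\be}$ supported in the iso locus of $\calq_{a,\al}\times \calq_{b,\be}$ with $R_{\al,\be;!}(f^{\al,\be})=\phi^{\al,\be}$ on the image of $R_{\al,\be}$. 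The analog of the orbital integral identity above on the endoscopic side then yields $\SO((x_a,x_b),f^{\al,\be})=\SO(R_{\al,\be}(x_a,x_b),\phi^{\al,\be})$ for regular semi-simple $(x_a,x_b)$ lying in the iso locus.

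Chaining the three identities and invoking the definitional equality $\Delta_{rel}((x_a,x_b),x)=\Delta((y_a,y_b),y)$ from \eqref{eqn: relative transfer factors} will verify condition (1) of Definition \ref{Def: transfer variety} on matching pairs. For non-matching $(x_a,x_b)$, the vanishing required by condition (2) splits into two cases: either $R_{\al,\be}(x_a,x_b)$ has no Lie-algebra match in $\Herm(W_1)$, so the vanishing is immediate from the infinitesimal transfer; or $R_{\al,\be}(x_a,x_b)$ matches some $y\in \Herm(W_1)$ whose discriminant is incompatible with $W_2$, in which case $y$ together with every stable conjugate lies outside $\supp(\phi)$ by Lemma \ref{Lem: image of contraction}, forcing $\Orb^\ka(y,\phi)=0$. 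The main technical hurdle will be the compactly supported smooth lifting of $\phi^{\al,\be}$ along the torsor $R_{\al,\be}$ in a way that preserves the pushforward identity on the image; this is standard but should be made precise. The rest amounts to a routine chain of orbital integral identities once the infinitesimal transfer is in hand.
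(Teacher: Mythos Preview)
Your proposal is correct and follows essentially the same route as the paper: push forward along the contraction map $R$ to reduce to smooth endoscopic transfer on the Hermitian Lie algebra $\Herm(W_1)$, then lift back along the endoscopic contraction $R_{\al,\be}$. The paper explicitly says the argument ``relies on the properties of the contraction map on $\calq^{iso}$ to reduce the statement to the existence of smooth transfer on the Hermitian Lie algebra'' and points to the analogous reduction (via $r$ rather than $R$) in \cite[Proposition~4.5]{Leslieendoscopy}. One small clarification: the transfer you invoke for $\phi\mapsto\phi^{\al,\be}$ is the \emph{classical} Lie-algebra endoscopic transfer for unitary groups (Laumon--Ng\^o, Waldspurger), not the relative infinitesimal transfer of \cite{Leslieendoscopy}; that reference is cited only because it carries out the same contraction-map reduction in the infinitesimal setting.
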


\begin{proof}
The argument relies on the properties of the contraction map on $\calq^{iso}$ to reduce the statement to the existence of smooth transfer on the Hermitian Lie algebra. It is very similar to the proof of Proposition 4.5 of \cite{Leslieendoscopy}; we omit the details. 
\end{proof}

\begin{Rem}
Using the discussion in Section \ref{Section: orbital ints and reduction}, we may pull all these notions back to statements of relative $\ka$-orbital integrals on $U(W)$ and the (pure inner forms of) endoscopic groups $U(V_a\oplus V_\al)$ and $U(V_b\oplus V_\be)$. We leave these details to the interested reader.
\end{Rem}

\subsection{The fundamental lemma for the unit element}
Now assume that $E/F$ is an unramified quadratic extension of $p$-adic fields and assume that our Hermitian spaces satisfy $V_n=W_1=W_2$. In this unramified setting, we will append our groups with a subscript $n$ to differentiate by rank of the associated symmetric space; for example, $\rH_n(F)=U(V_n)\times U(V_n)$. 

Fixing a self-dual lattice $\Lam_n\subset V_n$, let $\mathrm{G}_n$ and $\mathrm{H}_n$ denote the corresponding smooth group schemes over $\calo_F$. We obtain hyperspecial subgroups 
\[
\mathrm{H}_n(\calo_F):=\U(\Lam_n)\times U(\Lam_n)\subset \rH_n(F)
\]and 
\[
\mathrm{G}_n(\calo_F):=\U(\Lam_n\oplus \Lam_n)\subset \G_n(F).
\]
Set $\bfun_{G_n(\calo_F)}$ to be the associated the associated characteristic function.

 Now consider the symmetric space $\calq_n:=\mathrm{G}_n/\mathrm{H}_n$. Recall that Corollary \ref{Cor: surjects on integral points} implies that we have a short exact sequence of pointed sets
\begin{equation*}
1\lra \mathrm{H}_n(\calo_F)\lra \mathrm{G}_n(\calo_F)\lra \calq_n(\calo_F)\lra 1.
\end{equation*}
This is compatible with the sequence (\ref{eqn: long exact sequence}) on $F$-points 
and with our normalization of measures implies the equality 
\begin{equation}\label{eqn: basic functions}
\bfun_{\calq_n(\calo_F)}=s_!(\bfun_{G_n(\calo_F)})\in C_c^\infty (\calq_n(F)).
\end{equation}
 Now suppose that $\Xi_{a,b}=(\xi_{a,b},\al,\be)$ is an elliptic relative endoscopic datum. Our measures conventions in Section \ref{measures} ensure that the given hyperspecial maximal subgroups of $\rH_n(F)$ and 
 \[
 \rH_a(F)\times \rH_b(F)
 \]each have volume $1$. We now state the main result of this article.



\begin{Thm}\label{Thm: fundamental lemma}
Assume that the characteristic of $F$ satisfies the assumption of Lemma \ref{Lem: eignvalue restrictions}. If $(\al,\be) = (I_a,I_b)$, the functions $\bfun_{\calq_n(\calo_F)}$ and $\bfun_{\calq_a(\calo_F)}\otimes\bfun_{\calq_b(\calo_F)}$ match. Otherwise, $\bfun_{\calq_n(\calo_F)}$ matches $0$.
\end{Thm}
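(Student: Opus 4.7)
The plan is to reduce the identity to the infinitesimal fundamental lemma (Theorem \ref{Thm: fundamental lemma Lie alg}) proved in \cite{LeslieUFJFL} via two successive linearization devices: the Cayley transform and the topological Jordan decomposition developed in Sections \ref{Section: TJD and descent}--\ref{Section: relative kazhdan}.

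First, an elementary support argument (Lemma \ref{Lem: reduce to hyperspecial}) restricts attention to regular semi-simple $x \in \calq_n^{rss}(\calo_F)$; outside this locus both sides of (\ref{first identity}) vanish, using Proposition \ref{Prop: relative Kazdhan lemma} to control the representatives of a stable orbit. On the very regular locus $\calq_n^{\heartsuit,\nu}(F)$ for $\nu = \pm 1$, the Cayley transform $\fc_\nu$ provides an $\rH$-equivariant analytic isomorphism onto a neighborhood of $0$ in $\Herm(W)$ which, by the definition (\ref{eqn: relative transfer factors}) of $\Delta_{rel}$, matches the relative transfer factor with the Langlands--Shelstad--Kottwitz transfer factor on the Lie algebra side and preserves integral structures. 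The identity on this open locus therefore follows term-by-term from the Lie algebra fundamental lemma.

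For the remaining singular cases $x \in \calq_n^{rss}(\calo_F) \setminus (\calq_n^{\heartsuit,1}(F) \cup \calq_n^{\heartsuit,-1}(F))$, I would invoke the topological Jordan decomposition $x = x_{as}x_{tu}$ of Proposition \ref{Prop: relative top decomp}, valid because Lemma \ref{Lem: unitary nice} shows $(\mathcal{G}_n,\mathcal{H}_n)$ is nice and simply connected; both factors then lie in $\calq_n(\calo_F)$. Proposition \ref{Prop: absolute descent} then rewrites $\Orb^\kappa(x,\bfun_{\calq_n(\calo_F)})$ as $\Orb^{\kappa_{x_{as}}}(x_{tu},\bfun_{\calq_{x_{as}}(\calo_F)})$ on the descendant $\calq_{x_{as}}$, which by Lemma \ref{Lem: descendants} is a product of simpler symmetric spaces of strictly smaller complexity. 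The key structural refinement (Lemma \ref{Lem: into the reg locus}) says that $x_{tu}$ lies in the very regular locus of $\calq_{x_{as}}$, so the Cayley transform applies again and returns us to the infinitesimal setting on each factor.

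The principal obstacle is the compatibility of endoscopy with descent: one must verify that the relative endoscopic datum $\Xi_{a,b}$ restricts consistently to endoscopic data on each factor of the endoscopic descendant matched with $\calq_{x_{as}}$, that $\kappa$ pulls back along Lemma \ref{Lem: kernel isom} to $\kappa_{x_{as}}$, and most delicately that the transfer factor $\Delta_{rel}((x_a,x_b),x)$ descends to agree with the product of transfer factors on the descendants at the respective topologically unipotent parts. Once this compatibility is in place and the parallel descent is performed on the stable side of (\ref{first identity}) (where the case $(\al,\be) \neq (I_a,I_b)$ yields vanishing because $\bfun_{\calq_a(\calo_F)} \otimes \bfun_{\calq_b(\calo_F)}$ cannot match the integral data coming from a descendant of $\calq_n(\calo_F)$ over a non-split Hermitian structure), the identity follows from the Lie algebra fundamental lemma applied on the descendant, completing the induction.
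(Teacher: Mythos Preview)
Your overall architecture matches the paper's, but there is a genuine gap in the descent step that the paper resolves differently from what you describe.

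You propose descending from $x$ to $x_{tu}\in\calq_{x_{as}}$ via Proposition \ref{Prop: absolute descent}, and then applying the Cayley transform on each factor of $\calq_{x_{as}}$. The difficulty is that, by Lemma \ref{Lem: descendants}, the descendant $\calq_{x_{as}}$ generally contains factors of types (\ref{case3})--(\ref{case5}) (Galois pairs and diagonal pairs), and for these the paper has \emph{not} defined relative endoscopic data, transfer factors, or an infinitesimal fundamental lemma; Theorem \ref{Thm: fundamental lemma Lie alg} is only available for the unitary Friedberg--Jacquet type. So the obstacle you correctly flag --- compatibility of endoscopy and transfer factors with descent --- is not merely a bookkeeping check but would require new theory on those factors. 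Your citation of Lemma \ref{Lem: into the reg locus} is also off: that lemma is stated for the coarser descendant $\calq_\ga$, not for $\calq_{x_{as}}$.

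The paper's fix is to perform only a \emph{partial} descent. One writes $x_{as}=\ga\cdot y_{as}$ with $\ga$ carrying exactly the $\pm1$-eigenspaces of $x_{as}$ (Section \ref{Section: descent1}), so that the descendant $\calq_\ga=\calq_1\times\calq_{-1}$ is a product of two \emph{lower-rank symmetric spaces of the same Friedberg--Jacquet type}. The eigenvalue restriction of Lemma \ref{Lem: eignvalue restrictions} guarantees that the residual element $y=(y_1,-y_{-1})$ lands in the very regular locus of each factor (Lemma \ref{Lem: into the reg locus}), so Proposition \ref{Prop: very reg FL} applies directly there, and the transfer factor descends as a simple product (Lemma \ref{eqn: almost there transfer factor}) because the cross-terms in $D_{a,b}(x)$ between the $+1$ and $-1$ blocks are units. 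No induction is needed: this single partial descent already places both sides in the range of the Cayley transform and hence of Theorem \ref{Thm: fundamental lemma Lie alg}.
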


Combining (\ref{eqn: basic functions}) with (\ref{eqn: OI first reduction}), one obtains a matching of $\ka$-orbital integrals between the test functions
\[
\bfun_{\G_n(\calo_F)}\:\text{ and }\:\bfun_{\G_a(\calo_F)}\otimes\bfun_{\G_b(\calo_F)}.
\]
Note that $\G_a\times \G_b$ is an unramified elliptic endoscopic group of $\G_n$.

\subsection{Proof of Theorem \ref{Thm: fundamental lemma}}\label{Section: proof wrap-up}

The proof of this theorem occupies Sections \ref{Section: proof of FL} and \ref{Section: descent final}. For the readers convenience, we summarize the components of the argument here.

We begin with the following simple reduction.

\begin{Lem}\label{Lem: reduce to hyperspecial}
Suppose that $(x_a,x_b)\in \calq_a(\calo_F)\times \calq_b(\calo_F)$. Then there exists $x\in \calq_n(\calo_F)$ matching $(x_a,x_b)$. In particular, if $x\in\calq_n^{rss}(F)$ is not in the same stable orbit as an element $x'\in\calq_n(\calo_F)$, then $x$ does not match any integral element of $\calq_a(F)\times \calq_b(F)$.
\end{Lem}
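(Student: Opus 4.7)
The plan is to produce $x$ by an explicit block-diagonal construction, and then to deduce the contrapositive of the second statement from the fact that the characteristic polynomial is a complete invariant for stable orbits on the regular semi-simple locus.

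For the first part, note that since we take the split Hermitian spaces with $V_n = (E^n, I_n)$ and self-dual lattice $\Lam_n = \calo_E^n$, we have the orthogonal decomposition $V_n = V_a \oplus V_b$ with $\Lam_n = \Lam_a \oplus \Lam_b$. After reordering the coordinates of $V_n \oplus V_n = V_a \oplus V_b \oplus V_a \oplus V_b \cong (V_a \oplus V_a) \oplus (V_b \oplus V_b)$, the block-diagonal inclusion of unitary groups $\U(V_a \oplus V_a) \times \U(V_b \oplus V_b) \hookrightarrow \U(V_n \oplus V_n)$ is compatible with the respective involutions $\theta$ (each is conjugation by the diagonal $\diag(I,-I)$ of the appropriate size). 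Passing to the image of the symmetrization map, this induces a closed immersion of $\calo_F$-schemes $\iota: \calq_a \times \calq_b \hookrightarrow \calq_n$, which therefore maps $\calo_F$-points to $\calo_F$-points.

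Second, writing the coordinates of $x_a$ and $x_b$ as $\left(\begin{smallmatrix}A_a&B_a\\-B_a^*&D_a\end{smallmatrix}\right)$ and $\left(\begin{smallmatrix}A_b&B_b\\-B_b^*&D_b\end{smallmatrix}\right)$ and setting $x := \iota(x_a, x_b) \in \calq_n(\calo_F)$, a direct computation gives $R(x) = \diag(A_a, A_b) \in \Herm(V_n)$, so that
\[
\chi_x(t) = \det(tI_n - R(x)) = \det(tI_a - A_a)\det(tI_b - A_b) = \chi_{x_a}(t)\chi_{x_b}(t),
\]
which is precisely the matching condition.

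For the second statement (the contrapositive), suppose $x \in \calq_n^{rss}(F)$ matches some integral pair $(x_a, x_b) \in \calq_a(\calo_F) \times \calq_b(\calo_F)$. By the construction above, there exists $x' \in \calq_n(\calo_F)$ also matching $(x_a, x_b)$, so $\chi_x = \chi_{x'}$ in $\mathbb{A}^n(F)$. Since $x$ is regular semi-simple, Lemma~\ref{Lem: orbit reps} and the proof of Lemma~\ref{Lem: cat quotient unitary} show that the invariant $\chi_x$ lies in the open locus $Q'$ over which every fiber element is regular semi-simple and comprises a single stable $\rH_n(\Fbar)$-orbit; hence $x'$ is regular semi-simple and lies in the same stable orbit as $x$, contradicting the hypothesis. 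The argument involves no serious obstacle: the only points to verify are the compatibility of the block-diagonal inclusion with the $\calo_F$-structure and the involutions, both of which are immediate from the chosen lattice decomposition $\Lam_n = \Lam_a \oplus \Lam_b$.
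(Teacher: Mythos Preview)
The proposal is correct and takes essentially the same approach as the paper: both construct $x$ via the block-diagonal embedding $\calq_a\times\calq_b\hookrightarrow\calq_n$ coming from the lattice decomposition $\Lam_n=\Lam_a\oplus\Lam_b$, and verify matching by computing $R(x)=\diag(R_a(x_a),R_b(x_b))$. Your write-up is slightly more thorough in that you spell out the contrapositive argument for the ``In particular'' clause (using that $\chi$ separates stable regular semi-simple orbits), which the paper leaves implicit.
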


\begin{proof}
In accordance with our conventions on split Hermitian forms, we may fix a basis of $V_n$, $V_a$, and $V_b$ such that the forms are represented by the respective identity matrices. Writing 
\[
(x_a,x_b)=\left(\left(\begin{array}{cc}
A_1     & B_1 \\
    -{B}^\ast_1 &D_1 
\end{array}\right),\left(\begin{array}{cc}
A_2     & B_2 \\
    -{B}^\ast_2 &D_2 
\end{array}\right)\right),
\]
it is clear that 
\[
x=\left(\begin{array}{cccc}
A_1    & & B_1 &\\
&A_2&&B_2\\
    -{B}^\ast_1 &&D_1&\\ 
&-{B}^\ast_2 &&D_2 
\end{array}\right)\in \calq_n(\calo_F)
\]
matches $(x_a,x_b)$. Indeed,
\[
R_n(x) = \left(\begin{array}{cc}
    A_1 &  \\
     & A_2
\end{array}\right)=\left(\begin{array}{cc}
    R_a(x_a) &  \\
     & R_b(x_b)
\end{array}\right),
\]
showing that $x$ and $(x_a,x_b)$ are a nice matching pair.
\end{proof}

Now suppose that $\Xi_{a,b}$ is a relative endoscopic datum and suppose that $x\in \calq^{rss}(F)$ matches $(x_a,x_b)\in \calq_{a,\al}(F)\times \calq_{b,\be}(F)$. If the stable orbit of $x$ fails to meet $\calq_n(\calo_F)$, then\[\Orb^\ka(x,\bfun_{\calq_n(\calo_F)})=0,\] and the fundamental lemma at $x$ follows when $(\al,\be)$ is ramified. In the unramified case $(\al,\be)=(I_a,I_b)$, Lemma \ref{Lem: reduce to hyperspecial} forces \[\SO((x_a,x_b),\bfun_{\calq_a(\calo_F)}\otimes \bfun_{\calq_b(\calo_F)})=0,\] finishing the proof in this case.

We now assume that $x\in \calq_n(\calo_F)$. For $\nu=\pm1$, we define the $\nu$-very regular locus by
\[
\calq_n^{\heartsuit,\nu}(\calo_F)=\{x\in\calq_n^{rss}(\calo_F): \overline{x}\in \calq_n(k)-D_{\nu}(k)\},
\]
where 
\[
D_{\nu}(k)=\{X\in \End(\Lam_n/\vp\Lam_n): \det(\nu I_n-X)=0\}.
\]
If $x\in \calq_n^{\heartsuit,\nu}(\calo_F)$, the fundamental lemma at $x$ is shown in Proposition \ref{Prop: very reg FL}. If $x\notin \calq_n^{\heartsuit,\nu}(\calo_F)$ for either $\nu=\pm1,$ we must apply the descent techniques developed in Sections \ref{Section: TJD and descent} and \ref{Section: relative kazhdan}. The fundamental lemma at $x$ is shown in Proposition \ref{Prop: final cases}, which follows from the descent formulas in Lemmas \ref{eqn: almost there general}, \ref{eqn: stable almost there general}, and \ref{eqn: almost there transfer factor}. This completes the proof of Theorem \ref{Thm: fundamental lemma}.




\section{The infinitesimal theory and the very regular locus}\label{Section: proof of FL}

We begin by recalling the infinitesimal fundamental lemma from \cite{LeslieUFJFL}. We then study the Cayley transform. Through this quasi-exponential map, we reduce Theorem \ref{Thm: fundamental lemma} to the Lie algebra case over the very regular locus. In Section \ref{Section: descent final}, we apply the descent methods of Section \ref{Section: TJD and descent} to complete the proof.

\subsection{The Lie algebra of the symmetric space}
Consider the Lie algebra $\fu(W)$ of $\G=\U(W)$. The differential of the involution $\theta$ acts on $\fu(W)$ by the same action and induces a $\zz/2\zz$-grading
\[
\fu(W)= \fu(W)_0\oplus \fu(W)_1,
\]
where $\fu(W)_i$ is the $(-1)^i$-eigenspace of the map $d\theta$. Then the pair $$(\rH,\fu(W)_1)$$ is called the infinitesimal symmetric space associated to $\calq$. This means that if $x_0\in \calq_n(\calo_F)\subset\calq_n(F)$ denotes the distinguished $\rH(F)$-fixed point, then
\[
T_{x_0}(\calq_n)(F) \cong\fu(W)_1,
\]
with $\rH(F)$ acting by restriction of the adjoint representation. We have a natural isomorphism of $H(F)$-representations
\begin{align*}
    \fu(W)_1&\cong \Hom_E(W_2,W_1)\\
    \de(X)=\left(\begin{array}{cc}
     & X \\
    -X^\ast & 
\end{array}\right)&\longmapsto X,
\end{align*}
where the action on the right-hand side is given by pre- and post-composition; we frequently identify $\fu(W)_1$ and $\Hom_E(W_2,W_1)$ via this map in the sequel. 

We also recall the (infinitesimal) contraction map 
\begin{align*}
    r:\fu(W)_1&\lra \Herm(W_1),\\
    \de(X)&\longmapsto-XX^\ast.
\end{align*} This gives a categorical quotient of the $U(W_2)$-action on $\fu(W)_1$ \cite[Lemma 3.2]{Leslieendoscopy}. Taking the coefficients of the characteristic polynomial $\car_{r(\de)}(t)$ gives the categorical quotient for the $\rH$-action.

\subsection{Relative endoscopy for the Lie algebra}\label{Section: relative endoscopy}
We briefly recall the notions of relative endoscopy from \cite{Leslieendoscopy}. Fix a elliptic relative endoscopic datum $\Xi_{a,b}=(\xi_{a,b},\al,\be)$. As before, we denote $V_\al = (E^a,\al)$ and $V_\be=(E^b,\be)$ and consider the Lie algebras
\[
\fu(V_a\oplus V_\al)\text{   and    }\fu(V_b\oplus V_\be),
\]
and associated symmetric pairs
\[
\left(\U(V_a)\times \U(V_\al),\fu(V_a\oplus V_\al)_1\right) \text{ and }\left(\U(V_b)\times \U(V_\be),\fu(V_b\oplus V_\be)_1\right).
\]
In \cite{Leslieendoscopy}, we define the direct sum of these symmetric pairs to be an infinitesimal {endoscopic symmetric pair} associated to $\Xi_{a,b}$. It is clear that it is the tangent space at the identity of the endoscopic symmetric space associated to $\Xi_{a,b}$ defined in Section \ref{Section: relative endoscopy}. In particular, the theory we develop here is compatible with that of \cite{Leslieendoscopy}.

This space comes equipped with the contraction map (see \cite[Section 3]{Leslieendoscopy} for details)
\begin{align*}
r_{\al,\be}:\fu(V_a\oplus V_\al)_1\oplus \fu(V_b\oplus V_\be)_1&\lra \Herm(V_a)\oplus \Herm(V_b),\\
(\de_a,\de_b)&\longmapsto (r(\de_a),r(\de_b)).
\end{align*}
We say that a regular semi-simple element $\de\in \fu(W)_1^{rss}$  \textbf{matches} the pair $$(\de_a,\de_b)\in [\fu(V_a\oplus V_\al)_1\oplus\fu(V_b\oplus V_\be)_1]^{rss}$$ if
\[
\car_{r(\de)}(t)=\car_{r(\de_a)}(t)\car_{r(\de_b)}(t).
\]
In particular, $r(\de)\in \Herm(W_1)$ and $r_{\al,\be}(\de_a,\de_b)\in \Herm(V_a)\oplus\Herm(V_b)$ match in the sense of Definition \ref{Def: endoscopic matching}; we similarly define when $(\de, (\de_a,\de_b))$ is a good matching pair. 
 For matching elements $(\de_a,\de_b)$ and $\de$, we define the transfer factor
\begin{equation*}
\tilde{\De}_{rel}((\de_a,\de_b),\de):=\De(r_{\al,\be}(\de_a,\de_b), r(\de)),
\end{equation*}
where the right-hand side is the Langlands-Shelstad-Kottwitz transfer factor the twisted Lie algebra. The notion of smooth transfer of orbital integrals was studied in \cite{Leslieendoscopy}; this again is mirrored in Section \ref{Section: relative endo integrals} so we omit the details.
\subsection{The infinitesimal fundamental lemma}
For the remainder of the paper, we assume that $E/F$ is an unramified extension of non-archimedean local fields of characteristic zero. Suppose that $V_n=W_1=W_2$ is split, and let $\Lam_n\subset V_n$ be a self-dual lattice. In this case,
\[
\fu(W)_1= \Hom_E(V_n,V_n) =\End(V_n)
\] 
and the ring of endomorphisms $\End(\Lam_n)\subset \End(V_n)$ of the lattice $\Lam_n$ is a compact open subset. The following was proved in \cite{LeslieUFJFL}.
\begin{Thm}\label{Thm: fundamental lemma Lie alg}
 If $(\al,\be) = (I_a,I_b)$, the functions $\bfun_{\End(\Lam_n)}$ and $\bfun_{\End(\Lam_a)}\otimes\bfun_{\End(\Lam_b)}$ match. Otherwise, $\bfun_{\End(\Lam_n)}$ matches $0$.
\end{Thm}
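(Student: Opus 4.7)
The plan is to reduce the statement to the classical Langlands--Shelstad fundamental lemma for the twisted Lie algebra $\Herm(V_n)$ (the unitary Lie algebra case proved by Laumon--Ng\^o, then transferred to characteristic zero by Waldspurger), using the infinitesimal contraction map $r:\fu(W)_1\to \Herm(V_n)$ as the bridge. The key observation is that $r$ identifies the $\U(V_n)$-categorical quotient of $\fu(W)_1$ with $\Herm(V_n)$ and, on the appropriate open locus, realizes $\fu(W)_1$ as a $\U(V_n)$-torsor over $\Herm(V_n)^{iso}$. Correspondingly, on the endoscopic side, $r_{\al,\be}$ realizes $\fu(V_a\oplus V_\al)_1\oplus \fu(V_b\oplus V_\be)_1$ as a $\U(V_\al)\times \U(V_\be)$-torsor over $\Herm(V_a)^{iso}\oplus \Herm(V_b)^{iso}$.

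First I would compute the pushforwards. With our measure normalizations giving $\U(\Lam_n)$ volume one, one expects $r_!(\bfun_{\End(\Lam_n)})=\bfun_{\Herm(\Lam_n)^{iso}}$ (times an explicit power of $q$ arising from the measure on the fiber); similarly $r_{\al,\be,!}(\bfun_{\End(\Lam_a)}\otimes \bfun_{\End(\Lam_b)})$ should equal the characteristic function of the self-dual integral Hermitian matrices on the split side, and should \emph{vanish} on orbits matching elements with integral contraction whenever $(\al,\be)\neq(I_a,I_b)$, since a ramified Hermitian lattice cannot contain a split self-dual one. Combined with Lemma~\ref{Lem: kernel isom} (or rather its infinitesimal analogue from \cite{Leslieendoscopy}) identifying $\mathfrak{C}(\rH_\de,\rH;F)\cong \mathfrak{C}(\U(V_n)_{r(\de)},\U(V_n);F)$ and the fact that transfer factors are defined by $\tilde{\De}_{rel}=\De$ via the contraction, this reduces the relative identity
\[
\tilde{\De}_{rel}((\de_a,\de_b),\de)\Orb^\ka(\de,\bfun_{\End(\Lam_n)})=\SO((\de_a,\de_b),\bfun_{\End(\Lam_a)}\otimes \bfun_{\End(\Lam_b)})
\]
to the analogous identity for $\ka$- and stable orbital integrals of the characteristic functions of $\Herm(\Lam_n)$ and $\Herm(\Lam_a)\oplus \Herm(\Lam_b)$, which is precisely the Langlands--Shelstad fundamental lemma for the unit element in the unramified unitary Lie algebra.

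The main obstacle I anticipate is twofold. First, the contraction map is only a torsor on the isomorphism locus $\fu(W)_1^{iso}$; orbits whose $r$-image is non-invertible, and in particular orbits through the boundary $0\in \Herm(V_n)$, require additional care. One approach is to reduce to the regular semi-simple locus (where $r$ is a torsor) using density, exploiting that orbital integrals are locally constant on $\fu(W)_1^{rss}$ with the usual analytic continuation arguments. Second, the precise identification of $\ka$-orbital integrals on $\fu(W)_1$ with those on $\Herm(V_n)$ requires compatibility of signs (Kottwitz signs of stabilizers, which are tori in both cases) and of measures on centralizers --- but these checks are standard once the torsor structure is in place.

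If a purely local reduction to Laumon--Ng\^o does not close all cases (e.g. because of ramified endoscopic data or subtle sign issues), the fallback strategy, mirroring \cite{LeslieUFJFL}, is a global argument: embed the local situation into a global unramified quadratic extension, set up the relative trace formula for infinitesimal unitary Friedberg--Jacquet periods, and compare it with the Jacquet--Rallis trace formula in a suitable way. Known local identities at almost all places, combined with linear independence of characters over a sufficiently rich family of test functions, would then pin down the remaining local identity at the place of interest. The hard part of this route is establishing the geometric side of the infinitesimal RTF and arranging sufficient local freedom in the test functions to isolate the desired orbital integral.
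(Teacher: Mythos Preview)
The paper does not prove this theorem; it is quoted from \cite{LeslieUFJFL}, where the argument is described as ``a combination of local harmonic analysis and a global argument relying on a new comparison of relative trace formulae.'' Your fallback global strategy is therefore much closer to what is actually done than your primary approach.

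Your primary approach has a genuine gap. The expectation that $r_!(\bfun_{\End(\Lam_n)})$ equals $\bfun_{\Herm(\Lam_n)^{iso}}$ up to a constant is false. First, the image of $r$ on $F$-points is constrained: since $r(\de(X))=-XX^\ast$, the element $-r(\de)$ must represent a Hermitian form isometric to a subform of the split form, so $\det(r(\de))$ has even valuation. Thus the support of $r_!(\bfun_{\End(\Lam_n)})$ already misses half of $\Herm(\Lam_n)^{iso}$. Second, and more seriously, the function is not constant on its support. For $A\in\Herm(\Lam_n)$ invertible with $-A=X_0X_0^\ast$, the value $r_!(\bfun_{\End(\Lam_n)})(A)$ is the volume of $\{h\in U(V_n): X_0h\in\End(\Lam_n)\}$; when $A$ is a unit this is $\vol(\U(\Lam_n))=1$, but when $A=-\vp^2 I_n$ (so $X_0=\vp I_n$) the set is $U(V_n)\cap\vp^{-1}\End(\Lam_n)$, which for $n\geq 2$ strictly contains $\U(\Lam_n)$ and has volume strictly greater than $1$. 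So $r_!(\bfun_{\End(\Lam_n)})$ is a genuinely non-spherical function on $\Herm(V_n)$, and identifying its Langlands--Shelstad transfer is not the unit-element fundamental lemma but something substantially harder. The contraction map is exactly what is used in \cite{Leslieendoscopy} to prove \emph{existence} of smooth transfer for functions supported in the iso locus, but it does not by itself yield the specific matching of unit elements.
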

Our goal is to show that this result implies Theorem \ref{Thm: fundamental lemma}.

\subsection{The Cayley transform}\label{Section: cayley}
For any $\xi\in E$, we define
\[
D_\xi=\{X\in \End(W): \det(\xi I-X)=0\}.
\]
\begin{Lem}\label{Lem: Cayley map}
 The Cayley transform
\begin{align*}
\fc_{\pm1}:\End(W)-D_1(F)&\lra \GL(W)\\
			X&\longmapsto \mp(1+X)(1-X)^{-1}
\end{align*}
induces a $U(V_1)\times U(V_2)$-equivariant isomorphism between $\fu(W)_1-D_1(F)$ and $\calq(F)-D_{\pm1}(F)$. The images of $\fu(W)_1-D_1(F)$ under $\fc_{\pm}$ form a finite cover by open subsets of $\calq(F)-(D_1\cap D_{-1})(F)$. In particular, the images contain the regular semi-simple locus of $\calq(F)$.
\end{Lem}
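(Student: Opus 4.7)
My plan is to prove each claim by direct manipulation of the Cayley formula, exploiting the fact that $1+X$ and $1-X$ commute.

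First I would verify that the image of $\fc_\nu$ lands in $\calq(F)$. Fix $X\in \fu(W)_1-D_1(F)$, so that $X^{\dagger}=-X$ and $\theta(X)=-X$, and set $Y:=\fc_\nu(X)=-\nu(1+X)(1-X)^{-1}$. A direct computation using $(1+X)(1-X)=(1-X)(1+X)$ gives $YY^{\dagger}=I$ and $\theta(Y)=-\nu(1-X)(1+X)^{-1}=Y^{-1}$, so $Y\in \G^{\sigma}(F)$. To see $Y$ lies in the component $\calq=\G^{\sigma}_n$ rather than a different component of $\G^{\sigma}$, I would use that $\fu(W)_1-D_1$ is geometrically connected (being open in an affine space), together with the fact that $\fc_\nu(0)=-\nu I\in \calq(F)$: the identity always lies in $\calq$, and $-I\in \calq(F)$ then follows from Lemma~\ref{Lem: minus sign}.

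Next, I would produce the inverse explicitly. Since $X$ commutes with $Y$, solving $Y=-\nu(1+X)(1-X)^{-1}$ for $X$ yields
\[
\fc_\nu^{-1}(Y)=-(\nu Y+I)(I-\nu Y)^{-1},
\]
defined precisely when $I-\nu Y$ is invertible, i.e.\ on $\calq(F)-D_\nu(F)$. A symmetric computation (using $Y^{\dagger}=Y^{-1}$ and $\theta(Y)=Y^{-1}$) shows $\fc_\nu^{-1}(Y)\in \fu(W)_1$, and the identity $I-\fc_\nu^{-1}(Y)=2(I-\nu Y)^{-1}$ shows the image avoids $D_1$. Equivariance under $\rH=\U(W_1)\times \U(W_2)$ is automatic: $\rH$ acts on both $\fu(W)_1$ and $\calq(F)$ by conjugation (on the latter because $s(hg)=hs(g)h^{-1}$), and the Cayley formula is visibly functorial in conjugation.

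Finally, for the cover, the identity
\[
\nu I-\fc_\nu(X)=2\nu(I-X)^{-1}
\]
shows $\fc_\nu(X)\notin D_\nu(F)$ for every admissible $X$, so combined with the inverse we get $\fc_\nu\colon \fu(W)_1-D_1\iso \calq-D_\nu$. Since $D_{\pm 1}$ are Zariski-closed, the union over $\nu=\pm 1$ is a two-element open cover of $\calq(F)-(D_1\cap D_{-1})(F)$. For the regular semi-simple locus, Lemmas~\ref{Lem: orbit reps} and \ref{Lem: characteristic comparison} imply the eigenvalues of any $x\in \calq^{rss}(F)$ are of the form $\alpha\pm\sqrt{\alpha^{2}-1}$ where $\alpha$ ranges over the roots of $\chi_x(t)$, and regular semi-simplicity forces $\alpha\neq\pm 1$; hence these eigenvalues avoid $\pm 1$, so $\calq^{rss}(F)\subset \calq(F)-(D_1\cup D_{-1})(F)$ and in particular lies inside each image. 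The main subtlety is the component check in the first step; the rest is transparent linear algebra.
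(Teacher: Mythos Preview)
Your proof is correct and follows essentially the same route as the paper: verify the Cayley transform and its inverse give mutually inverse $\rH$-equivariant maps between the stated loci, check the image lands in the correct component $\calq\subset \G^\sigma$, and use the eigenvalue description from Lemmas~\ref{Lem: orbit reps} and~\ref{Lem: characteristic comparison} to place $\calq^{rss}$ inside $\calq-(D_1\cup D_{-1})$. The only notable variation is in the component check: the paper observes directly that $\fc_{\pm1}(X)=\mp s(1+X)$ (so one value is visibly in $s(\G)=\calq$ and the other is handled by Lemma~\ref{Lem: minus sign}), whereas you argue by geometric connectedness of $\fu(W)_1-D_1$ together with $\fc_\nu(0)=-\nu I\in\calq(F)$; both arguments are valid and equally short.
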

\begin{proof}
It is well known \cite[Lemma 3.4]{ZhangFourier} that for any $\nu\in E$ the Cayley map 
\[
\fc_\nu(X) = -\nu(1+X)(1-X)^{-1}
\]induces a $\GL(W)$-equivariant isomorphism between $\fgl(W)-D_1(F)$ and $\GL(W)-D_\nu(F)$. Indeed, the inverse is of the same form: for $x\in \GL(W)-D_\nu(F)$, set
\[
\be_\nu(x) = -(\nu+x)(\nu-x)^{-1}.
\]
This gives the required inverse transformation. It is easy to check that the constraint that $\fc_\nu(X)\in U(W)$ whenever $X\in \fu(W)$ forces $\nu\overline{\nu}=1$. 

We now show that for the transform to compatible with the involution $\theta$, we need $\nu=\pm1$. Indeed, if $\theta(X)=\ep X \ep =-X$, then 
\[
\theta(\fc_\nu(X)) = -\nu(1-X)(1+X)^{-1}= ({-\nu}^{-1}(1+X)(1-X)^{-1})^{-1}=\fc_{\nu^{-1}}(X)^{-1}.
\]
Thus, $\theta(\fc_\nu(X))=\fc_\nu(X)^{-1}$ if and only if $\nu=\pm1$. Furthermore, recalling the symmetrization map $s(g)=g\theta(g)^{-1}$, we see that
\[
\fc_{\pm1}(X) = \mp s(1+X)
\] whenever $X\in \fu(W)_1-D_1(F)$. Lemma \ref{Lem: minus sign} implies that both $\mp s(1+X)\in\calq(F)$ and we conclude that we obtain a pair of morphisms
\[
\fc_{\pm1}: \textbf{$\fu(W)_1-D_1(F)\lra\calq(F)-D_{\pm1}(F)$}.
\]
For $\nu=\pm1,$ suppose now that $x\in \calq(F)-D_{\pm1}(F)$, so that $\theta(x) = \ep x\ep =x^{-1}$. Then 
\begin{align*}
  \theta(\be_\nu(x))&=-(\nu+x^{-1})(\nu-x^{-1})^{-1}\\
                    &=-(\nu x+I)(\nu x-I)^{-1}\\ &=-(\nu +x)(-\nu+ x)^{-1}=- \be_\nu(x),
\end{align*}
where we made use in the third equality of the fact that $\nu=\nu^{-1}$. This implies that $c_\pm$ induces a $\GL(W)$-equivariant isomorphism between $\fgl(W)-D_1(F)$ and $\GL(W)-D_\nu(F)$.

For the final statement, we can check over $\Fbar$. Using (\ref{Prop: GL orbit reps}), we see that any $x\in \calq^{rss}(F)$ lies in the same $\rH(\Fbar)$-orbit as an element of the form $x(A,0,0)$ where $A\in \fgl(W_1)$ is regular semi-simple with eigenvalues avoiding $\pm1$. Considering Lemma \ref{Lem: characteristic comparison}, the same is true of roots of $\car_x(t),$  implying that
\[
\calq^{rss}\subset \calq-D_1\cup D_{-1}.\qedhere
\]
\end{proof}



The next few lemmas show how the Cayley transform is compatible with the categorical quotients considered in Section \ref{Section: unitary orbits}, our notions of matching, and transfer factors.

\begin{Lem}\label{Lem: Cayley contraction}
For $\nu=\pm1$, there is a commutative diagram
\[
\begin{tikzcd}
\fu(W)_1-D_1(F)\ar[r,"\fc_{\nu}"]\ar[d,"r"]& {\calq}(F)-D_{\nu}(F)\ar[d,"R"]\\
\Herm(W_1)-D_1(F)\ar[r,"\fc_\nu"]&\Herm(W_1)-D_{\nu}(F),
\end{tikzcd}
\]
where by abuse of notation we let $\fc_\nu$ denote the Cayley transform on both $\fu(W)$ and $\Herm(W_1)$.
\end{Lem}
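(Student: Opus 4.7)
The plan is to verify commutativity by a direct block-matrix computation on the explicit formula for $\fc_\nu$. Writing $\de = \de(X) = \begin{pmatrix} 0 & X \\ -X^* & 0 \end{pmatrix}$, the key is to compute the $(1,1)$-block of $\fc_\nu(\de) = -\nu(1+\de)(1-\de)^{-1}$ explicitly; by the definition of $R$, this block is precisely $R(\fc_\nu(\de))$, and by the definition of $r$ we have $r(\de) = -XX^*$.

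First, I would invert $1-\de = \begin{pmatrix} I & -X \\ X^* & I \end{pmatrix}$ block-wise by solving $(1-\de)M = I$, which (using the standard intertwining identity $X(I+X^*X)^{-1} = (I+XX^*)^{-1}X$) yields
\[
(1-\de)^{-1} = \begin{pmatrix} (I+XX^*)^{-1} & X(I+X^*X)^{-1} \\ -(I+X^*X)^{-1}X^* & (I+X^*X)^{-1} \end{pmatrix}.
\]
Note this computation also shows that the upper-left block of $(1-\de)^{-1}$ exists precisely when $I+XX^* = I - r(\de)$ is invertible; in particular $r(\de) \notin D_1(F)$, so the bottom arrow $\fc_\nu$ is defined on $r(\de)$.

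Second, multiplying on the left by $1+\de = \begin{pmatrix} I & X \\ -X^* & I \end{pmatrix}$ and extracting the $(1,1)$-block gives
\[
R(\fc_\nu(\de)) = -\nu\bigl[(I+XX^*)^{-1} - X(I+X^*X)^{-1}X^*\bigr] = -\nu(I+XX^*)^{-1}(I - XX^*),
\]
where the second equality again uses $X(I+X^*X)^{-1}X^* = (I+XX^*)^{-1}XX^*$.

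Finally, since $I-XX^*$ and $(I+XX^*)^{-1}$ are both polynomials in $XX^*$, they commute, so
\[
R(\fc_\nu(\de)) = -\nu(I - XX^*)(I+XX^*)^{-1} = -\nu\bigl(I + r(\de)\bigr)\bigl(I - r(\de)\bigr)^{-1} = \fc_\nu(r(\de)),
\]
establishing commutativity of the diagram. The argument is a straightforward computation, with the only mild subtlety being the bookkeeping of the block-matrix inversion; there is no substantive obstacle.
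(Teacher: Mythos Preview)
Your proof is correct and follows essentially the same approach as the paper: both compute $\fc_\nu(\de)$ by explicitly inverting the block matrix $1-\de$ (or $1+\de$), multiplying out, and reading off the $(1,1)$-block to verify it equals $\fc_\nu(r(\de))$. Your write-up is slightly more explicit about why the final commutation step holds, but the argument is the same direct computation.
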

\begin{proof}
Let $\de\in \fu(W)_1-D_1(F)$, and let $X\in \Hom_E(W_2,W_1)$ such that
\[
\de = \left(\begin{array}{cc}
     & X \\
    -X^\ast & 
\end{array}\right),\text{ so that  } r(\de)=-XX^\ast.
\]
We need to calculate
\[
\fc_\nu(\de) = -\nu\left(\begin{array}{cc}
    I & -X \\
    X^\ast & I
\end{array}\right)\left(\begin{array}{cc}
    I & X \\
    -X^\ast & I
\end{array}\right)^{-1}.
\]
A simple matrix computation shows that
\[
\left(\begin{array}{cc}
    I & X \\
    -X^\ast & I
\end{array}\right)^{-1}= \left(\begin{array}{cc}
    (I+XX^\ast)^{-1} & -X (I+X^\ast X)^{-1} \\
    X^\ast (I+XX^\ast)^{-1}&  (I+X^\ast X)^{-1}
\end{array}\right),
\]
In particular, we have
\[
\fc_{\nu}(\de) =  -\nu\left(\begin{array}{cc}(I-XX^\ast)(I+X X^\ast)^{-1}&-2X(I+X^\ast X)^{-1}\\2X^\ast(I+XX^\ast)^{-1}&(I-X^\ast X)(I+X^\ast X)^{-1}\end{array}\right). 
\]
 The commutativity of the diagram now follows from the definitions of $r$ and $R$.
\end{proof}

We now consider the effect of the Cayley transform on the invariant polynomial maps.

\begin{Lem}\label{Lem: Cayley cahracteristic}
Let $\de\in \End(W)-D_1(F)$ and set $x=\fc_\nu(\de)$. Then
\begin{equation*}
\car_x(t) = (t-\nu)^{\dim(W)}\car_\de(1)^{-1}\left(\car_\de\left(\frac{t+\nu}{t-\nu}\right)\right).
\end{equation*}
\end{Lem}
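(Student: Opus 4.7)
The plan is to reduce the identity to a direct determinant manipulation, with no need to pass to eigenvalues or to an algebraic closure. The key observation is that the definition of the Cayley transform gives a clean factorization of $tI - x$.

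Starting from $x = \fc_\nu(\de) = -\nu(I+\de)(I-\de)^{-1}$ and using that $I - \de$ is invertible by the assumption $\de \notin D_1(F)$, I would first compute
\[
tI - x \;=\; tI + \nu(I+\de)(I-\de)^{-1} \;=\; \bigl[t(I-\de) + \nu(I+\de)\bigr](I-\de)^{-1} \;=\; \bigl[(t+\nu)I - (t-\nu)\de\bigr](I-\de)^{-1}.
\]
Taking determinants and using multiplicativity yields
\[
\car_x(t) \;=\; \det\!\bigl[(t+\nu)I - (t-\nu)\de\bigr] \cdot \det(I-\de)^{-1}.
\]
The factor $\det(I-\de)^{-1}$ is already equal to $\car_\de(1)^{-1}$, accounting for that part of the right-hand side.

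For the other factor, I would pull out the scalar $(t-\nu)$, working over the function field $F(t)$ so that $t-\nu$ is invertible:
\[
\det\!\bigl[(t+\nu)I - (t-\nu)\de\bigr] \;=\; (t-\nu)^{\dim W} \det\!\left(\tfrac{t+\nu}{t-\nu}I - \de\right) \;=\; (t-\nu)^{\dim W}\,\car_\de\!\left(\tfrac{t+\nu}{t-\nu}\right).
\]
Combining these two computations gives the claimed formula.

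There is no genuine obstacle here; the only minor point is that intermediate expressions involve $(t-\nu)^{-1}$, but after clearing denominators the asserted identity is a polynomial identity in $t$ and the formula holds as stated for all $t$ by continuity (or equivalently by working throughout in $F(t)$ and noting that both sides lie in $F[t]$).
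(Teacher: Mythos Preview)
Your proof is correct and takes a genuinely different route from the paper's. The paper passes to the algebraic closure, tracks how eigenvalues of $\de$ transform to eigenvalues of $x$ under $\fc_\nu$, and then argues that the right-hand side is a monic polynomial with the correct roots and multiplicities, hence must equal $\car_x(t)$. You instead factor $tI-x$ directly as $\bigl[(t+\nu)I-(t-\nu)\de\bigr](I-\de)^{-1}$ and take determinants, which yields the identity in one line without any recourse to eigenvalues, Jordan form, or the algebraic closure. Your approach is cleaner and more elementary: it sidesteps the multiplicity bookkeeping implicit in the paper's eigenvalue argument and works uniformly over $F$ (with the harmless passage to $F(t)$ to make $(t-\nu)$ invertible, which you handle correctly). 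The paper's approach has the mild advantage of making the bijection $\lam\mapsto -\nu(1+\lam)/(1-\lam)$ between eigenvalues explicit, which is used later, but that bijection is in any case immediate from the formula once proved.
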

\begin{proof}
We may assume that $F=\Fbar$. It is a straightforward exercise that if $\lam\in \Fbar^\times$ is an eigenvalue of multiplicity $m(\lam)$ of $\de$, then 
\[
\lam':=-\nu\left(\frac{1+\lam}{1-\lam}\right)
\]is an eigenvalue of $x$ with the same multiplicity. The rational function
\[
\car_\de\left(\frac{t+\nu}{t-\nu}\right)
\]
then vanishes on the eigenvalues of $x$. In particular, 
\[
(t-\nu)^{\dim(W)}\car_\de(1)^{-1}\left(\car_\de\left(\frac{t+\nu}{t-\nu}\right)\right)
\]
is a monic polynomial with the correct roots and multiplicities, and so must be $\car_x(t).$
\end{proof}

Before we apply this to comparing transfer factors, we check that the Cayley transform preserves our notions of matching orbits.

\begin{Lem}\label{Lem: cayley matching}
Suppose that $\de\in \fu(W)^{rss}_1-D_{\pm}$, and let $x=\fc_{\nu}(\de)\in \calq^{rss}(F)$. Fix an elliptic datum $\Xi_{a,b}$. 
Then 
\[(\de_a,\de_b)\in \fu(V_a\oplus _\al)_1\oplus \fu(V_b\oplus V_\be)_1
\]matches $\de$ if and only if 
\[
(x_a,x_b)= (\fc_{\nu}(\de_a),\fc_{\nu}(\de_b))\in \calq_{a,\al}(F)\times \calq_{b,\be}(F)
\]matches $x$. 
\end{Lem}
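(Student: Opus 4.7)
The plan is to reduce the claim to a purely polynomial identity about the Cayley-induced transformation of characteristic polynomials. First, I would apply Lemma \ref{Lem: Cayley contraction} to both sides: it gives
\[
R(x)=R(\fc_{\nu}(\de))=\fc_{\nu}(r(\de))\in \Herm(W_1)
\]
and, by applying the same lemma to the endoscopic symmetric pairs $(\U(V_a)\times \U(V_\al),\fu(V_a\oplus V_\al)_1)$ and $(\U(V_b)\times \U(V_\be),\fu(V_b\oplus V_\be)_1)$,
\[
R(x_a)=\fc_{\nu}(r(\de_a))\in\Herm(V_a),\qquad R(x_b)=\fc_{\nu}(r(\de_b))\in\Herm(V_b).
\]
Consequently the invariants $\chi_x(t)=\det(tI_n-R(x))$, $\chi_{x_a}(t)$, and $\chi_{x_b}(t)$ are the characteristic polynomials of the Cayley transforms of $r(\de)$, $r(\de_a)$, and $r(\de_b)$ respectively.

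Next, I would apply Lemma \ref{Lem: Cayley cahracteristic} to each of these three elements (with ambient dimensions $n$, $a$, and $b$, noting $a+b=n$) to obtain
\[
\chi_x(t)=(t-\nu)^n\car_{r(\de)}(1)^{-1}\car_{r(\de)}\!\left(\frac{t+\nu}{t-\nu}\right),
\]
and analogous formulas for $\chi_{x_a}(t)$ and $\chi_{x_b}(t)$. The key observation is that the operator
\[
\phi_\nu^{(m)}: P(t)\longmapsto (t-\nu)^m P(1)^{-1}P\!\left(\frac{t+\nu}{t-\nu}\right)
\]
on monic degree $m$ polynomials with $P(1)\neq 0$ is multiplicative in the sense that, for any factorization $P=P_aP_b$ with $\deg P_a=a$, $\deg P_b=b$, $a+b=n$, one has
\[
\phi_\nu^{(n)}(P_aP_b)=\phi_\nu^{(a)}(P_a)\cdot \phi_\nu^{(b)}(P_b),
\]
which is a direct algebraic manipulation. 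Moreover, $\phi_\nu^{(m)}$ is a bijection onto monic degree $m$ polynomials not vanishing at $-\nu$, with inverse given by the analogous formula using $\be_\nu$ in place of $\fc_\nu$.

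Combining these, the identity $\car_{r(\de)}(t)=\car_{r(\de_a)}(t)\car_{r(\de_b)}(t)$ is equivalent, via $\phi_\nu$, to $\chi_x(t)=\chi_{x_a}(t)\chi_{x_b}(t)$. The first is the definition of $(\de_a,\de_b)$ matching $\de$, and the second is the definition of $(x_a,x_b)$ matching $x$; this proves the lemma. The argument involves no serious obstacle beyond the formal polynomial bookkeeping, and the hypothesis $\de\in \fu(W)_1^{rss}\setminus D_{\pm 1}$ (together with the analogous condition for the endoscopic elements, which follows since matching of regular semi-simple elements in one setting corresponds to matching in the other) ensures the non-vanishing conditions at $\pm 1$ needed to invoke $\phi_\nu$.
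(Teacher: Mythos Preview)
Your proposal is correct and follows essentially the same approach as the paper: both use Lemma~\ref{Lem: Cayley contraction} to pass to the contractions and Lemma~\ref{Lem: Cayley cahracteristic} to compare characteristic polynomials. The paper phrases the comparison in terms of the bijection $\lam\mapsto -\nu(1+\lam)(1-\lam)^{-1}$ on roots, while you repackage the same content as multiplicativity and invertibility of the polynomial operator $\phi_\nu^{(m)}$; these are two ways of saying the same thing.
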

\begin{proof}
This follows immediately from our definition of matching and the preceding  characteristic polynomial calculation. Indeed, if $x=\fc_\nu(\de)$, then Lemma \ref{Lem: Cayley contraction} implies that $R(x) = \fc_{\nu}(r(\de))$. Now Lemma \ref{Lem: Cayley cahracteristic} implies there is a bijection
\[
\lam\mapsto -\nu\left(\frac{1+\lam}{1-\lam}\right)
\] between roots of the characteristic polynomials of $R(x)$ and $r(\de)$. This implies that the invariant polynomial of $x$ has the same roots as that of $(x_a,x_b)$ if and only if the same holds for $\de$ and $(\de_a,\de_b)$.
\end{proof}

\begin{Lem}\label{Lem: Cayley disc}
With notation as in the previous lemma with $x=\fc_\nu(\de)$, consider the relative discriminants
\[
\tilde{D}_{a,b}(\de) = \prod_{t_a,t_b}(t_a-t_b)\quad\text{ and }\quad {D}_{a,b}(x) = \prod_{z_a,z_b}(z_a-z_b),
\]
where $t_a$ (resp. $z_a$) runs over the roots of the invariant of $\de$ (resp. $x$) arising from $\de_a$ (resp. $y_a$) and likewise with $t_b$ (resp. $y_b$), then
\[
\tilde{D}_{a,b}(\de) =(-2\nu)^{a\cdot b}\prod_{z_a,z_b}\frac{1}{(z_a-\nu)(z_b-\nu)}D_{a,b}(x).
\]

\end{Lem}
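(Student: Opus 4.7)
The plan is to reduce the claim to a single elementary identity between the roots of the invariant polynomials, and then multiply over all pairs $(a,b)$ of roots.

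Recall from Lemma \ref{Lem: Cayley contraction} that $R(x) = \fc_\nu(r(\de))$, and from Lemma \ref{Lem: cayley matching} that $(\de_a,\de_b)$ matches $\de$ if and only if $(x_a,x_b)$ matches $x$ under this transform. In particular, the calculation in Lemma \ref{Lem: Cayley cahracteristic} (applied to the contractions rather than to $\de$ itself) sets up a bijection between roots of the invariant of $\de_a$ (resp. $\de_b$) and roots of the invariant of $x_a$ (resp. $x_b$): if $t$ is such a root on the infinitesimal side, then $z = -\nu(1+t)/(1-t)$ is the corresponding root on the group side, and equivalently $t = (z+\nu)/(z-\nu)$.

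The key computation is then to show that for corresponding pairs of roots
\[
t_a - t_b \;=\; \frac{-2\nu\,(z_a - z_b)}{(z_a-\nu)(z_b-\nu)}.
\]
This follows by putting the two fractions over a common denominator:
\[
\frac{z_a+\nu}{z_a-\nu} - \frac{z_b+\nu}{z_b-\nu} = \frac{(z_a+\nu)(z_b-\nu)-(z_b+\nu)(z_a-\nu)}{(z_a-\nu)(z_b-\nu)},
\]
whose numerator telescopes to $-2\nu(z_a-z_b)$ after expanding.

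Taking the product over all $a\cdot b$ pairs $(z_a,z_b)$ yields
\[
\tilde{D}_{a,b}(\de) \;=\; \prod_{z_a,z_b} \frac{-2\nu\,(z_a - z_b)}{(z_a-\nu)(z_b-\nu)} \;=\; (-2\nu)^{a\cdot b}\prod_{z_a,z_b}\frac{1}{(z_a-\nu)(z_b-\nu)}\,D_{a,b}(x),
\]
which is exactly the asserted identity. The step that required any real input beyond arithmetic is the matching of roots under the Cayley transform, which was already established; the rest is a bookkeeping computation, so no serious obstacle is expected.
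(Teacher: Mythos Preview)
Your proof is correct and follows essentially the same approach as the paper: both set up the bijection of roots $t = (z+\nu)/(z-\nu)$ via the Cayley transform, compute the single-pair identity $t_a - t_b = -2\nu(z_a - z_b)/((z_a-\nu)(z_b-\nu))$ by clearing denominators, and then take the product over all $a\cdot b$ pairs.
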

\begin{proof}
Lemma \ref{Lem: Cayley cahracteristic} implies that if $\tilde{\chi}_{\de}(t)$ is the invariant of $\de$, then
\[
\chi_x(t)=(t-\nu)^n\tilde{\chi}_\de(1)^{-1}\left(\tilde{\chi}_\de\left(\frac{t+\nu}{t-\nu}\right)\right)
\]
is the invariant of $x$. It is clear that $\pm1$ is not a root of this polynomial, and we see that if $z$ is a root of $\chi_x(t)$, then $t=\frac{z+\nu}{z-\nu}$ is a root of $\tilde{\chi}_\de(t)$ of the same multiplicity.

Applying this to the discriminants, we have
\begin{align*}
\tilde{D}_{a,b}(\de) &= \prod_{t_a,t_b}(t_a-t_b)\\
				&=\prod_{z_a,z_b}\left(\frac{z_a+\nu}{z_a-\nu}-\frac{z_b+\nu}{z_b-\nu}\right)\\
				&=\prod_{z_a,z_b}\frac{-2\nu(z_a-z_b)}{(z_a-\nu)(z_b-\nu)}.
\end{align*}
Counting the number of factors gives the coefficient $(-2\nu)^{a\cdot b}$.
\end{proof}


\subsection{The $\nu$-very regular locus}\label{Section: very regular} 
Let $\nu=\pm1$. If the pairs $(x,(x_a,x_b))$ and $(\de,(\de_a,\de_b))$ are as in Lemmas \ref{Lem: cayley matching} and \ref{Lem: Cayley disc}, we set
\[
C_{a,b,\nu}(x,\de):=(-2\nu)^{a\cdot b}\prod_{z_a,z_b}\frac{1}{(z_a-\nu)(z_b-\nu)}.
\]
The results of the previous section imply the formula
\begin{equation*}
    \tilde{\De}_{rel}((\de_a,\de_b),\de) = \eta_{E/F}(C_{a,b,\nu}(x,\de))|C_{a,b,\nu}(x,\de)|_F\De_{rel}((x_a,x_b),x).
\end{equation*}
Since $E/F$ is unramified and the residue characteristic is odd, these transfer factors agree whenever $C_{a,b,\nu}(x,\de)$ is a unit. In this section, we define certain open subsets of $\calq_n(\calo_F)$ for which this is the case.

Identifying $\fu(\Lam_n\oplus \Lam_n)_1=\End(\Lam_n)$, we define the \textbf{very regular locus} of $\End(\Lam_n)$ 
\[
\End(\Lam_n)^\heartsuit=\{\de\in \End(\Lam_n)^{rss}-D_{1}(\calo_F): \overline{\de}\in \End(\Lam/\vp\Lam)-D_{1}(k) \},
\]
where $\overline{\de}\in\End(\Lam/\vp\Lam)$ denotes the image of $\de$ under the modular reduction map. Similarly, we define  the $\nu$-\textbf{very regular locus} of $\calq_n(\calo_F)$
\[
\calq_n^{\heartsuit,\nu}(\calo_F)=\{x\in\calq_n^{rss}(\calo_F): \overline{x}\in \calq_n(k)-D_{\nu}(k)\}.
\]
\begin{Lem}\label{Lem: very reg count}
Suppose that $x\in \calq_n(F)$ and suppose $x=\fc_\nu(\de)$ for $\de\in \fu(W)_1$. Then $x\in \calq_n^{\heartsuit,\nu}(\calo_F)$ if and only if $\de\in\End(\Lam_n)^{\heartsuit}$.
\end{Lem}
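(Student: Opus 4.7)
The plan is to verify both directions by direct computation from the explicit formula for the Cayley transform, keying on the single algebraic identity
\begin{equation*}
x - \nu I \;=\; -\nu\bigl[(1+\de) + (1-\de)\bigr](1-\de)^{-1} \;=\; -2\nu(1-\de)^{-1},
\end{equation*}
which rearranges to $1 - \de = 2\nu(\nu - x)^{-1}$. Since the residue characteristic is odd and $\nu = \pm 1$, the scalar $2\nu$ is a unit in $\calo_F$, so reducing either form modulo $\vp$ shows that $\det(1-\overline{\de})$ and $\det(\nu-\overline{x})$ are simultaneously nonzero (and simultaneously units in $\calo_F$ when $\de$ and $x$ are integral). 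This is the whole content of the mod-$\vp$ equivalence in the lemma.

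For the forward direction, assume $\de \in \End(\Lam_n)^\heartsuit$. Under the identification $\fu(W)_1 \cong \End(V_n)$, write $\de = \left(\begin{smallmatrix} 0 & X \\ -X^\ast & 0 \end{smallmatrix}\right)$; since $X \in \End(\Lam_n)$ and $\Lam_n$ is self-dual, $X^\ast \in \End(\Lam_n)$ as well, so $\de$ preserves $\Lam_n \oplus \Lam_n$. The hypothesis $\overline{\de} \notin D_1(k)$ gives $\det(1-\de) \in \calo_F^\times$, hence $(1-\de)^{-1}$ preserves $\Lam_n \oplus \Lam_n$, and therefore $x = \fc_\nu(\de)$ does as well. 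Combined with $x \in \calq(F)$ from Lemma~\ref{Lem: Cayley map}, this gives $x \in \calq_n(\calo_F)$, while regular semisimplicity is transported by the same lemma. The identity above then yields $\det(\overline{x}-\nu I) = (-2\nu)^{2n}\det(1-\overline{\de})^{-1} \neq 0$, proving $x \in \calq_n^{\heartsuit,\nu}(\calo_F)$.

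For the converse, assume $x \in \calq_n^{\heartsuit,\nu}(\calo_F)$. The inverse Cayley transform $\de = \be_\nu(x) = -(\nu+x)(\nu-x)^{-1}$ is well defined: the hypothesis $\overline{x} \notin D_\nu(k)$ forces $\det(\nu - x) \in \calo_F^\times$, so $(\nu - x)^{-1}$ preserves $\Lam_n \oplus \Lam_n$, and thus $\de \in \End(\Lam_n \oplus \Lam_n) \cap \fu(W)_1$, which corresponds to $X \in \End(\Lam_n)$ via self-duality. Regular semisimplicity transfers by Lemma~\ref{Lem: Cayley map}, and the complementary form $1-\de = 2\nu(\nu-x)^{-1}$ gives $\det(1-\overline{\de}) = (2\nu)^{2n}\det(\nu-\overline{x})^{-1} \neq 0$, so $\overline{\de} \notin D_1(k)$ and $\de \in \End(\Lam_n)^\heartsuit$. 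There is no real obstacle here beyond the bookkeeping between the two avatars of $\de$ (as $X \in \End(V_n)$ and as a $2n \times 2n$ matrix in $\fu(W)$), which is resolved by the self-duality of $\Lam_n$, and the arithmetic input that $p$ is odd.
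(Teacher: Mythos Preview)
Your proof is correct and follows essentially the same route as the paper's. Both arguments use the explicit formulas for $\fc_\nu$ and its inverse $\be_\nu$ to show that integrality of $x$ (resp. $\de$) together with the mod-$\vp$ condition forces integrality of $\de$ (resp. $x$), and then verify the complementary mod-$\vp$ condition. Your packaging via the single identity $x-\nu = -2\nu(1-\de)^{-1}$ is a slight streamlining: the paper argues via eigenvalues and invokes Lemma~\ref{Lem: Cayley cahracteristic} for the reduction step, whereas your determinant identity handles both integrality and the residue condition in one stroke.
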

\begin{proof}
We begin by assuming $x\in \calq_n^{\heartsuit,\nu}(\calo_F)$. This implies that if $\lam\in\calo_{\Fbar}^\times$ is a root of the characteristic polynomial of $x$, then $\nu -\lam\in\calo_{\Fbar}^\times$. In particular,  $\nu-x\in \GL(\Lam_n)$. Clearly, we also have $\nu+x\in \End(\Lam_n)$, implying that 
\[
\de=\fc_\nu^{-1}(x) = -(\nu+x)(\nu-x)^{-1}\in \End(\Lam_n).
\]
In particular, $\overline{\de}=\fc_\nu^{-1}(\overline{x})$ and Lemma \ref{Lem: Cayley cahracteristic} implies that $1$ is not a root of $\car_{\overline{\de}}(t)$.

Conversely, assume $\de\in \End(\Lam_n)^\heartsuit$ with $x=\fc_\nu(\de)$. Similar elementary considerations imply that $1-\de\in \GL(\Lam_n)$, so that
\[
x=-\nu(1+\de)(1-\de)^{-1}\in \calq_n^{\heartsuit,\nu}(\calo_F).\qedhere
\]
\end{proof}
This shows that
\[
\calq_n^{\heartsuit,\nu}(\calo_F)=\fc_\nu\left(\End(\Lam_n)^\heartsuit\right).
\]
In particular, for any $x\in \calq_n^{\heartsuit,\nu}(\calo_F)$, the reduction $\overline{x}\in\calq_n(k)$ is in the image of the Cayley transform. 

The following lemma is immediate from the definitions.
\begin{Lem}
Let $x\in \calq_n^{\heartsuit,\nu}(\calo_F)$ and suppose $x'\in\calq_n(\calo_F)$ lies in the stable orbit of $x$. Then $x'\in\calq_n^{\heartsuit,\nu}(\calo_F)$
\end{Lem}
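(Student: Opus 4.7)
The plan is to observe that membership in $\calq_n^{\heartsuit,\nu}(\calo_F)$ is a stable invariant, in the sense that for integral elements it is cut out by the value at $\nu$ of a polynomial which is constant on stable orbits. Specifically, inspecting the proof of Lemma~\ref{Lem: very reg count}, the condition $x\in \calq_n^{\heartsuit,\nu}(\calo_F)$ is equivalent to $x\in \calq_n^{rss}(\calo_F)$ together with
\[
\car_x(\nu) = \det(\nu I - x) \in \calo_F^\times,
\]
where $x$ is viewed in $\GL(W)$ via the symmetrization embedding $s\colon \calq_n\hookrightarrow \G_n$. Indeed, integrality of $x$ ensures $\car_x(\nu)\in \calo_F$, and its reduction is nonzero in $k$ precisely when $\overline{x}\notin D_\nu(k)$.

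Next I would combine this with the fact that, under $s$, the $\rH_n$-action on $\calq_n$ is by ordinary conjugation. Stable conjugacy of $x$ and $x'$ thus means $x' = hxh^{-1}$ for some $h\in \rH_n(\Fbar)\subset \GL(W)(\Fbar)$, whence $x$ and $x'$ are $\GL(W)(\Fbar)$-conjugate and therefore have the same characteristic polynomial: $\car_{x'}(t) = \car_x(t)\in F[t]$. Since both $x, x'\in\calq_n(\calo_F)$, we deduce
\[
\car_{x'}(\nu) = \car_x(\nu)\in \calo_F^\times,
\]
so $\overline{x'}\notin D_\nu(k)$. Regular semisimplicity is likewise preserved under stable conjugacy, so $x'\in \calq_n^{rss}(\calo_F)$, and altogether $x'\in \calq_n^{\heartsuit,\nu}(\calo_F)$. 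No real obstacle arises; the lemma is essentially the observation that the $\nu$-very regular condition, restricted to integral points, is detected by a single coordinate of the categorical quotient map.
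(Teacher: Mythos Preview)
Your proof is correct and takes essentially the same approach as the paper: both arguments recognize that the condition $\overline{x}\notin D_\nu(k)$ is determined by a conjugation-invariant quantity (the value $\car_x(\nu)=\det(\nu I-x)$), which is constant on stable orbits. The paper phrases this as ``$D_\nu(k)$ is closed under the stable action of $\mathcal{H}(\kbar)$,'' whereas you unwind it explicitly via the characteristic polynomial; the content is the same.
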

\begin{proof}
We need to show that $\overline{x'}\notin D_\nu(k)$. But $D_\nu(k)$ is closed under the stable action of $\mathcal{H}(\kbar)$ and $\overline{x}\notin D_\nu(k)$.
\end{proof}

Define $\calq_n^{\heartsuit, \nu}(F)$ to be the open subset of $\calq_n^{rss}(F)$ consisting of elements in the same stable orbit as an element in $\calq_n^{\heartsuit,\nu}(\calo_F)$; we similarly define $\End(V_n)^{\heartsuit}$. This locus may be characterized as those elements of $\calq_n(F)$ with eigenvalues $\lam\in \calo^\times_{F^{alg}}$ all satisfying 
\[
|\lam-\nu|=1.
\]
The next proposition shows that the fundamental lemma holds for these open sets of $\calq_n(\calo_F)$.

\begin{Prop}\label{Prop: very reg FL}
Fix an elliptic relative endoscopic datum $\Xi_{a,b}$, and suppose that $x\in\calq_n^{\heartsuit,\nu}(F)$. Then the fundamental lemma holds at $x$. That is, if $\ka$ is the character associated to the endoscopic datum and if $x$ and $(x_a,x_b)\in \calq_{a,\al}(F)\times\calq_{b,\be}(F)$ match, we have
\[
\De_{rel}((x_a,x_b),x)\Orb^\ka(x,\bfun_{\calq_n(\calo_F)})=\begin{cases}\SO((x_a,x_b),\bfun_{\calq_a(\calo_F)}\otimes \bfun_{\calq_b(\calo_F)})&:(\al,\be)= (I_a,I_b),\\\qquad\qquad\qquad 0&:(\al,\be)\neq (I_a,I_b).\end{cases}
\]
\end{Prop}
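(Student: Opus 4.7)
The plan is to transport the claimed identity along the Cayley transform $\fc_\nu$ and invoke the infinitesimal fundamental lemma, Theorem \ref{Thm: fundamental lemma Lie alg}. First, using that the product $\De_{rel}((x_a,x_b),x)\Orb^\ka(x,f)$ is a stable invariant in the first variable, I would apply Lemma \ref{Lem: reduce to hyperspecial} together with the definition of $\calq_n^{\heartsuit,\nu}(F)$ to reduce to the case $x\in \calq_n^{\heartsuit,\nu}(\calo_F)$; analogously $\SO((x_a,x_b),\cdot)$ depends only on the stable orbit, so for the RHS I would move $(x_a,x_b)$ within its stable orbit to an integral representative, which exists in the split endoscopic case by the block construction in Lemma \ref{Lem: reduce to hyperspecial}. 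Set $\de=\fc_\nu^{-1}(x)\in\End(\Lam_n)^\heartsuit$ via Lemma \ref{Lem: very reg count}, and let $(\de_a,\de_b)=(\fc_\nu^{-1}(x_a),\fc_\nu^{-1}(x_b))$. By Lemma \ref{Lem: cayley matching}, $(\de_a,\de_b)$ matches $\de$, and the identification $\rH_\de = \rH_x$ coming from the $\rH$-equivariance of $\fc_\nu$ pulls the character $\ka$ on $\mathfrak{C}(\rH_x,\rH;F)$ back to the corresponding character for $\de$.

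Second, I would compare the two transfer factors. The very regular condition on $x$ forces every eigenvalue $z$ of $R(x)$ to satisfy $|z-\nu|_F=1$, and by the matching of invariants the same holds for the eigenvalues $z_a,z_b$ of $R(x_a),R(x_b)$. Thus $C_{a,b,\nu}(x,\de)\in\calo_F^\times$, and because $E/F$ is unramified Lemma \ref{Lem: Cayley disc} collapses to
\[
\tilde{\De}_{rel}((\de_a,\de_b),\de)=\De_{rel}((x_a,x_b),x).
\]
An entirely parallel analysis on the endoscopic side shows, in the split case, that $\tilde{\De}_{rel}$ on the Lie algebra coincides with $\De_{rel}$ on the symmetric space for the matching pair, which for the trivial character amounts to the statement that the basic functions are compatible under Cayley.

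Third, I would match the orbital integrals themselves. With the same Haar measure on $\rH_x(F)\backslash \rH(F)=\rH_\de(F)\backslash\rH(F)$, $\rH$-equivariance gives
\[
\Orb^\ka(x,\bfun_{\calq_n(\calo_F)})=\Orb^\ka(\de,\bfun_{\End(\Lam_n)}),
\]
once one verifies that for every $h\in \rH(F)$, $h^{-1}xh\in \calq_n(\calo_F)$ if and only if $h^{-1}\de h\in\End(\Lam_n)$. The converse direction follows directly from Lemma \ref{Lem: very reg count}; for the forward direction, an integral element in the rational orbit of $x$ has the same characteristic polynomial as $x$, hence lies in $\calq_n^{\heartsuit,\nu}(\calo_F)$, and Lemma \ref{Lem: very reg count} again applies. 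The same argument on the endoscopic side produces
\[
\SO((x_a,x_b),\bfun_{\calq_a(\calo_F)}\otimes\bfun_{\calq_b(\calo_F)})=\SO((\de_a,\de_b),\bfun_{\End(\Lam_a)}\otimes\bfun_{\End(\Lam_b)})
\]
in the split case. Combining these three reductions with Theorem \ref{Thm: fundamental lemma Lie alg} settles both the split case (matching of the two sides) and the ramified case (vanishing of $\Orb^\ka(\de,\bfun_{\End(\Lam_n)})$, which transports to $\Orb^\ka(x,\bfun_{\calq_n(\calo_F)})=0$). The main subtlety to be handled carefully is the integrality correspondence in the third step: the very regular hypothesis is exactly what prevents an element of the rational $\rH(F)$-orbit from meeting $\calq_n(\calo_F)$ while escaping the domain of $\fc_\nu^{-1}$, and I expect verifying this dichotomy (and its endoscopic analog) to be the technical heart of the argument.
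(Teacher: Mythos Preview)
Your proposal is correct and follows essentially the same approach as the paper: reduce to the Lie algebra via the Cayley transform $\fc_\nu$, use the very regular hypothesis to show $C_{a,b,\nu}(x,\de)\in\calo_F^\times$ so that the transfer factors agree, invoke Lemma \ref{Lem: very reg count} to match the orbital integrals, and conclude by Theorem \ref{Thm: fundamental lemma Lie alg}. The only minor difference is that the paper does not first reduce to $x\in\calq_n^{\heartsuit,\nu}(\calo_F)$ via Lemma \ref{Lem: reduce to hyperspecial}; it works directly from the definition of $\calq_n^{\heartsuit,\nu}(F)$ as stable orbits meeting $\calq_n^{\heartsuit,\nu}(\calo_F)$, and it separates out the case where $(x,(x_a,x_b))$ is not a nice matching pair by tracking $\inv(x,x')=\inv(\de,\de')$ explicitly---but this is exactly the content of your remark that the $\rH$-equivariance of $\fc_\nu$ identifies the characters $\ka$.
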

\begin{proof}

We begin with the transfer factor. First assume that $x$ and $(x_a,x_b)$ are a nice matching pair. Then we have an embedding 
\[
\phi_{a,b}:\Herm(V_a)\oplus \Herm(V_b)\hra \Herm(V)
\] satisfying that 
\[
\phi_{a,b}(y_a,y_b)) = y,
\]
where $R(x)= y$ and similarly for $(y_a,y_b)$ and 
\[
\De_{rel}((x_a,x_b),x) = \eta_{E/F}(D_{a,b}(x))|D_{a,b}(x)|_F.
\]
The assumption $x\in \calq_n^{\heartsuit,\nu}(F)$ implies that $x=\fc_\nu(\de)$ for some $\de\in \End(V_n)^{rss}$. Moreover, the same holds for the matching pair $ (x_a,x_b)=(\fc_\nu(\de_a),\fc_\nu(\de_b))$. Lemma \ref{Lem: cayley matching} implies that $x$ and $(x_a,x_b)$ are a good match if and only if $\de$ and $(\de_a,\de_b)$ are. 

Combining the calculation of Lemma \ref{Lem: Cayley disc} with the fact that  $C_{a,b,\nu}(\de,x)$ is a unit when $x\in \calq_n^{\heartsuit,\nu}(F)$, we have
\begin{equation}\label{eqn: transfer factor descent}
\De_{rel}((x_a,x_b),x)=\tilde{\De}_{rel}((\de_a,\de_b),\de).    
\end{equation}

In general, suppose that $x$ and $x'$ are in the same stable orbit with $\inv(x,x')\in H^1(F,\rH_x)$ the corresponding invariant. Writing $x'=\fc_\nu(\de')$, the equivariance of $\fc_\nu$ implies that $\de'$ is in the same stable class as $\de$ and that under the induced isomorphism $\rH_x\iso \rH_\de$, we have the identification 
\[
\inv(\de,\de')=\inv(x,x').
\]
Thus, the identity (\ref{eqn: transfer factor descent}) holds for any matching pair $x=\fc_\nu(\de)$ and $(x_a,x_b)=(\fc_\nu(\de_a),\fc_\nu(\de_b))$. 

We now note that Lemma \ref{Lem: very reg count} implies immediately that for any $x=\fc_\nu(\de)\in \calq_n^{\heartsuit,\nu}(F)$, 
\begin{equation*}
    \Orb(x,\bfun_{\calq_n(\calo_F)})= \Orb(\de,\bfun_{\End(\Lam_n)}).
\end{equation*} 
This shows that
\begin{equation}\label{eqn: reduce to Lie 1}
\De_{rel}((x_a,x_b),x)\Orb^\ka(x,\bfun_{\calq_n(\calo_F)})=\tilde{\De}_{rel}((\de_a,\de_b),\de)\Orb^\ka(\de,\bfun_{\End(\Lam_n)}).
\end{equation}
If $(\al,\be)\neq (I_a,I_b)$, the proposition follows from the corresponding vanishing of orbital integrals in Theorem \ref{Thm: fundamental lemma Lie alg}.

Assuming now that $(\al,\be)= (I_a,I_b)$, we further claim that
\begin{equation}\label{eqn: reduce to Lie 2}
\SO((x_a,x_b),\bfun_{\calq_a(\calo_F)}\otimes \bfun_{\calq_b(\calo_F)})=\SO((\de_a,\de_b),\bfun_{\End(\Lam_a)}\otimes \bfun_{\End(\Lam_b)}).
\end{equation}
where $(\de_a,\de_b)\in \End(V_\al)\oplus\End(V_\be)$. To see this, suppose $(x_a',x_b')\in \calq_{a}(\calo_F)\times\calq_{b}(\calo_F)$ lies in the stable orbit of $(x_a,x_b)$. Since $x\in \calq_n^{\heartsuit,\nu}(F)$, if $\lam$ is a root of $\car_x(t)$, then 
\[
\lam\in\calo_{\Fbar}^\times\:\text{ and }\:\overline{\lam}\neq \nu\in \kbar.
\]By the definition of matching of orbits and Lemma \ref{Lem: characteristic comparison}, the same is true of the roots of the characteristic polynomials of $x_a'$ and $x_b'$. In particular,
\[
(x_a',x_b')\in \calq^{\heartsuit,\nu}_a(\calo_F)\times \calq^{\heartsuit,\nu}_b(\calo_F).
\]
The equality (\ref{eqn: reduce to Lie 2}) now follows from Lemma \ref{Lem: very reg count}. This proves the proposition by combining (\ref{eqn: reduce to Lie 1}) and (\ref{eqn: reduce to Lie 2}) with the matching of orbital integrals in Theorem \ref{Thm: fundamental lemma Lie alg}.
\end{proof}

\begin{Rem}\label{Rem: last cases}
Combining Lemma \ref{Lem: reduce to hyperspecial} and Proposition \ref{Prop: very reg FL}, the fundamental lemma at $x$ is now established unless $x\in\calq_n^{rss}(\calo_F)$ has the property that 
\[
\overline{x}\in (D_1\cap D_{-1})(k).
\]
\end{Rem} 

\section{Descent to the very regular case}\label{Section: descent final}

We now use the results of Section \ref{Section: top jordan decomp rel} and Proposition \ref{Prop: relative Kazdhan lemma} to prove the final cases of Theorem \ref{Thm: fundamental lemma}. This is stated as Proposition \ref{Prop: final cases} below.

 
Suppose now that $x\in \calq_n^{rss}(\calo_F)$ such that 
\[
\overline{x}\in (D_1\cap D_{-1})(k),
\] and let $x=x_{as}x_{tu}$ be the topological Jordan decomposition. 
If $W=V_0\oplus V_1\oplus V_{-1}$ is the eigenspace decomposition of $W$ for $x_{as}$ discussed in Lemma \ref{Lem: descendants}, 
we have
\begin{align}\label{central product}
  \G_{x_{as}}= \G_{x_{as}}'\times \U(V_1)\times \U(V_{-1}),  
\end{align}
where $\G_{x_{as}}'$ is a product of groups in cases (\ref{case3})-(\ref{case5}) in Lemma \ref{Lem: descendants}.  
 
 We need the following characterization of the eigenvalues of $x_{as}$ that can occur in $V_0$.

\begin{Lem}\label{Lem: eignvalue restrictions}
Let $e$ be the ramification degree of $F/\qq_p$ and assume that $p>\max\{e+1,2\}$. Let $\nu\in \{\pm1\}$. Suppose that $x=x_{as}x_{tu}\in\calq_n^{rss}(\calo_F)$ is the topological Jordan decomposition. If $\lam\in \calo_{\Fbar}^\times$ is an eigenvalue of $x_{as}$ such that $\overline{\lam}=\nu\in \kbar$, then $\lam=\nu$.
\end{Lem}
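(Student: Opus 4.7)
The plan is to exploit that absolute semi-simplicity forces $x_{as}$ to have finite order coprime to $p$, so its eigenvalues on $W$ are roots of unity of order prime to $p$; such roots embed injectively into the residue field via reduction, and since $\nu=1$ and $\nu=-1$ are distinct in $\kbar$ (as $p>2$), the only such root reducing to $\nu$ is $\nu$ itself.

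To execute this, I would first recall from Section \ref{Section: top jordan decomp} that $x_{as}^{c_{\G_n}}=1$ with $c_{\G_n}$ coprime to $p$ by definition. Since $x_{as}$ is absolutely semi-simple, it is diagonalizable over $\Fbar$, and each eigenvalue $\lambda \in \calo_{\Fbar}^\times$ satisfies $\lambda^{c_{\G_n}}=1$; hence $\lambda \in \mu_m(\calo_{\Fbar})$ for some divisor $m \mid c_{\G_n}$, with $(m,p)=1$. The polynomial $T^m-1$ is then separable over $k$, so Hensel's lemma yields that reduction modulo $\fp_{\Fbar}$ restricts to a bijection
\[
\mu_m(\calo_{\Fbar})\iso \mu_m(\kbar).
\]

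Next, using the hypothesis $p>2$, the elements $\nu=\pm 1$ are distinct in $\kbar^\times$, and (enlarging $m$ to a multiple of $2$ if $\nu=-1$, which is still coprime to $p$) the tautological lift $\nu \in \mu_m(\calo_{\Fbar})$ reduces to $\nu\in \mu_m(\kbar)$. Since $\overline{\lambda}=\nu=\overline{\nu}$ with both $\lambda$ and $\nu$ lying in $\mu_m(\calo_{\Fbar})$, injectivity of the reduction forces $\lambda=\nu$, completing the argument.

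The main subtlety is verifying the identity $x_{as}^{c_{\G_n}}=1$, but this is built into the explicit construction $x_{as}=\lim_m x^{q^{lm}}$ with $q^l\equiv 1\pmod{c_{\G_n}}$, since strong compactness of $x$ places the limit in a compact subgroup where finite-order relations pass to the limit. The stronger hypothesis $p>e+1$ does not appear explicitly in this sketch; it is harmless and presumably reflects the analytic conventions governing the $p$-adic exponential used elsewhere in the paper (for instance in Section \ref{Section: cayley} via the Cayley transform).
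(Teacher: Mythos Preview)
Your argument is correct and takes a genuinely different, more direct route than the paper. The paper proceeds analytically: it identifies each eigenvalue $\lam_{as}$ of $x_{as}$ as the limit $\lim_m \lam^{q^{lm}}$ for an eigenvalue $\lam$ of $x$, shows that $\overline{\lam}=\nu$, writes $\lam=\nu+V$ with $\val(V)>0$, and then uses a binomial estimate (citing \cite[Lemma 3.1]{Halesunramified}) together with the hypothesis $p>e+1$ to prove $\lam^{q^{lm}}\to\nu$. Your approach instead goes straight to the algebraic characterization: by Definition in Section~\ref{Section: top jordan decomp}, $x_{as}^{c_{\G_n}}=1$ with $(c_{\G_n},p)=1$, so every eigenvalue lies in $\mu_m(\calo_{\Fbar})$ for some $m$ prime to $p$, and the Teichm\"uller lift argument finishes. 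This is cleaner and, notably, uses only $p>2$; the stronger bound $p>e+1$ is genuinely unnecessary for this lemma (the paper invokes it solely for the binomial-coefficient valuation bound). The paper's computation does give slightly more information---it tracks which eigenvalue of $x$ limits to which eigenvalue of $x_{as}$---but none of that is needed for the statement as written. Your closing remark that $p>e+1$ ``presumably reflects the analytic conventions governing the $p$-adic exponential'' is not quite right: the paper uses Cayley transforms rather than exponentials, and the bound enters only in its chosen proof of this very lemma.
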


\begin{proof}
Recall the construction of $x_{as}$: for $c_G$ as in Section \ref{Section: top jordan decomp}, and $l\in \zz_{>0}$ such that $q^l\equiv 1\pmod{c_G}$, we have $x_{as} = \lim_{m\to \infty}x^{q^{lm}}$. Thus, the eigenvalues of $x_{as}$ are 
\[
\left\{\lim_{m\to \infty}\lam^{q^{lm}}: \car_x(\lam)=0\right\}.
\]

Suppose now that $\lam_{as}\in \calo_{\Fbar}^\times$ is as in the statement of the Lemma. 
As above, there exists $\lam\in \Omega(x)$ (see Lemma \ref{Lem: characteristic comparison}) such that 
\begin{equation}\label{eqn:limit eigenvalue}
\lam_{as}=\lim_{m\to \infty}\lam^{q^{lm}}.
\end{equation}

We claim that $\overline{\lam}=\nu\in \kbar$. Indeed, since $x_{as}x_{tu}=x_{tu}x_{as}$ we may simultaneously diagonalize these semi-simple elements of $\GL(W)$ over $\Fbar$. A fortiori, this diagonalizes $x=x_{as}x_{tu}$ and shows that $\lam$ decomposes
\[
\lam=\lam_{as}\lam_{tu}
\]
where $\lam_{as}$ (resp. $\lam_{tu}$) is the associated eigenvalue of $x_{as}$ (resp. $x_{tu}$). Since $x_{tu}$ is topologically unipotent, we see that $\overline{\lam}_{tu}=1$ so that
\[
\overline{\lam}=\overline{\lam}_{as}\overline{\lam}_{tu}=\overline{\lam}_{as}\in \kbar.
\]
By assumption, we see $\overline{\lam}=\nu$.

We now claim the limit (\ref{eqn:limit eigenvalue}) is $\nu$. Let $q=p^f$ and note that $\la\vp^e\ra=\la p\ra$. We may write $\lam = \nu+V$ with $V\in \calo_{\Fbar}$ such that $v:=\val(V)>0$. For $m\geq1$, set $D=q^{lm}$. We have
\[
\lam^{D} = \nu+\sum_{j=1}^{D}{\binom{D}{j}} \nu^{(D-j)}V^j.
\]
By our assumption that $p>e+1$ and a simple argument on the divisibility of binomial coefficients (see \cite[Lemma 3.1]{Halesunramified}), we have 
\[
\val\left({\binom{D}{j}} \nu^{(D-j)}V^j\right)\geq \val\left(DV\right)=(lef)m+v.
\]
Thus, 
\[
\big|\lam^{q^{lm}}-\nu\big|_F\lra 0 \text{   as   } m\lra \infty.\qedhere
\]
\end{proof}

In particular, for  $x\in \calq_n^{rss}(\calo_F)$  such that 
\[
\overline{x}\in (D_1\cap D_{-1})(k),
\] this lemma implies that $\dim(V_1)>0$ and $\dim(V_{-1})>0$.

\subsection{Descent on $\calq_n$}\label{Section: descent1}
While Proposition \ref{Prop: relative Kazdhan lemma} allows us to descend all the way to the topologically unipotent locus of $\calq_{x_{as}}(F)$, this is not necessary in our present case. Indeed, Proposition \ref{Prop: very reg FL} implies we need only descend to the very regular locus, which is larger than the topological unipotent locus. To avoid a tedious comparison of our transfer factors to those associated to the descendants of the forms (\ref{descendants1}) and (\ref{descendants2}), we establish a slight generalization of this proposition (see Lemma \ref{Lem: into the reg locus} below).

Corollary \ref{Cor: absolute nice lifts} tells us that there exists $g\in \G_{x_{as}}(\calo_F)$ such that
\[
s(g)=x_{as}.
\] Using the decomposition (\ref{central product}), we write $g= (g',g_{1},g_{-1})\in \G_{x_{as}}(\calo_F)$, so that
\[
s(g) = (s(g'),s(g_1),s(g_{-1}))= (x_{as}|_{V_0},I_{V_{1}},-I_{V_{-1}})=x_{as}.
\]We now decompose the absolutely semi-simple part $x_{as}=\ga\cdot y_{as}$
where  
\[
\ga = (I_{V_0}, I_{V_1}, -I_{V_{-1}})\in \G_{x_{as}}(\calo_F),
\]
 and 
 \[
 y_{as}=(x_{as}|_{V_0}, I_{V_1}, I_{V_{-1}})\in \G_{x_{as}}(\calo_F).
 \]
 In particular,
 \[
 \G_\ga=\U(V_0\oplus V_1)\times \U(V_{-1})\subset \U(W),
 \]
 and $\G_{x_{as}}\subset \G_\ga$ is a twisted Levi subgroup of $\G_\ga$.  
  It is clear that both $\ga, y_{as}\in \calq_n(\calo_F)$. If we set $y=\ga^{-1}x,$ then $y=y_{as}x_{tu}\in \G_{\ga}(\calo_F)$ is the topological Jordan decomposition of $y$. Lemma \ref{Lem: unipotent image} and Proposition \ref{Prop: relative top decomp} also imply that $y\in \calq_n(F)$.
 \begin{Lem}\label{Lem: new decomp}
  With the notation as above, $y\in \calq_{\ga}^{rss}(F)$.
 \end{Lem}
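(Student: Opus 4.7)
The plan is to check two things: that $y$ actually lies in $\calq_\gamma(F)$, and that its $H_\gamma$-orbit is of minimal possible dimension.

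For the first point, I would apply Proposition \ref{Prop: relative top decomp} to the connected symmetric pair $(\G_\gamma,\rH_\gamma)$ and the strongly compact element $y\in \G_\gamma(\calo_F)$: this reduces the statement to the claim that each of $y_{as}$ and $x_{tu}$ already lies in $\calq_\gamma(F)$. For $x_{tu}$, the element is topologically unipotent in $\G_\gamma(F)$ and satisfies $\sigma(x_{tu})=x_{tu}^{-1}$, so Lemma \ref{Lem: unipotent image} applied inside $(\G_\gamma,\rH_\gamma)$ produces the required lift. For $y_{as}=(x_{as}|_{V_0},I_{V_1},I_{V_{-1}})$, the $V_{\pm 1}$ components are the identity (which lies in any symmetric space), while the $V_0$ component lies in the symmetric space attached to the descendant of $(\G,\rH)$ at $x_{as}|_{V_0}$ by Lemma \ref{Lem: descendants}.

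The main step is to identify the stabilizer $(\rH_\gamma)_y$. By construction $\G_\gamma$ is the centralizer of $\gamma$ in $\G_n$, so $\gamma$ is central in $\G_\gamma$ and in $\rH_\gamma$. Under the closed embedding $s_\gamma:\calq_\gamma\hookrightarrow\calq_n$ of Lemma \ref{Lem: symmetr at x}, $y$ maps to $\gamma y=x$, and centrality of $\gamma$ makes the $\rH_\gamma$-action on $\calq_\gamma$ intertwine with the $\rH_\gamma$-action on $\calq_n$ near $x$. Hence $(\rH_\gamma)_y = \rH_\gamma\cap \G_x$, where $\G_x$ is the centralizer of $x$ in $\G_n$. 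Lemma \ref{Lem: decomp in center} gives $x_{as}\in Z(\G_x)$, so $\G_x\subset \G_{x_{as}}$; since $\gamma\in Z(\G_{x_{as}})$ we likewise have $\G_{x_{as}}\subset \G_\gamma$, so $\G_x\subset \G_\gamma$. Therefore
\[
(\rH_\gamma)_y = \rH\cap \G_\gamma\cap \G_x = \rH\cap \G_x = \rH_x.
\]

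To conclude, I would invoke the inequality proved in Lemma \ref{Lem: top nil reg rel}: for every $z\in \calq^{ss}_\gamma(F)$ such that $s_\gamma(z)\in \calq_n^{ss}(F)$, we have $\dim((\rH_\gamma)_z)\geq m:=\mathrm{rank}(\calq_n)$. Since $x\in \calq_n^{rss}(F)$, the torus $\rH_x$ has dimension exactly $m$, and by Step 2 this is the dimension of $(\rH_\gamma)_y$. So $y$ meets the minimum and is regular semi-simple in $\calq_\gamma$. The one place that requires care is keeping track of the two embeddings of $\calq_\gamma$ into $\G_n$ (via $s$ versus via $s_\gamma$); the whole argument relies on the fact that $\gamma$ is central in $\G_\gamma$, which ensures that the shift by $\gamma$ is harmless for both the $\rH_\gamma$-orbit structure and the dimensional comparison.
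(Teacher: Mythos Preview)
Your core argument—identifying $(\rH_\gamma)_y = \rH_x$ via centrality of $\gamma$ and the chain $\G_x\subset \G_{x_{as}}\subset \G_\gamma$, then deducing regularity by comparing stabilizer dimensions—is correct and is exactly what the paper does. The only difference is that the paper computes the minimal stabilizer dimension $m_\gamma$ directly from the product decomposition $\calq_\gamma = \calq_1\times\calq_{-1}$ (getting $m_\gamma=\tfrac12\dim(V_0\oplus V_1)+\tfrac12\dim(V_{-1})=n$), while you invoke the general inequality $m_\gamma\geq m$ established inside the proof of Lemma~\ref{Lem: top nil reg rel}. Both routes give the same conclusion.

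There is, however, a real gap in your first step. You try to show $y_{as}\in\calq_\gamma(F)$ by treating its $V_0$, $V_1$, $V_{-1}$ components separately, but $\calq_\gamma$ does \emph{not} factor over these three spaces: it factors as $\calq_1\times\calq_{-1}$ with $\calq_1$ built from $W_1=V_0\oplus V_1$ as a single Hermitian space. So the sentence ``the $V_0$ component lies in the symmetric space attached to the descendant of $(\G,\rH)$ at $x_{as}|_{V_0}$'' does not establish what you need; the relevant question is whether $y_{as}|_{V_0\oplus V_1}\in\calq_1(F)$, and that requires a separate argument. A cleaner route (which is what the paper's appeal to the proof of Lemma~\ref{Lem: top nil reg rel} is pointing at) is to bypass $y_{as}$ entirely: since $x\in\calq^{rss}(F)$ there exists $g\in\G_x(\Fbar)$ with $s(g)=x$; as $\G_x\subset\G_\gamma$ this gives $x\in\calq_\gamma(F)$ directly, and then $y=\gamma^{-1}x\in\calq_\gamma(F)$ follows from centrality of $\gamma$ together with Lemma~\ref{Lem: minus sign} on the $\calq_{-1}$ factor.

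One minor terminological slip: you write $m:=\mathrm{rank}(\calq_n)$, but in the paper $m$ is the dimension of a regular stabilizer, $\dim(\mathrm{St}_\rH(A))$. Both equal $n$ here, but they are different invariants in general.
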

 \begin{proof}
 The argument is similar to the proof of Lemma \ref{Lem: top nil reg rel}. In our present setting, $n=\frac{1}{2}\dim(W)$ gives the dimension of the centralizer of an element of $\calq_n^{rss}(F)$. As $\calq_\ga$ is a product of two lower rank analogues of $\calq_n$, it follows that the dimension of a regular stabilizer in $\calq_\ga$ is
 \[
m_\ga = \frac{1}{2}\dim(V_0\oplus V_1)+\frac{1}{2}\dim(V_{-1}) = n,
 \]
 as well (here we use the fact that $\dim(V_{-1})$ is even). Arguing as in the proof of Lemma \ref{Lem: top nil reg rel}, for $x=\ga y\in \calq^{rss}(F)$ as above,
\[
(\rH_{\ga})_{y}=\rH_{x}.
\]
This proves the claim.
 \end{proof}

Decomposing $W$ with respect to the action of $\ga$, we make a slight abuse of notation\footnote{The notation $W=W_1\oplus W_2$ was used for the eigenspace decomposition of $\ep$, but this should cause no confusion.} and write
 \[
W=W_1\oplus W_{-1},
 \]
 where $W_1=V_0\oplus V_1$ and $W_{-1}=V_{-1}$. The descendant $(\G_\ga,\rH_\ga)$ decomposes as a product
 \[
 (\G_\ga,\rH_\ga) = (\G_{\ga,1},\rH_{\ga,1})\times  (\G_{\ga,-1},\rH_{\ga,-1}) 
 \]
 where for both $\nu=\pm1$,
 \[
(\G_{\ga,\nu},\rH_{\ga,\nu}) =(\U(W_{\nu}), \U(W_{\nu,1})\times \U(W_{\nu,-1})).
 \]
Here $W_{\nu,\nu'}=\{w\in W: \ga w=\nu w,\:\: \ep w=\nu' w\}$. We have the associated symmetric spaces
 \[
 \calq_{1}=\U(W_{1})/ \U(W_{1,1})\times \U(W_{1,-1})
 \]
 and
 \[
 \calq_{-1}=\U(W_{-1})/ \U(W_{-1,1})\times \U(W_{-1,-1}).
 \]
 Proposition \ref{Prop: relative Kazdhan lemma} implies that each of these Hermitian spaces are split. In fact, we can identify the self-dual lattices directly. For the factors of $\rH_\ga$, we have the decomposition
 \[
 \Lam_1 = (\Lam_1\cap W_{1,1})\oplus (\Lam_1\cap W_{-1,1})
 \] 
 and
 \[
\Lam_{-1}= (\Lam_{-1}\cap W_{1,-1})\oplus (\Lam_{-1}\cap W_{-1,-1}).
\]are both self-dual lattice  giving rise to a hyperspecial subgroup $\rH_\ga(\calo_F)$. 
\begin{Lem}\label{Lem: into the reg locus}
Writing $x=\ga y= (y_1,-y_{-1})\in \calq_1(\calo_F) \times \calq_{-1}(\calo_F)$, we have
\[
-y_{-1}\in \calq_{-1}^{\heartsuit,1}(\calo_F)\:\text{ and }\:y_1\in \calq_{1}^{\heartsuit,-1}(\calo_F).
\]
\end{Lem}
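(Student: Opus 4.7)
The plan is to compute the reductions modulo $\vp$ of the components $y_1$ and $-y_{-1}$ and verify directly the eigenvalue conditions defining the very regular loci. The key arithmetic input is that the topologically unipotent part $x_{tu}$ reduces to the identity modulo $\vp$: since $\calq_n$ is quasi-split, $x \in \G_n^{rss}(F)$ as well, so both $x_{as}$ and $x_{tu}$ are semi-simple in $\G_n(F)$; the eigenvalues of a topologically unipotent semi-simple element must all be principal units in $\calo_{\Fbar}^\times$ (otherwise their reductions, being of finite order prime to $p$ in $\kbar^\times$, could not converge to $1$ under repeated $q^n$-th powers), so $\overline{x_{tu}} \in \mathcal{G}_n(k)$ is semi-simple with all eigenvalues equal to $1$, hence equals the identity.

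Granted that, both claims reduce to inspection of $\overline{y_{as}}$ on each factor. On the factor $W_{-1} = V_{-1}$, the element $y_{as}$ acts as the identity (the $(-1)$-eigenvalue piece of $x_{as}$ having been absorbed into $\gamma$), so $\overline{y_{-1}} = \overline{y_{as}|_{V_{-1}}} \cdot \overline{x_{tu}|_{V_{-1}}} = I$. Therefore $\overline{-y_{-1}} = -I$, whose only eigenvalue is $-1 \neq 1$ (as $p$ is odd), giving $-y_{-1} \in \calq_{-1}^{\heartsuit, 1}(\calo_F)$. On the factor $W_1 = V_0 \oplus V_1$, the reduction $\overline{y_1}$ equals $\overline{(x_{as}|_{V_0}, I_{V_1})}$, whose eigenvalues are the reductions of the eigenvalues of $x_{as}|_{V_0}$ together with $\dim V_1$ copies of $1$. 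By construction of the eigenspace decomposition of $x_{as}$, the eigenvalues of $x_{as}|_{V_0}$ avoid $\pm 1$, and by the contrapositive of Lemma \ref{Lem: eignvalue restrictions} their reductions in $\kbar$ therefore also avoid $\pm 1$. In particular, $-1$ is not an eigenvalue of $\overline{y_1}$, so $y_1 \in \calq_1^{\heartsuit, -1}(\calo_F)$.

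Some preliminary bookkeeping supports the above: integrality of $y$ as an element of $\calq_\gamma(\calo_F) = \calq_1(\calo_F) \times \calq_{-1}(\calo_F)$ follows from Corollary \ref{Cor: absolute nice lifts} applied to $y_{as}$ together with the integrality of $x$ (from which $x_{tu} = x_{as}^{-1} x \in \mathcal{G}_n(\calo_F) \cap \calq_n(F) = \calq_n(\calo_F)$); the identification $x = (y_1, -y_{-1}) \in \calq_1(\calo_F) \times \calq_{-1}(\calo_F)$ uses Lemma \ref{Lem: minus sign} together with the orbit structure from Lemma \ref{Lem: orbit reps} (which forces $\dim V_{-1}$ to be even so that $-I_{V_{-1}}$ lies in the correct component of $\G_{-1}^\sigma$); and regular semi-simplicity of both components is Lemma \ref{Lem: new decomp} combined with the product structure of $\calq_\gamma$. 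The only genuine obstacle I anticipate is the reduction statement $\overline{x_{tu}} = 1$, which is standard in $p$-adic group theory but must be applied carefully because $x_{tu}$ is only \emph{relatively} regular semi-simple a priori; the rest of the arithmetic content is already packaged into Lemma \ref{Lem: eignvalue restrictions}.
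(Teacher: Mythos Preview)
Your proposal is correct and follows essentially the same approach as the paper: topological unipotence of $y_{-1}$ handles the first claim, and the contrapositive of Lemma~\ref{Lem: eignvalue restrictions} handles the second. The paper's proof is a two-sentence version of exactly this.

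One small overclaim: the assertion that $\overline{x_{tu}}$ is semi-simple (hence equals the identity) is not justified and can fail---the reduction of a semi-simple integral element need not be semi-simple, and $\overline{x_{tu}}$ is in general only unipotent in $\mathcal{G}_n(k)$. Fortunately this does not matter: the very regular condition $\overline{z}\notin D_\nu(k)$ is purely about eigenvalues (i.e., about $\det(\nu I-\overline{z})$), and the eigenvalues of $\overline{x_{tu}}$ are indeed all $1$ since they are the reductions of the eigenvalues of $x_{tu}$. So your chain of reasoning survives if you replace ``$\overline{x_{tu}}=I$'' by ``all eigenvalues of $\overline{x_{tu}}$ are $1$,'' which is what the paper implicitly uses when it says ``$y_{-1}$ is topologically unipotent.''
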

\begin{proof}
The first assertion is immediate as $y_{-1}$ is topologically unipotent, so that $\overline{-y_{-1}}\notin D_1(k)$. Likewise, our analysis of eigenvalues of $y_{as}$ in Lemma \ref{Lem: eignvalue restrictions} tells us that no eigenvalue of $y_1$ is congruent to $-1$, so that $y_1\in \calq_{1}^{\heartsuit,-1}(\calo_F)$.
\end{proof}

The product decomposition of $(\G_\ga,\rH_\ga)$ induces a decomposition of the centralizer
\begin{equation}\label{eqn: stabilizer decomposition}
\rH_x = \rH_{x,1}\times \rH_{x,-1}\subset \U(W_{1}) \times \U(W_{-1})    
\end{equation}
and a resulting decomposition
 \[
 \ka_\ga=(\ka_{1},\ka_{-1}): \mathfrak{C}(\rH_{x,1},\U(W_1);F)\times\mathfrak{C}(\rH_{x,-1},\U(W_{-1});F)\lra\cc^\times.
 \]
We now prove a generalization on Proposition \ref{Prop: absolute descent} for the decomposition $x=\ga y$, the central point being that the uniqueness property of the topological Jordan decomposition holds implies a similar uniqueness principal for this decomposition.
 \begin{Lem}\label{eqn: almost there general}
 Let $x\in \calq_n(\calo_F)$ be regular semi-simple. With the notation as above,
   \begin{equation*}
 \Orb^\ka(x,\bfun_{\calq_n(\calo_F)})= \Orb^{\ka_1}(y_1,\bfun_{\calq_{1}(\calo_F)})\cdot \Orb^{\ka_{-1}}(- y_{-1},\bfun_{\calq_{-1}(\calo_F)}).
 \end{equation*}
 \end{Lem}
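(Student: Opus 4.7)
The plan is to mirror the proof of Proposition \ref{Prop: absolute descent}, substituting the decomposition $x = \gamma \cdot y$ for the topological Jordan decomposition $x = x_{as} x_{tu}$. The first step is to establish an analogue of the uniqueness of the topological Jordan decomposition for our new decomposition. Concretely, I would argue that $\gamma$ is intrinsically characterized by $x$: one recovers $x_{as}$ uniquely from $x$, and then $\gamma$ is the involution that is $-1$ on the $-1$-eigenspace of $x_{as}$ and $+1$ on the complement. If $x' \in \calo_{st}(x) \cap \calq_n(\calo_F)$ and $h \in \rH(\Fbar)$ satisfies $h x h^{-1} = x'$, then $h$ permutes the relevant eigenspaces; writing $x' = \gamma' \cdot y'$ for the analogous decomposition, we conclude $h\gamma h^{-1} = \gamma'$ and $h y h^{-1} = y'$. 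In particular, $\gamma$ and $\gamma'$ lie in the same stable $\rH(F)$-orbit, $y$ and $y'$ lie in the same stable $\rH_\gamma(F)$-orbit (viewed inside $\calq_\gamma(F)$ via $s_\gamma$), and
\[
\inv(x,x') \in \cald(\rH_x, \rH_\gamma; F) \subset \mathfrak{C}(\rH_x, \rH; F).
\]

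Next, since $\gamma \in \calq_n(\calo_F)$ is absolutely semi-simple (it has order $2$ and $p$ is odd), Proposition \ref{Prop: relative Kazdhan lemma} applies, and any element in the stable $\rH(F)$-orbit of $\gamma$ that lies in $\calq_n(\calo_F)$ is conjugate to $\gamma$ by an element of $\mathcal{H}(\calo_F)$. After modifying $x'$ within its $\mathcal{H}(\calo_F)$-orbit, we may therefore assume $\gamma' = \gamma$. Repeating the $\mathcal{H}(\calo_F)$-orbit counting argument from the proof of Proposition \ref{Prop: absolute descent} verbatim — using Lemma \ref{Lem: new decomp} to identify $\rH_x = \rH_{\gamma,y}$, the equality of Kottwitz signs $e(\rH_x) = e(\rH_{\gamma, y}) = 1$ (all tori), the bijection between $\mathcal{H}(\calo_F)$-orbits in $\rH(F) \cdot x'$ meeting $\calq_n(\calo_F)$ and $\mathcal{H}_\gamma(\calo_F)$-orbits in $\rH_\gamma(F) \cdot y'$ meeting $\calq_\gamma(\calo_F)$, and the compatibility of measures — I obtain
\[
\Orb^\kappa(x, \bfun_{\calq_n(\calo_F)}) = \Orb^{\kappa_\gamma}(y, \bfun_{\calq_\gamma(\calo_F)}),
\]
where $\kappa_\gamma$ denotes the restriction of $\kappa$ to $\mathfrak{C}(\rH_x, \rH_\gamma; F)$.

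Finally, the product decomposition $(\G_\gamma, \rH_\gamma) = (\G_{\gamma,1}, \rH_{\gamma,1}) \times (\G_{\gamma,-1}, \rH_{\gamma,-1})$ induces $\calq_\gamma = \calq_1 \times \calq_{-1}$, a factorization $y = (y_1, -y_{-1})$, a compatible splitting of the self-dual lattices (so that $\bfun_{\calq_\gamma(\calo_F)} = \bfun_{\calq_1(\calo_F)} \otimes \bfun_{\calq_{-1}(\calo_F)}$), a decomposition of the centralizer \eqref{eqn: stabilizer decomposition}, and a product decomposition of characters $\kappa_\gamma = (\kappa_1, \kappa_{-1})$. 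Stable orbits, invariants, and Haar measures all factor through the product, so the orbital integral on the left splits as the product on the right, yielding the desired identity.

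The main obstacle is the first step: verifying that the ad hoc decomposition $x = \gamma \cdot y$ behaves as well under stable conjugation as the topological Jordan decomposition. Once intrinsic characterization of $\gamma$ through the $-1$-eigenspace of $x_{as}$ is in place, everything else is a direct transcription of the arguments already used to prove Proposition \ref{Prop: absolute descent}, combined with the elementary product decomposition in the last paragraph.
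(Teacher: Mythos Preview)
Your proposal is correct and follows essentially the same approach as the paper. The only minor variation is in how Proposition~\ref{Prop: relative Kazdhan lemma} is invoked: the paper first aligns $x_{as}=x_{as}'$ (applying the proposition to $x_{as}$) and then deduces $h\ga h^{-1}=\ga$ from $h\in\G_{x_{as}}(\Fbar)$, whereas you apply the proposition directly to the order-two element $\ga$; both routes are valid and lead to the same orbit-counting argument transcribed from Proposition~\ref{Prop: absolute descent}.
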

\begin{proof}
We have the two decompositions 
\[
x=x_{as}x_{tu} = \ga y.
\] Let $x'=x'_{as}x'_{tu}\in \calq_n(\calo_F)$ lie in the same stable orbit as $x$. Arguing as in Proposition \ref{Prop: absolute descent}, Proposition \ref{Prop: relative Kazdhan lemma} allows us to assume that $x_{as}=x_{as}'$. In particular, if we set $y'=y_{as}x_{tu}'$, we have a similar decomposition 
 \[
 x'=\ga y'=y'\ga,\text{ with }y'\in\G_\ga(\calo_F)
 \] such that $y$ and $y'$ lie in the same stable orbit. This implies that $x'\in \G_{\ga}(\calo_F)$. 
 
 Now, for any $h\in \rH(\Fbar)$ such that $x=hx'h^{-1},$ the uniqueness of the topological Jordan decomposition forces $x_{as}=hx_{as}h^{-1}$, so that $h\in \G_{x_{as}}(\Fbar)$. Applying this to the decomposition
 \[
 x_{as}=\ga y_{as} = (h\ga h^{-1})(h y'_{as}h^{-1}),
 \]
along with $h=(h_0,h_1,h_{-1})\in \G_{x_{as}}(\Fbar)$ preserving the decomposition of $x_{as}$, we conclude that
\[
\ga= h\ga h^{-1}.
\] 
In particular, $x$ and $x'$ lie in the same stable $\rH_\ga$-orbit. Noting that $y\in \calq_\ga^{rss}(F)$ by Lemma \ref{Lem: new decomp}, the natural generalization of Lemma \ref{Lem: all good} holds and the proof now follows mutatis mutandis as in Proposition \ref{Prop: absolute descent}.
\end{proof}

\subsection{Descent on the endoscopic side}Suppose now that $(x_a,x_b)\in \calq_{a,\al}(F)\times \calq_{b,\be}(F)$ matches $x$. Comparing characteristic polynomials, it follows from the definition of matching of orbits, Lemma \ref{Lem: characteristic comparison}, and the definition of strongly compact elements that $x_a\in U(V_a\oplus V_\al)$ and $x_b\in U(V_b\oplus V_\be)$ are strongly compact. In particular, there exist topological Jordan decompositions
\[
x_a=x_{a,as}x_{a,tu}\:\:\text{ and }\:\:x_b=x_{b,as}x_{b,tu}.
\]

Running the above argument in each case gives the descendants
\[
(\G_{\ga_a},\rH_{\ga_a})\: \text{ and }\: (\G_{\ga_b},\rH_{\ga_b})
\]
where we have  $(x_a,x_b) = (\ga_ay_a,\ga_by_b),$ with the obvious meaning for the notation. Write 
\[
W_a = V_a\oplus V_\al\:\:\text{ and }\:\:W_b=V_b\oplus V_\be.
\]
The action of $\ga_a$ on $W_a$ and $\ga_b$ on $W_b$ induce analogous decompositions
\[
W_a=W_{a,1}\oplus W_{a,-1}\:\text{  and  }\:W_b=W_{b,1}\oplus W_{b,-1}
\]
 and the centralizers are of the form
\[
\G_{\ga_a} = \U(W_{a,1})\times \U(W_{a,-1})\:\text{ and }\:\G_{\ga_b} = \U(W_{b,1})\times \U(W_{b,-1}),
\]
and the groups $\rH_{\ga_a}$ and $\rH_{\ga_b}$ are the appropriate products of unitary subgroups as above.
\begin{Lem}\label{eqn: stable almost there general}
  If $(\al,\be)\neq (I_a,I_b),$ then 
  \[
  \Orb^\ka(x,\bfun_{\calq_n(\calo_F)})=0.
  \]
  Otherwise, the stable orbital integral
\[
\SO((x_a,x_b),\bfun_{\calq_a(\calo_F)}\otimes \bfun_{\calq_b(\calo_F)})
\]
equals
 \begin{equation*}
 \SO\left((y_{a,1},y_{b,1}),\bfun_{\calq_{a,1}(\calo_F)}\otimes \bfun_{\calq_{b,1}(\calo_F)}\right)\cdot\SO\left((-y_{a,-1},-y_{b,-1}),\bfun_{\calq_{a,-1}(\calo_F)}\otimes \bfun_{\calq_{b,-1}(\calo_F)}\right),
 \end{equation*}
 where $\calq_{a,\pm1}$ and $\calq_{b,\pm1}$ are the descendants associated to $(\ga_a,\ga_b)\in \calq_a(F)\times\calq_b(F)$.
\end{Lem}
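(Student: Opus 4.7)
The proof proceeds by developing, on the endoscopic side, the same decomposition constructed in Section \ref{Section: descent1} for $x$. Since $(x_a,x_b)$ matches $x$, comparison of characteristic polynomials (via Lemma \ref{Lem: characteristic comparison}) shows that the topological Jordan decompositions of $x_a$ and $x_b$ are compatible with that of $x$, so the further splitting $x=\ga y=(y_1,-y_{-1})$ transfers to compatible decompositions $(x_a,x_b)=((y_{a,1},-y_{a,-1}),(y_{b,1},-y_{b,-1}))\in(\calq_{a,1}\times\calq_{a,-1})\times(\calq_{b,1}\times\calq_{b,-1})$ with $(y_{a,\nu},y_{b,\nu})$ matching $y_\nu$ for each $\nu=\pm 1$. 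Verifying this compatibility of decompositions under matching, and explicitly identifying the induced endoscopic data on the descendants $\calq_1$ and $\calq_{-1}$, is the main obstacle.

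For the second assertion, assume $(\al,\be)=(I_a,I_b)$, so that $\calq_{a,\al}=\calq_a$ and $\calq_{b,\be}=\calq_b$ are the standard split endoscopic symmetric spaces. Running the argument of Lemma \ref{eqn: almost there general} (built on Proposition \ref{Prop: absolute descent}) separately for $\calq_a$ and $\calq_b$ produces the descent of the stable orbital integrals on each factor: the stabilizer of $(x_a,x_b)$ in $\rH_a\times \rH_b$ decomposes across the $\nu=\pm 1$ pieces, and the sum over rational orbits in a stable orbit factors accordingly. Taking the product of these two descents then yields the claimed factorization of $\SO((x_a,x_b),\bfun_{\calq_a(\calo_F)}\otimes \bfun_{\calq_b(\calo_F)})$.

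For the first assertion, suppose $(\al,\be)\neq (I_a,I_b)$. By Lemma \ref{eqn: almost there general} the $\ka$-orbital integral factors as $\Orb^{\ka_1}(y_1,\bfun_{\calq_1(\calo_F)})\cdot \Orb^{\ka_{-1}}(-y_{-1},\bfun_{\calq_{-1}(\calo_F)})$, and by Lemma \ref{Lem: into the reg locus} both arguments lie in the very regular loci of their respective descendants. Applying Proposition \ref{Prop: very reg FL} to each $\calq_\nu$ (which is itself of the standard split form) reduces the question to whether the induced endoscopic datum on $\calq_\nu$ is unramified: the corresponding factor vanishes whenever that induced datum is not split. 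The decompositions $V_\al=W_{a,1,-1}\oplus W_{a,-1,-1}$ and $V_\be=W_{b,1,-1}\oplus W_{b,-1,-1}$ yield the discriminant relations
\[
d(W_{a,1,-1})d(W_{a,-1,-1})\equiv d(\al)\quad\text{and}\quad d(W_{b,1,-1})d(W_{b,-1,-1})\equiv d(\be)\pmod{\Nm_{E/F}(E^\times)},
\]
so if $(\al,\be)\neq (I_a,I_b)$ then at least one of the two induced data is ramified, giving the desired vanishing of the product.
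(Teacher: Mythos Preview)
Your overall approach matches the paper's: the stable-orbital-integral descent in the case $(\al,\be)=(I_a,I_b)$ is exactly Proposition~\ref{Prop: absolute descent} (augmented as in Lemma~\ref{eqn: almost there general}) applied to each endoscopic factor, and the vanishing for $(\al,\be)\neq(I_a,I_b)$ combines Lemma~\ref{eqn: almost there general} with the vanishing case of Proposition~\ref{Prop: very reg FL} on the descendants, just as the paper does.

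There is, however, a gap in your discriminant step. Whether the induced endoscopic datum on $\calq_\nu$ is split is controlled by the full $\ga_a$-eigenspace $W_{a,\nu}$ (and similarly $W_{b,\nu}$), not by the piece $W_{a,\nu,-1}$ alone: applying Lemma~\ref{Lem: image of contraction} to $y_{a,\nu}\in\calq_{a,\nu}$ shows that the relevant form $\al_\nu$ is split exactly when $d(W_{a,\nu})=d(W_{a,\nu,1})\,d(W_{a,\nu,-1})$ is trivial. Your implication ``$W_{a,\nu,-1}$ non-split $\Rightarrow$ induced datum on $\calq_\nu$ ramified'' can fail if $W_{a,\nu,1}$ is also non-split, since then $W_{a,\nu}$ is split. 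The paper avoids this by working one level up: from $d(W_{a,1})\,d(W_{a,-1})=d(W_a)=d(V_a)\,d(\al)=d(\al)$ it follows directly that if $\al$ (or $\be$) is non-split then one of $W_{a,\pm1}$ (or $W_{b,\pm1}$) is non-split, and hence one of the two induced data is ramified. Your conclusion is correct, but the justification should track $W_{a,\nu}$ rather than only its $V_\al$-component.
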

\begin{proof}
 If $(\al,\be)\neq (I_a,I_b),$ then at least one of the forms on the four Hermitian spaces 
\[
W_{a,1},\: W_{a,-1}\quad\text{ or }\quad  W_{b,1},\: W_{b,-1}
\]is not split. Combining Lemma \ref{eqn: almost there general} with the vanishing statement in Proposition \ref{Prop: very reg FL} now implies that
\[
\Orb^\ka(x,\bfun_{\calq_n(\calo_F)})=0.
\]
Thus, we assume may that $(\al,\be)=(I_a,I_b)$, and the result follows from Proposition \ref{Prop: absolute descent}, augmented as in Lemma \ref{eqn: almost there general}.
\end{proof}

\subsection{Descent of transfer factors}\label{Section: descent3} Finally, we consider the transfer factor.
\begin{Lem}\label{eqn: almost there transfer factor}
Suppose that $x\in\calq_n(\calo_F)$ and $(x_a,x_b)\in\calq_a(F)\times \calq_b(F)$ match, with $x$ satisfying the assumption of Proposition \ref{Prop: final cases}. Then
\begin{equation*}
    \De_{rel}((x_a,x_b),x) = \De_{rel}((y_{a,1},y_{b,1}),y_1)\cdot\De_{rel}((-y_{a,-1},-y_{b,-1}),-y_{-1}).
\end{equation*}
\end{Lem}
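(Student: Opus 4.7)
The plan is to unwind the relative transfer factor via \eqref{eqn: relative transfer factors} and reduce the identity to block-diagonal multiplicativity of the Langlands--Shelstad--Kottwitz transfer factor on the Hermitian Lie algebra. Setting $y=R(x)$ and $(y_a,y_b)= R_{\al,\be}(x_a,x_b)$, and using the analogous notation for each descendant $\calq_\nu$, the identity becomes the claim that the Hermitian transfer factor $\De$ splits as a product under the block decomposition of the matching pair coming from $\ga$ and $(\ga_a,\ga_b)$.

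The first step is to verify that the contraction map $R$ is compatible with the $\ga$-eigenspace decomposition of $W$. Because $x$ commutes with $\ga$ and the Hermitian form on $W$ is orthogonal with respect to the splitting $W = W_1\oplus W_{-1}$, writing $x$ in block form with respect to the finer decomposition $W^+\oplus W^-$ with $W^+ = W_{1,1}\oplus W_{-1,1}$ shows that $R(x)$ itself decomposes as a direct sum. The same analysis applied to the descendants $\calq_1$ and $\calq_{-1}$ identifies the two blocks as $R_1(y_1)$ and $R_{-1}(-y_{-1})$, with the sign reflecting that $\ga|_{W_{-1}} = -I$. Running the identical argument on the endoscopic side, using the $\ga_a,\ga_b$-eigenspace decompositions of $V_a\oplus V_\al$ and $V_b\oplus V_\be$, exhibits the matching pair $(R_{\al,\be}(x_a,x_b),R(x))$ as the orthogonal direct sum of matching pairs associated to $\nu=1$ and $\nu=-1$.

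The final step is to appeal to block-diagonal multiplicativity of the Langlands--Shelstad--Kottwitz transfer factor on the Hermitian Lie algebra (reviewed in Appendix \ref{Section: endoscopy roundup}). On a nice matching pair $\De$ is given by $\eta_{E/F}$ applied to the relative discriminant $D_{a,b}$, and this discriminant is defined as a product over pairs of roots drawn from the $x_a$- and $x_b$-blocks; the $\ga_a,\ga_b$-eigenspace decomposition partitions these roots according to the eigenvalues of $\ga_a$ and $\ga_b$, so $D_{a,b}$ factors as $D_{a,b,1}\cdot D_{a,b,-1}$, while both $|\cdot|_F$ and $\eta_{E/F}$ distribute across the product. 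For a general matching pair one reduces to the nice case by stable deformation and tracks the stable-conjugacy cocycle along the decomposition \eqref{eqn: stabilizer decomposition}, noting that the endoscopic character $\ka$ splits as $(\ka_1,\ka_{-1})$ accordingly. Combining this with the block decomposition of the contraction map yields the desired product formula.

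The main technical obstacle will be the careful bookkeeping of signs and of the endoscopic auxiliary data across the decomposition: in particular, verifying that no extraneous factor arises from cross-terms between $W_{1,1}$ and $W_{-1,1}$, and that the signs introduced by $\ga$ (turning $y_{-1}$ and $y_{\star,-1}$ into their negatives) are absorbed consistently on both sides of the identity. This should ultimately rest on the observation that both the discriminant and the underlying cocycle depend only on characteristic-polynomial data and on abelianized $H^1$-invariants of the centralizer, both of which are multiplicative under orthogonal direct sums of Hermitian spaces.
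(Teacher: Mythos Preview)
Your outline has the right architecture---reduce via the contraction map, factor the discriminant, then handle the cocycle correction---but there is a genuine gap in the discriminant step. You assert that under the eigenspace partition of roots the relative discriminant $D_{a,b}(x)$ factors as $D_{a,b,1}\cdot D_{a,b,-1}$, and in the last paragraph you ground this in the claim that the discriminant is ``multiplicative under orthogonal direct sums of Hermitian spaces.'' This is false as stated: $D_{a,b}(x)$ is a product over \emph{all} pairs $(z_a,z_b)\in R_a\times R_b$, so it contains cross-terms with $z_a\in R_{a,1}$ and $z_b\in R_{b,-1}$ (and the symmetric ones). These cross-terms are not equal to $1$, and no amount of block-diagonal bookkeeping makes them disappear.

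What the paper actually does is show that each such cross-term $z_a-z_b$ is a \emph{unit} in $\calo_{\Fbar}$, by invoking Lemma~\ref{Lem: eignvalue restrictions}: the hypothesis on $x$ (together with the residue-characteristic bound) forces the roots in $R_{\star,1}$ and $R_{\star,-1}$ to have distinct reductions in $\kbar$, so their differences are units. Since $E/F$ is unramified and $p$ is odd, units lie in $\Nm_{E/F}(E^\times)$ and have absolute value $1$, hence contribute trivially to both $|\cdot|_F$ and $\eta_{E/F}$. Only after killing the cross-terms in this way does one obtain the product formula for $\De_{rel}$. Your proposal never identifies this arithmetic input, and without it the argument does not go through. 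The treatment of the general (not nice) matching case via the decomposition~\eqref{eqn: stabilizer decomposition} and the splitting $\ka=(\ka_1,\ka_{-1})$ is fine and matches the paper.
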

\begin{proof}The matching induces a partition of multi-sets
\[
R=R_a\bigsqcup R_b,
\] where $R\subset \Fbar^\times$ are the roots of $\chi_x$ (with multiplicity) and $R_a$ (resp., $R_b$) are the  roots of the invariant of $x_a$ (resp., $x_b$). On the other hand, the decomposition
\[
W= W_{1}\oplus W_{-1}
\]
induces a partition of multi-sets
\[
R= R_1\bigsqcup R_{-1}.
\]
The actions of $\ga_{a}$ on $W_a$ and $\ga_b$ on $W_b$ induce analogous decompositions
\[
R_a=R_{a,1}\bigsqcup R_{a,-1}\:\:\text{  and  }\:\:R_b= R_{b,1}\bigsqcup R_{b,-1}.
\]
It is immediate from the definition of matching that these partitions are compatible. 

Recalling the formulas for the transfer factor in Appendix \ref{Section: transfer factor}, consider the relative discriminant
\[
D_{a,b}(x) = \prod_{(z_a,z_b)\in R_a\times R_b}(z_a-z_b).
\]
If $z_a\in R_{a,1}$ and $z_b\in R_{b,-1},$ then Lemma \ref{Lem: eignvalue restrictions} implies that $z_a-z_b$ is a unit in $\calo_{\Fbar}$, and similarly if $z_a\in R_{a,-1}$ and $z_b\in R_{b,1}$. 
Thus, recalling $x= (y_1,-y_{-1})\in \calq_{\ga}(\calo_F)$, if we set
\[
D_{(a,\nu),(b,\nu)}(\nu y_\nu) = \prod_{(z_a,z_b)\in R_{a,\nu}\times R_{b,\nu}}(z_a-z_b)
\]we compute that
\[
|D_{a,b}(x)|_F = \left|D_{(a,1),(b,1)}(y_1)\right|_F\cdot\left|D_{(a,-1),(b,-1)}(-y_{-1})\right|_F
\]
and 
\[
\eta_{E/F}(D_{a,b}(x)) = \eta_{E/F}\left(D_{(a,1),(b,1)}(y_1)\right)\cdot\eta_{E/F}\left(D_{(a,-1),(b,-1)}(-y_{-1})\right).
\]
In particular, if $(x,(x_a,x_b))$ is a good matching pair, then the definitions of the transfer factors in (\ref{eqn: relative transfer factors}) and Section \ref{Section: transfer factor} immediately imply the lemma.

When $(x,(x_a,x_b))$ is not a good matching pair, the assumption that $(\al,\be)= (I_a,I_b)$ implies that we can fix an embedding
\[
\varphi_{a,b}:\Herm(V_a)\oplus \Herm(V_b)\hra \Herm(V_n),
\]
and set $A':=\varphi_{a,b}(R_a(x_a),R_b(x_b))$. Then $A'$ is stably conjugate to $R(x)\in \Herm(V_n)$, so Lemma \ref{Lem: kernel isom} tells us that there exists $x'\in \calq(F)$ in the same stable orbit as $x$ such that $R(x')=A'$. In particular, $x'$ and $(x_a,x_b)$ are a good matching pair. 

Letting $x'=\ga'y'$ be the analogous decomposition, we similarly write $x'=(y'_1,-y'_{-1})\in \calq_{\ga'}(F)$.  Suppose that $x'=hxh^{-1}$ for $h\in H(\Fbar)$. As we have seen in the proof of Lemma \ref{eqn: almost there general}, uniqueness of the topological Jordan decomposition forces 
\[
\ga'=h\ga h^{-1}\:\:\text{ and }\:\:y_{\pm1}'=hy_{\pm1}h^{-1}.
\]
 This proves that under the isomorphism induced by the decomposition (\ref{eqn: stabilizer decomposition}) we have
\begin{align*}
    H^1(F,H_x)&\cong H^1(F,\rH_{x,1})\times H^1(F,\rH_{x,-1}),\\
    \inv(x,x')&\longmapsto(\inv(y_1,y_1'),\inv(-y_{-1},-y_{-1}')).
\end{align*}
In particular,
\[
\ka( \inv(x,x')) = \ka_1(\inv(y_1,y_1'))\cdot\ka_{-1}(\inv(-y_{-1},-y_{-1}'));
\]
The formulas for the transfer factor (\ref{eqn: transfer not nice}) now implies the lemma for all matching pairs $(x,(x_a,x_b))$.
\end{proof}
\subsection{Final cases of Theorem \ref{Thm: fundamental lemma}}
We may now complete the proof of Theorem \ref{Thm: fundamental lemma}.
\begin{Prop}\label{Prop: final cases}
Suppose $x\in\calq_n^{rss}(\calo_F)$ such that 
\[
\overline{x}\in (D_1\cap D_{-1})(k).
\]
and that $(x_a,x_b)\in \calq_{a,\al}(F)\times\calq_{b,\be}(F)$ match. Then the fundamental lemma holds for $(x, (x_a,x_b))$. That is, if $\ka$ is the character associated to the endoscopic datum, we have
\[
\De_{rel}((x_a,x_b),x)\Orb^\ka(x,\bfun_{\calq_n(\calo_F)})=\begin{cases}\SO((x_a,x_b),\bfun_{\calq_a(\calo_F)}\otimes \bfun_{\calq_b(\calo_F)})&:(\al,\be)= (I_a,I_b),\\\qquad\qquad\qquad 0&:(\al,\be)\neq (I_a,I_b).\end{cases}
\]
\end{Prop}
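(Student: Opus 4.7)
The plan is to combine the three descent identities established immediately preceding this proposition (Lemmas \ref{eqn: almost there general}, \ref{eqn: stable almost there general}, and \ref{eqn: almost there transfer factor}) with the very regular locus case treated in Proposition \ref{Prop: very reg FL}, applied separately to each factor of the descendant $(\G_\ga,\rH_\ga)$. The point is that the decomposition $x = \ga y = (y_1, -y_{-1})\in \calq_1(\calo_F)\times \calq_{-1}(\calo_F)$ allows us to bypass a delicate analysis on the singular locus by routing the orbital integrals through the very regular locus of the smaller-dimensional symmetric spaces $\calq_1$ and $\calq_{-1}$.

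First I would dispense with the vanishing case $(\al,\be)\neq (I_a,I_b)$: Lemma \ref{eqn: stable almost there general} already delivers $\Orb^\ka(x,\bfun_{\calq_n(\calo_F)})=0$ in this case, so the identity is trivial. Hence I may assume $(\al,\be)=(I_a,I_b)$ throughout the remainder of the argument.

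Next, the matching of $(x_a,x_b)$ with $x$ combined with the eigenvalue constraint from Lemma \ref{Lem: eignvalue restrictions} ensures that the decompositions $W_a = W_{a,1}\oplus W_{a,-1}$ and $W_b = W_{b,1}\oplus W_{b,-1}$ on the endoscopic side are compatible with $W = W_1\oplus W_{-1}$. In particular, $(y_{a,\nu},y_{b,\nu})$ matches $\nu y_\nu$ inside the respective descendants $\calq_\nu$ for $\nu=\pm1$, with the induced endoscopic characters $\ka_1$ and $\ka_{-1}$. By Lemma \ref{Lem: into the reg locus}, $y_1\in\calq_1^{\heartsuit,-1}(\calo_F)$ and $-y_{-1}\in\calq_{-1}^{\heartsuit,1}(\calo_F)$, so each factor lies in the very regular locus of its descendant. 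Applying Proposition \ref{Prop: very reg FL} to each factor yields
\begin{align*}
\De_{rel}((y_{a,1},y_{b,1}),y_1)\Orb^{\ka_1}(y_1,\bfun_{\calq_1(\calo_F)}) &= \SO((y_{a,1},y_{b,1}),\bfun_{\calq_{a,1}(\calo_F)}\otimes\bfun_{\calq_{b,1}(\calo_F)}),\\
\De_{rel}((-y_{a,-1},-y_{b,-1}),-y_{-1})\Orb^{\ka_{-1}}(-y_{-1},\bfun_{\calq_{-1}(\calo_F)}) &= \SO((-y_{a,-1},-y_{b,-1}),\bfun_{\calq_{a,-1}(\calo_F)}\otimes\bfun_{\calq_{b,-1}(\calo_F)}).
\end{align*}

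Finally, multiply these two identities together. The left-hand side equals $\De_{rel}((x_a,x_b),x)\Orb^\ka(x,\bfun_{\calq_n(\calo_F)})$ by Lemmas \ref{eqn: almost there transfer factor} and \ref{eqn: almost there general}, while the right-hand side equals $\SO((x_a,x_b),\bfun_{\calq_a(\calo_F)}\otimes\bfun_{\calq_b(\calo_F)})$ by Lemma \ref{eqn: stable almost there general}. This produces the desired identity. The only step requiring any real care is verifying that the endoscopic decomposition on the Hermitian side matches the decomposition of $W$ under $\ga$ (so that $\ka$ actually factors as $\ka_1\otimes\ka_{-1}$); this compatibility is forced by the matching of invariants together with the eigenvalue rigidity of Lemma \ref{Lem: eignvalue restrictions}, and is already implicit in the proof of Lemma \ref{eqn: almost there transfer factor}. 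There is no genuine obstacle at this stage since all the geometric work has been performed in the preceding sections.
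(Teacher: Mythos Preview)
Your proposal is correct and follows essentially the same approach as the paper: descend via Lemmas \ref{eqn: almost there general}, \ref{eqn: stable almost there general}, and \ref{eqn: almost there transfer factor}, invoke Lemma \ref{Lem: into the reg locus} to land in the very regular locus of each factor, and then apply Proposition \ref{Prop: very reg FL} componentwise. The paper's own proof is a one-sentence summary of exactly this chain of reductions; your write-up simply unpacks it more explicitly.
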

\begin{proof}
 Lemma \ref{Lem: into the reg locus} allows us to apply Proposition \ref{Prop: very reg FL} to the descended orbital integrals and transfer factors appearing in Lemmas \ref{eqn: almost there general}, \ref{eqn: stable almost there general}, and \ref{eqn: almost there transfer factor} on the descendant $\calq_1 \times \calq_{-1}$. The resulting matching of orbital integrals gives the identity.
\end{proof}

\begin{appendix}

\section{Endoscopy for unitary Lie algebras}\label{Section: endoscopy roundup}
We recall the necessary facts from the theory of endoscopy for unitary Lie algebras. We follow \cite{Leslieendoscopy} closely. Assume $E/F$ is a quadratic extension of $p$-adic fields and let $W$ be a $n$-dimensional Hermitian space over $E$. As previously noted, we will work with the twisted Lie algebra
\[
\Herm(W)=\{x\in \End(W): \la xu,v\ra=\la u, xv\ra\}.
\]

Let $\de\in \Herm(W)$ be regular and semi-simple. 
Recalling that the set of rational conjugacy classes $\calo_{st}(\de)$ in the stable conjugacy class of $y$ form a $\cald(T_\de,\U(W);F)$-torsor, we have a map
\begin{equation*}
\inv(\de,-):\calo_{st}(\de)\iso \cald(T_\de,\U(W);F)
\end{equation*}trivializing the torsor by fixing the orbit $[\de]$ as the base point. This map is given by 
\[
[\de']\mapsto \inv(\de,\de'):=[\sig\in \Gal(\Fbar/F)\mapsto \sig(g)^{-1}g], 
\]
where $g\in \GL(W)$ such that $\de'=\Ad(g)(\de)$.

\subsection{Endoscopy for unitary Lie algebras}\label{Section: endoscopy defs}

An elliptic endoscopic datum for $\Herm(W)$ is the same as a datum for the group $U(W),$ namely a triple $$(\U(V_a)\times \U(V_b),s,\eta),$$  where $a+b=n$. Here $s\in \hat{U}(W)$ a semi-simple element of the Langlands dual group of $U(W)$, and an embedding $$\eta:\hat{U}(V_a)\times\hat{U}(V_b)\hra \hat{U}(W)$$ identifying $\hat{U}(V_a)\times\hat{U}(V_b)$ with the neutral component of the centralizer of $s$ in the $L$-group ${}^LU(W)$. Fixing such a datum, we consider the endoscopic Lie algebra $\Herm(V_a)\oplus \Herm(V_b)$. Let $\de\in \Herm(W)$ and $(\de_a,\de_b)\in\Herm(V_a)\oplus \Herm(V_b)$ be regular semi-simple. 

Denote $W_{a,b}=V_a\oplus V_b$. In the non-archimedean case, the isomorphism class of $W_{a,b}$ is uniquely determined by those of $V_a$ and $V_b$ \cite[Theorem 3.1.1]{jacobowitz1962hermitian}. 

\subsection{Matching of orbits}\label{Section: endo matching} We first recall the notion of Jacquet--Langlands transfer between two non-isomorphic Hermitian spaces $W$ and $W'$. If we identify the underlying vector spaces (but not necessarily the Hermitian structures)
\begin{equation*}
  W\cong E^n\cong W',  
\end{equation*}
we have embeddings 
\[
\Herm(W),\:\Herm(W')\hra\fgl_n(E).
\]
Then $\de\in \Herm(W)$ and $\de'\in \Herm(W')$ are said to be \textbf{Jacquet--Langlands transfers} if they are $\GL_n(E)$-conjugate in $\fgl_n(E)$.  This is well defined since the above embeddings are determined up to $\GL_n(E)$-conjugacy. Note that if $\de$ and $\de'$ are Jacquet--Langlands transfers, then
\[
\de'=\Ad(g)(\de)
\]
for some $g\in \GL(W)$ and we obtain a well-defined cohomology class
\[
\inv(\de,\de')=[\sig\in \Gal(\Fbar/F)\mapsto g^{-1}\sig(g)]\in H^1(F,T_\de)
\]
extending the invariant map on $\cald(T_\de;\U(W);F)$.

\begin{Def}\label{Def: endoscopic matching}
In the case that $W'=W_{a,b}$, we have an embedding
 $$\phi_{a,b}:\Herm(V_a)\oplus\Herm(V_b)\hra \Herm(W_{a,b}),$$ well defined up to conjugation by $U(W_{a,b})$. We say that $\de$ and $(\de_a,\de_b)$ are \textbf{transfers (or are said to match)} if $\de$ and $\phi_{a,b}(\de_a,\de_b)$ are Jacquet--Langlands transfers in the above sense.
\end{Def}

For later purposes, if $W\cong W_{a,b}$, we say that a matching pair $y$ and $(\de_a,\de_b)$ are a \textbf{\emph{nice matching pair}} if we may choose $\phi_{a,b}$ so that
 \[
 \phi_{a,b}(\de_a,\de_b) = \de.
 \]
We remark that being Jacquet--Langlands transfers is equivalent to belonging to the same stable conjugacy class, while the notation of a nice matching pair is equivalent  $\phi_{a,b}(\de_a,\de_b)$ and $\de$ lying in the same rational conjugacy class.

\subsubsection{Orbital integrals}\label{Section:endoscopy character}
For $\de\in \Herm(W)^{rss}$ and $f\in C_c^\infty(\Herm(W))$, we define the orbital integral
\[
\Orb(\de,f)=\int_{T_\de\backslash U(W)}f(g^{-1}\de g)d\dot{g},
\]
where $dg$ is a Haar measure on $U(W)$, $dt$ is the unique normalized Haar measure on the torus $T_\de$, and $d\dot{g}$ is the invariant measure such that $dt d\dot{g}=dg$.

To an elliptic endoscopic datum $(\U(V_a)\times \U(V_b),s,\eta)$ and regular semi-simple element $\de\in \Herm(W)$, there is a natural character (see Section \ref{Section: Prelim inv} for notation)
\[
\ka:\mathfrak{C}(T_\de,\U(W);F)\cong\cald(T_\de,\U(W);F)\lra \cc^\times,
\]
which may be computed as follows. For matching elements $\de$ and $(\de_a,\de_b)$,
\begin{equation*}
H^1(F,T_\de)=\prod_{S_1}\zz/2\zz=\prod_{S_1(a)}\zz/2\zz\times\prod_{S_1(b)}\zz/2\zz=H^1(F,T_{\de_a}\times T_{\de_b}),
\end{equation*}
where the notation indicates which elements of $S_1$ arise from the torus $T_{\de_a}$ or $T_{\de_b}$.
\begin{Lem}\cite[Proposition 3.10]{Xiao}\label{Lem: endo character}
 Consider the character $\tilde{\ka}: H^1(F,T_\de)\to \cc^\times$ such that on each $\zz/2\zz$-factor arising from $S_1(a)$, $\tilde{\ka}$ is the trivial map, while it is the unique non-trivial map on each $\zz/2\zz$-factor arising from $S_1(b)$. Then $$\ka=\tilde{\ka}|_{\mathfrak{C}(T_\de,\U(W);F)}.$$ 
\end{Lem}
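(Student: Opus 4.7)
The plan is to unwind the standard construction of the endoscopic character $\kappa$ attached to an elliptic endoscopic datum and then match it, factor by factor, with the explicit description of $H^1(F,T_\de)$ recalled just above the statement. Since the result is attributed to \cite[Proposition 3.10]{Xiao}, we will essentially be re-interpreting that computation in the present notation.

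First I would recall the general construction: the semi-simple element $s\in\hat{\U}(W)$ together with the admissible embedding $T_\de\hookrightarrow \U(W)$ determined by the matching $\de\leftrightarrow(\de_a,\de_b)$ produces, after dualizing, a canonical class $\hat{s}\in \pi_0\bigl((\hat{T}_\de)^{\Gamma}\bigr)$. Tate--Nakayama (in its Kottwitz form) pairs this class with $H^1(F,T_\de)$ and the resulting character on $\cald(T_\de,\U(W);F)$ is by definition $\kappa$. Because $T_\de$ is an algebraic torus in the $p$-adic setting, $\mathfrak{C}(T_\de,\U(W);F)=\cald(T_\de,\U(W);F)$, so it suffices to identify the restriction of this pairing to $\cald$.

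Next I would make the torus and its dual explicit, using the description recalled in Section~\ref{measures}/Section~\ref{Section: endoscopy defs}: writing $F[\de]=\prod_i F_i$ and $E[\de]=\prod_i E_i$, one has $T_\de\cong\ker(\Nm: E[\de]^\times\to F[\de]^\times)$ and hence
\[
H^1(F,T_\de)=\prod_{i\in S_1}\zz/2\zz,
\]
one factor for each index where $F_i\not\supseteq E$. Under the endoscopic matching, each $F_i$ sits inside exactly one of $F[\de_a]$ or $F[\de_b]$, so this product factors compatibly as $H^1(F,T_{\de_a})\times H^1(F,T_{\de_b})$, indexed by $S_1(a)\sqcup S_1(b)$. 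On the dual side, $\hat{T}_\de$ factors accordingly, and a standard choice puts the datum $s$ in the form whose image in $\hat{T}_\de$ acts trivially on the $S_1(a)$-components and nontrivially (by the sign character) on the $S_1(b)$-components, since $s$ cuts out $\hat{\U}(V_a)\times \hat{\U}(V_b)$ inside $\hat{\U}(W)$.

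Finally, I would feed this into Tate--Nakayama to read off $\kappa$ on each $\zz/2\zz$-factor: the $S_1(a)$-factors pair trivially and the $S_1(b)$-factors pair via the unique nontrivial character, which is exactly the character $\tilde{\kappa}$ in the statement. Restriction to $\mathfrak{C}(T_\de,\U(W);F)\subset H^1(F,T_\de)$ then gives the claimed identity $\kappa=\tilde{\kappa}|_{\mathfrak{C}(T_\de,\U(W);F)}$. The only real subtlety is bookkeeping the admissible embedding: one must check that the factorization $H^1(F,T_\de)=H^1(F,T_{\de_a})\times H^1(F,T_{\de_b})$ used above is the one induced by the matching (not merely an abstract isomorphism), but this is exactly what is ensured by Definition~\ref{Def: endoscopic matching} together with the identification $\phi_{a,b}(\de_a,\de_b)=\de$ in the nice case and the Jacquet--Langlands transfer compatibility in general. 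This is the main bookkeeping obstacle; the rest is a direct computation.
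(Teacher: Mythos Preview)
The paper does not prove this lemma; it simply cites \cite[Proposition 3.10]{Xiao} and states the result. Your sketch is a reasonable outline of the standard argument behind that citation: one passes the semi-simple element $s$ through the admissible embedding to obtain a class in $\pi_0((\hat T_\de)^\Gamma)$, applies Kottwitz's form of Tate--Nakayama duality, and reads off the character on each $\zz/2\zz$-factor according to whether the corresponding index lies in $S_1(a)$ or $S_1(b)$. The only point where your write-up is slightly loose is the assertion that ``a standard choice puts the datum $s$ in the form whose image in $\hat T_\de$ acts trivially on the $S_1(a)$-components and nontrivially on the $S_1(b)$-components'': making this precise requires tracking the embedding $\eta$ and the Galois action on $\hat T_\de$ carefully, which is exactly the content of the computation in \cite{Xiao}. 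But as a sketch there is no genuine gap.
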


Using the invariant map
\begin{equation*}
\inv(\de,-):\calo_{st}(\de)\iso \cald(T_\de,\U(W);F),
\end{equation*}we form the $\ka$-orbital integral of $f\in C_c^\infty(\Herm(W_1))$
\[
\Orb^\ka(\de,f) = \sum_{[\de']}\ka(\inv(\de,\de'))\Orb(\de',f).
\]
When $\ka=1$ is trivial, write $\SO=\Orb^\ka$.
\subsubsection{Transfer factors}\label{Section: transfer factor}
We now recall the \emph{transfer factor} of Langlands-Sheldstad and Kottwitz. This is a function
\[
\De:[\Herm(V_a)\oplus \Herm(V_b)]^{rss}\times  \Herm(W)^{rss}\to \cc.
\]
The two important properties are
\begin{enumerate}
    \item $\De((\de_a,\de_b),\de) = 0$ if $\de$ does not match $(\de_a,\de_b),$
    \item\label{second} if $\de$ is stably conjugate to $\de'$, then 
    \[
    \De((\de_a,\de_b),\de)\Orb^\ka(\de,f) = \De((\de_a,\de_b),\de')\Orb^\ka(\de',f).
    \]
\end{enumerate}
While the general definition, given in \cite{LanglandsShelstad} for the group case and \cite{kottwitztransfer} in the quasi-split Lie algebra setting, is subtle, our present setting enjoys the following simplified formulation when $E/F$ is unramified.  



When $\de\in \Herm(W)$ and $(\de_a,\de_b)\in \Herm(V_a)\oplus \Herm(V_b)$ do not match, we set
\[
\De((\de_a,\de_b),\de)=0.
\]
Now suppose that $\de$ and $(\de_a,\de_b)$ match. We define the relative discriminant
\[
D_{a,b}(\de)=\prod_{x_a,x_b}(x_a-x_b),
\]
where $x_a$ (resp. $x_b$) ranges over the eigenvalues of $\de_a$ (resp. $\de_b$) in $\Fbar$. 

Recall our notation $W_{a,b}=V_a\oplus V_b$ and first assume that $W\cong W_{a,b}$ and that $\de$ and $(\de_a,\de_b)$ are a nice matching pair. In this case, the transfer factor is then given by
\begin{equation*}
\De((\de_a,\de_b),\de):=\eta_{E/F}(D_{a,b}(\de))|D_{a,b}(\de)|_F,
\end{equation*}
where $\eta_{E/F}$ is the quadratic character associated to $E/F$.

Now for any matching pair $\de$ and $(\de_a,\de_b)$, let
\[
\de'=\phi_{a,b}(\de_a,\de_b)\in \Herm(W_{a,b}).
\]
As discussed in Section \ref{Section: endo matching}, $\de$ and $\de'$ are Jacquet--Langlands transfers of each other and we set
\begin{align}\label{eqn: transfer not nice}
   \De((\de_a,\de_b),\de) = \kappa(\inv(\de,\de'))\eta_{E/F}(D_{a,b}(\de))|D_{a,b}(\de)|_F=\kappa(\inv(\de,\de'))\De((\de_a,\de_b),\de'),
\end{align}

where $\ka:H^1(F,T_\de)\to \cc^\times$ is the character arising from the datum $(\U(V_a)\times \U(V_b),s,\eta)$ and $\inv$ is the extension of the invariant map discussed in Section \ref{Section: endo matching}.  This property ensures that $\De$ satisfies (\ref{second}) above.

We use these explicit formulas in Sections \ref{Section: cayley}, \ref{Section: very regular}, and \ref{Section: descent3}.

\end{appendix}



\newpage
\bibliographystyle{alpha}

\bibliography{bibs}
\end{document}